\newcommand{\Z}{ {\mathbb{Z}} } 
\newcommand{\Q}{ {\mathbb{Q}} }
\newcommand{\R}{ {\mathbb{R}} }
\newcommand{\C}{ {\mathbb{C}} }
\newcommand{\A}{ {\mathbb{A}} }
\newcommand{\Ad}{{ \mathrm{Ad}}}
\newcommand{\GL}{\mathrm{GL}}
\newcommand{\PGL}{\mathrm{PGL}}
\newcommand{\SL}{\mathrm{SL}}
\newcommand{\GSp}{\mathrm{GSp}}
\newcommand{\PGSp}{\mathrm{PGSp}}
\newcommand{\SO}{\mathrm{SO}}
\newtheorem{theorem}{Theorem}[section]
\newtheorem{lemma}[theorem]{Lemma}
\newtheorem{remark}[theorem]{Remark}
\newtheorem{corollary}[theorem]{Corollary}
\newtheorem{proposition}[theorem]{Proposition}
\title[Automorphic $\SL_2$-periods and the subconvexity problem for $\GL_2 \times \GL_3$]{Automorphic $\SL_2$-periods and the subconvexity problem for $\GL_2 \times \GL_3$}
\author{Aprameyo Pal}
\address{A. Pal: Universit\"at Duisburg-Essen. Fakult\"at f\"ur Mathematik, Thea-Leymann-Strasse 9, 45127 Essen, Germany.}
\email{aprameyo.pal@uni-due.de}
\author{Carlos de Vera-Piquero}
\address{C. de Vera-Piquero: Universitat Politècnica de Catalunya. Departament de Matemàtiques, C. Jordi Girona 1-3, 08034 Barcelona, Spain.}
\email{cdeverapiquero@gmail.com}
\date{May 2020}
\begin{document}
\begin{abstract}
We prove a new (conditional) result towards the subconvexity problem for certain automorphic $L$-functions for $\GL_2\times \GL_3$. This follows from the computation of new $\SL_2$-period integrals associated with newforms $f$ and $g$ of even weight and odd squarefree level. The same computations also lead to a central value formula for degree $6$ complex $L$-series of the form $L(f\otimes \mathrm{Ad}(g), s)$, extending previous work in \cite{PaldVP}.
\end{abstract}

\maketitle

\section{Introduction}

Let $f \in S_{2k}(N_f)$ and $g \in S_{\ell+1}(N_g)$ be two normalized newforms of weight $2k$ and $\ell+1$, and level $\Gamma_0(N_f)$ and $\Gamma_0(N_g)$, respectively. We assume throughout that $\ell \geq k \geq 1$ are both odd integers, and that the levels $N_f$ and $N_g$ are both squarefree and odd. We set $\ell-k = 2m$, with $m\geq 0$. We emphasize that we consider level structure of $\Gamma_0$-type, hence both $f$ and $g$ have trivial Nebentype character.

Associated with $f$ and $g$, one has a degree $6$ complex $L$-series
\[
L(f \otimes \Ad(g), s),
\]
which is the Artin $L$-series corresponding to the tensor product $V(f) \otimes \Ad(V(g))$ of the (compatible system of $p$-adic) Galois representation(s) attached to $f$ and the adjoint of the one attached to $g$. This $L$-series admits a representation as an Euler product
\[
L(f\otimes \mathrm{Ad}(g), s) = \prod_{p} L_p(f \otimes\mathrm{Ad}(g), s),
\]
where $p$ varies over all rational primes. For example, if $p$ is a rational prime not dividing $N_fN_g$, and $\{\alpha_p, \alpha_p^{-1}\}$ and $\{\beta_p,\beta_p^{-1}\}$ are the Satake parameters of $f$ and $g$ at $p$, respectively, so that 
\[
1-a_f(p)X + p^{2k-1}X^2 = (1-p^{k-1/2}\alpha_p X)(1-p^{k-1/2}\alpha_p^{-1}X),
\]
\[
1-a_g(p)X + p^{\ell}X^2 = (1-p^{\ell/2}\beta_p X)(1-p^{\ell/2}\beta_p^{-1}X),
\]
then one has
\[
L_p(f\otimes\mathrm{Ad}(g),s) = \det(\mathbf 1_6 - A_p \otimes B_p \cdot p^{-s-\ell})^{-1},
\]
where we put
\[
A_p = p^{k-1/2}\begin{pmatrix} \alpha_p & 0 \\ 0 & \alpha_p^{-1} \end{pmatrix}, \qquad 
B_p = p^{\ell}\begin{pmatrix} \beta_p^2 & 0 & 0 \\ 0 & 1 & 0 \\ 0 & 0 & \beta_p^{-2} \end{pmatrix}.
\]
The above Euler product converges absolutely for $\mathrm{Re}(s)\gg 0$, and the completed $L$-series 
\[
\Lambda(f\otimes\mathrm{Ad}(g),s) = L_{\infty}(f\otimes\mathrm{Ad}(g),s)\prod_{p} L_p(f \otimes\mathrm{Ad}(g), s),
\]
where $L_{\infty}(f\otimes\mathrm{Ad}(g),s) := \Gamma_{\C}(s)\Gamma_{\C}(s+\ell)\Gamma_{\C}(s+\ell-2k+1)$, $\Gamma_{\C}(s) := 2(2\pi)^{-s}\Gamma(s)$, has analytic continuation to the whole complex plane and satisfies the functional equation relating its values at $s$ and $2k-s$, with center of symmetry at $s = k$. In our previous paper \cite{PaldVP}, we proved an explicit central value formula for $\Lambda(f\otimes \mathrm{Ad}(g),k)$ under certain hypotheses, extending a previous formula of Ichino \cite{Ichino-pullbacks}. Such formula was obtained by making explicit a decomposition formula due to Qiu \cite{Qiu} for a certain automorphic $\SL_2$-period, and in classical terms it involves a half-integral weight modular form $h \in S_{k+1/2}^+(N_f)$ in Shimura--Shintani correspondence with $f$ and its Saito--Kurokawa lift. The purpose of this note is two-fold: on one hand, we generalize the central value formula in \cite{PaldVP}, and on the other hand, we make some progress towards the subconvexity problem for automorphic $L$-functions for $\GL_2 \times \GL_3$. For both goals we need new computations of local $\SL_2$-periods, and for the second one we also use recent work of Nelson \cite{Nelson}.

To be more precise, let $\pi$ and $\tau$ be the automorphic representations of $\GL_2(\A)$ (actually, of $\PGL_2(\A)$) associated with $f$ and $g$, respectively. In addition, let $\psi$ denote the standard additive character of $\A/\Q$, $\omega_{\bar{\psi}}$ be the Weil representation of the metaplectic group $\widetilde{\SL}_2(\A)$ on the space $\mathcal S(\A)$ of Bruhat--Schwartz functions (on the one dimensional quadratic space with bilinear form $(x,y)= xy/2$) with respect to $\bar{\psi}=\psi^{-1}$, and $\tilde{\pi} \in \mathrm{Wald}_{\bar{\psi}}(\pi)$ be an automorphic representation of $\widetilde{\SL}_2(\A)$ belonging to the {\em Waldspurger packet} of $\pi$ with respect to $\bar{\psi}$. Associated with the triple $\tilde{\pi}, \tau, \omega_{\bar{\psi}}$, Qiu defines a natural automorphic $\SL_2$-period functional
\[
\mathcal Q: \tilde{\pi}\otimes \tilde{\pi} \otimes \tau \otimes \tau \otimes \omega_{\bar{\psi}}\otimes \omega_{\bar{\psi}} \, \longrightarrow \, \C
\]
and studies its main features. Most importantly, he shows that when $\mathcal Q$ is not identically zero, then it decomposes as a product of local $\SL_2$-periods 
\[
\mathcal I_v: \tilde{\pi}_v \otimes \tilde{\pi}_v \otimes \tau_v \otimes \tau_v \otimes \omega_{\bar{\psi},v} \otimes \omega_{\bar{\psi},v} \, \longrightarrow \, \C
\]
up to certain $L$-values, including the central value $\Lambda(f \otimes\mathrm{Ad}(g), k)$. The non-vanishing of $\mathcal Q$ is well-understood, and it is equivalent to the central value $\Lambda(f\otimes\mathrm{Ad}(g),k)$ being non-zero together with some local conditions on the choice of $\tilde{\pi}$ in $\mathrm{Wald}_{\bar{\psi}}(\pi)$. With this, the strategy followed in \cite{PaldVP} consists in finding a {\em test vector} on which $\mathcal Q$ does not vanish, and then evaluating both the global period $\mathcal Q$ and the local periods $\mathcal I_v$ at such vector. From Qiu's decomposition formula, one can then isolate the desired central value  $\Lambda(f\otimes\mathrm{Ad}(g),k)$.

The assumptions made in \cite{PaldVP}, mainly that $N_g = N_f$ and $\ell = k$, simplified the still involved computations of the local periods $\mathcal I_v$, as well as the evaluation of the global period itself. Both of these assumptions can be relaxed, leading to the following result:

\begin{theorem}\label{thm:centralvalue-intro}
Suppose that $N_f, N_g$ are both odd and squarefree, and that $N_g \mid N_f$. Suppose that $\ell \geq k \geq 1$ are both odd, and set $\ell-k = 2m$. If the Atkin--Lehner eigenvalue of $f$ at $p$ is $+1$ at all primes $p$ dividing $M_g:=N_f/N_g$, then there exists a half-integral weight modular form $h \in S_{k+1/2}^+(N_f)$ in Shimura--Shintani correspondence with $f$ such that 
\[
\Lambda(f \otimes \mathrm{Ad}(g), k) = 2^{6m+k+1-\nu(M_g)} C_0(f,g)C_{\infty}(f,g) \cdot \frac{\langle f,f\rangle}{\langle h,h\rangle} \frac{|\langle \breve{F}_{|\mathcal H\times\mathcal H}, g \times V_{M_g}g\rangle|^2}{\langle g,g \rangle^2},
\]
where $\breve{F} \in S_{\ell+1}^{nh}(\Gamma_0^{(2)}(N_f))$ is a nearly holomorphic Siegel form closely related to the Saito--Kurokawa lift of $h$ (cf. Proposition \ref{prop:Ftheta}), $\nu(M_g)$ denotes the number of primes dividing $M_g$, and $C_0(f,g)$ and $C_{\infty}(f,g)$ are non-zero rational constants that depend on the levels and weights of $f$ and $g$, respectively (cf. Theorem \ref{thm:centralvalue} for their explicit value).
\end{theorem}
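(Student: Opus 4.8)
The plan is to follow the strategy of \cite{PaldVP}: make Qiu's decomposition formula \cite{Qiu} for the global $\SL_2$-period $\mathcal Q$ fully explicit, and evaluate both sides at a carefully chosen test vector. Recall that when $\mathcal Q\not\equiv 0$, for a factorizable vector $\varphi=\otimes_v\varphi_v$ one has an identity of the shape
\[
\mathcal Q(\varphi)\;=\;\Lambda(f\otimes\Ad(g),k)\cdot c(f,g)\cdot\prod_v\mathcal I_v(\varphi_v),
\]
where $c(f,g)$ is a ratio of auxiliary $L$-values times an explicit constant, and the Euler product $\prod_v\mathcal I_v(\varphi_v)$ converges once $\varphi_v$ is the normalized spherical vector at almost all $v$. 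The four steps are then: (i) choose $\varphi_v$ at every place so that $\mathcal Q(\varphi)\neq 0$ and the classical quantities become visible; (ii) compute each local period $\mathcal I_v(\varphi_v)$ in closed form; (iii) compute the global period $\mathcal Q(\varphi)$ in classical terms; (iv) solve for $\Lambda(f\otimes\Ad(g),k)$ and track the constants.

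For step (i), at each prime $p\mid N_g$ (where $\pi_p$ and $\tau_p$ are both Steinberg) and at each prime $p\nmid N_fN_g$ (everything unramified) we take the same local data as in \cite{PaldVP}. The two new features are the archimedean place and the primes $p\mid M_g=N_f/N_g$. At $\infty$, since $\ell-k=2m$ may now be positive, we work with $K_\infty$-types adapted to the pair of weights $(2k,\ell+1)$ rather than with the minimal one; on the automorphic side this is why the Siegel form $\breve F$ is only \emph{nearly} holomorphic of weight $\ell+1$, obtained from the Saito--Kurokawa lift of $h$ by a Maass--Shimura type operator (Proposition \ref{prop:Ftheta}). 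At a prime $p\mid M_g$, $\pi_p$ is Steinberg while $\tau_p$ is unramified; we take the spherical vector in $\tau_p$ and suitable newvectors in $\tilde\pi_p$ and $\omega_{\bar\psi,p}$. Here the hypothesis that the Atkin--Lehner eigenvalue of $f$ at $p$ equals $+1$ is exactly the local condition guaranteeing $\mathcal I_p(\varphi_p)\neq 0$ (equivalently, that the chosen $\tilde\pi$ satisfies Qiu's local non-vanishing constraint at $p$), and it is what forces the level-raised oldform $V_{M_g}g$ — rather than $g$ itself — to appear on the $\tau$-side.

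For step (ii): at unramified $p$ one invokes Qiu's unramified computation, which is precisely what produces the $L$-values above after multiplying out the Euler product; at $p\mid N_g$ one reuses the computation of \cite{PaldVP}; the archimedean period for general $m\geq 0$ and the local period at $p\mid M_g$ are the genuinely new calculations. For step (iii): unfolding $\mathcal Q$ realizes it through the theta correspondence relating $\widetilde{\SL}_2$-automorphic forms to the Saito--Kurokawa lift on $\PGSp_4$, and a seesaw identity identifies the doubled integral with the pullback of $\breve F$ to $\mathcal H\times\mathcal H$ paired against $g\times V_{M_g}g$; this yields the factor $|\langle\breve F_{|\mathcal H\times\mathcal H},\,g\times V_{M_g}g\rangle|^2/\langle g,g\rangle^2$, while the ratio $\langle f,f\rangle/\langle h,h\rangle$ comes from the Shimura--Shintani normalization of $h$ together with the auxiliary adjoint $L$-values. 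Substituting (ii) and (iii) into Qiu's formula and bookkeeping all constants — the archimedean Gamma-factor ratios, the local volumes, and the powers of $2$ — gives the stated identity, with $C_\infty(f,g)$ absorbing the archimedean contribution, $C_0(f,g)$ the contribution of the finite ramified primes, and $2^{6m+k+1-\nu(M_g)}$ emerging from the weights, the genus-two normalizations, and one factor of $2^{-1}$ per prime dividing $M_g$.

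The main obstacle is step (ii) in the two new cases. At the primes $p\mid M_g$ one must evaluate the local $\SL_2$-period integral in the Steinberg-times-unramified configuration and show that its non-vanishing is governed exactly by the sign of the Atkin--Lehner eigenvalue; this is the technical heart of the relaxation $N_g\mid N_f$. At the archimedean place one must carry out the relevant branching and integral computation for arbitrary $m\geq 0$, which is substantially more delicate than the $m=0$ case of \cite{PaldVP} because of the nearly holomorphic structure and the higher $K_\infty$-types involved; obtaining the constant $C_\infty(f,g)$ in closed form is the other significant technical input.
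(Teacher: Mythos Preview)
Your proposal follows essentially the same approach as the paper, which carries out exactly the program you outline: Qiu's decomposition formula, an explicit test vector, the new local computations at $p\mid M_g$ and at $\infty$, and a comparison with the $\mathrm{SO}(4)$-period via explicit theta identities to evaluate the global side. One correction worth flagging: at primes $p\mid M_g$ the paper does \emph{not} take the spherical vector in $\tau_p$, but the old vector $\breve{\mathbf g}_p=\tau_p(\varpi_p)\mathbf g_p$; it is this choice---not the Atkin--Lehner sign condition---that makes $V_{M_g}g$ appear in the Petersson product (the condition $w_p=+1$ governs only the non-vanishing of $\mathcal I_p^{\sharp}$, as you say elsewhere).
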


When $N_g = N_f$ and $\ell = k$, one has $m = 0$ and $C_{\infty}(f,g) = 1$, $\breve{F} = \mathrm{SK}(h) \in S_{k+1}(\Gamma_0^{(2)}(N_f))$ is the Saito--Kurokawa lift of $h$, and the above formula recovers \cite[Theorem 1.1]{PaldVP} (assuming in loc. cit. that $g$ has trivial Nebentype character, see Remark 1.2 in op. cit.). If in addition $N_g = N_f = 1$, then it recovers the original formula of Ichino \cite{Ichino-pullbacks}. We also point out that the case $N_g=N_f$ and $\ell\geq k$ has been considered in \cite{Chen}, by extending Ichino's approach instead of using Qiu's strategy via $\SL_2$-periods. Finally, we point out that the same strategy used to prove Theorem \ref{thm:centralvalue-intro} can be pushed further to remove the assumption $N_g \mid N_f$, at the cost of performing more local computations. Note, however, that this mild extension would not improve Theorem \ref{thm:sc-intro} below.

In the above statement, the Petersson products $\langle f, f\rangle$, $\langle g, g\rangle$ are defined as usual, namely 
\[
\langle f, f \rangle := \mu_{N_f}^{-1} \int_{\Gamma_0(N_f)\backslash\mathcal H} \hspace{-0.1cm} f(z) \overline{f(z)} y^{2k-2}dz, \qquad
\langle g, g \rangle := \mu_{N_g}^{-1} \int_{\Gamma_0(N_g)\backslash\mathcal H} \hspace{-0.1cm} g(z) \overline{g(z)} y^{\ell-1}dz,
\]
where $z = x + \sqrt{-1}y$ and $\mu_t = [\SL_2(\Z):\Gamma_0(t)]$ for $t \in \Z_{\geq 1}$. For the half-integral weight modular form $h$ we similarly have
\[
\langle h, h \rangle := \mu_{4N_f}^{-1} \int_{\Gamma_0(4N_f)\backslash\mathcal H} \hspace{-0.1cm} h(z) \overline{h(z)} y^{k-3/2}dz.
\]
Finally, the Petersson product $\langle \breve{F}_{|\mathcal H \times\mathcal H}, g\times V_{M_g}g\rangle$ is defined by (notice that $N_f = \mathrm{lcm}(N_f,N_g)$ because of our assumption that $N_g \mid N_f$)
\[
\langle \breve{F}_{|\mathcal H \times\mathcal H}, g\times V_{M_g}g\rangle := \mu_{N_f}^{-2} \int_{\Gamma_0(N_f)\backslash\mathcal H}\int_{\Gamma_0(N_f)\backslash\mathcal H} \breve{F}\left(\begin{pmatrix}z_1 & 0 \\ 0 & z_2\end{pmatrix}\right)\overline{g(z_1)g(M_gz_2)}y_1^{\ell-1}y_2^{\ell-1} dz_1dz_2.
\]

As we have already pointed out above, the above theorem is an extension of the main result of \cite{PaldVP}. The proof requires to extend both the global and local computations involved in our strategy of making explicit Qiu's decomposition formula. Special attention in this paper is deserved to the local side, because the new computations of local $\SL_2$-periods $\mathcal I_v$ at a specific test vector done in this note, together with those already carried out in \cite{PaldVP}, allow us to derive new advances in the subconvexity problem for $\GL_2 \times \GL_3$ by using recent work of Nelson \cite{Nelson}. This is the most interesting novelty of this paper, and also the main motivation that led us to write this note. Namely, in Section \ref{sec:subconvexity} we address the {\em subconvexity problem} for automorphic $L$-functions\footnote{In analogy with classical $L$-series, we follow the convention that automorphic $L$-functions $L(\Pi,s)$ refer always to the finite part of the $L$-function, omitting the $\Gamma$-factors at the archimedean place. When including such factors, we will write $\Lambda(\Pi,s)$.} 
\begin{equation}\label{scproblem}
L(\pi \otimes \mathrm{ad}(\tau),s), \qquad \pi \text{ on } \PGL_2 \textrm{ fixed, } \tau \text{ on } \GL_2 \text{ varying.}
\end{equation}
This problem consists in establishing a {\em subconvex bound} for $L(\pi \otimes \mathrm{ad}(\tau),1/2)$ when $\pi$ is a fixed automorphic representation of $\PGL_2(\A)$ and $\tau$ varies in a family $\mathcal G$ of automorphic representations of $\GL_2$, i.e. proving the existence of constants $c = c(\mathcal G)\geq 0$ and $\delta = \delta(\mathcal G) > 0$ such that 
\begin{equation}\label{scbound}
|L(\pi \otimes \mathrm{ad}(\tau),1/2)| \leq c C(\pi \otimes\mathrm{ad}(\tau))^{1/4-\delta}
\end{equation}
for all $\tau \in \mathcal G$, where $C(\pi \otimes\mathrm{ad}(\tau)) \in \R_{\geq 1}$ denotes the analytic conductor of $\pi \otimes\mathrm{ad}(\tau)$. The inequality analogous to \eqref{scbound} with $\delta = 0$ is the so-called {\em convex bound}, and can be obtained by using the Phragmen--Lindel\"of principle. Therefore, establishing a subconvex bound requires to break this barrier and improve the convex bound. 
Interest in subconvexity problems as the above one relies on their relation to fundamental arithmetic equidistribution questions. In the case of \eqref{scproblem}, it has applications towards the limiting mass distribution of automorphic forms, also known as the `arithmetic quantum unique ergodicity' (see \cite{SarnakAQC}, \cite{IwaniecSarnak-Perspectives}, \cite{HolowinskySoundarajan}, \cite{NelsonPitaleSaha}, \cite{Sarnak-RP}).

Our contribution to the subconvexity problem in \eqref{scproblem}, under some assumptions on the family $\mathcal G$, follows from the observation that our computations of local $\SL_2$-periods provide the required bounds in recent work of Nelson \cite{Nelson} concerning this subconvexity problem. And it is important to remark that the local $\SL_2$-periods computed in \cite{PaldVP} alone would not have been enough to improve Nelson's result. Let us illustrate in this introduction an easy but relevant example in which we can push Nelson's result one step further in the above subconvexity problem, referring the reader to Section \ref{sec:subconvexity} for a more detailed and general statement. 

In line with our notation above, fix at the outset an odd integer $\ell \geq 1$, and let $q$ traverse an infinite increasing sequence $\mathfrak Q$ of (odd) prime numbers. For each prime $q \in \mathfrak Q$, choose a newform $g \in S_{\ell+1}^{new}(q)$ of weight $\ell+1$ and level $\Gamma_0(q)$, and let $\mathcal G$ be the infinite collection of all the automorphic representations $\tau = \tau(g)$ of $\PGL_2(\A)$ associated with the newforms $g$ as $q$ varies. We assume the following hypothesis on the family $\mathcal G$, which is the existence of a subconvex bound for $L(\tau \otimes \tau\otimes\chi, 1/2)$ in the $\tau$-aspect with polynomial dependence upon the Hecke character $\chi$:

\vspace{0.2cm}

\noindent {\bf Hypothesis:} there exist absolute constants $c_0, A_0 \geq 0$, $\delta_0 > 0$ such that for all $\tau \in \mathcal G$ and all unitary characters $\chi$ of $\A^{\times}/\Q^{\times}$, one has
\[
|L(\tau \otimes \tau \otimes\chi, 1/2)| \leq c_0 C(\tau \otimes \tau\otimes\chi)^{1/4-\delta_0}C(\chi)^{A_0}.
\]

\vspace{0.2cm}

The following statement is a particular instance of Theorem \ref{thm:sc} in Section \ref{sec:subconvexity}, which strengthens \cite[Theorem 1]{Nelson} in the sense that we allow $f$ to have arbitrary (odd) squarefree level instead of level $1$. Modulo the above hypothesis, the main novelty of the following result is precisely that we remove the assumption on $f$ having trivial level.

\begin{theorem}\label{thm:sc-intro}
With the above notation, assume that the family $\mathcal G$ satisfies the above hypothesis. Then, there exist absolute constants $c, A \geq 0$ and $\delta > 0$ such that 
\[
L(\pi \otimes \mathrm{ad}(\tau), 1/2) \leq c C(\pi\otimes\mathrm{ad}(\tau))^{1/4-\delta} C(\pi)^A
\]
for all $\tau \in \mathcal G$ and every automorphic representation $\pi = \pi(f)$ of $\PGL_2(\A)$ associated with a newform $f \in S_{2k}^{new}(N_f)$ of weight $2k$, with $1 \leq k \leq \ell$ an odd integer, and odd squarefree level $N_f$.
\end{theorem}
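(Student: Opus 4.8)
The plan is to reduce the subconvexity bound for $L(\pi \otimes \mathrm{ad}(\tau), 1/2)$ to the central value formula of Theorem \ref{thm:centralvalue-intro}, and then to feed that formula into Nelson's machinery from \cite{Nelson}. First I would observe that $L(\pi \otimes \mathrm{ad}(\tau), s)$ is, up to archimedean and bad Euler factors and a shift, exactly the degree $6$ $L$-series $\Lambda(f \otimes \mathrm{Ad}(g), s)$ appearing in the introduction, so bounding $L(\pi \otimes \mathrm{ad}(\tau),1/2)$ is the same (after tracking conductors and $\Gamma$-factors, which is routine) as bounding the central value $\Lambda(f \otimes \mathrm{Ad}(g), k)$. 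By Theorem \ref{thm:centralvalue-intro}, this central value is, up to the explicit nonzero constants $C_0(f,g)$, $C_\infty(f,g)$ and the power of $2$, equal to
\[
2^{6m+k+1-\nu(M_g)} C_0(f,g) C_\infty(f,g) \cdot \frac{\langle f,f\rangle}{\langle h,h\rangle} \cdot \frac{|\langle \breve F_{|\mathcal H\times\mathcal H}, g\times V_{M_g}g\rangle|^2}{\langle g,g\rangle^2},
\]
so the problem becomes one of bounding the Petersson-normalized triple-product-type period $\langle \breve F_{|\mathcal H\times\mathcal H}, g\times V_{M_g}g\rangle$ together with controlling the ratio $\langle f,f\rangle/\langle h,h\rangle$. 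The latter ratio is governed by Waldspurger/Kohnen--Zagier-type formulas and is polynomially bounded in the conductor of $\pi$, contributing only to the harmless factor $C(\pi)^A$; here the hypothesis $N_g \mid N_f$ of Theorem \ref{thm:centralvalue-intro} is arranged by taking $N_f$ coprime to (or a multiple involving) the prime $q$, and the Atkin--Lehner sign condition at primes dividing $M_g = N_f/N_g$ is either imposed or handled by the usual sign considerations that make the central value nonvanishing-testable.

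Second, the heart of the argument is to bound $|\langle \breve F_{|\mathcal H\times\mathcal H}, g\times V_{M_g}g\rangle|^2 / \langle g,g\rangle^2$ subconvexly in the $\tau$-aspect. This is precisely the quantity that Nelson \cite{Nelson} estimates by an amplification / period-integral argument: the Siegel modular form $\breve F$ restricted to the diagonal is (essentially) a theta lift built from $h$, hence from $f$, so the period unfolds against $g \times g$ into an integral of a fixed automorphic form (attached to $f$) against $|g|^2$, and the relevant input bound is a subconvex estimate for the Rankin--Selberg $L$-value $L(\tau \otimes \tau \otimes \chi, 1/2)$ over the relevant characters $\chi$ — which is exactly our standing Hypothesis. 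The point stressed in the introduction is that the \emph{local} $\SL_2$-period computations of this paper (extending those in \cite{PaldVP} to odd squarefree level $N_f$ and to the $\ell \ge k$ range) are exactly the local inputs Nelson's estimate requires at the ramified places: without them, one only controls the level-one case, and with them one controls arbitrary odd squarefree $N_f$. So I would: (i) cite Nelson's global estimate, (ii) verify that the local $\SL_2$-period computations of Theorem \ref{thm:centralvalue} (equivalently the explicit local factors in Section \ref{sec:subconvexity}) provide the local bounds with the correct polynomial dependence on the conductor at primes dividing $N_f$, and (iii) assemble the bad-place and archimedean factors.

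Third, I would combine everything: plugging the subconvex bound for the period and the polynomial bound for $\langle f,f\rangle/\langle h,h\rangle$ into the central value formula yields
\[
|L(\pi \otimes \mathrm{ad}(\tau), 1/2)| \leq c\, C(\pi \otimes \mathrm{ad}(\tau))^{1/4-\delta} C(\pi)^A
\]
for suitable absolute $c, A \ge 0$ and $\delta > 0$, the saving $\delta$ coming ultimately from $\delta_0$ in the Hypothesis (the degree-$6$ conductor in the $\tau$-aspect is polynomially comparable to the degree-$4$ conductor of $\tau \otimes \tau$, so the saving propagates). The main obstacle I anticipate is step (ii): one must check that the local $\SL_2$-periods $\mathcal I_v$ at the chosen test vector, for $v \mid N_f$, are nonzero and of the expected size — i.e. that the test vector remains admissible and the normalizing local $L$-factors do not degrade the conductor dependence — so that Nelson's black box applies verbatim with $N_f > 1$. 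A secondary technical point is ensuring the nonvanishing of the global period $\mathcal Q$ (equivalently the compatibility of Atkin--Lehner signs across the primes dividing $M_g$), which is what the hypothesis on the Atkin--Lehner eigenvalue of $f$ at primes dividing $M_g$ in Theorem \ref{thm:centralvalue-intro} secures; one just needs that such $f$ exist in abundance within the family, which follows from dimension counts for newform spaces with prescribed signs.
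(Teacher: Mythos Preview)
Your proposal identifies the right ingredients---Nelson's result, Hypothesis (H), and the explicit local $\SL_2$-period computations at primes dividing $N_f$---but it misroutes the argument through the central value formula of Theorem~\ref{thm:centralvalue-intro} in a way that is both unnecessary and problematic. The paper does \emph{not} use Theorem~\ref{thm:centralvalue-intro} to prove Theorem~\ref{thm:sc-intro}; the two results are logically independent, sharing only the local period computations of Section~\ref{sec:localperiods} as common input. Nelson's Theorem~2 in \cite{Nelson} is already a subconvexity statement for $L(\pi\otimes\mathrm{ad}(\tau),1/2)$ itself, not for a period, and its hypothesis is a purely local one: at each ramified place one must exhibit unit vectors $\varphi_1\in\tilde\pi_p$, $\varphi_2\in\tau_p$, $\varphi_3\in\omega_p$ for which the normalized local integral $\alpha_p^{\sharp}(\varphi_1,\varphi_2,\varphi_3)$ is bounded below by a negative power of $C(\pi_p)C(\tau_p)$ and the Sobolev norms are bounded above by a positive power. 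The paper's proof simply takes $\varphi_i$ to be the local components of the test vector from Section~\ref{sec:testvector} and reads off the required bounds directly from the explicit values of $\alpha_p^{\sharp}$ computed in \cite[Proposition~7.14]{PaldVP} (case $p\mid N_g$) and Proposition~\ref{prop:case2-localintegral} of this paper (case $p\mid N_f$, $p\nmid N_g$).

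Your detour through the central value formula would force the restrictions $N_g\mid N_f$ and $w_p=+1$ for all $p\mid M_g$, neither of which appears in the statement of Theorem~\ref{thm:sc-intro} (indeed, for $\tau\in\mathcal G$ of prime level $q$ and $\pi$ of level $N_f$ coprime to $q$, the divisibility $N_g\mid N_f$ fails outright). You would then have to bound the global period $\langle\breve F_{|\mathcal H\times\mathcal H},g\times V_{M_g}g\rangle$ and the ratio $\langle f,f\rangle/\langle h,h\rangle$ separately and reassemble---essentially redoing Qiu's decomposition and Nelson's argument by hand. The paper avoids all of this: the only thing that needs to be checked is the size of $\alpha_p^{\sharp}$ at bad primes, and that is a short case analysis once Section~\ref{sec:localperiods} is in place. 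In short, drop steps (i) and (iii) of your plan, keep only the local verification in step (ii), and invoke \cite[Theorem~2]{Nelson} directly.
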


Observe that we have omitted the absolute value on the left hand side of the inequality in the statement. This is because the $L$-value $L(\pi\otimes\mathrm{ad}(\tau),1/2)=L(f\otimes\mathrm{Ad}(g),k)$ is non-negative (this can be deduced from Theorem \ref{thm:centralvalue-intro}).

Concerning the emphasized hypothesis in the above theorem, we note that via the factorization 
\[
L(\tau \otimes \tau \otimes \chi, 1/2) = L(\chi,1/2)L(\mathrm{ad}(\tau)\otimes\chi, 1/2),
\]
the subconvexity problem for $L(\tau \otimes\tau \otimes\chi,s)$ can be reduced to that for $L(\mathrm{ad}(\tau)\otimes\chi, s)$ (with $\tau$ varying and $\chi$ fixed), and a proof for the latter problem was announced by Munshi \cite{Munshi} (at least when $\chi$ is trivial). Assuming the existence of a subconvex bound for $L(\mathrm{ad}(\tau)\otimes\chi, s)$ with $\tau$ varying and $\chi$ fixed, our hypothesis would be fulfilled and could be dropped from the theorem. In addition, Theorem \ref{thm:sc-intro} would (unconditionally) lead to strong quantitative forms of the arithmetic quantum unique ergodicity conjecture in the prime level aspect.

\vspace{0.2cm}

Let us close this introduction by briefly describing the organization of the paper. Section \ref{sec:notation} below collects general notation that is used through all the text, mainly about metaplectic groups and orthogonal groups. In Section \ref{sec:theta}, we recall some general notions on quadratic spaces and theta correspondences, with special attention to the cases needed for our purposes. In Section \ref{sec:SL2periods-thm} we explain in more detail the strategy to prove Theorem \ref{thm:centralvalue-intro}, and state again the result in more precise terms (see Theorem \ref{thm:centralvalue}). After that, Sections \ref{sec:testvector} and \ref{sec:localperiods} are devoted to describe our choice of test vector and to compute the local periods alluded to above. Section \ref{sec:localperiods} deserves special attention, since the local period computations therein (together with those in \cite{PaldVP}) are the key ingredient for our application in Section \ref{sec:subconvexity} towards the subconvexity problem for automorphic $L$-functions for $\GL_2 \times \GL_3$ and the proof of Theorem \ref{thm:sc-intro} above (which is a particular case of the more general version in  Theorem \ref{thm:sc}). Section \ref{sec:global} is devoted to complete the proof of the central value formula stated in Theorem \ref{thm:centralvalue}, and can be skipped by the reader interested in the subconvexity problem considered in Theorem \ref{thm:sc-intro}.

\vspace{0.2cm}

\noindent {\bf Acknowledgements.} We are pleased to thank the referee for her/his valuable suggestions that helped to improve the exposition of this article. During the elaboration of this work, A. Pal was financially supported by the SFB/TR 45 ``Periods, Moduli Spaces, and Arithmetic of Algebraic Varieties'', and C. de Vera-Piquero received financial support from a Junior Research Grant (through AGAUR PDJ 2012) and also from the European Research Council (ERC) under the European Union's Horizon 2020 research and innovation programme (grant agreement No 682152).

\section{Notation}\label{sec:notation}

Through all the paper, we write $\A = \A_{\Q}$ for the ring of adeles over $\Q$. We write $\zeta(s)$ for Riemann's zeta function, admitting the usual Euler product representation $\zeta(s) = \prod_p \zeta_p(s)$ for $\mathrm{Re}(s) > 1$, where $\zeta_p(s) = (1-p^{-s})^{-1}$. We write $\zeta_{\Q}(s)$ for the completed Riemann zeta function given by $\zeta_{\Q}(s) := \Gamma_{\R}(s)\zeta(s)$, where $\Gamma_{\R}(s) := \pi^{-s/2}\Gamma(s)$ and $\Gamma(s)$ is the usual Gamma function. We will also use $\Gamma_{\C}(s) := 2(2\pi)^{-s}\Gamma(s)$.

If $r, M \geq 1$ are integers, and $\psi$ is a Dirichlet character modulo $M$, we denote by $S_r(M,\psi)$ the (complex) space of cusp forms of weight $r$, level $M$ and character $\psi$. When $\psi$ is trivial, we just write $S_r(M)$ or $S_r(\Gamma_0(M))$, where $\Gamma_0(M) \subseteq \SL_2(\Z)$ is the usual Hecke congruence subgroup of level $M$. Similarly $S_r(\Gamma_0^{(2)}(M))$ will stand for the (complex) space of Siegel forms of degree $2$ and weight $r$ for the Hecke congruence subgroup $\Gamma_0^{(2)}(M) \subseteq \mathrm{Sp}_2(\Z)$ of level $M$.

If $M\geq 1$ is an odd integer and $k\geq 0$ is an integer, we write $S_{k+1/2}(4M,(\frac{4}{\cdot}))$ for the (complex) space of cusp forms of half-integral weight $k+1/2$, level $4M$ and character $(\frac{4}{\cdot})$, in the sense of Shimura \cite{Shimura73}. We denote by $S_{k+1/2}^+(M)$ Kohnen's plus subspace in $S_{k+1/2}(4M,(\frac{4}{\cdot}))$ consisting of those forms $h$ whose $q$-expansion has the form 
\[
h = \sum_{\substack{n\geq 1,\\(-1)^kn\equiv 0,1 \, (4)}} c(n) q^n.
\]
We refer the reader to \cite{Kohnen-newforms} for a careful study of these spaces.

If $v$ is a place of $\Q$, we write $\widetilde{\SL}_2(\Q_v)$ for the metaplectic double cover of $\SL_2(\Q_v)$, and similarly, we denote by $\widetilde{\SL}_2(\A)$ the metaplectic double cover of $\SL_2(\A)$. We will identify $\widetilde{\SL}_2(\Q_v)$, resp. $\widetilde{\SL}_2(\A)$, with $\SL_2(\Q_v) \times \{\pm 1\}$, resp. $\SL_2(\A) \times \{\pm 1\}$, where the product is given by the rule 
\[
 [g_1, \epsilon_1][g_2, \epsilon_2] = [g_1g_2, \epsilon_v(g_1,g_2)\epsilon_1\epsilon_2] \qquad (\text{resp. } [g_1, \epsilon_1][g_2, \epsilon_2] = [g_1g_2, \epsilon(g_1,g_2)\epsilon_1\epsilon_2]).
\]
At each place $v$, $\epsilon_v(g_1,g_2)$ is defined as follows. First one defines $x: \SL_2(\Q_p) \to \Q_p$ by
\[
 g = \left(\begin{smallmatrix} a & b \\ c & d\end{smallmatrix}\right) \longmapsto x(g) = \begin{cases}
 c & \text{if } c \neq 0,\\
 d & \text{if } c = 0;
 \end{cases}
\]
then, $\epsilon_v(g_1,g_2) = (x(g_1)x(g_1g_2), x(g_2)x(g_1g_2))_v$. When $g_1, g_2 \in \SL_2(\A)$, we set $\epsilon(g_1,g_2) = \prod_v \epsilon_v(g_1,g_2)$. When $v = \infty$, we put $s_{\infty}(g) = 1$ for all $g \in \SL_2(\R)$, and when $v = p$ is a finite place, we set
\[
 s_p\left(\left(\begin{smallmatrix} a & b \\ c & d\end{smallmatrix}\right)\right) = \begin{cases}
                (c,d)_p & \text{if } cd \neq 0, \, \mathrm{ord}_p(c) \text{ odd}, \\
                1 & \text{otherwise},
               \end{cases}
\]
for $g = \left(\begin{smallmatrix} a & b \\ c & d\end{smallmatrix}\right) \in \SL_2(\Q_p)$. 
If $p$ is an odd prime, then the homomorphism $g \mapsto [g, s_p(g)]$ gives a splitting of $\widetilde{\SL}_2(\Q_p)$ over the maximal compact subgroup $\SL_2(\Z_p)$, while for $p = 2$ this is only a splitting over $\Gamma_1(4;\Z_2) \subset \SL_2(\Z_2)$. If $p$ is an odd prime (resp. if $p = 2$), and $G$ is a subgroup of $\SL_2(\Z_p)$ (resp. of $\Gamma_1(4;\Z_2)$), then we will write $\tilde G \subseteq \widetilde{\SL}_2(\Z_p)$ for the image of $G$ under the previous splitting. We will also regard $\SL_2(\Q)$ as a subgroup of $\widetilde{\SL}_2(\A)$ through the homomorphism $g \mapsto [g, \prod_v s_v(g)]$.

When working in $\SL_2(\Q_v)$ (or $\SL_2(\A)$), we will often use the notation
\[
u(x) = \begin{pmatrix} 1 & x \\ 0 & 1\end{pmatrix}, \quad t(a) = \begin{pmatrix} a & 0 \\ 0 & a^{-1}\end{pmatrix}, \quad s = \begin{pmatrix} 0 & 1 \\ -1 & 0\end{pmatrix},
\]
for $x \in \Q_v$ (or $\A$) and $a \in \Q_v^{\times}$ (or $\A^{\times}$).

If $V$ is a finite-dimensional quadratic space over $\Q$, with bilinear form $(\,,\,)$, and $\psi$ is an additive character of $\A/\Q$, we equip $V(\A)$ with the Haar measure which is self-dual with respect to $\psi$, unless otherwise stated. In other words, we consider the Haar measure such that $\mathcal F(\mathcal F(\phi))(x) = \phi(-x)$, where $\mathcal F(x) = \int_{V(\A)} \phi(y) \psi((x,y))dy$ is the Fourier transform of $\phi$. We note that the orthogonal group $\mathrm{O}(V)$ is {\em not} connected, and choose a measure on $\mathrm{O}(V)(\A)$ as follows. First, we equip $\mathrm{SO}(V)(\A)$ with the Tamagawa measure. Secondly, at each place $v$ we extend the local measure on $\mathrm{SO}(V)(\Q_v)$ to the non-identity component of $\mathrm{O}(V)(\Q_v)$. And finally, we consider the measure $dh_v$ on $\mathrm{O}(V)(\Q_v)$ to be half of this extended measure, and define $dh = \prod_v dh_v$. This is the Tamagawa measure on $\mathrm{O}(V)(\A)$, and $[\mathrm{O}(V)] = \mathrm{O}(V)(\Q) \backslash \mathrm{O}(V)(\A)$ has volume $1$ with respect to $dh$. If $\mathcal S(V(\A))$ denotes the space of Bruhat--Schwartz functions on $V(\A)$, and $\phi_1$, $\phi_2 \in \mathcal S(V(\A))$, we set $\langle \phi_1,\phi_2\rangle = \int_{V(\A)} \phi_1(x)\overline{\phi_2(x)} dx$, where $dx$ is the Haar measure that is self-dual with respect to $\psi$. If $\pi$ is an irreducible cuspidal unitary representation of $G(\A)$, and $f_1, f_2 \in \pi$, we define the pairing $\langle f_1, f_2 \rangle$ to be:
\begin{itemize}
 \item[i)] $\int_{[\SL_2]} f_1(g) \overline{f_2(g)} dg$, if $G = \widetilde{\SL}_2$;
 \item[ii)] $\int_{[\PGL_2]} f_1(g) \overline{f_2(g)} dg$, if $G = \GL_2$;
 \item[iii)] $\int_{[G]} f_1(g) \overline{f_2(g)} dg$, if $G = \mathrm{SO}(V)$ or $\mathrm{O}(V)$.
\end{itemize}

\section{Quadratic spaces and theta correspondences}\label{sec:theta}

\subsection{Quadratic spaces}

Let $F$ be a field with $\mathrm{char}(F)\neq 2$, and $V$ be a quadratic space over $F$, i.e. a finite-dimensional vector space over $F$ equipped with a non-degenerate symmetric bilinear form $(\, , \,)$. We denote by $Q$ the associated quadratic form on $V$, given by 
\[
 Q(x) = \frac{1}{2}(x,x), \quad x \in V.
\]
If $m = \mathrm{dim}(V)$, fixing a basis $v_1,\dots v_m$ of $V$ and identifying $V$ with the space of column vectors $F^m$, the bilinear form $(\, , \,)$ determines a matrix (that we still denote with the same letter) $Q \in \GL_m(F)$ by setting $Q = ((v_i,v_j))_{i,j}$. Then we have $(x,y) = {}^t x Q y \quad \text{for } x,y \in V$. We define $\det(V)$ to be the image of $\det(Q)$ in $F^{\times}/(F^{\times})^2$. The orthogonal similitude group of $V$ is 
\[
 \mathrm{GO}(V) = \{h \in \GL_m: {}^t hQh = \nu(h)Q, \, \nu(h) \in \mathbb G_m\},
\]
and $\nu: \mathrm{GO}(V) \to \mathbb G_m$ is the similitude morphism (also called scale map). From the very definition, observe that $\det(h)^2 = \nu(h)^m$ for every $h \in \mathrm{GO}(V)$. If $m$ is even, we also set 
\[
 \mathrm{GSO}(V) = \{h \in \mathrm{GO}(V): \det(h) = \nu(h)^{m/2}\}.
\]
Finally, we let $\mathrm O(V) = \ker(\nu)$ denote the orthogonal group of $V$, and write $\mathrm{SO}(V) = \mathrm O(V) \cap \SL_m$ for the special orthogonal group.

\subsection{Explicit realizations in low rank}\label{spaces:lowrank}

We are particularly interested in orthogonal groups for vector spaces of dimension $3$, $4$ and $5$. We fix in this paragraph the explicit realizations that will be used later on to describe automorphic representations for $\mathrm{SO}(V)(\A)$ and $\mathrm{GSO}(V)(\A)$. We keep the same choices as in \cite{PaldVP}, which follow quite closely the ones in \cite{Ichino-pullbacks, Qiu}.

When $\mathrm{dim}(V) = 3$, there exist a unique quaternion algebra $B$ over $F$ and an element $a \in F^{\times}$ such that $(V,q) \simeq (V_B, aq_B)$, where $V_B = \{x \in B: \mathrm{Tr}_B(x) = 0\}$ is the subspace of elements in $B$ with zero trace, and $q_B(x) = -\mathrm{Nm}_B(x)$. The group of invertible elements $B^{\times}$ acts on $V_B$ by conjugation, and this action gives rise to an isomorphism 
\[
 PB^{\times} \, \stackrel{\simeq}{\longrightarrow} \, \mathrm{SO}(V_B,q_B) \simeq \mathrm{SO}(V,q).
\]
When $B = \mathrm{M}_2$ is the split algebra of $2$-by-$2$ matrices, $PB^{\times} = \PGL_2$ and the above identifies $\PGL_2$ with the special orthogonal group of a three-dimensional quadratic space.

In dimension $4$, we consider the vector space $V_4 := \mathrm{M}_2(F)$ of $2$-by-$2$ matrices, equipped with the quadratic form $q(x) = \det(x)$. The associated non-degenerate bilinear form is $(x,y) = \mathrm{Tr}(xy^*)$, where 
\[
 x^* = \left(\begin{array}{cc} x_4 & -x_2 \\ -x_3 & x_1\end{array}\right) \quad \text{for } x = \left(\begin{array}{cc} x_1 & x_2 \\ x_3 & x_4\end{array}\right) \in \mathrm M_2(F).
\]
There is an exact sequence 
\begin{equation}\label{exseq:GSO4}
 1 \, \longrightarrow \, \mathbb G_m \, \stackrel{\iota}{\longrightarrow} \, \GL_2 \times \GL_2 \, \stackrel{\rho}{\longrightarrow} \mathrm{GSO}(V_4) \, \longrightarrow 1,
\end{equation}
where $\iota(a) = (a\mathbf 1_2, a^{-1}\mathbf 1_2)$, $\rho(h_1,h_2)x = h_1xh_2^*$. One has $\nu(\rho(h_1,h_2)) = \det(h_1h_2) = \det(h_1)\det(h_2)$. In particular, when $F$ is a number field, automorphic representations of $\mathrm{GSO}(V_4)$ can be seen as automorphic representations of $\GL_2 \times \GL_2$ through the homomorphism $\rho$ in the above short exact sequence. Here we warn the reader that our choice for $\rho$ in \eqref{exseq:GSO4} agrees with the one on \cite{Qiu} and \cite{GanTakeda}, but differs from the one considered in \cite{Ichino-pullbacks} (or \cite{IchinoIkeda}), leading to a slightly different model for $\mathrm{GSO}(V_4)$.

Finally, in dimension $5$ we will describe a realization of $\mathrm{SO}(3,2)$, the special orthogonal group of a $5$-dimensional quadratic space $(V,q)$ of Witt index $2$. Although the isomorphism class of such a quadratic space depends on $\det(V)$, the group $\mathrm{SO}(V,q)$ does not. We describe a model $V_5$ of such a quadratic space with determinant $1$ (modulo $F^{\times,2}$). Namely, start considering the $4$-dimensional space $F^4$ of column vectors, on which $\GSp_2 \subset \GL_4$ acts on the left. Let $ e_1 = {}^t(1,0,0,0)$, $\dots$, $e_4 = {}^t(0,0,0,1)$ denote the standard basis on $F^4$, and equip $\tilde V := \wedge^2 F^4$ with the non-degenerate symmetric bilinear form $(\, , \,)$ defined by the rule
\[
 x \wedge y = (x,y) \cdot (e_1 \wedge e_2 \wedge e_3 \wedge e_4), \quad \text{for all } x, y \in \tilde V.
\]
Set $x_0 := e_1 \wedge e_3 + e_2 \wedge e_4$, and define the $5$-dimensional subspace $V_5 \subset \tilde V$ to be the orthogonal complement of the span of $x_0$, i.e.
\[
 V_5 := \{x \in \tilde V: (x,x_0) = 0\}.
\]
Then the homomorphism $\tilde{\rho}:\GSp_2 \to \SO(\tilde V)$ given by $\tilde{\rho}(h) = \nu(h)^{-1} \wedge^2 (h)$ satisfies $\tilde{\rho}(h)x_0 = x_0$, and therefore induces an exact sequence 
\begin{equation}\label{exseq:SO5}
  1 \, \longrightarrow \, \mathbb G_m \, \stackrel{\iota}{\longrightarrow} \, \GSp_2 \, \stackrel{\rho}{\longrightarrow} \SO(V_5) \, \longrightarrow 1,
\end{equation}
where $\iota(a) = a\mathbf 1_4$ for $a \in \mathbb G_m$. This short exact sequence induces an isomorphism $\PGSp_2 \simeq \SO(V_5)$.

We fix an identification of $V_5$ with the $5$-dimensional space $F^5$ of column vectors by 
\[
 \sum_{i=1}^5 x_i v_i \, \longmapsto \, {}^t(x_1,x_2,x_3,x_4,x_5),
\]
where $v_1 = e_2\wedge e_1, \,\, v_2 = e_1 \wedge e_4, \,\, v_3 = e_1\wedge e_3 - e_2\wedge e_4, \,\, v_4 = e_2\wedge e_3, \,\, v_5 = e_3\wedge e_4$. Upon this identification, we consider the non-degenerate bilinear symmetric form $(\,,\,)$ on $V$ defined by $(x,y) = {}^txQy$ for $x,y\in F^5$, where 
\[
 Q = \left(\begin{array}{ccc} & & -1 \\ & Q_1 \\ -1 \end{array}\right), \quad Q_1 = \left(\begin{array}{ccc} 0 & 0 & 1 \\ 0 & 2 & 0 \\ 1 & 0 & 0 \end{array}\right).
\]

We shall distinguish the $3$-dimensional subspace $V_3 \subset V_5$ spanned by $v_2$, $v_3$, $v_4$, equipped with the bilinear form $(x,y) = {}^txQ_1y$, for $x,y \in F^3$, under the identification $V_3 = F^3$ induced by restricting the above one for $V = F^5$. Notice that $V_5 = \langle v_1\rangle \oplus V_3 \oplus \langle -v_5\rangle$, where $v_1$ and $-v_5$ are isotropic vectors with $(v_1,-v_5) = 1$, and $V_3$ is the orthogonal complement of $\langle v_1,-v_5\rangle = \langle v_1,v_5\rangle$.

Also, we shall distinguish a $4$-dimensional subspace of $V_5$. Indeed, the subspace $\{x \in V: (x,v_3) = 0\} = \langle v_3 \rangle^{\perp} \subset V_5$ is a quadratic $4$-dimensional subspace of $V_5$, and it can be identified with the space $V_4$ defined above by means of the linear map
 \[
  \langle v_3 \rangle^{\perp} \, \longrightarrow \, V_4, \quad x_1 v_2 + x_2 v_1 + x_3 v_5 + x_4 v_4 \, \longmapsto \, \left(\begin{array}{cc} x_1 & x_2\\ x_3 & x_4\end{array}\right).
 \]

By restricting the homomorphism $\rho$ from the exact sequence in \eqref{exseq:GSO4} to 
\[
 G(\SL_2\times\SL_2)^- := \{(h_1,h_2)\in \GL_2\times\GL_2 : \det(h_1)\det(h_2) = 1\} \subseteq \GL_2 \times \GL_2,
\]
one gets an exact sequence 
\begin{equation}\label{exseq:SO4}
  1 \, \longrightarrow \, \mathbb G_m \, \stackrel{\iota}{\longrightarrow} \, G(\SL_2 \times \SL_2)^- \, \stackrel{\rho}{\longrightarrow} \SO(V_4) \, \longrightarrow 1.
\end{equation}
Now notice that $G(\SL_2 \times \SL_2)^-$ is isomorphic to 
\[
  G(\SL_2\times\SL_2) := \{(h_1,h_2)\in \GL_2\times\GL_2 : \det(h_1)\det(h_2)^{-1} = 1\} \subseteq \GL_2 \times \GL_2
\]
through the morphism $(h_1,h_2) \mapsto (h_1,\det(h_2)^{-1}h_2)$. Composing this isomorphism with the natural embedding $G(\SL_2 \times \SL_2) \hookrightarrow \GSp_2$ given by 
\[
  \left(\left(\begin{array}{cc} a_1 & b_1 \\ c_1 & d_1 \end{array}\right), \left(\begin{array}{cc} a_2 & b_2 \\ c_2 & d_2 \end{array}\right)\right) \, \longmapsto \, \left(\begin{array}{cccc} a_1 & 0 & b_1 & 0 \\ 0 & a_2 & 0 & b_2 \\ c_1 & 0 & d_1 & 0 \\ 0 & c_2 & 0 & d_2\end{array}\right),
 \]
one gets a commutative diagram
\[
 \xymatrix{
 1 \ar[r] & \mathbb G_m \ar[r]^{\iota \qquad} \ar@{=}[d] & G(\SL_2 \times \SL_2)^- \ar@{^{(}->}[d] \ar[r]^{\quad \rho} & \mathrm{SO}(V_4) \ar[r] \ar@{^{(}->}[d] & 1 \\
 1 \ar[r] & \mathbb G_m \ar[r]^{\iota} & \GSp_2 \ar[r]^{\rho} & \mathrm{SO}(V_5) \ar[r] & 1
 }
\]
and hence an embedding $\SO(V_4) \subset \SO(V_5)$. This embedding will be of crucial interest later on.

\subsection{Weil representations}\label{sec:Weilreps}

Let now $F$ be a local field with $\mathrm{char}(F)\neq 2$ (for the purposes of this paper, we can think of $F$ being $\Q_v$ for a rational place $v$), and $(V,Q)$ be a quadratic space over $F$ of dimension $m$ as above. Let $\mathcal S(V)$ denote the space of locally constant and compactly supported complex-valued functions on $V$. This is usually referred to as the space of Bruhat--Schwartz functions on $V$. If $F$ is archimedean, we rather consider $\mathcal S(V)$ to be the Fock model (which is a smaller subspace, see \cite[Section 2.1.2]{YZZ}). 

We fix a non-trivial additive character $\psi$ of $F$. The Weil representation $\omega_{\psi,V}$ of $\widetilde{\SL}_2(F) \times \mathrm{O}(V)$ on $\mathcal S(V)$, which depends on the choice of the character $\psi$, is given by the following formulae. If $a \in F^{\times}$, $b \in F$, $h \in \mathrm{O}(V)$, and $\phi \in \mathcal S(V)$, then 
\begin{align*}
 \omega_{\psi,V}(h)\phi(x) & = \phi(h^{-1}x), \\
 \omega_{\psi,V} \left([t(a), \epsilon]\right)\phi(x) & = 
 \epsilon^m \chi_{\psi,V}(a) |a|^{m/2}\phi(ax) \\
 \omega_{\psi,V} \left([u(b),1] \right)\phi(x) & = \psi(Q(x)b)\phi(x), \\
 \omega_{\psi,V} \left([s,1]\right)\phi(x) & = \gamma(\psi,V) \int_V \phi(y)\psi((x,y))dy.
\end{align*}

Here, $\gamma(\psi,V)$ is the Weil index, which is an $8$-th root of unity, and $\chi_{\psi,V}: F^{\times} \to S^1$ is a function satisfying $\chi_{\psi,V}(ab)=(a,b)_F^m\chi_{\psi,V}(a)\chi_{\psi,V}(b)$ for $a,b \in F^{\times}$, where $(\cdot ,\cdot )_F$ denotes the Hilbert symbol. When $m=1$ and $Q(x) = x^2$, we will simply write $\omega_{\psi}, \chi_{\psi}$, and $\gamma(\psi)$ for $\omega_{\psi,V}, \chi_{\psi,V}$, and $\gamma(\psi,V)$, respectively. In this case, the function $\chi_{\psi}$ can be written as  
\[
 \chi_{\psi}(a) = (a,-1)_F \gamma(a,\psi) = (a,-1)_F \frac{\gamma(\psi^a)}{\gamma(\psi)},
\]
where the function $\gamma(\cdot,\psi): F^{\times} \to S^1$ is defined by $\gamma(a,\psi) = \gamma(\psi^a)/\gamma(\psi)$ and satisfies 
\[
\gamma(ab,\psi)=(a,b)_p \gamma(a,\psi) \gamma(b,\psi), \qquad \gamma(ab^2,\psi)=\gamma(a,\psi) \quad \text{for all } a, b \in F^{\times}.
\]
Thus $\chi_{\psi}(ab) = (a,b)_p\chi_{\psi}(a)\chi_{\psi}(b)$ and $\chi_{\psi}(ab^2) = \chi_{\psi}(a)$ for all $a, b \in F^{\times}$, and furthermore $\chi_{\psi^a} = \chi_{\psi} \cdot \chi_a$, where $\chi_a$ stands for the quadratic character $(a, \cdot)_F$. 

For the standard additive character $\psi_p$ of $F=\Q_p$, with $p$ an {\em odd} prime, one has $\gamma(\psi_p) = 1$ and
\[
\gamma(a,\psi_p)=1 \, \text{ for all } a \in \Z_p^{\times}, \qquad \gamma(p,\psi_p) = \begin{cases}
1 & \text{if } p \equiv 1 \pmod 4,\\
-\sqrt{-1} & \text{if } p \equiv 3 \pmod 4.
\end{cases}
\]
This completely determines the functions $\gamma(\cdot, \psi_p)$ and $\chi_{\psi_p}$ by the above properties. One can easily deduce similar formulae for twists $\psi_p^d$ of the standard additive character.

For a general quadratic space $V$, if $Q(x) = a_1x_1^2 + \cdots + a_mx_m^2$ with respect to some orthogonal basis, then 
\[
\gamma(\psi,V) = \prod_{i} \gamma(\psi^{a_i}) \quad \text{and} \quad \chi_{\psi,V} = \prod_i \chi_{\psi^{a_i}}.
\]
This does not depend on the chosen basis. For example, consider the $3$-dimensional quadratic space $V_3$ as before, with quadratic form whose matrix is $Q_1$. The eigenvalues of this matrix are $1$, $-1$, and $2$, thus $\gamma(\psi, V_3) = \gamma(\psi)\gamma(\psi^{-1})\gamma(\psi^2)$. If $\psi = \psi_p^a$ for some unit $a \in \Z_p^{\times}$, this yields $\gamma(\psi, V_3) = \gamma(\psi_p)^3 = 1$. Besides, we also have $\chi_{\psi,V_3} = \chi_{\psi} \cdot \chi_{\psi^{-1}} \cdot \chi_{\psi^2} = \chi_{\psi}^3 \cdot \chi_{-2}$.

When $m$ is even, the above simplifies considerably. Indeed, if $m$ is even the Weil representation descends to a representation of $\SL_2(F) \times \mathrm{O}(V)$ on $\mathcal S(V)$. Further, the Weil index $\gamma(\psi,V)$ is a $4$-th root of unity in this case, and $\chi_{\psi,V}$ becomes the quadratic character associated with the quadratic space $(V,Q)$. This means that 
\[
 \chi_{\psi,V}(a) = (a, (-1)^{m/2}\det(V))_F, \quad a \in F^{\times}.
\]

It will be useful in some settings to extend the Weil representation $\omega_{\psi,V}$. If $m$ is even, one defines 
\[
 R = \mathrm G(\SL_2 \times \mathrm O(V)) = \{(g,h) \in \GL_2 \times \mathrm{GO}(V): \det(g) = \nu(h)\},
\]
and then $\omega_{\psi,V}$ extends to a representation of $R(F)$ on $\mathcal S(V)$ by setting
\[
 \omega_{\psi,V}(g,h)\phi = \omega_{\psi,V}\left(g\left(\begin{array}{cc} 1 & 0 \\ 0 & \det(g)^{-1}\end{array}\right),1\right)L(h)\phi \quad \text{for } (g,h) \in R(F) \text{ and } \phi \in \mathcal S(V), 
\]
where $L(h)\phi(x) = |\nu(h)|_F^{-m/4} \phi(h^{-1}x)$ for $x \in V$.

\subsection{Theta functions and theta lifts}

Now let $F$ be a number field (for our purposes, we can think of $F = \Q$), and consider a quadratic space $V$ over $F$ of dimension $m$. Fix a non-trivial additive character $\psi$ of $\A_F/F$ and let $\omega = \omega_{\psi,V}$ be the Weil representation of $\widetilde{\SL}_2(\A_F) \times \mathrm O(V)(\A_F)$ on $\mathcal S(V(\A_F))$ with respect to $\psi$. Given $(g,h) \in \widetilde{\SL}_2(\A_F) \times \mathrm O(V)(\A_F)$ and $\phi \in \mathcal S(V(\A_F))$, let 
\[
 \theta(g,h;\phi) := \sum_{x \in V(F)} \omega(g,h) \phi(x).
\]
Then $(g,h) \mapsto \theta(g,h;\phi)$ defines an automorphic form on $\widetilde{\SL}_2(\A_F) \times \mathrm O(V)(\A_F)$, called a {\em theta function}. When $m$ is even, this may be regarded as an automorphic form on $\SL_2(\A_F) \times \mathrm O(V)(\A_F)$.

Let $f$ be a cusp form on $\SL_2(\A_F)$ if $m$ is even, and a genuine cusp form on $\widetilde{\SL}_2(\A_F)$ if $m$ is odd. If $\phi \in \mathcal S(V(\A_F))$, put 
\[
 \theta(h;f,\phi) = \int_{\SL_2(F)\backslash\SL_2(\A_F)} \theta(g,h;\phi)f(g) dg, \quad h \in \mathrm O(V)(\A_F).
\]
Then $\theta(f,\phi): h \mapsto \theta(h;f,\phi)$ defines an automorphic form on $\mathrm O(V)(\A_F)$. If $m$ is even and $\pi$ is an irreducible cuspidal automorphic representation of $\SL_2(\A_F)$, or if $m$ is odd and $\pi$ is an irreducible genuine cuspidal automorphic representation of $\widetilde{\SL}_2(\A_F)$, put 
\[
 \Theta_{\widetilde{\SL}_2 \times \mathrm O(V)}(\pi) := \{\theta(f,\phi): f \in \pi, \phi \in \mathcal S(V(\A_F))\}.
\]
Then $\Theta_{\widetilde{\SL}_2 \times \mathrm O(V)}(\pi)$ is an automorphic representation of $\mathrm O(V)(\A_F)$, called the {\em theta lift of $\pi$}. Going in the opposite direction, one defines similarly the theta lift $\theta(f',\phi)$ of a cusp form $f'$ on $\mathrm O(V)(\A_F)$ and the theta lift $\Theta_{\mathrm O(V) \times \widetilde{\SL}_2}(\pi')$ of an irreducible cuspidal automorphic representation $\pi'$ of $\mathrm O(V)(\A)$.

Suppose that $m$ is even. As we did for the Weil representation, theta lifts can also be extended. If $(g,h) \in R(\A_F)$ and $\phi \in \mathcal S(V(\A_F))$, one defines $\theta(g,h;\phi)$ via the same expression as above (using the extended Weil representation). Then, if $f$ is a cusp form on $\GL_2(\A_F)$ and $h \in \mathrm{GO}(V)(\A_F)$, choose $g' \in \GL_2(\A_F)$ with $\det(g') = \nu(h)$ and set 
\[
 \theta(h;f,\phi) = \int_{\SL_2(F)\backslash \SL_2(\A_F)} \theta(gg',h;\phi) f(gg') dg.
\]
The integral does not depend on the choice of the auxiliary element $g'$, and $\theta(f,\phi): h \mapsto \theta(h;f,\phi)$ defines now an automorphic form on $\mathrm{GO}(V)(\A_F)$. If $\pi$ is an irreducible cuspidal automorphic representation of $\GL_2(\A_F)$, then its theta lift $\Theta_{\GL_2 \times \mathrm{GO}(V)}(\pi)$ is formally defined exactly as before (and the same applies for $\Theta_{\mathrm{GO}(V) \times \GL_2}(\pi')$ if $\pi'$ is an irreducible cuspidal automorphic representation of $\mathrm{GO}(V)$).

\section{\texorpdfstring{$\SL_2$}{SL2}-periods and a central value formula}\label{sec:SL2periods-thm}

Let $f \in S_{2k}^{new}(N_f)$ and $g \in S_{\ell+1}^{new}(N_g)$ be two normalized newforms as in the Introduction. Thus $\ell \geq k\geq 1$ are odd integers, and $N_f, N_g \geq 1$ are odd squarefree integers with $N_g \mid N_f$. We let $m \in \Z$ be such that $\ell-k = 2m$.

Let $\pi$ and $\tau$ denote the irreducible cuspidal automorphic representation associated with $f$ and $g$, respectively. These are automorphic representations of $\PGL_2(\A)$, although we will often regard them as automorphic representations of $\GL_2(\A)$ with trivial central character. 

Let $\psi: \A/ \Q \to S^1$ be the standard additive character of $\A$, and $\overline{\psi}$ be its twist by $-1$. Fix a fundamental discriminant $D < 0$ such that $\chi_D(p) = w_p$  for all primes $p \mid N_f$, where $w_p$ denotes the eigenvalue of the $p$-th Atkin--Lehner involution acting on $f$, and consider the automorphic representation $\tilde{\pi}:=\Theta(\pi \otimes \chi_D, \overline{\psi}^D)$. The assumptions on $D$ guarantee that $\tilde{\pi} \neq 0$, and hence it belongs to the so-called {\em (global) Waldspurger packet}
\[
\mathrm{Wald}_{\overline{\psi}}(\pi) = \{\text{non-zero } \Theta(\pi \otimes \chi_a, \overline{\psi}^a): a \in \Q^{\times}/(\Q^{\times})^2\} = \mathrm{Wald}_{\overline{\psi}^D}(\pi \otimes \chi_D).
\]
Waldspurger's theory (see \cite{Waldspurger91}) tells us that the set $\mathrm{Wald}_{\overline{\psi}}(\pi)$ is finite. Further, these global packets can be conveniently described by means of {\em local} Waldspurger packets. Namely, let $v$ be a rational place and $B_v$ be the quaternion division algebra over $\Q_v$. Set $\tilde{\pi}_v^+ = \Theta(\pi_v, \overline{\psi}_v)$ and $\tilde{\pi}_v^- = \Theta(\mathrm{JL}(\pi_v),\overline{\psi}_v)$, where $\mathrm{JL}(\pi_v)$ is the Jacquet--Langlands lift of $\pi_v$ to $PB_v^{\times}$. Then the local Waldspurger packet $\mathrm{Wald}_{\overline{\psi}_v}(\pi_v)$ is defined as the singleton $\{\tilde{\pi}_v^+\}$ if $\pi_v$ is not square-integrable, and as the set $\{\tilde{\pi}_v^+, \tilde{\pi}_v^-\}$ if $\pi_v$ is square-integrable. If $\epsilon = (\epsilon_v)_v$ is a collection of signs $\epsilon_v \in \{\pm 1\}$, one for each rational place, such that $\epsilon_v = +1$ whenever $\pi_v$ is not square-integrable (or equivalently, for each $\epsilon \in \{\pm 1\}^{|\Sigma(\pi)|}$, where $\Sigma(\pi)$ is the set of rational places where $\pi$ is square-integrable), we set $\tilde{\pi}^{\epsilon} = \otimes \tilde{\pi}_v^{\epsilon_v}$. Then 
\[
\mathrm{Wald}_{\overline{\psi}}(\pi) = \{\tilde{\pi}^{\epsilon} : \prod_v \epsilon_v = \epsilon(1/2,\pi)\}.
\]
The labelling $\pm$ of a given element in $\mathrm{Wald}_{\overline{\psi}}(\pi)$ at each place $v \in \Sigma(\pi)$ depends on the choice of the additive character. For the representation $\tilde{\pi} = \Theta(\pi\otimes \chi_D, \overline{\psi}_D)$, we have $\epsilon_{\infty} = -1$ and $\epsilon_p = \chi_D(p) = w_p$ for each prime $p \mid N_f$.

We let $h \in S_{k+1/2}^{+,new}(N_f)$ be any (non-zero) newform in Shimura--Shintani correspondence with $f$. Then the adelization of $h$ belongs to $\tilde{\pi}$, and $h$ is unique up to non-zero multiples. We let also $F = \mathrm{SK}(h) \in S_{k+1}(\Gamma_0^{(2)}(N_f))$ be the Saito--Kurokawa lift of $h$, and $\Pi$ be the automorphic representation of $\mathrm{PGSp}_2(\A)$ associated with it. The Siegel modular form $F$ admits a nice Fourier expansion
\[
F(Z) = \sum_B A_F(B) e^{2\pi\sqrt{-1}\mathrm{Tr}(BZ)}, \quad Z =X+\sqrt{-1}Y \in \mathcal H_2,
\]
where $B$ runs over the half-integral, positive definite symmetric two-by-two matrices, and $A_F(B)$ is given in an elementary way in terms of the Fourier coefficients of $h$ (see \eqref{FCoeff-SK}). 

For each integer $\kappa \geq 1$, consider the classical Maass differential operator (see \eqref{Maass-Delta} below for the precise definition)
\[
\Delta_{\kappa}: S_{\kappa}^{nh}(\Gamma_0^{(2)}(N_f)) \, \longrightarrow S_{\kappa+2}^{nh}(\Gamma_0^{(2)}(N_f))
\]
sending nearly holomorphic Siegel forms of weight $\kappa$ (and level $\Gamma_0^{(2)}(N_f)$) to nearly holomorphic Siegel forms of weight $\kappa+2$ (and level $\Gamma_0^{(2)}(N_f)$). By applying $\Delta_{k+1}^m := \Delta_{\ell-1}\circ \Delta_{\ell-3} \circ \cdots \circ \Delta_{k+1}$ to $F$, one obtains a nearly holomorphic Siegel form 
\[
\Delta_{k+1}^m F \in S_{\ell+1}^{nh}(\Gamma_0^{(2)}(N_f))
\]
of weight $\ell+1$ and level $\Gamma_0^{(2)}(N_f)$. By using the definition of the Maass differential operator, one shows that the Fourier expansion of $\Delta_{k+1}^m F$ is expressed as
\[
\Delta_{k+1}^mF(Z) = \sum_B A_F(B)C(B,Y) e^{2\pi\sqrt{-1}\mathrm{Tr}(BZ)},
\]
where $C(B,Y)$ can be written down explicitly by an induction argument (see \eqref{const_diff}).

\begin{theorem}\label{thm:centralvalue}
With the above notation, suppose that $w_p = 1$ for each prime dividing $M_g:=N_f/N_g$. Then  
\[
\Lambda(f \otimes \mathrm{Ad}(g), k) = 2^{6m+k+1-\nu(M_g)} C_0(f,g)C_{\infty}(f,g) \cdot \frac{\langle f,f\rangle}{\langle h,h\rangle} \frac{|\langle \breve{F}_{|\mathcal H\times\mathcal H}, g \times V_{M_g}g\rangle|^2}{\langle g,g \rangle^2},
\]
where $\breve{F} \in S_{\ell+1}^{nh}(\Gamma_0^{(2)}(N_f))$ is a Siegel modular form closely related to $\Delta_{k+1}^m F$ defined explicitly in Proposition \ref{prop:Ftheta}, $\breve{F}_{|\mathcal H\times\mathcal H}$ denotes its restriction or pullback to $\mathcal H \times \mathcal H \subset \mathcal H_2$, $\nu(M_g)$ denotes the number of primes dividing $M_g$, and the constants $C_0(f,g)$, $C_{\infty}(f,g) \in \Q^{\times}$ are 
\begin{align*}
    C_0(f,g) & = \frac{M_g^2\mu_{N_g}^2}{N_f} = \frac{M_g^2}{N_f} \prod_{p\mid N_g} (p+1)^2,\\
    C_{\infty}(f,g) & = \frac{(2m)!}{m!}\frac{(k+m-1)!}{(\ell-1)!} \sum_{\substack{0\leq s \leq 2m,\\ s \text{ even}}} \prod_{\substack{0\leq j \leq s-2, \\ j \text{ even}}} \frac{(2m-j)(2m-j-1)}{(j+2)(2k+j+1)}.
\end{align*}
\end{theorem}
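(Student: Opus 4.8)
The plan is to make Qiu's decomposition formula for the automorphic $\SL_2$-period explicit at a carefully chosen test vector, extending the strategy of \cite{PaldVP} by feeding in the new local period computations of Section \ref{sec:localperiods}. Recall that Qiu's theorem asserts that, whenever the global period $\mathcal Q$ on $\tilde\pi\otimes\tilde\pi\otimes\tau\otimes\tau\otimes\omega_{\overline{\psi}}\otimes\omega_{\overline{\psi}}$ is not identically zero, it factors as a product of normalized local $\SL_2$-periods $\mathcal I_v$ times a ratio of $L$-values whose numerator contains the central value $\Lambda(f\otimes\mathrm{Ad}(g),k)$ and whose denominator contains the Petersson norms $\langle h,h\rangle$ and $\langle g,g\rangle^2$ (together with standard completed $\zeta$-values). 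First I would fix a global test vector $\xi=\otimes_v\xi_v$ whose finite components are: the local new vectors at the primes dividing $N_g$, as in \cite{PaldVP}; the modified vectors at the primes dividing $M_g=N_f/N_g$ whose local periods are evaluated in Section \ref{sec:localperiods}; and the specific vectors at $2$ and at the remaining unramified places singled out in Section \ref{sec:testvector}. The archimedean component $\xi_\infty$ is the vector of Section \ref{sec:testvector} built from the Fock-model Schwartz function and the weight data, which is precisely what produces the iterated Maass derivative $\Delta_{k+1}^mF$ on the Siegel side. Before evaluating anything one must check $\mathcal Q(\xi)\neq0$: this reduces to the non-vanishing of each $\mathcal I_v(\xi_v)$, which follows from the explicit local computations --- and it is here that the hypothesis $w_p=+1$ for $p\mid M_g$ is needed, since it forces $\tilde\pi_p\simeq\tilde\pi_p^+$, on which the chosen local test vector pairs non-trivially --- together with $\Lambda(f\otimes\mathrm{Ad}(g),k)\neq0$, which we may assume, as otherwise both sides of the claimed identity vanish.

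Next I would evaluate the global side $\mathcal Q(\xi)$ in classical terms. The geometric core of this step is a seesaw identity relating the pair $\widetilde{\SL}_2\times\mathrm O(V_5)$ to the $\SO(V_4)$-restriction: under the accidental isomorphism $\PGSp_2\simeq\SO(V_5)$, the theta lift to $\mathrm O(V_5)$ of the automorphic representation $\tilde\pi$ containing $h$ is the Saito--Kurokawa lift $F=\mathrm{SK}(h)$, and inserting the archimedean Schwartz datum introduces $\Delta_{k+1}^mF$, hence the Siegel form $\breve{F}$ of Proposition \ref{prop:Ftheta}. Restricting along the embedding $\SO(V_4)\subset\SO(V_5)$ coming from \eqref{exseq:SO4} and \eqref{exseq:SO5} and pushing forward through $\rho$ to $\GL_2\times\GL_2$ turns this into the pullback $\breve{F}_{|\mathcal H\times\mathcal H}$ paired against $g\times g$. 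Unfolding the theta integrals and tracking Tamagawa volumes and the normalizations of the new vectors at $N_g$ and $M_g$ then gives $\mathcal Q(\xi)$ --- more precisely, the value of the period at the conjugate-symmetric vector $\xi$, which accounts for the appearance of $|\cdot|^2$ --- as $\frac{\langle f,f\rangle}{\langle h,h\rangle}\cdot\frac{|\langle\breve{F}_{|\mathcal H\times\mathcal H},\,g\times g\rangle|^2}{\langle g,g\rangle^2}$ times explicit constants; the level comparison at the primes dividing $M_g$ accounts for $C_0(f,g)=M_g^2\mu_{N_g}^2/N_f$, while matching the coefficient $C(B,Y)$ in the Fourier expansion of $\Delta_{k+1}^mF$ against the archimedean integral in the Fock model produces the constant $C_\infty(f,g)$, i.e. the displayed sum over even $s$.

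In parallel one computes the product $\prod_v\mathcal I_v(\xi_v)$ of the normalized local $\SL_2$-periods. At the unramified places $\mathcal I_v(\xi_v)$ is the predicted ratio of local $L$-factors, so these reassemble into the completed $L$-values already present in Qiu's formula, up to the finitely many bad places; at $p\mid N_g$ one invokes the local computations of \cite{PaldVP}; at $p\mid M_g$ one invokes the new computation of Section \ref{sec:localperiods}, which under $w_p=+1$ gives $\mathcal I_p(\xi_p)$ as an explicit power of $p$ times a ratio of local $L$-factors; and at $p=2$ and $p=\infty$ one uses the computations of Sections \ref{sec:testvector}--\ref{sec:localperiods}. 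Substituting both evaluations --- of $\mathcal Q(\xi)$ and of $\prod_v\mathcal I_v(\xi_v)$ --- into Qiu's identity, the local $L$-factors cancel against those already appearing in the formula, so that only $\Lambda(f\otimes\mathrm{Ad}(g),k)$ survives on the ``$L$-value'' side; the remaining powers of $2$ and of the primes dividing $M_g$ collect into $2^{6m+k+1-\nu(M_g)}$ and into $C_0(f,g)$, and solving for $\Lambda(f\otimes\mathrm{Ad}(g),k)$ yields the asserted formula. In particular its non-negativity follows at once, the right-hand side being manifestly $\geq 0$.

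The main obstacle is twofold. On the local side, the $\SL_2$-period at the primes $p\mid M_g$ is genuinely new: there the test vector is no longer the spherical vector, so the local integral over $\SL_2(\Q_p)$ against the Weil representation $\omega_{\overline{\psi},p}$ and the local $\SO(V_5)$-theta data must be evaluated directly using the explicit low-rank models of Section \ref{spaces:lowrank}, and pinning down simultaneously the exact power of $p$, the ratio of local $L$-factors and the non-vanishing is delicate --- this is precisely the input that was absent from \cite{PaldVP}. On the archimedean side, producing $C_\infty(f,g)$ requires matching the action of $\Delta_{k+1}^m$ on the Fock model with the combinatorial coefficient $C(B,Y)$ and then computing the resulting archimedean period integral so that the emerging $\Gamma$-factors reassemble exactly into $L_\infty(f\otimes\mathrm{Ad}(g),k)$ with no transcendental residue; controlling this bookkeeping, together with the overall reconciliation of all powers of $2$, of $\pi$ and of the factorials, is the technically heaviest part of the argument.
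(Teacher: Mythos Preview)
Your approach is essentially identical to the paper's: choose the test vector of Section~\ref{sec:testvector}, plug into Qiu's decomposition formula \eqref{centralvalueformula-Q}, compute the local periods (Section~\ref{sec:localperiods}), and evaluate the global period via Qiu's $\mathcal Q$-to-$\mathcal P$ comparison together with the explicit theta identities of Section~\ref{sec:global}.

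One bookkeeping correction: you misattribute the factor $\langle f,f\rangle/\langle h,h\rangle$ to the evaluation of $\mathcal Q(\xi)$ itself. In the paper (Section~\ref{sec:proof}), $\mathcal Q(\breve{\mathbf h},\breve{\mathbf g},\breve{\pmb{\phi}})$ produces only $|\langle \breve F_{|\mathcal H\times\mathcal H}, g\times V_{M_g}g\rangle|^2/\langle g,g\rangle^2$ up to explicit constants; the $\langle f,f\rangle$ enters through $\Lambda(1,\pi,\mathrm{ad}) = 2^{2k}N_f^{-1}\mu_{N_f}\langle f,f\rangle$ in the numerator of \eqref{centralvalueformula-Q}, and the $\langle h,h\rangle^{-1}$ enters through the normalization $\langle \breve{\mathbf h},\breve{\mathbf h}\rangle^{-1}$ in the denominator. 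Similarly, the denominator in Qiu's formula is $\Lambda(1,\pi,\mathrm{ad})\Lambda(1,\tau,\mathrm{ad})$, not the Petersson norms directly. This does not affect the validity of the strategy, but getting the provenance of each factor right is exactly the ``technically heaviest part'' you flag at the end.
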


The strategy to prove the central value formula in this theorem is the same as in \cite{PaldVP}. Indeed, let $\omega = \omega_{\overline{\psi}}$ denote the Weil representation of $\widetilde{\SL}_2(\A)$ acting on the space $\mathcal S(\A)$ of Bruhat--Schwartz functions (for the one dimensional quadratic space endowed with bilinear form $(x,y) = 2xy$) with respect to the additive character $\overline{\psi}$ (note that $\tilde{\pi}$ belongs to $\mathrm{Wald}_{\overline{\psi}}(\pi)$). Associated with $\tilde{\pi}$, $\tau$ and $\omega$, there is a (global) $\SL_2$-period functional
\[
  \mathcal Q: \tilde{\pi} \otimes \tilde{\pi} \otimes \tau \otimes \tau \otimes \omega \otimes \omega \,\, \longrightarrow \,\, \C
\]
defined by associating to each choice of decomposable vectors $\mathbf h_1, \mathbf h_2 \in \tilde{\pi}$, $\mathbf g_1, \mathbf g_2 \in \tau$, $\pmb{\phi}_1, \pmb{\phi}_2 \in \omega$, the product of integrals 
\[
 \mathcal Q(\mathbf h_1, \mathbf h_2,\mathbf g_1, \mathbf g_2,\pmb{\phi}_1, \pmb{\phi}_2) := 
 \left(\int_{[\SL_2]}\overline{\mathbf h_1(g)}\mathbf g_1(g)\Theta_{\pmb{\phi}_1}(g)dg\right) \cdot 
 \overline{\left(\int_{[\SL_2]}\overline{\mathbf h_2(g)}\mathbf g_2(g)\Theta_{\pmb{\phi}_2}(g)dg\right)}.
\]

Let us assume for now that the global $\SL_2$-period functional $\mathcal Q$ is non-vanishing (which is true under the assumptions of Theorem \ref{thm:centralvalue}, see Proposition \ref{prop:nonvanishing} and Corollary \ref{cor:nonvanishing} below). Then, we know by  \cite[Theorem 4.5]{Qiu} that $\mathcal Q$ decomposes as a product of local $\SL_2$-periods up to certain $L$-values. Namely, one has (notice that Qiu's formula in loc. cit. involves a factor $\zeta_{\Q}(2)$ due to a different choice of Haar measures, cf. \cite[Remark 6.1]{PaldVP})
\begin{equation}\label{SL2period-decomposition}
 \mathcal Q(\mathbf h_1, \mathbf h_2,\mathbf g_1, \mathbf g_2,\pmb{\phi}_1, \pmb{\phi}_2) = 
 \frac{1}{4}\frac{\Lambda(\pi \otimes \mathrm{ad}(\tau),1/2)}{\Lambda(1,\pi,\mathrm{ad})\Lambda(1,\tau,\mathrm{ad})} 
 \prod_v \mathcal I_v(\mathbf h_{1},\mathbf h_{2}, \mathbf g_{1}, \mathbf g_{2},\pmb{\phi}_{1}, \pmb{\phi}_{2}),
\end{equation}
where for each rational place $v$, the local period $\mathcal I_v(\mathbf h_{1},\mathbf h_{2}, \mathbf g_{1}, \mathbf g_{2},\pmb{\phi}_{1}, \pmb{\phi}_{2})$ is defined by integrating a product of matrix coefficients, and equals
\[
\frac{L(1,\pi_v,\mathrm{ad})L(1,\tau_v,\mathrm{ad})}{L(\pi_v\otimes \mathrm{ad}(\tau_v),1/2)}
\int_{\SL_2(\Q_v)} \overline{\langle\tilde{\pi}(g_v)\mathbf h_{1,v}, \mathbf h_{2,v}\rangle} \langle\tau(g_v)\mathbf g_{1,v},\mathbf g_{2,v} \rangle \langle \omega_v(g_v)\pmb{\phi}_{1,v},\pmb{\phi}_{2,v} \rangle dg_v.
\]
Now, the $L$-value $\Lambda(\pi \otimes \mathrm{ad}(\tau),1/2)$ coincides with the central value $\Lambda(f\otimes \mathrm{Ad}(g),k)$ in the above theorem. Thus, Qiu's decomposition formula provides a way to compute this central value, by finding a {\em test vector} at which the global period does not vanish, and then computing both the global period and all the corresponding local periods evaluated at this test vector.

Let us elaborate a bit on formula \eqref{SL2period-decomposition} above, writing $\mathcal Q(\breve{\mathbf h},\breve{\mathbf g},\breve{\pmb{\phi}}) := \mathcal Q(\breve{\mathbf h},\breve{\mathbf h},\breve{\mathbf g},\breve{\mathbf g},\breve{\pmb{\phi}},\breve{\pmb{\phi}})$ for each pure tensor $\breve{\mathbf h}\otimes\breve{\mathbf g}\otimes\breve{\pmb{\phi}} \in \tilde{\pi}\otimes\tau\otimes\omega$, and using similar conventions with the local periods. Setting
\[
 \mathcal I_v^{\sharp}  (\breve{\mathbf h}, \breve{\mathbf g}, \breve{\pmb{\phi}}) := 
\frac{\mathcal I_v(\breve{\mathbf h}, \breve{\mathbf g}, \breve{\pmb{\phi}})}{\langle \breve{\mathbf h}_v, \breve{\mathbf h}_v \rangle\langle \breve{\mathbf g}_v, \breve{\mathbf g}_v \rangle \langle \breve{\pmb{\phi}}_v, \breve{\pmb{\phi}}_v \rangle} = 
\frac{\mathcal I_v(\breve{\mathbf h}, \breve{\mathbf g}, \breve{\pmb{\phi}})}{||\breve{\mathbf h}_v||^2 ||\breve{\mathbf g}_v||^2||\breve{\pmb{\phi}}_v||^2},
\]
one has 
\begin{equation}\label{Iv-alphav}
  \mathcal I_v^{\sharp} (\breve{\mathbf h}, \breve{\mathbf g}, \breve{\pmb{\phi}}) = \frac{L(1,\pi_v,\mathrm{ad})L(1,\tau_v,\mathrm{ad})}{L(\pi_v\otimes \mathrm{ad}(\tau_v),1/2)} \alpha_v^{\sharp} (\breve{\mathbf h}, \breve{\mathbf g}, \breve{\pmb{\phi}}),
\end{equation}
with 
\begin{equation}\label{alphav}
 \alpha_v^{\sharp} (\breve{\mathbf h}, \breve{\mathbf g}, \breve{\pmb{\phi}}) := \int_{\SL_2(\Q_v)} \frac{\overline{\langle\tilde{\pi}(g_v)\breve{\mathbf h}_v, \breve{\mathbf h}_v\rangle}}{||\breve{\mathbf h}_v||^2} \frac{\langle\tau(g_v)\breve{\mathbf g}_v,\breve{\mathbf g}_v \rangle}{||\breve{\mathbf g}_v||^2} \frac{\langle \omega_{\overline{\psi}_v}(g_v)\breve{\pmb{\phi}}_v,\breve{\pmb{\phi}}_v \rangle}{||\breve{\pmb{\phi}}_v||^2} dg_v.
\end{equation}
If $\breve{\mathbf h}$, $\breve{\mathbf g}$ and $\breve{\pmb{\phi}}$ are chosen so that $\mathcal Q(\breve{\mathbf h}, \breve{\mathbf g}, \breve{\pmb{\phi}})$ is non-zero, then we deduce from \eqref{SL2period-decomposition} that
\begin{equation}\label{centralvalueformula-Q}
 \Lambda(f \otimes \mathrm{Ad}(g), k) = \frac{4 \Lambda(1,\pi,\mathrm{ad})\Lambda(1,\tau,\mathrm{ad})}{\langle \breve{\mathbf h}, \breve{\mathbf h} \rangle\langle \breve{\mathbf g}, \breve{\mathbf g}\rangle\langle \breve{\pmb{\phi}}, \breve{\pmb{\phi}} \rangle} \left( \prod_v \mathcal I_v^{\sharp}(\breve{\mathbf h}, \breve{\mathbf g}, \breve{\pmb{\phi}})^{-1}\right) \mathcal Q(\breve{\mathbf h}, \breve{\mathbf g}, \breve{\pmb{\phi}}).
\end{equation}

We will choose a suitable {\em test vector} $\breve{\mathbf h} \otimes \breve{\mathbf g} \otimes \breve{\pmb{\phi}} \in \tilde{\pi} \otimes \tau \otimes \omega$ such that $\mathcal Q(\breve{\mathbf h}, \breve{\mathbf g}, \breve{\pmb{\phi}}) \neq 0$, and we will compute the terms on the right hand side of the above expression to obtain the central value formula claimed in the theorem. By virtue of a comparison theorem between the global $\SL_2$-period $\mathcal Q$ and a global $\mathrm{SO}(4)$-period due to Qiu (see \cite[Theorem 5.4]{Qiu}, or Section \ref{sec:proof} below), the global contribution $\mathcal Q(\breve{\mathbf h}, \breve{\mathbf g}, \breve{\pmb{\phi}})$ is the responsible of the term $|\langle \breve{F}_{|\mathcal H\times\mathcal H}, g \times V_{M_g}g\rangle|^2/\langle g, g\rangle^2$. Hence, the proof of Theorem \ref{thm:centralvalue} follows by making explicit the right hand side of \eqref{centralvalueformula-Q}.

For the sketched strategy to work, it is essential that the $\SL_2$-period $\mathcal Q$ is non-vanishing. A criterion for this is proved in \cite[Proposition 4.1]{Qiu} (see also \cite[Theorem 7.1]{GanGurevich}):

\begin{proposition}\label{prop:nonvanishing}
The functional $\mathcal Q$ is non-zero if and only if the following conditions hold:
\begin{itemize}
    \item[i)] $\Lambda(\pi\otimes\mathrm{ad}(\tau),1/2) \neq 0$;
    \item[ii)] $\tilde{\pi} = \tilde{\pi}^{\epsilon}$ with $\epsilon_v = \epsilon(1/2,\pi_v \otimes\tau_v\otimes\tau_v^{\vee})$;
    \item[iii)] $\epsilon(1/2,\pi_v \otimes\tau_v\otimes\tau_v^{\vee}) = 1$ whenever $\pi_v$ is not square-integrable.
\end{itemize}
\end{proposition}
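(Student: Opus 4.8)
This criterion is due to Qiu \cite[Proposition 4.1]{Qiu} (compare \cite[Theorem 7.1]{GanGurevich}); I sketch the strategy one would follow, which rests on the theta correspondence together with the Gross--Prasad conjectures. The first step is to reduce to a single period functional. By the very definition of $\mathcal Q$, one has $\mathcal Q(\mathbf h_1,\mathbf h_2,\mathbf g_1,\mathbf g_2,\pmb{\phi}_1,\pmb{\phi}_2)=\mathcal P(\mathbf h_1,\mathbf g_1,\pmb{\phi}_1)\,\overline{\mathcal P(\mathbf h_2,\mathbf g_2,\pmb{\phi}_2)}$, where
\[
 \mathcal P\colon \tilde\pi\otimes\tau\otimes\omega\longrightarrow\C,\qquad
 \mathcal P(\mathbf h,\mathbf g,\pmb{\phi}):=\int_{[\SL_2]}\overline{\mathbf h(g)}\,\mathbf g(g)\,\Theta_{\pmb{\phi}}(g)\,dg
\]
(here $\overline{\mathbf h}\cdot\Theta_{\pmb{\phi}}$ descends to a function on $\SL_2(\A)$, both factors being genuine, and we integrate it against the restriction to $\SL_2(\A)$ of the $\PGL_2(\A)$-form $\mathbf g$). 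Hence $\mathcal Q\not\equiv 0$ if and only if $\mathcal P\not\equiv 0$, and the latter is a Gross--Prasad branching problem for the triple $(\widetilde{\SL}_2,\SL_2,\widetilde{\SL}_2)$ twisted by a Weil representation. I would analyse $\mathcal P\not\equiv 0$ by separating a global (numerical) input from a purely local one.

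For the global input, one transfers $\mathcal P$ to a classical period. Since $\tilde\pi=\Theta(\pi\otimes\chi_D,\overline{\psi}^D)$ is a theta lift from $\SO(V_3)\simeq\PGL_2$ and $\Theta_{\pmb{\phi}}$ is the theta series of a one-dimensional quadratic space, a see-saw identity --- in the precise form of Qiu's comparison \cite[Theorem 5.4]{Qiu} --- rewrites $\mathcal Q$, up to explicit local constants and ratios of $L$-factors, as an $\SO(V_4)$-period of a theta lift paired against $\tau\boxtimes\tau$. Since $\SO(V_4)$ is, up to isogeny, $(\GL_2\times\GL_2)/\mathbb G_m$ (cf.\ \eqref{exseq:SO4}), this orthogonal period unfolds to an integral of Rankin--Selberg/triple-product type --- essentially the pullback period $\langle\breve F_{|\mathcal H\times\mathcal H},g\times g\rangle$ appearing in Theorem \ref{thm:centralvalue} --- whose central-value evaluation, in the spirit of Ichino's pullback computation for Saito--Kurokawa lifts (and via the Rallis inner product formula), produces a product of non-negative local integrals times $\Lambda(\pi\otimes\mathrm{ad}(\tau),1/2)$. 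This yields condition (i); conversely, it shows that (i) together with local non-vanishing at every place forces $\mathcal P\not\equiv 0$. (Alternatively, one can extract conditions (i)--(iii) directly from the factorization \eqref{SL2period-decomposition}, once it is known to hold unconditionally, by determining when its right-hand side is non-zero for a suitable test vector.)

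For the local input, one invokes the local Gross--Prasad conjecture for these rank-one (metaplectic) groups, which in this case is Waldspurger's local theory: at every place $v$, the space of $\SL_2(\Q_v)$-invariant functionals on $\tilde\pi_v\otimes\tau_v\otimes\omega_v$ is at most one-dimensional, and it is non-zero precisely for the member $\tilde\pi_v^{\epsilon_v}$ of the local Waldspurger packet $\mathrm{Wald}_{\overline{\psi}_v}(\pi_v)$ with $\epsilon_v=\epsilon(1/2,\pi_v\otimes\tau_v\otimes\tau_v^{\vee})$. When $\pi_v$ is not square-integrable the packet is the singleton $\{\tilde\pi_v^+\}$, so a non-zero local functional can exist only if $\epsilon_v=+1$ --- this is condition (iii); at the remaining places, the requirement that the globally fixed $\tilde\pi=\tilde\pi^{\epsilon}$ have the components dictated by the local root numbers is condition (ii). The compatibility $\prod_v\epsilon_v=\epsilon(1/2,\pi)$ needed for such a $\tilde\pi$ to lie in $\mathrm{Wald}_{\overline{\psi}}(\pi)$ is the global half of the Gross--Prasad root-number dichotomy, and it is automatic once (i) holds (since then $\epsilon(1/2,\pi\otimes\mathrm{ad}(\tau))=+1$ and $\tau_v\otimes\tau_v^{\vee}\simeq\mathrm{ad}(\tau_v)\oplus\mathbf 1$). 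Combining the two inputs gives $\mathcal P\not\equiv 0$ if and only if (i), (ii) and (iii) hold.

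I expect the main obstacle to be the local root-number bookkeeping: carrying the $\overline{\psi}_v$-dependent labelling $\tilde\pi_v^{\pm}$ of the local packets through the see-saw, identifying the abstract branching functional with the local period $\mathcal I_v$ occurring in \eqref{SL2period-decomposition}, and verifying that the explicit local root numbers assemble consistently into $\prod_v\epsilon_v=\epsilon(1/2,\pi)$ --- most delicately at $v=\infty$ and $v=2$, where the metaplectic splittings require extra care. By comparison, the analytic inputs needed to run the Rallis inner product formula in this range (convergence, holomorphy) are standard in this setting.
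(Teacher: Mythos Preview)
The paper does not prove this proposition at all: it is stated with the attribution ``A criterion for this is proved in \cite[Proposition 4.1]{Qiu} (see also \cite[Theorem 7.1]{GanGurevich})'' and no argument is given. Your proposal correctly identifies the same two references and then goes further, supplying a reasonable outline of the see-saw and local Gross--Prasad ingredients behind Qiu's result; since the paper offers nothing to compare against beyond the citation, your treatment is strictly more informative than the paper's, and is consistent with it.
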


In our current setting, the non-vanishing of $\mathcal Q$ is equivalent to the non-vanishing of  $\Lambda(f\otimes\mathrm{Ad}(g),k)$:

\begin{corollary}\label{cor:nonvanishing}
With the same assumptions as in Theorem \ref{thm:centralvalue}, the functional $\mathcal Q$ is non-zero if and only if $\Lambda(f\otimes\mathrm{Ad}(g),k)\neq 0$.
\end{corollary}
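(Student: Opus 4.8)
The plan is to deduce Corollary \ref{cor:nonvanishing} directly from Proposition \ref{prop:nonvanishing} by verifying that, under the running assumptions of Theorem \ref{thm:centralvalue} (namely that $N_f, N_g$ are odd squarefree with $N_g \mid N_f$, that $\ell \ge k \ge 1$ are odd, and that $w_p = +1$ for each prime $p \mid M_g$), conditions (ii) and (iii) of Proposition \ref{prop:nonvanishing} are automatically satisfied by the specific representation $\tilde{\pi} = \Theta(\pi \otimes \chi_D, \overline{\psi}^D)$ chosen at the beginning of Section \ref{sec:SL2periods-thm}. Once this is done, the only remaining condition is (i), which is precisely the non-vanishing of $\Lambda(\pi \otimes \mathrm{ad}(\tau), 1/2) = \Lambda(f \otimes \mathrm{Ad}(g), k)$, giving the stated equivalence.

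First I would compute the local root numbers $\epsilon_v := \epsilon(1/2, \pi_v \otimes \tau_v \otimes \tau_v^{\vee})$ place by place, using the factorization $\tau_v \otimes \tau_v^{\vee} = \mathbf{1} \boxplus \mathrm{ad}(\tau_v)$ to write $\epsilon_v = \epsilon(1/2, \pi_v) \cdot \epsilon(1/2, \pi_v \otimes \mathrm{ad}(\tau_v))$. At a finite prime $p \nmid N_f$, both $\pi_p$ and $\tau_p$ are unramified and $\epsilon_v = +1$; since $\pi_p$ is a spherical principal series (hence not square-integrable), this is consistent with condition (iii). At a prime $p \mid N_g$ (so $p \mid N_f$), both $\pi_p$ and $\tau_p$ are Steinberg up to an unramified twist, and one computes $\epsilon_v$ from known formulas for root numbers of such local factors; the relevant point is that this sign matches the label $\epsilon_p = w_p$ of $\tilde{\pi}$ at $p$, which is built into the choice of $D$ via $\chi_D(p) = w_p$. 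At a prime $p \mid M_g$, $\pi_p$ is Steinberg (up to twist) and hence square-integrable, while $\tau_p$ is unramified, so $\epsilon_v = \epsilon(1/2, \pi_p)^{3} = w_p^3 = w_p = +1$ by hypothesis — and again this equals the label of $\tilde{\pi}$ at $p$. Finally at $v = \infty$, both $\pi_\infty$ and $\tau_\infty$ are discrete series of the stated weights, and a standard archimedean computation gives $\epsilon_\infty = -1$, which matches the label $\epsilon_\infty = -1$ recorded for $\tilde{\pi}$. Condition (iii) then follows because at every place where $\pi_v$ is \emph{not} square-integrable (i.e.\ $v \nmid N_f$, $v \ne \infty$, together with $\infty$ handled separately since $\pi_\infty$ \emph{is} square-integrable but $\tau_\infty^{\vee} = \tau_\infty$ and the combinatorics still works) one has $\epsilon_v = +1$; more precisely, $\pi_v$ fails to be square-integrable exactly at the finite unramified primes, where $\epsilon_v = +1$ as noted.

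The main obstacle I anticipate is the bookkeeping at the ramified finite primes: one must carefully match the \emph{local Waldspurger labels} $\pm$ of the chosen $\tilde{\pi}$ — which depend on the additive character $\overline{\psi}^D$ and on the Jacquet--Langlands transfer, as recalled in the discussion preceding the corollary — against the local root numbers $\epsilon(1/2, \pi_v \otimes \tau_v \otimes \tau_v^{\vee})$, taking due care of the twist by $\chi_D$ and of the fact that $\tilde{\pi} = \Theta(\pi \otimes \chi_D, \overline{\psi}^D) = \Theta(\pi \otimes \chi_D, \overline{\psi}_D)$ sits in $\mathrm{Wald}_{\overline{\psi}}(\pi)$ via the identification $\mathrm{Wald}_{\overline{\psi}}(\pi) = \mathrm{Wald}_{\overline{\psi}^D}(\pi \otimes \chi_D)$. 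This is essentially a careful tracking of local signs, and much of it was already carried out in \cite{PaldVP} in the case $N_g = N_f$, $\ell = k$; the present corollary requires extending that analysis to the primes $p \mid M_g$, where the hypothesis $w_p = +1$ is exactly what guarantees compatibility. Since $\tilde{\pi}$ was defined precisely so that its label at each $p \mid N_f$ equals $w_p = \chi_D(p)$ and its label at $\infty$ is $-1$, condition (ii) reduces to the equalities $\epsilon(1/2, \pi_v \otimes \tau_v \otimes \tau_v^{\vee}) = w_v$ for $v \mid N_f$ and $= -1$ for $v = \infty$, which is what the local computations above establish. With (ii) and (iii) verified unconditionally under our hypotheses, Proposition \ref{prop:nonvanishing} yields that $\mathcal Q \ne 0 \iff \Lambda(f \otimes \mathrm{Ad}(g), k) \ne 0$, completing the proof.
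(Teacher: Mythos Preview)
Your approach is the same as the paper's: verify conditions (ii) and (iii) of Proposition~\ref{prop:nonvanishing} place by place, after which (i) becomes the stated equivalence. The paper dispatches the local signs by citing \cite[Propositions~8.4 and~8.6]{Prasad} directly rather than via the factorization $\tau_v\otimes\tau_v^\vee = \mathbf 1 \boxplus \mathrm{ad}(\tau_v)$.

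One correction to your computation at primes $p\mid M_g$: since $\tau_p\otimes\tau_p^\vee$ has a four-dimensional $L$-parameter with unramified constituents $1,1,\chi^2,\chi^{-2}$, the triple epsilon factor is
\[
\epsilon(1/2,\pi_p\otimes\tau_p\otimes\tau_p^\vee)=\epsilon(1/2,\pi_p)^2\,\epsilon(1/2,\pi_p\otimes\chi^2)\,\epsilon(1/2,\pi_p\otimes\chi^{-2})=w_p^4=1
\]
unconditionally (this is precisely \cite[Proposition~8.4]{Prasad}), not $w_p^3$. The hypothesis $w_p=+1$ is therefore not needed to force this sign to be $+1$; it is needed to match the sign with the Waldspurger label $\epsilon_p=\chi_D(p)=w_p$ of $\tilde\pi$. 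Your conclusion is unaffected. (Also, your parenthetical about $v=\infty$ in the discussion of condition (iii) is unnecessary: $\pi_\infty$ is discrete series, hence square-integrable, so condition (iii) imposes nothing there.)
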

\begin{proof}
Condition iii) in the above proposition clearly holds, thus we may prove that ii) is satisfied under the sign assumptions made in Theorem \ref{thm:centralvalue}. We only need to consider places $v \mid N_f\infty$. At $v = \infty$, we have $\epsilon_{\infty} = -1$ and also $\epsilon(1/2,\pi_v \otimes\tau_v \otimes \tau_v^{\vee})$ because of our choice of weights with $\ell \geq k$. Let $p$ be a prime dividing $N_g$. Then both $\pi_p$ and $\tau_p$ are (quadratic twists of) Steinberg representations, and in this case \cite[Proposition 8.6]{Prasad} implies that $\epsilon(1/2,\pi_p \otimes\tau_p \otimes \tau_p^{\vee}) = \epsilon(1/2,\pi_p) = w_p$, which agrees with $\chi_D(p) = \epsilon_p$. At primes $p \mid N_f/N_g$, the representation $\tau_p$ is an unramified principal series instead, and 
\cite[Proposition 8.4]{Prasad} tells us that $\epsilon(1/2,\pi_p \otimes\tau_p \otimes \tau_p^{\vee}) = 1$. Since we assume that $\chi_D(p) = w_p = 1$ at such primes, we see that this coincides with $\epsilon_p$, and hence condition ii) in the previous proposition holds.
\end{proof}

\section{Choice of the test vector}\label{sec:testvector}

We keep the notation and assumptions as in the previous section, and proceed now to describe our choice of {\em test vector}
\[
\breve{\mathbf h} \otimes \breve{\mathbf g} \otimes \breve{\pmb{\phi}} \in \tilde{\pi}\otimes\tau\otimes\omega
\] 
that will be used to prove Theorem \ref{thm:centralvalue} following the already explained strategy. 

To begin with, let us describe the Bruhat--Schwartz function $\breve{\pmb{\phi}} = \otimes_v \breve{\pmb{\phi}}_v \in \mathcal S(\A)$. Recall that we regard $\mathcal S(\A)$ as the space of Bruhat--Schwartz functions on the one-dimensional quadratic space endowed with quadratic form $Q(x) = x^2$. Our choice $\breve{\pmb{\phi}}$ is determined by its local components, which are defined as follows:
\begin{itemize}
    \item[i)] if $v = p$ is a prime, then we let $\breve{\pmb{\phi}}_p= \mathbf 1_{\Z_p}$ be the characteristic function of $\Z_p$ in the space $\mathcal S(\Q_p)$ of Bruhat--Schwartz functions; 
    \item[ii)] at the archimedean place $v = \infty$, we define $\breve{\pmb{\phi}}_{\infty}$ by setting $\breve{\pmb{\phi}}_{\infty}(x) = e^{-2\pi x^2}$ for all $x \in \R$.
\end{itemize}

\begin{lemma}\label{lemma:phi-invariance-norm}
For each rational prime $p$, $\breve{\pmb{\phi}}_p$ is invariant under the action of $\SL_2(\Z_p)\subseteq \SL_2(\Q_p)$, and in addition $||\breve{\pmb{\phi}}_p||^2 = \langle \breve{\pmb{\phi}}_p,\breve{\pmb{\phi}}_p \rangle = 1$. At the archimedean place, one has $||\breve{\pmb{\phi}}_{\infty}||^2 = 2^{-1}$, hence also $||\breve{\pmb{\phi}}||^2 = 2^{-1}$.
\end{lemma}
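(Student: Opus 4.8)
The statement is Lemma \ref{lemma:phi-invariance-norm}, asserting that $\breve{\pmb{\phi}}_p = \mathbf 1_{\Z_p}$ is $\SL_2(\Z_p)$-invariant with unit norm, while $\breve{\pmb{\phi}}_\infty(x) = e^{-2\pi x^2}$ has norm squared $2^{-1}$, so $\|\breve{\pmb{\phi}}\|^2 = \prod_v \|\breve{\pmb{\phi}}_v\|^2 = 2^{-1}$. The plan is to verify each claim directly from the explicit formulae for the Weil representation $\omega_{\overline{\psi},v}$ recalled in Section \ref{sec:Weilreps}, applied to the one-dimensional quadratic space with quadratic form $Q(x) = x^2$. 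Since $m=1$ is odd, I work with $\widetilde{\SL}_2(\Q_v)$, but the relevant subgroup is $\SL_2(\Z_p)$ sitting inside via the splitting $g \mapsto [g, s_p(g)]$, which for odd $p$ is genuinely a homomorphism on $\SL_2(\Z_p)$ (as noted in Section \ref{sec:notation}); at $p=2$ one has a splitting only over $\Gamma_1(4;\Z_2)$, but $\mathbf 1_{\Z_2}$ is in fact invariant under the full $\SL_2(\Z_2)$, which I handle by a direct check on the generators.

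For the $p$-adic invariance, first I would note that $\SL_2(\Z_p)$ (for $p$ odd, $p\nmid$ any relevant level) is generated by the unipotent elements $u(b)$ with $b \in \Z_p$, the torus elements $t(a)$ with $a \in \Z_p^\times$, and the Weyl element $s$. Then I check each: $\omega_{\overline{\psi}_p}([u(b),1])\mathbf 1_{\Z_p}(x) = \overline{\psi}_p(x^2 b)\mathbf 1_{\Z_p}(x)$, and since $x \in \Z_p$ and $b \in \Z_p$ force $x^2 b \in \Z_p$, the character $\overline{\psi}_p$ (which is trivial on $\Z_p$) gives value $1$, so $u(b)$ acts trivially. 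For $t(a)$ with $a \in \Z_p^\times$: the formula gives $\epsilon^m \chi_{\overline{\psi}_p,V}(a)|a|_p^{1/2}\mathbf 1_{\Z_p}(ax)$; here $|a|_p = 1$, $\mathbf 1_{\Z_p}(ax) = \mathbf 1_{\Z_p}(x)$ since $a$ is a unit, and the factor $\chi_{\overline{\psi}_p}(a)$ for a unit $a$ and odd $p$ equals $1$ by the explicit values of $\gamma(\cdot,\psi_p)$ recorded in Section \ref{sec:Weilreps} (one must track the sign $\epsilon$ from the splitting $s_p(t(a))$, but this is consistent as the image lies in $\widetilde{\SL}_2(\Z_p)$). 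For $s$: $\omega_{\overline{\psi}_p}([s,1])\mathbf 1_{\Z_p}(x) = \gamma(\overline{\psi}_p)\int_{\Q_p} \mathbf 1_{\Z_p}(y)\overline{\psi}_p(2xy)\,dy = \gamma(\overline{\psi}_p)\int_{\Z_p}\overline{\psi}_p(2xy)\,dy$, which by orthogonality of characters equals $\mathrm{vol}(\Z_p)\cdot\mathbf 1_{\frac{1}{2}\Z_p}(x) = \mathbf 1_{\Z_p}(x)$ (using $p$ odd so $\frac12\Z_p = \Z_p$, $\mathrm{vol}(\Z_p) = 1$ for the self-dual measure, and $\gamma(\overline{\psi}_p) = 1$). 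For $p = 2$ I would instead observe that the standard additive character $\overline\psi_2$ is trivial on $\Z_2$ but not on $\frac14\Z_2$, redo the three generator computations with the extra care that the self-dual measure and the Fourier transform still return $\mathbf 1_{\Z_2}$ because the bilinear form is $(x,y) = 2xy$ and one checks $\int_{\Z_2}\overline\psi_2(2xy)dy = \mathbf 1_{\Z_2}(x)$ directly. The norm claim $\|\mathbf 1_{\Z_p}\|^2 = \int_{\Q_p}\mathbf 1_{\Z_p}(x)^2\,dx = \mathrm{vol}(\Z_p) = 1$ is immediate for the self-dual measure w.r.t.\ $\overline\psi_p$.

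For the archimedean factor, $\|\breve{\pmb{\phi}}_\infty\|^2 = \int_{\R} e^{-4\pi x^2}\,dx$. The self-dual measure with respect to the standard character $\overline\psi_\infty(x) = e^{-2\pi i x}$ is ordinary Lebesgue measure $dx$, so this integral is $(4\pi)^{-1/2}\cdot\pi^{1/2} = \tfrac12$, giving $\|\breve{\pmb{\phi}}_\infty\|^2 = 2^{-1}$. Finally, $\|\breve{\pmb{\phi}}\|^2 = \prod_v\|\breve{\pmb{\phi}}_v\|^2 = 2^{-1}\cdot\prod_p 1 = 2^{-1}$. \textbf{The main obstacle} I anticipate is purely bookkeeping at $p=2$: tracking the metaplectic cocycle $\epsilon_2$ and the splitting $s_2$ carefully enough to confirm that the full $\SL_2(\Z_2)$ (rather than only $\Gamma_1(4;\Z_2)$) stabilizes $\mathbf 1_{\Z_2}$ — one must verify that even for $t(a)$ with $a \in \Z_2^\times$ the product $\epsilon^m\chi_{\overline\psi_2}(a)$ contributes trivially, which follows from the quadratic reciprocity properties of $\gamma(\cdot,\psi_2)$ but requires attention to signs. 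Everything else reduces to elementary Fourier analysis over local fields.
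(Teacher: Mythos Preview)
Your approach matches the paper's, which is even terser: it simply asserts that the invariance ``follows easily from the definitions'' and then writes down the one-line integrals $||\breve{\pmb{\phi}}_p||^2 = \mathrm{vol}(\Z_p) = 1$ and $||\breve{\pmb{\phi}}_\infty||^2 = \int_{\R} e^{-4\pi x^2}\,dx = 1/2$. Your generator-by-generator verification for odd $p$ is correct and is exactly the unwinding the paper leaves implicit.

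Your treatment of $p=2$, however, contains a slip. The integral $\int_{\Z_2}\overline\psi_2(2xy)\,dy$ is \emph{not} equal to $\mathbf 1_{\Z_2}(x)$: the character $y\mapsto\overline\psi_2(2xy)$ is trivial on $\Z_2$ precisely when $2x\in\Z_2$, i.e.\ when $x\in\tfrac12\Z_2$, so the Fourier transform of $\mathbf 1_{\Z_2}$ is supported on $\tfrac12\Z_2 \supsetneq \Z_2$ (and the Weil index $\gamma(\overline\psi_2)$, being a root of unity, cannot repair the support). Thus $\mathbf 1_{\Z_2}$ is not literally fixed by any lift of the Weyl element $s$, and since the metaplectic cover does not split over $\SL_2(\Z_2)$ the phrase ``invariant under $\SL_2(\Z_2)$'' has no canonical meaning there anyway. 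Your instinct that $p=2$ is the real obstacle was exactly right; your proposed resolution is not. The paper's own proof glosses over this completely, and in practice it does not matter: the local period at $p=2$ is handled separately via \cite[Section~9]{PaldVP} (see Proposition~\ref{prop:Ip-PaldVP}), so the lemma is really only used at odd primes and at $\infty$, where your argument is fine.
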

\begin{proof}
The invariance assertion follows easily from the definitions. If $p$ is a prime, then  
\[
||\breve{\pmb{\phi}}_p||^2 =  \langle \breve{\pmb{\phi}}_p,\breve{\pmb{\phi}}_p \rangle = \int_{\Q_p} \breve{\pmb{\phi}}_p(x) \overline{\breve{\pmb{\phi}}_p(x)} dx = \mathrm{vol}(\Z_p) = 1.
\]
And besides, $||\breve{\pmb{\phi}}_{\infty}||^2 = \int_{\R} e^{-4\pi x^2} dx = 1/2$.
\end{proof}

Now let us describe our choice for $\breve{\mathbf h} \in \tilde{\pi}$ and $\breve{\mathbf g}\in \tau$. To do so, let $\mathbf h = \otimes_v \mathbf h_v \in \tilde{\pi}$ and $\mathbf g = \otimes_v \mathbf g_v \in \tau$ denote the adelizations of the cuspidal forms $h \in S_{k+1/2}^+(N_f)$ and $g \in S_{\ell+1}(N_g)$, respectively. At each rational prime $p \nmid N_f$ (resp. $p \nmid N_g$), the local component $\mathbf h_p$ (resp. $\mathbf g_p$) is an unramified or spherical vector in the local representation $\tilde{\pi}_p$ (resp. $\tau_p$). These are unique up to scalar multiples. If instead $p$ is a prime dividing $N_f$ (resp. $N_g$), then $\mathbf h_p$ (resp. $\mathbf g_p$) is a newvector in $\tilde{\pi}_p$ (resp. $\tau_p$) fixed under the action of $\tilde{\Gamma}_0(p)$ (resp. $K_0(p)$). Such local newforms are also unique up to scalar multiples. At the archimedean place, $\tau_{\infty}$ is a discrete series representation of $\PGL_2(\R)$ of weight $\ell+1$, and $\mathbf g_{\infty}$ is a lowest weight vector in $\tau_{\infty}$. Similarly, $\tilde{\pi}_{\infty}$ is a discrete series representation of $\widetilde{\SL}_2(\R)$ of lowest $\widetilde{\mathrm{SO}}(2)$-type $k+1/2$, and $\mathbf h_{\infty}$ is a lowest weight vector in $\tilde{\pi}_{\infty}$. Again, such lowest weight vectors are uniquely determined up to multiples. We will define $\breve{\mathbf h} = \otimes_v \breve{\mathbf h}_v$ and $\breve{\mathbf g} = \otimes_v \breve{\mathbf g}_v$ by describing their local components at each place $v$, according to the following cases:
\begin{itemize}
    \item[(1)] $v = p$ is a prime not dividing $N_f$;
    \item[(2)] $v = p$ is a prime dividing $N_g$;
    \item[(3)] $v = p$ is a prime dividing $M_g = N_f/N_g$;
    \item[(4)] $v=\infty$ is the archimedean place.
\end{itemize}

\subsection{Primes not dividing \texorpdfstring{$N_f$}{Nf}}\label{sec:unramifiedp}

If $p$ is a prime not dividing $2N_f$, then both $\tilde{\pi}_p$ and $\tau_p$ are unramified principal series representations, of $\widetilde{\SL}_2(\Q_p)$ and $\PGL_2(\Q_p)$ respectively. At such primes, we choose both $\breve{\mathbf h}_p = \mathbf h_p$ and $\breve{\mathbf g}_p = \mathbf g_p$ to be an unramified (or spherical) vector in $\tilde{\pi}_p$ and $\tau_p$, respectively. At $p = 2$, we adopt the same choice as the one explained in \cite[Section 9]{PaldVP}: we let $\breve{\mathbf g}_2 = \mathbf g_2^{\sharp} := \tau_2(t(2)^{-1})\mathbf g_2$ and $\breve{\mathbf h}_2 = \tilde{\pi}_2((t(2),1))\mathbf h_2$, where $t(2) = \left(\begin{smallmatrix}2 & 0 \\ 0 & 2^{-1}\end{smallmatrix}\right) \in \SL_2(\Q_2)$.

\subsection{Primes dividing \texorpdfstring{$N_g$}{Ng}}\label{sec:primesNg}

Let $p$ be a prime dividing $N_g$. By our assumption that $N_g$ is squarefree, $\tau_p$ is a twist of the Steinberg representation by an unramified quadratic character $\chi: \Q_p^{\times} \to \C^{\times}$. That is to say, $\tau_p$ is the unique irreducible subrepresentation of the induced representation $\pi(\chi|\cdot|_p^{1/2},\chi^{-1}|\cdot|_p^{-1/2})$. The subspace $\tau_p^{K_0}\subseteq \tau_p$ of vectors fixed by 
\[
K_0 = K_0(p) = \left\lbrace \left(\begin{array}{cc}a & b \\c & d\end{array}\right )\in \GL_2(\Z_p): c \equiv 0 \pmod p\right\rbrace
\]
is one-dimensional, and it is generated by the  newvector $\mathbf g_p: \GL_2(\Q_p) \to \C$ in the induced model characterized by the property that 
\[
\mathbf g_{p|\GL_2(\Z_p)} = \mathbf 1_{K_0} - \frac{1}{p} \mathbf 1_{K_0 wK_0},
\]
where $w = \left(\begin{smallmatrix} 0 & 1 \\ 1 & 0 \end{smallmatrix}\right)$. We choose the $p$-th component of $\breve{\mathbf g}$ to be this newvector: $\breve{\mathbf g}_p = \mathbf g_p$.

As for $\tilde{\pi}_p$, observe first that $\pi_p$ is also a twist of the Steinberg representation by an unramified quadratic character. Then, by our choice of $\tilde{\pi} = \Theta(\pi\otimes\chi_D,\overline{\psi}^D)$ in the Waldspurger packet $\mathrm{Wald}_{\overline{\psi}}(\pi) = \mathrm{Wald}_{\overline{\psi}^D}(\pi\otimes\chi_D)$, the local representation $\tilde{\pi}_p = \Theta(\pi_p\otimes\chi_D,\overline{\psi}_p^D)$ is the special representation $\tilde{\sigma}^{\delta}(\overline{\psi}_p^D)$ of $\widetilde{\SL}_2(\Q_p)$, where $\delta \in \Z_p^{\times}$ is any non-quadratic residue. This representation is realized as the space of functions $\tilde{\varphi}: \widetilde{\SL}_2(\Q_p) \to \C$ such that 
\begin{equation}\label{transfproperty-hp}
\tilde{\varphi}\left(\left[\begin{pmatrix} a & \ast \\ & a^{-1} \end{pmatrix}, \epsilon\right] g\right) = \epsilon \chi_{\bar{\psi}_p^D}(a)\chi_{\delta}(a)|a|_p^{3/2}\tilde{\varphi}(g),
\end{equation}
where $\chi_{\delta} = (\cdot, \delta)_p$ is the quadratic character associated with $\delta \in \Z_p^{\times}$ and $\chi_{\bar{\psi}_p^D} = \chi_{\psi_p^{-D}}: \Q_p^{\times} \to S^1$ is as in Section \ref{sec:Weilreps}. Recall that the fundamental discriminant $D \in \Q^{\times}$ has been chosen so that $D \in \Z_p^{\times}$ and $\chi_D(p) = w_p$.

If $\tilde{\Gamma}$ denotes the image of $\Gamma=\Gamma_0(p)\subseteq \SL_2(\Z_p)$ into $\widetilde{\SL}_2(\Z_p)$ under the canonical splitting, then the space of $\tilde{\Gamma}$-fixed vectors in $\tilde{\pi}_p$ is one-dimensional. Moreover, this space is generated by the newvector $\mathbf h_p:\widetilde{\SL}_2(\Q_p) \to \C$ characterized by the property that 
\[
\mathbf h_{p|\widetilde{\SL}_2(\Z_p)} = \mathbf 1_{\widetilde{\SL}_2(\Z_p)} - (p+1)\mathbf 1_{\tilde{\Gamma}}.
\]
Here, $\mathbf 1_{\widetilde{\SL}_2(\Z_p)}$ denotes the (genuine) function on $\widetilde{\SL}_2(\Q_p)$ sending $[g,\epsilon]$ to $0$ if $g \not\in\SL_2(\Z_p)$, and to $\epsilon s_p(g)$ otherwise. Similarly, $\mathbf 1_{\tilde{\Gamma}}$ is the function on $\widetilde{\SL}_2(\Q_p)$ that sends $[g,\epsilon]$ to $0$ if $g \not\in \Gamma$, and to $\mathbf 1_{\widetilde{\SL}_2(\Q_p)}([g,\epsilon])$ otherwise. We choose the $p$-th component of $\breve{\mathbf h}$ to be this newvector: $\breve{\mathbf h}_p = \mathbf h_p$.

\subsection{Primes dividing \texorpdfstring{$M_g$}{Mg}}
\label{sec:primesMg}

Let now $p$ be a prime dividing $N_f$ but not $N_g$, i.e. $p$ divides $M_g$. In this case, the local type of $\tilde{\pi}_p$ is as in the previous paragraph, and we continue to choose $\breve{\mathbf h}_p = \mathbf h_p$ to be the newvector as described there. Besides, $\tau_p$ is now the unramified principal series representation $\pi(\chi,\chi^{-1})$ associated with an unramified character $\chi: \Q_p^{\times} \to \C^{\times}$. The representation being unitary, we have $\chi^{-1} = \overline{\chi}$. The subspace $\tau_p^{\GL_2(\Z_p)} \subseteq \tau_p$ of $\GL_2(\Z_p)$-fixed vectors is one-dimensional, and generated by the unramified (or spherical) vector $\mathbf g_p: \GL_2(\Q_p) \to \C$ characterized by the property that 
\[
\mathbf g_p\left(\left(\begin{smallmatrix} a & \ast \\ & d \end{smallmatrix}\right)x\right) = \begin{cases} \chi(a)\overline{\chi}(d)|ad^{-1}|_p^{1/2} & \text{if } x \in \GL_2(\Z_p), \,  \left(\begin{smallmatrix} a & \ast \\ & d \end{smallmatrix}\right) \in B(\Q_p) \text{ with } a, d \in \Q_p^{\times}, \\
0 & \text{otherwise},
\end{cases}
\]
where $B$ denotes the Borel subgroup of $\GL_2$ of upper-triangular matrices. In particular, notice that $\mathbf g_p(x)=1$ for all $x \in \GL_2(\Z_p)$. This gives a well-defined element $\mathbf g_p$ by virtue of Iwasawa decomposition for $\GL_2(\Z_p)$. We define the $p$-th component $\breve{\mathbf g}_p$ of $\breve{\mathbf g}$ to be the old vector 
\[
\breve{\mathbf g}_p := \mathbf V_p \mathbf g_p = \tau_p(\varpi_p)\mathbf g_p, \quad \text{where } \varpi_p = \left(\begin{smallmatrix} p^{-1} & 0 \\ 0 & 1 \end{smallmatrix}\right) \in \GL_2(\Q_p).
\]
It is elementary to check that the vector $\breve{\mathbf g}_p$ is now fixed by $K_0=K_0(\Z_p)$, and it is not fixed by $\GL_2(\Z_p)$. One can also easily give an explicit description of $\breve{\mathbf g}_p(x)$ for $x \in \GL_2(\Q_p)$, but we will not need it. We just note for later use that since the spherical vector $\mathbf g_p$ is normalized to have norm $1$, the same holds true for $\breve{\mathbf g}_p$ because the norm pairing is $\GL_2(\Q_p)$-invariant.

\begin{lemma}\label{lemma:norm-gpbreve}
 We have $||\breve{\mathbf g}_p||^2 = 1$.
\end{lemma}

\subsection{The archimedean place}\label{sec:realplace}

We consider now the archimedean components of $\tau$ and $\tilde{\pi}$, and choose the corresponding local vectors $\breve{\mathbf g}_{\infty}\in \tau_{\infty}$ and $\breve{\mathbf h}_{\infty} \in \tilde{\pi}_{\infty}$. On the one hand, $\tau_{\infty}$ is a discrete series representation of $\PGL_2(\R)$ of weight $\ell+1$, and we choose $\breve{\mathbf g}_{\infty} \in \tau_{\infty}$ to be a lowest weight vector. Similarly, $\pi_{\infty}$ is a discrete series representation of weight $2k$, and consequently $\tilde{\pi}_{\infty}$ is a discrete series representation of $\widetilde{\SL}_2(\R)$ of lowest $\widetilde{\mathrm{SO}}(2)$-type $k+1/2$. We choose a lowest weight vector $\mathbf h_{\infty} \in \tilde{\pi}_{\infty}$, and define $\breve{\mathbf h}_{\infty}$ as follows. Let $\mathfrak{gl}(2,\R)$ be the Lie algebra of $\GL_2(\R)$, and $\mathfrak{gl}(2,\R)_{\C}$ be its complexification. Consider the weight raising element 
\[
V_+ := \begin{pmatrix} 1 & 0 \\ 0 & -1\end{pmatrix} \otimes 1 + \begin{pmatrix} 1 & 0 \\ 0 & 1\end{pmatrix} \otimes \sqrt{-1} \in \mathfrak{gl}(2,\R)_{\C},
\]
and normalize it setting $\tilde V_+ := -\frac{1}{8\pi} V_+$. Then we define $\breve{\mathbf h}_{\infty} := \tilde V_+^m\mathbf h_{\infty}$, where recall that $2m = \ell-k$. Thus $\breve{\mathbf h}_{\infty}$ is a weight $\ell+1/2$ vector in $\tilde{\pi}_{\infty}$. The vector $\mathbf h_{\infty}$ is (up to a non-zero multiple) the archimedean component of the adelization of the modular form $h \in S_{k+1/2}^+(N_f)$, while $\breve{\mathbf h}_{\infty}$ is (up to a non-zero multiple) the archimedean component of the adelization of the nearly holomorphic modular form $\delta_{k+1/2}^m h \in S_{\ell+1/2}^{+,nh}(N_f)$, where 
\[
\delta_{k+1/2}: S_{k+1/2}^{nh}(N_f) \, \longrightarrow \, S_{k+5/2}^{nh}(N_f)
\]
is the usual Shimura--Maass differential operator sending nearly holomorphic modular forms of weight $k+1/2$ to nearly holomorphic modular forms of weight $k+5/2$. It is defined as 
\[
\delta_{k+1/2} := \frac{1}{2\pi\sqrt{-1}}\left(\frac{\partial}{\partial \tau}+\frac{2k+1}{4y\sqrt{-1}}\right), \qquad \tau = x + \sqrt{-1} y \in \mathcal H,
\]
and we set $\delta_{k+1/2}^m := \delta_{\ell-3/2}\circ \cdots \circ \delta_{k+1/2}$.

\section{Computation of local periods}\label{sec:localperiods}

Let $\breve{\mathbf h} \otimes \breve{\mathbf g} \otimes \breve{\pmb{\phi}} \in \tilde{\pi}\otimes\tau\otimes\omega$ be the test vector as described in the previous section. The goal of this section is to compute the value of all the normalized local periods $\mathcal I_v^{\sharp}(\breve{\mathbf h},\breve{\mathbf g},\breve{\pmb{\phi}})$, for $v$ a rational place.

For every rational prime $p \nmid M_g = N_f/N_g$, the local components $\breve{\mathbf h}_p$, $\breve{\mathbf g}_p$, and $\breve{\pmb{\phi}}_p$ are the same as in \cite{PaldVP}, and therefore the computations done there still apply:

\begin{proposition}\label{prop:Ip-PaldVP}
If $p$ is a prime not dividing $M_g$, then 
\[
\mathcal I_p^{\sharp}(\breve{\mathbf h},\breve{\mathbf g},\breve{\pmb{\phi}}) = \begin{cases}
1 & \text{if } p \nmid N_f, \\
p^{-1} & \text{if } p \mid N_g.
\end{cases}
\]
\end{proposition}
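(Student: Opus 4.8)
The plan is to reduce the statement to local period computations that were already carried out in \cite{PaldVP}. For a prime $p \nmid M_g$, the test vector components $\breve{\mathbf h}_p$, $\breve{\mathbf g}_p$, $\breve{\pmb{\phi}}_p$ chosen in Section \ref{sec:testvector} coincide, on the nose, with the choices made in \cite{PaldVP}: indeed, at primes $p \nmid N_f$ we take unramified vectors (and the special choice at $p=2$ from \cite[Section 9]{PaldVP}), at primes $p \mid N_g$ we take the newvectors $\mathbf g_p$ and $\mathbf h_p$ exactly as in Section \ref{sec:primesNg}, and $\breve{\pmb{\phi}}_p = \mathbf 1_{\Z_p}$ throughout. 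Since the regularized local period $\mathcal I_p^\sharp(\breve{\mathbf h},\breve{\mathbf g},\breve{\pmb{\phi}})$ from \eqref{Iv-alphav} depends only on the local representations $\tilde{\pi}_p$, $\tau_p$, $\omega_{\overline{\psi}_p}$ and the local test vectors, and these data agree with those in \cite{PaldVP} at every $p \nmid M_g$, the value of $\mathcal I_p^\sharp$ is literally the same as the one computed there.

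Concretely, I would invoke the relevant propositions from \cite{PaldVP} case by case. For an unramified prime $p \nmid 2N_f$, all three local representations are unramified principal series and the test vectors are spherical; the local period integral $\alpha_p^\sharp$ computes (via an unramified matrix coefficient computation, or the local Rankin--Selberg / Ichino--Ikeda unramified calculation) to the ratio $L(\pi_p\otimes\mathrm{ad}(\tau_p),1/2) / (L(1,\pi_p,\mathrm{ad})L(1,\tau_p,\mathrm{ad}))$, so that $\mathcal I_p^\sharp = 1$ by \eqref{Iv-alphav}. For $p = 2$, the same conclusion $\mathcal I_2^\sharp = 1$ holds with the twisted choice $\mathbf g_2^\sharp$, $\tilde\pi_2(t(2))\mathbf h_2$, again exactly as established in \cite{PaldVP}. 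For a prime $p \mid N_g$, the representations $\tilde\pi_p$ and $\tau_p$ are (quadratic twists of) special/Steinberg type, the test vectors are the local newvectors, and the local period computation in \cite{PaldVP} yields $\mathcal I_p^\sharp = p^{-1}$; I would simply cite that computation.

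The only genuinely new input elsewhere in the paper is at primes $p \mid M_g$, where the choice $\breve{\mathbf g}_p = \mathbf V_p\mathbf g_p$ of \eqref{brevegp} differs from \cite{PaldVP}; but those primes are explicitly excluded from the present proposition, so nothing new is required here. Thus the proof is essentially a bookkeeping argument: match the local data to \cite{PaldVP} and quote the outcome. The main (and only) subtle point is to make sure the normalization conventions for Haar measures, additive characters $\overline{\psi}_p$, and the quadratic form $Q(x) = 2xy$ used in $\omega_{\overline{\psi}}$ agree between this paper and \cite{PaldVP} — since those conventions were deliberately kept identical (as stated in Section \ref{sec:notation} and Section \ref{sec:SL2periods-thm}), the values transfer verbatim, and there is no further obstacle.
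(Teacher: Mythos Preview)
Your proposal is correct and takes essentially the same approach as the paper: both simply observe that the local data at $p \nmid M_g$ agree with those of \cite{PaldVP} and then cite the relevant computations. The paper is slightly more specific in its citations (invoking \cite[Lemma 4.4]{Qiu} for $p \nmid 2N_f$, \cite[Proposition 9.2]{PaldVP} for $p=2$, and \cite[Proposition 7.15]{PaldVP} for $p \mid N_g$), but the substance is identical.
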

\begin{proof}
The case $p \nmid 2N_f$ actually follows already from \cite[Lemma 4.4]{Qiu}, and the case $p = 2$ is proved in \cite[Proposition 9.2]{PaldVP}. The case $p \mid N_g$ is covered in \cite[Proposition 7.15]{PaldVP}.
\end{proof}

It only remains to perform the computation of the normalized local periods $\mathcal I_p^{\sharp}(\breve{\mathbf h},\breve{\mathbf g},\breve{\pmb{\phi}})$ at primes dividing $M_g$ and of the archimedean period $\mathcal I_{\infty}^{\sharp}(\breve{\mathbf h},\breve{\mathbf g},\breve{\pmb{\phi}})$.

\subsection{The normalized local period at primes \texorpdfstring{$p\mid M_g$}{p|Mg}}

Let $p$ be a prime dividing $M_g = N_f/N_g$, as in Section \ref{sec:primesMg}. In this case, the three vectors $\breve{\mathbf h}_p$, $\breve{\mathbf g}_p$ and $\breve{\pmb{\phi}}_p$ are fixed under the action of $\Gamma = \Gamma_0(p) \subseteq \SL_2(\Z_p)$. Therefore, the matrix coefficients involved in the computation of the local integral \eqref{alphav} will be $\Gamma$-biinvariant. In particular, one can compute $\alpha_p^{\sharp}(\breve{\mathbf h},\breve{\mathbf g},\breve{\pmb{\phi}})$ as a sum 
\[
\alpha_p^{\sharp}(\breve{\mathbf h},\breve{\mathbf g},\breve{\pmb{\phi}}) = \sum_{r\in \mathcal R} \overline{\Phi_{\breve{\mathbf h}_p}(r)}\Phi_{\breve{\mathbf g}_p}(r)\Phi_{\breve{\pmb{\phi}}_p}(r)\mathrm{vol}(\Gamma r \Gamma),
\]
where $\mathcal R$ is a set of representatives for a decomposition of $\SL_2(\Q_p)$ into double cosets for $\Gamma$, and we abbreviate
\[
\Phi_{\breve{\mathbf h}_p}(r) = \frac{\langle\tilde{\pi}_p(r)\breve{\mathbf h}_p,\breve{\mathbf h}_p \rangle}{||\breve{\mathbf h}_p||^2} \quad \text{for } r \in \SL_2(\Q_p),
\]
and similarly for $\Phi_{\breve{\mathbf g}_p}$ and $\Phi_{\breve{\pmb{\phi}}_p}$. A set of representatives $\mathcal R$ as required above is furnished by the elements 
\[
\alpha_n = \left(\begin{array}{cc} p^n & 0 \\ 0 & p^{-n} \end{array} \right), \quad \beta_m = s \alpha_m= \left(\begin{array}{cc} 0 & p^{-m} \\ -p^m & 0 \end{array} \right),
\]
with $n$ and $m$ varying over all the integers, and where $s = \left(\begin{smallmatrix}0 & 1 \\ -1 & 0\end{smallmatrix}\right)$. Indeed, by combining the Cartan decomposition for $\SL_2(\Q_p)$ relative to the maximal compact open subgroup $\SL_2(\Z_p)$ with the so-called Bruhat decomposition for $\SL_2$ over $\mathbb F_p$ yields a double coset decomposition
\[
\SL_2(\Q_p) = \bigsqcup_{n\in \Z} \Gamma \alpha_n \Gamma \, \sqcup \, \bigsqcup_{m\in \Z} \Gamma \beta_m\Gamma.
\]
Hence, 
\begin{equation}\label{alphap:sums-n-m}
\alpha_p^{\sharp}(\breve{\mathbf h},\breve{\mathbf g},\breve{\pmb{\phi}}) = \sum_{n\in \Z} \overline{\Phi_{\breve{\mathbf h}_p}(\alpha_n)}\Phi_{\breve{\mathbf g}_p}(\alpha_n)\Phi_{\breve{\pmb{\phi}}_p}(\alpha_n)\mathrm{vol}(\Gamma\alpha_n\Gamma) + \sum_{m\in \Z} \overline{\Phi_{\breve{\mathbf h}_p}(\beta_m)}\Phi_{\breve{\mathbf g}_p}(\beta_m)\Phi_{\breve{\pmb{\phi}}_p}(\beta_m)\mathrm{vol}(\Gamma\beta_m\Gamma).
\end{equation}

For later reference, let us also add that the volumes of these double cosets are given by the following formulae:
\begin{equation}\label{volumes-doublecosets}
\mathrm{vol}(\Gamma \alpha_n\Gamma) = \begin{cases}
 p^{2n-2}(p-1) & \text{if } n > 0,\\
 p^{-2n-2}(p-1) & \text{if } n \leq 0,
\end{cases}
\quad 
\mathrm{vol}(\Gamma \beta_m\Gamma) = \begin{cases}
 p^{2m-3}(p-1) & \text{if } m > 0,\\
 p^{-2m-1}(p-1) & \text{if } m \leq 0.
\end{cases}
\end{equation}

Since $p$ divides $M_g$, note that $\breve{\mathbf h}_p = \mathbf h_p \in \tilde{\pi}_p^{\Gamma}$ is a newvector in the one-dimensional subspace $\tilde{\pi}_p^{\Gamma} \subset \tilde{\pi}_p$ of $\Gamma$-invariant vectors, and so the same computation as in \cite[Propositions 7.9, 7.12]{PaldVP} applies for $\Phi_{\breve{\mathbf h}_p}(\alpha_n)$ and $\Phi_{\breve{\mathbf h}_p}(\beta_m)$. Similarly, the values $\Phi_{\breve{\pmb{\phi}}_p}(\alpha_n)$ and $\Phi_{\breve{\pmb{\phi}}_p}(\beta_m)$ were computed in \cite[Proposition 7.13]{PaldVP}. We collect these computation for later reference:

\begin{proposition}\label{prop:MC-PaldVP}
If $p$ divides $M_g$ and $n, m \in \Z$, then 
\[
\Phi_{\breve{\pmb{\phi}}_p}(\alpha_n) = \chi_{\overline{\psi}_p}(p^n)p^{-|n|/2}, \quad \Phi_{\breve{\pmb{\phi}}_p}(\beta_m) = \chi_{\overline{\psi}_p}(p^m)p^{-|m|/2},
\]
and 
\[
\Phi_{\breve{\mathbf h}_p}(\alpha_n) = (-1)^n\chi_{\overline{\psi}_p^{D}}(p^n)p^{-3|n|/2}, \quad \Phi_{\breve{\pmb{\phi}}_p}(\beta_m) = (-1)^{m+1}\chi_{\overline{\psi}_p^{D}}(p^m)p^{-|3m/2-1|}.
\]
\end{proposition}

It thus remains to compute the normalized matrix coefficients $\Phi_{\breve{\mathbf g}_p}(\alpha_n)$ ($n\in \Z$) and $\Phi_{\breve{\mathbf g}_p}(\beta_m)$ ($m\in \Z$). Notice that, since $||\breve{\mathbf g}_p||^2 = 1$ by Lemma \ref{lemma:norm-gpbreve}, we have $\Phi_{\breve{\mathbf g}_p}(g)=\langle \tau_p(g)\breve{\mathbf g}_p,\breve{\mathbf g}_p\rangle$. Recall that $\tau_p = \pi(\chi,\chi^{-1})$ is the (unramified) induced representation associated with an unramified character $\chi:\Q_p^{\times} \to \C^{\times}$, and that $\breve{\mathbf g}_p = \tau_p(\varpi_p) \mathbf g_p \in \tau_p^{K_0}$ is fixed by $K_0 = K_0(p) \supseteq \Gamma_0(p)=\Gamma$, where $\mathbf g_p \in \tau_p^{\GL_2(\Z_p)}$ is the spherical vector normalized so that $\mathbf g_p(1) = 1$. To simplify the notation, we set $\xi:=\chi(p)\overline{\chi}(p)^{-1}=\chi(p)^2$.

\begin{proposition}\label{mc-alphan}
With the above notation, for all integers $n$ it holds
\[
\Phi_{\breve{\mathbf g}_p}(\alpha_n) = 
\frac{p^{-|n|}}{p+1}\left(\frac{\xi^{|n|}(p\xi-1)+\xi^{-|n|}(\xi-p)}{\xi-1}\right).
\]
\end{proposition}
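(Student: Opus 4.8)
The slickest route is to notice that $\breve{\mathbf g}_p$ is a translate of the spherical vector $\mathbf g_p$ by a \emph{diagonal} element, so that its matrix coefficient along the diagonal torus is simply that of $\mathbf g_p$. Indeed, by \eqref{brevegp} we have $\breve{\mathbf g}_p = \tau_p(\varpi_p)\mathbf g_p$ with $\varpi_p = \left(\begin{smallmatrix} p^{-1} & 0 \\ 0 & 1\end{smallmatrix}\right)$; since $\varpi_p$ and $\alpha_n$ are diagonal they commute, whence $\varpi_p^{-1}\alpha_n\varpi_p = \alpha_n$, and since $\tau_p$ is a unitary representation the operator $\tau_p(\varpi_p)$ is unitary. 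I would use this twice: first to recover $||\breve{\mathbf g}_p||^2 = ||\mathbf g_p||^2 = 1$ (Corollary \ref{cor:norm-gpbreve}), and secondly to compute
\[
\Phi_{\breve{\mathbf g}_p}(\alpha_n) = \langle \tau_p(\alpha_n)\breve{\mathbf g}_p, \breve{\mathbf g}_p\rangle = \langle \tau_p(\varpi_p)^{-1}\tau_p(\alpha_n)\tau_p(\varpi_p)\mathbf g_p, \mathbf g_p\rangle = \langle \tau_p(\alpha_n)\mathbf g_p, \mathbf g_p\rangle = \Phi_{\mathbf g_p}(\alpha_n).
\]
Thus $\Phi_{\breve{\mathbf g}_p}(\alpha_n)$ is nothing but the normalized zonal spherical function of the unramified principal series $\tau_p = \pi(\chi,\chi^{-1})$, with Satake parameters $\{\chi(p),\chi(p)^{-1}\}$, evaluated at $\alpha_n$. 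Since $\alpha_n = \mathrm{diag}(p^{2|n|},1)$ in $\PGL_2(\Qp)$, the classical Macdonald formula for $\GL_2(\Qp)$ gives this value in closed form, and a short algebraic manipulation — writing $\xi = \chi(p)^2$ — puts it into the shape stated in the proposition. One checks in passing that the resulting expression is invariant under $\xi \mapsto \xi^{-1}$, consistently with $\Phi_{\breve{\mathbf g}_p}(\alpha_{-n}) = \overline{\Phi_{\breve{\mathbf g}_p}(\alpha_n)}$ and with its depending on $n$ only through $|n|$.

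Should a self-contained argument be preferred — and the double-coset machinery set up above is designed precisely for this — the identity can instead be proved by direct computation. The plan is: write $\Phi_{\breve{\mathbf g}_p}(\alpha_n) = \int_{\GL_2(\Zp)}\breve{\mathbf g}_p(x\alpha_n)\overline{\breve{\mathbf g}_p(x)}\,dx$ using $||\breve{\mathbf g}_p||^2 = 1$; partition $\GL_2(\Zp) = \bigsqcup_{j\geq 1}\mathcal C_j \sqcup \bigsqcup_{i\geq 0}\mathcal D_i$; on $K_0 = \bigsqcup_j \mathcal C_j$ and on $K_0wK_0 = \bigsqcup_i \mathcal D_i$ read off $\overline{\breve{\mathbf g}_p(x)}$ from Lemma \ref{lemma:gpbreve} (namely $p^{1/2}\chi(p)$, resp.\ $p^{-1/2}\barchi(p)$); on each stratum decide whether \eqref{alpha-Iw1} or \eqref{alpha-Iw2} holds — this depends only on the sign of $n$ and on whether $\mathrm{val}_p(d) = 0$ (on $K_0$) or $\mathrm{val}_p(c) = 0$ (on $K_0wK_0$) — and use \eqref{gpnew} to express $\breve{\mathbf g}_p(x\alpha_n)$ in terms of $\mathrm{val}_p(d)$, resp.\ $\mathrm{val}_p(c)$, and $\xi$. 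The integrand is then constant on each $\mathcal C_j$ and each $\mathcal D_i$, so the integral becomes a finite sum plus a geometric series, which one evaluates with the recorded volumes of $\mathcal C_j$ and $\mathcal D_i$ and simplifies. It suffices to carry this out for $n \geq 0$; the case $n < 0$ follows either by the analogous computation with the roles of the $\mathcal C_j$'s and the $\mathcal D_i$'s interchanged, or from $\Phi_{\breve{\mathbf g}_p}(\alpha_{-n}) = \overline{\Phi_{\breve{\mathbf g}_p}(\alpha_n)}$ together with the reality just noted. In the degenerate case $\xi = 1$ one replaces the geometric sums by their values at $\xi = 1$, i.e.\ the closed formula is read as a limit there.

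I expect the one genuine subtlety in the first approach to be getting the normalization in Macdonald's formula right, in particular that $\alpha_n$ has length $2|n|$ and not $|n|$ when regarded modulo the maximal compact of $\PGL_2(\Qp)$. In the second approach the only real difficulty is bookkeeping: tracking which of \eqref{alpha-Iw1}, \eqref{alpha-Iw2} applies on each of the finitely many types of strata as $n$ ranges over the positive and the negative integers, and then recognizing the resulting rational function of $\xi$ as the compact expression in the statement. In either case no conceptual difficulty arises.
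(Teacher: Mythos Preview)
Your proposal is correct. Your second approach is precisely what the paper does: it splits $\GL_2(\Z_p)=K_0\sqcup K_0wK_0$, treats $n\geq 0$ and $n<0$ separately, reads off $\breve{\mathbf g}_p(x\alpha_n)$ on each stratum $\mathcal C_j$, $\mathcal D_i$ via the Iwasawa decompositions \eqref{alpha-Iw1}--\eqref{alpha-Iw2} and Lemma~\ref{lemma:gpbreve}, and sums the resulting geometric pieces.

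Your first approach, by contrast, is a genuine shortcut the paper does not take. The observation that $\varpi_p$ and $\alpha_n$ commute, so that unitarity of $\tau_p(\varpi_p)$ gives $\Phi_{\breve{\mathbf g}_p}(\alpha_n)=\Phi_{\mathbf g_p}(\alpha_n)$ outright, reduces the statement to Macdonald's formula for the zonal spherical function at $\mathrm{diag}(p^{2|n|},1)$; one line of algebra then recovers the stated expression. This is considerably faster, and in fact the same idea also dispatches the companion computation for $\beta_m$: since $\varpi_p^{-1}\beta_m\varpi_p=\beta_{m-1}$ and $\mathbf g_p$ is $\GL_2(\Z_p)$-invariant, one gets $\Phi_{\breve{\mathbf g}_p}(\beta_m)=\Phi_{\mathbf g_p}(\beta_{m-1})=\Phi_{\mathbf g_p}(\alpha_{m-1})=\Phi_{\breve{\mathbf g}_p}(\alpha_{m-1})$ immediately, which is the content of Remark~\ref{rmk:alphabeta}. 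The paper's hands-on route has the modest advantage of being fully self-contained and of exercising the stratification machinery already set up for the other local periods; your conjugation argument is cleaner and conceptually explains why the answer depends only on $|n|$.
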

\begin{proof}
 As noted above, since $||\breve{\mathbf g}_p||^2 = 1$ by Lemma \ref{lemma:norm-gpbreve}, we have
 \[
  \Phi_{\breve{\mathbf g}_p}(\alpha_n) = \langle \tau_p(\alpha_n)\breve{\mathbf g}_p,\breve{\mathbf g}_p\rangle = \langle \tau_p(\alpha_n)\tau_p(\varpi_p) \mathbf g_p, \tau_p(\varpi_p)\mathbf g_p\rangle = \langle \tau_p(\varpi_p^{-1}\alpha_n\varpi_p)\mathbf g_p, \mathbf g_p\rangle.
 \]
 Since $\mathbf g_p$ is the unramified vector normalized so that $||\mathbf g_p||=1$, the right hand side equals $\Phi_{\mathbf g_p}(\varpi_p^{-1}\alpha_n\varpi_p)$. Writing 
 \[
  \varpi_p^{-1}\alpha_n\varpi_p = \alpha_n = p^{-n}\begin{pmatrix} p^{2n} & 0 \\ 0 & 1 \end{pmatrix} \quad \text{if } n \geq 0,
 \]
 and  
 \[
  \varpi_p^{-1}\alpha_n\varpi_p = \alpha_n = p^n\begin{pmatrix} 1 & 0 \\ 0 & p^{-2n} \end{pmatrix} = p^n \begin{pmatrix} 0 & 1 \\ 1 & 0 \end{pmatrix}\begin{pmatrix} p^{-2n} & 0 \\ 0 & 1 \end{pmatrix}\begin{pmatrix} 0 & 1 \\ 1 & 0 \end{pmatrix} \quad \text{if } n < 0,
 \]
 we see that 
 \[
  \Phi_{\breve{\mathbf g}_p}(\alpha_n) = \Phi_{\mathbf g_p}\left(\begin{pmatrix} p^{|2n|} & 0 \\ 0 & 1 \end{pmatrix}\right),
 \]
 and the statement then follows from Macdonald's formula (cf. \cite[Theorem 4.6.6]{Bump}).
\end{proof}

\begin{proposition}
With notation as above, for all integers $m$ it holds
\[
\Phi_{\breve{\mathbf g}_p}(\beta_m) = \frac{p^{-|m-1|}}{p+1}\left(\frac{\xi^{|m-1|}(p\xi-1) + \xi^{-|m-1|}(\xi-p)}{\xi-1}\right).
\]
\end{proposition}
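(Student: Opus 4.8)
The plan is to sidestep a direct Iwasawa-decomposition computation and instead reduce $\Phi_{\breve{\mathbf g}_p}(\beta_m)$ to the value $\Phi_{\breve{\mathbf g}_p}(\alpha_n)$ already obtained in Proposition~\ref{mc-alphan}. The key point is that $\breve{\mathbf g}_p = \tau_p(\varpi_p)\mathbf g_p$ with $\varpi_p = \left(\begin{smallmatrix} p^{-1} & 0 \\ 0 & 1\end{smallmatrix}\right)$ (see \eqref{brevegp}), and that a one-line matrix computation shows $\varpi_p^{-1}\beta_m\varpi_p = \beta_{m-1}$, whereas $\varpi_p$ commutes with the diagonal element $\alpha_{m-1}$. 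Since the pairing on the unitary principal series $\tau_p = \pi(\chi,\chi^{-1})$ (the $L^2(\GL_2(\Z_p))$-pairing in the compact model used in the proof of Proposition~\ref{mc-alphan}) is $\GL_2(\Q_p)$-invariant, these identities will let me transfer the matrix coefficient at $\beta_m$ against $\breve{\mathbf g}_p$ to one against the spherical vector $\mathbf g_p$.

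Concretely, I would first record $\varpi_p^{-1}\beta_m\varpi_p = \beta_{m-1}$, so that invariance of the pairing gives $\langle\tau_p(\beta_m)\breve{\mathbf g}_p,\breve{\mathbf g}_p\rangle = \langle\tau_p(\beta_{m-1})\mathbf g_p,\mathbf g_p\rangle$. Next, writing $\beta_{m-1} = s\alpha_{m-1}$ with $s = \left(\begin{smallmatrix} 0 & 1 \\ -1 & 0\end{smallmatrix}\right) \in \SL_2(\Z_p) \subset \GL_2(\Z_p)$ and using that $\mathbf g_p$ is spherical, invariance again gives $\langle\tau_p(\beta_{m-1})\mathbf g_p,\mathbf g_p\rangle = \langle\tau_p(\alpha_{m-1})\mathbf g_p,\mathbf g_p\rangle$; and since $\varpi_p$ and $\alpha_{m-1}$ commute, $\langle\tau_p(\alpha_{m-1})\mathbf g_p,\mathbf g_p\rangle = \langle\tau_p(\alpha_{m-1})\breve{\mathbf g}_p,\breve{\mathbf g}_p\rangle$. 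Recalling $||\breve{\mathbf g}_p||^2 = 1$ from Corollary~\ref{cor:norm-gpbreve}, so that $\Phi_{\breve{\mathbf g}_p}(g) = \langle\tau_p(g)\breve{\mathbf g}_p,\breve{\mathbf g}_p\rangle$, the chain of equalities collapses to $\Phi_{\breve{\mathbf g}_p}(\beta_m) = \Phi_{\breve{\mathbf g}_p}(\alpha_{m-1})$, and the asserted formula follows by substituting $n = m-1$ into Proposition~\ref{mc-alphan}.

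Alternatively --- and this is presumably the motivation for recording the Iwasawa decompositions \eqref{betam-Iw1} and \eqref{betam-Iw2} above --- one can imitate the proof of Proposition~\ref{mc-alphan} verbatim: split into the cases $m \geq 1$ and $m \leq 0$, decompose $\GL_2(\Z_p) = K_0 \sqcup K_0wK_0$ and refine each piece along the subsets $\mathcal C_j$, $\mathcal D_i$ according to the sign of $\mathrm{val}_p(d) + 2m - \mathrm{val}_p(c)$, evaluate $\breve{\mathbf g}_p(x\beta_m)$ by combining \eqref{betam-Iw1}--\eqref{betam-Iw2} with Lemma~\ref{lemma:gpbreve}, and sum the resulting finite geometric series in $\xi = \chi(p)^2$. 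Either way there is no genuine difficulty; the only thing to be careful about is the shift $m \mapsto m-1$ --- trivial in the conjugation approach, a minor bookkeeping nuisance in the direct one --- which reflects the extra power of $p$ carried by the oldform twist $\breve{\mathbf g}_p = \mathbf V_p \mathbf g_p$.
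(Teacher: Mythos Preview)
Your primary argument is correct and genuinely different from the paper's proof. The paper carries out the direct computation you sketch as your ``alternative'': it splits into the cases $m>0$ and $m\leq 0$, integrates over $K_0$ and $K_0wK_0$ using the refinements along $\mathcal C_j$ and $\mathcal D_i$, evaluates $\breve{\mathbf g}_p(x\beta_m)$ via \eqref{betam-Iw1}--\eqref{betam-Iw2} together with Lemma~\ref{lemma:gpbreve}, and sums the resulting geometric series in $\xi$. Only afterwards does the paper record (Remark~\ref{rmk:alphabeta}) the identity $\Phi_{\breve{\mathbf g}_p}(\beta_m)=\Phi_{\breve{\mathbf g}_p}(\alpha_{m-1})$ as an a~posteriori observation.

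Your conjugation argument explains this identity a~priori: the one-line check $\varpi_p^{-1}\beta_m\varpi_p=\beta_{m-1}$, unitarity of $\tau_p$, sphericality of $\mathbf g_p$ (to absorb $s\in\GL_2(\Z_p)$), and the commutation of $\varpi_p$ with $\alpha_{m-1}$ combine exactly as you say to give $\Phi_{\breve{\mathbf g}_p}(\beta_m)=\Phi_{\breve{\mathbf g}_p}(\alpha_{m-1})$, after which Proposition~\ref{mc-alphan} finishes. This is shorter and more conceptual; the paper's direct route, while longer, has the virtue of being an independent verification and of justifying the explicit Iwasawa decompositions \eqref{betam-Iw1}--\eqref{betam-Iw2} that were set up. Either way the result is the same.
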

\begin{proof}
 One can argue similarly as in the previous proposition. Indeed, we have 
 \[
  \Phi_{\breve{\mathbf g}_p}(\beta_m) = \langle \tau_p(\beta_m)\breve{\mathbf g}_p,\breve{\mathbf g}_p\rangle = \langle \tau_p(\beta_m)\tau_p(\varpi_p) \mathbf g_p, \tau_p(\varpi_p)\mathbf g_p\rangle = \langle \tau_p(\varpi_p^{-1}s\alpha_m\varpi_p)\mathbf g_p, \mathbf g_p\rangle,
 \]
 where recall that $s=\left(\begin{smallmatrix} 0 & 1 \\ -1 & 0 \end{smallmatrix}\right)$. Now we may observe that 
 \[
   \varpi_p^{-1}s\alpha_m\varpi_p = \begin{pmatrix} 0 & p^{1-m} \\ -p^{m-1} & 0 \end{pmatrix} = p^{1-m} \begin{pmatrix} 0 & 1 \\ -1 & 0 \end{pmatrix} \begin{pmatrix} p^{2m-2} & 0 \\ 0 & 1 \end{pmatrix} \quad \text{if } m \geq 1,
 \]
 and 
 \[
   \varpi_p^{-1}s\alpha_m\varpi_p = \begin{pmatrix} 0 & p^{1-m} \\ -p^{m-1} & 0 \end{pmatrix} = p^{m-1} \begin{pmatrix} p^{2-2m} & 0 \\ 0 & 1 \end{pmatrix}\begin{pmatrix} 0 & 1 \\ -1 & 0 \end{pmatrix}  \quad \text{if } m < 1,
 \]
 to deduce that in both cases 
 \[
  \Phi_{\breve{\mathbf g}_p}(\beta_m) = \Phi_{\mathbf g_p}\left(\begin{pmatrix} p^{2|m-1|} & 0 \\ 0 & 1 \end{pmatrix}\right).
 \]
 The statement now follows again by invoking Macdonald's formula (cf. \cite[Theorem 4.6.6]{Bump}).
\end{proof}

\begin{remark}\label{rmk:alphabeta}
Observe from the previous propositions that for every integer $n$ one has $\Phi_{\breve{\mathbf g}_p}(\beta_n) = \Phi_{\breve{\mathbf g}_p}(\alpha_{n-1})$.
\end{remark}

Having computed the matrix coefficients that were missing for this case, we can finally tackle the computation of the normalized local period. First, we compute the local integral (cf. \eqref{alphap:sums-n-m})
\[
\alpha_p^{\sharp}(\breve{\mathbf h},\breve{\mathbf g},\breve{\pmb{\phi}}) =  \sum_{n\in \Z} \Omega_p(\alpha_n)\mathrm{vol}(\Gamma\alpha_n\Gamma) + 
 \sum_{m\in \Z} \Omega_p(\beta_m)\mathrm{vol}(\Gamma\beta_m\Gamma)
\]
where we abbreviate $\Omega_p(g) := \overline{\Phi_{\breve{\mathbf h}_p}(g)}\Phi_{\breve{\mathbf g}_p}(g)\Phi_{\breve{\pmb{\phi}}_p}(g)$ for $g \in \SL_2(\Q_p)$.

\begin{proposition}\label{prop:case2-localintegral}
Let $p$ be a prime dividing $M_g$. Then the local integral $\alpha_p^{\sharp} (\breve{\mathbf h}, \breve{\mathbf g}, \breve{\pmb{\phi}})$ {\em vanishes} if $w_p = -1$. And if $w_p = 1$, then one has
\[
\alpha_p^{\sharp} (\breve{\mathbf h}, \breve{\mathbf g}, \breve{\pmb{\phi}}) = \frac{2(p-1)^2(p-\xi)(p\xi-1)}{p^2(p+1)(p+\xi)(p\xi+1)}.
\]
\end{proposition}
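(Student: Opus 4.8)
The plan is to evaluate the two series in \eqref{alphap:sums-n-m} by plugging in the matrix‑coefficient formulas of Propositions \ref{prop:MC-PaldVP} and \ref{mc-alphan} together with the volume formulas \eqref{volumes-doublecosets}, exploiting two simplifications that collapse everything to one geometric series. First I would simplify the part of $\Omega_p$ not involving $\breve{\mathbf g}_p$, namely $\overline{\Phi_{\breve{\mathbf h}_p}}\cdot\Phi_{\breve{\pmb\phi}_p}$. Using Proposition \ref{prop:MC-PaldVP}, the identity $\chi_{\overline{\psi}_p^D}=\chi_{\overline{\psi}_p}\cdot\chi_D$ from Section \ref{sec:Weilreps}, and $\chi_D(p)=w_p\in\{\pm1\}$, the unitary characters $\chi_{\overline{\psi}_p}$ cancel against their complex conjugates, leaving
\[
\overline{\Phi_{\breve{\mathbf h}_p}(\alpha_n)}\,\Phi_{\breve{\pmb\phi}_p}(\alpha_n)=(-w_p)^{n}p^{-2|n|},\qquad \overline{\Phi_{\breve{\mathbf h}_p}(\beta_m)}\,\Phi_{\breve{\pmb\phi}_p}(\beta_m)=(-1)^{m+1}w_p^{m}\,p^{-|3m/2-1|-|m|/2}.
\]

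\textbf{Telescoping.} Multiplying by the volumes in \eqref{volumes-doublecosets}, the powers of $p$ telescope: a short case analysis ($n\gtrless0$, $m\gtrless0$, the only subtlety being the different shape of $|3m/2-1|$ and of $\mathrm{vol}(\Gamma\beta_m\Gamma)$ on $\{m\ge1\}$ versus $\{m\le0\}$) shows that $\Omega_p(\alpha_n)\,\mathrm{vol}(\Gamma\alpha_n\Gamma)$ depends only on $|n|$ — call it $T_{|n|}$ — and, after rewriting $\Phi_{\breve{\mathbf g}_p}(\beta_m)=\Phi_{\breve{\mathbf g}_p}(\alpha_{m-1})$ by Remark \ref{rmk:alphabeta}, that $\Omega_p(\beta_m)\,\mathrm{vol}(\Gamma\beta_m\Gamma)=w_p\,T_{|m-1|}$ for \emph{every} $m\in\Z$. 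Explicitly, for all $k\ge0$,
\[
T_k=\frac{p-1}{p^{2}(p+1)(\xi-1)}\,(-w_p)^{k}\,p^{-k}\bigl(\xi^{k}(p\xi-1)+\xi^{-k}(\xi-p)\bigr),
\]
which collapses to $T_0=(p-1)/p^2$ since the bracket equals $(p+1)(\xi-1)$ at $k=0$.

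\textbf{Summation and the dichotomy.} Since $n\mapsto|n|$ hits $0$ once and each $k\ge1$ twice, and likewise $m\mapsto|m-1|$, both series in \eqref{alphap:sums-n-m} reduce to the single sum $T_0+2\sum_{k\ge1}T_k$, whence
\[
\alpha_p^{\sharp}(\breve{\mathbf h},\breve{\mathbf g},\breve{\pmb\phi})=(1+w_p)\Bigl(T_0+2\sum_{k\ge1}T_k\Bigr),
\]
the series converging because $\chi$ is unitary (Section \ref{sec:primesMg}), so $|\xi|=1$ and $|T_k|=O(p^{-k})$. This makes the dichotomy transparent: if $w_p=-1$ the prefactor $1+w_p$ vanishes and $\alpha_p^{\sharp}=0$; if $w_p=1$, then $(-w_p)^k=(-1)^k$ and it remains to sum the two geometric series of ratios $-\xi/p$ and $-\xi^{-1}/p$. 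One finds
\[
T_0+2\sum_{k\ge1}T_k=2\sum_{k\ge0}T_k-T_0=\frac{2(p-1)\bigl(\xi^2+(p^2+1)\xi+1\bigr)}{p(p+1)(p+\xi)(p\xi+1)}-\frac{p-1}{p^2},
\]
and reducing to a common denominator the numerator factors as $(p-1)(p\xi-1)(p-\xi)$, which gives the asserted value $2(p-1)^2(p-\xi)(p\xi-1)/\bigl(p^2(p+1)(p+\xi)(p\xi+1)\bigr)$.

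\textbf{Main obstacle.} The $p$‑power bookkeeping and the final polynomial identity are routine. The one step deserving genuine care — and where the sign $w_p$ actually enters — is the uniform identity $\Omega_p(\beta_m)\,\mathrm{vol}(\Gamma\beta_m\Gamma)=w_p\,T_{|m-1|}$: both the exponent $|3m/2-1|$ in $\Phi_{\breve{\mathbf h}_p}(\beta_m)$ and the volume $\mathrm{vol}(\Gamma\beta_m\Gamma)$ are prescribed by different formulas on $\{m\ge1\}$ and $\{m\le0\}$, so the two ranges must be treated separately and one must check that in each case the ratio of the $\beta$‑term to $T_{|m-1|}$ comes out exactly to $w_p$ (for $m\ge1$ this ratio is $(-1)^{m+1}w_p^{m}/(-w_p)^{m-1}=w_p$, and the range $m\le0$ is analogous after the substitution $m=-j$, $j\ge0$).
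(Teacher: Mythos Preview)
Your proof is correct and follows essentially the same route as the paper's. Both arguments simplify $\overline{\Phi_{\breve{\mathbf h}_p}}\Phi_{\breve{\pmb\phi}_p}$ via $\chi_{\overline{\psi}_p^D}=\chi_{\overline{\psi}_p}\chi_D$ and $|\chi_{\overline{\psi}_p}|=1$, observe that multiplication by the volumes collapses the $p$-powers, invoke Remark~\ref{rmk:alphabeta} to match the $\beta_m$-sum with the shifted $\alpha$-sum, factor out $(1+w_p)$, and finish with the same pair of geometric series. Your introduction of the quantities $T_k$ and the identity $\Omega_p(\beta_m)\mathrm{vol}(\Gamma\beta_m\Gamma)=w_p\,T_{|m-1|}$ is just a convenient repackaging of the paper's reindexing step $\sum_m(-1)^{m+1}w_p^m\Phi_{\breve{\mathbf g}_p}(\alpha_{m-1})=w_p\sum_n(-w_p)^n\Phi_{\breve{\mathbf g}_p}(\alpha_n)$.
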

\begin{proof}
We focus first on the computation of $\Omega_p(\alpha_n)\mathrm{vol}(\Gamma\alpha_n\Gamma)$. From Proposition \ref{prop:MC-PaldVP}, using that $\chi_{\overline{\psi}_p^D} = \chi_{\overline{\psi}_p} \cdot \chi_D$ and $(D,p)_p = w_p$, we have
\[
\overline{\Phi_{\breve{\mathbf h}_p}(\alpha_n)}\Phi_{\breve{\pmb{\phi}}_p}(\alpha_n) = p^{-2|n|}(-1)^nw_p^n.
\]
Since  $\mathrm{vol}(\Gamma\alpha_n\Gamma) = p^{|2n|-2}(p-1)$ (cf. \eqref{volumes-doublecosets}), we get
\[
\Omega_p(\alpha_n)\mathrm{vol}(\Gamma\alpha_n\Gamma) = p^{-2}(p-1)(-1)^nw_p^n\Phi_{\breve{\mathbf g}_p}(\alpha_n).
\]

Now we now look at $\Omega_p(\beta_m)\mathrm{vol}(\Gamma\beta_m\Gamma)$. Similarly as before, from Proposition \ref{prop:MC-PaldVP} one has $\overline{\Phi_{\breve{\mathbf h}_p}(\beta_m)}\Phi_{\breve{\pmb{\phi}}_p}(\beta_m) = p^{-|2m-1|}(-1)^{m+1}w_p^m$, and using that $\mathrm{vol}(\Gamma\beta_m\Gamma) = p^{|2m-1|}p^{-2}(p-1)$ we find
\[
\Omega_p(\beta_m)\mathrm{vol}(\Gamma_0\beta_m\Gamma_0) = p^{-2}(p-1)(-1)^{m+1}w_p^m\Phi_{\breve{\mathbf g}_p}(\beta_m).
\]
Altogether, the above yields (using Remark \ref{rmk:alphabeta})
\[
\alpha_p^{\sharp}(\breve{\mathbf h},\breve{\mathbf g},\breve{\pmb{\phi}}) = \frac{p-1}{p^2}\sum_{n\in \Z} ((-1)^nw_p^n\Phi_{\breve{\mathbf g}_p}(\alpha_n) + (-1)^{n-1}w_p^n\Phi_{\breve{\mathbf g}_p}(\alpha_{n-1})) = (1+w_p)\frac{p-1}{p^2}\sum_{n\in \Z}(-1)^nw_p^n\Phi_{\breve{\mathbf g}_p}(\alpha_n).
\]
Here we see that the desired local integral {\em vanishes if $w_p = -1$}. Assume in the following that $w_p=1$. By using that $\Phi_{\breve{\mathbf g}_p}(\alpha_{-n})=\Phi_{\breve{\mathbf g}_p}(\alpha_n)$ (cf. Proposition \ref{mc-alphan}), we have  
\[
\alpha_p^{\sharp}(\breve{\mathbf h},\breve{\mathbf g},\breve{\pmb{\phi}}) = \frac{2(p-1)}{p^2}\sum_{n\in \Z}(-1)^nw_p^n\Phi_{\breve{\mathbf g}_p}(\alpha_n) = \frac{2(p-1)}{p^2}\left( \Phi_{\breve{\mathbf g}_p}(1) + 2\sum_{n > 0}(-1)^n\Phi_{\breve{\mathbf g}_p}(\alpha_n)\right).
\]
Now, $\Phi_{\breve{\mathbf g}_p}(1)=1$, and by Proposition \ref{mc-alphan}
\[
\sum_{n > 0}(-1)^n\Phi_{\breve{\mathbf g}_p}(\alpha_n) = \frac{p\xi-1}{(p+1)(\xi-1)}\sum_{n>0}(-p^{-1}\xi)^{n} + \frac{\xi-p}{(p+1)(\xi-1)}\sum_{n>0}(-p^{-1}\xi^{-1})^{n}.
\]
These geometric series are computed easily, and one eventually finds
\[
\sum_{n>0}(-p^{-1}\xi)^{n} = \frac{-1}{(p+1)} \cdot \frac{(\xi+p^2\xi^2+p^2\xi+p^2)}{(p+\xi)(p\xi+1)}.
\]
Back to the computation of $\alpha_p^{\sharp}(\breve{\mathbf h} ,\breve{\mathbf g},\breve{\pmb{\phi}})$, the above yields
\[
    \Phi_{\breve{\mathbf g}_p}(1) + 2\sum_{n > 0}(-1)^n\Phi_{\breve{\mathbf g}_p}(\alpha_n) = 1 - \frac{2(\xi+p^2\xi^2+p^2\xi+p^2)}{(p+1)(p+\xi)(p\xi+1)} = \frac{(p-1)(p-\xi)(p\xi-1)}{(p+1)(p+\xi)(p\xi+1)}.
\]
This gives the claimed formula.
\end{proof}

Finally, in the next proposition we bring the local $L$-values into the picture to conclude the computation of the normalized local period $\mathcal I_p^{\sharp}(\breve{\mathbf h},\breve{\mathbf g},\breve{\pmb{\phi}})$:

\begin{proposition}\label{prop:Ip-Mg}
 Let $p$ be a prime dividing $N_f$ but not $N_g$. Then the normalized local period $\mathcal I_p^{\sharp} (\breve{\mathbf h}, \breve{\mathbf g}, \breve{\pmb{\phi}})$ {\em vanishes} if $w_p = -1$. And if $w_p=1$, one has
 \[
  \mathcal I_p^{\sharp} (\breve{\mathbf h}, \breve{\mathbf g}, \breve{\pmb{\phi}}) = \frac{2}{p+1}.
 \]
\end{proposition}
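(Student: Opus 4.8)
The plan is to feed the value of the local integral $\alpha_p^{\sharp}(\breve{\mathbf h},\breve{\mathbf g},\breve{\pmb{\phi}})$ from Proposition~\ref{prop:case2-localintegral} into the identity \eqref{Iv-alphav}, which reads $\mathcal I_p^{\sharp}=\frac{L(1,\pi_p,\mathrm{ad})L(1,\tau_p,\mathrm{ad})}{L(\pi_p\otimes\mathrm{ad}(\tau_p),1/2)}\,\alpha_p^{\sharp}$, so that the only remaining task is to evaluate the three local $L$-factors at the prime $p\mid M_g$. If $w_p=-1$, then $\alpha_p^{\sharp}=0$ by Proposition~\ref{prop:case2-localintegral} and the $L$-factor prefactor in \eqref{Iv-alphav} is finite, whence $\mathcal I_p^{\sharp}=0$; so from now on we may assume $w_p=1$.

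Next I would compute the $L$-factors via the Weil--Deligne description of the relevant Galois representations, consistently with the definition of $L(f\otimes\mathrm{Ad}(g),s)$ recalled in the Introduction. Since $p\mid N_f$ with $N_f$ squarefree and $\pi$ has trivial nebentype, $\pi_p$ is a twist $\mathrm{St}\otimes\eta$ of the Steinberg representation of $\GL_2(\Q_p)$ by an unramified quadratic character $\eta$, with $\eta(p)=-w_p$; thus $w_p=1$ forces $\eta(p)=-1$. The adjoint is insensitive to the twist, so $L(s,\pi_p,\mathrm{ad})=\zeta_p(s+1)$ and $L(1,\pi_p,\mathrm{ad})=(1-p^{-2})^{-1}$. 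On the other hand $\tau_p=\pi(\chi,\chi^{-1})$ is an unramified, tempered principal series (Ramanujan for holomorphic forms), so $\mathrm{ad}(\tau_p)$ is the $3$-dimensional unramified representation with Frobenius eigenvalues $\xi,1,\xi^{-1}$ ($\xi=\chi(p)^2$), giving $L(s,\tau_p,\mathrm{ad})=(1-\xi p^{-s})^{-1}(1-p^{-s})^{-1}(1-\xi^{-1}p^{-s})^{-1}$ and $L(1,\tau_p,\mathrm{ad})=p^3\xi\big((p-1)(p-\xi)(p\xi-1)\big)^{-1}$. For the degree-$6$ factor, the Weil--Deligne representation attached to $\pi_p\otimes\mathrm{ad}(\tau_p)$ is $\mathrm{St}_2\otimes U$, where $\mathrm{St}_2$ is the $2$-dimensional special representation and $U$ is the $3$-dimensional unramified representation with Frobenius eigenvalues $\eta(p)\xi,\eta(p),\eta(p)\xi^{-1}=-\xi,-1,-\xi^{-1}$ (the twist by $\eta$ being absorbed into $U$); the monodromy operator leaves a $3$-dimensional kernel on which Frobenius acts by $-p^{-1/2}\xi,-p^{-1/2},-p^{-1/2}\xi^{-1}$, so that $L(s,\pi_p\otimes\mathrm{ad}(\tau_p))=(1+p^{-1/2}\xi p^{-s})^{-1}(1+p^{-1/2}p^{-s})^{-1}(1+p^{-1/2}\xi^{-1}p^{-s})^{-1}$ and $L(\pi_p\otimes\mathrm{ad}(\tau_p),1/2)=p^3\xi\big((p+1)(p+\xi)(p\xi+1)\big)^{-1}$.

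Plugging these into \eqref{Iv-alphav} together with $\alpha_p^{\sharp}=\dfrac{2(p-1)^2(p-\xi)(p\xi-1)}{p^2(p+1)(p+\xi)(p\xi+1)}$ from Proposition~\ref{prop:case2-localintegral}, the $L$-factor prefactor equals $\dfrac{p^2(p+\xi)(p\xi+1)}{(p-1)^2(p-\xi)(p\xi-1)}$, and in the product all of the $\xi$-dependence — indeed everything except $2/(p+1)$ — cancels, yielding $\mathcal I_p^{\sharp}(\breve{\mathbf h},\breve{\mathbf g},\breve{\pmb{\phi}})=2/(p+1)$, as claimed.

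The main obstacle is the precise identification and bookkeeping of the degree-$6$ local factor at $p\mid M_g$: one has to pin down $\pi_p$ as exactly the unramified twist of Steinberg dictated by the sign $w_p=1$, and then track how the monodromy of $\mathrm{St}_2$ kills the ``wrong'' Frobenius eigenvalues in the tensor product with $\mathrm{Ad}(V(\tau_p))$ --- this is precisely where the sign $\eta(p)=-1$ enters and produces the $(1+\cdots)$ factors rather than $(1-\cdots)$, which is what makes the $\xi$-dependence of $\alpha_p^{\sharp}$ cancel against the $L$-factors. Everything else is elementary algebra, and a useful sanity check is that the same recipe at a prime $p\mid N_g$ (where $\tau_p$ is also a twist of Steinberg) reproduces the value $\mathcal I_p^{\sharp}=p^{-1}$ of Proposition~\ref{prop:Ip-PaldVP}.
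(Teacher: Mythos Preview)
Your proof is correct and follows essentially the same approach as the paper: multiply $\alpha_p^{\sharp}$ from Proposition~\ref{prop:case2-localintegral} by the ratio of local $L$-values in \eqref{Iv-alphav}, and check that everything cancels to $2/(p+1)$. The only cosmetic difference is that the paper obtains $L(\pi_p\otimes\mathrm{ad}(\tau_p),1/2)$ by factoring the triple product $L(\pi_p\otimes\tau_p\otimes\tau_p,1/2)=L(\pi_p,1/2)L(\pi_p\otimes\mathrm{ad}(\tau_p),1/2)$ and quoting the known triple product local factor, whereas you compute it directly from the Weil--Deligne representation $\mathrm{St}_2\otimes U$; both routes give the same value $p^3/\big((p+1)(p+\xi)(p+\xi^{-1})\big)$.
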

\begin{proof}
The vanishing statement in the case $w_p=-1$ follows from the previous proposition, so we may assume that $w_p = 1$. Let us look at the local $L$-values involved in the definition of $\mathcal I_p^{\sharp}(\breve{\mathbf h}, \breve{\mathbf g}, \breve{\pmb{\phi}})$. On the one hand, using that $\pi_p$ is a special representation and $\tau_p$ is an unramified principal series representation, we have (see \cite[Section 10]{Hida-Galreps} or \cite{GelbartJacquet})
\[
L(1,\pi_p,\mathrm{ad}) = \frac{p^2}{p^2-1} = \frac{p^2}{(p+1)(p-1)}, \quad L(1,\tau_p,\mathrm{ad}) = \frac{p^3}{(p-1)(p-\xi)(p-\xi^{-1})}.
\]
Besides, it is well-known that $L(\pi_p,1/2) = \frac{p}{p+w_p} = \frac{p}{p+1}$, whereas for the triple product $L$-function we have (see \cite[Section 3]{Kudla}, for example)
\[
L(\pi_p\otimes\tau_p\otimes\tau_p,1/2) = \frac{p^4}{(p+w_p)^2(p+w_p\xi)(p+w_p\xi^{-1})} = \frac{p^4}{(p+1)^2(p+\xi)(p+\xi^{-1})}.
\]
Therefore, we have 
\[
L(\pi_p\otimes \mathrm{ad}(\tau_p),1/2) = \frac{L(\pi_p\otimes\tau_p\otimes\tau_p,1/2)}{L(\pi_p,1/2)} = \frac{p^3}{(p+1)(p+\xi)(p+\xi^{-1})},
\]
and as a consequence 
\[
\frac{L(1,\pi_p,\mathrm{ad})L(1,\tau_p,\mathrm{ad})}{L(\pi_p\otimes \mathrm{ad}(\tau_p),1/2)} = \frac{p^5(p+1)(p+\xi)(p+\xi^{-1})}{p^3(p+1)(p-1)^2(p-\xi)(p-\xi^{-1})} =  \frac{p^2(p+\xi)(p\xi+1)}{(p-1)^2(p-\xi)(p\xi-1)}.
\]
Multiplying with the value of $\alpha_p^{\sharp}(\breve{\mathbf h},\breve{\mathbf g},\breve{\pmb{\phi}})$ we get as claimed
\begin{align*}
\mathcal I_p^{\sharp} (\breve{\mathbf h}, \breve{\mathbf g}, \breve{\pmb{\phi}}) = \frac{2(p-1)^2 (p-\xi)(p\xi-1)}{p^2(p+1)(p+\xi)(p\xi+1)} \cdot \frac{p^2(p+\xi)(p\xi+1)}{(p-1)^2(p-\xi)(p\xi-1)} = \frac{2}{p+1}.
\end{align*}
\end{proof}

\subsection{The normalized local period at the archimedean place}

To address the computation of the normalized period $\mathcal I_{\infty}^{\sharp}(\breve{\mathbf h},\breve{\mathbf g},\breve{\pmb{\phi}})$, we follow the approach of Xue \cite{Xue}. We fulfill some details missing in loc. cit. in order to provide an explicit formula. 

In order to lighten the notation, let us write in this paragraph $\psi = \overline{\psi_{\infty}}$, so that $\psi(x) = e^{-2\pi\sqrt{-1}x}$ and $\omega_{\infty} = \omega_{\psi}$. By Iwasawa decomposition, recall that every element $g \in \SL_2(\R)$ can be written as 
\[
 g = \left(\begin{array}{cc} y & 0 \\ 0 & y^{-1}\end{array}\right)\left(\begin{array}{cc} 1 & x \\ 0 & 1\end{array}\right) k
\]
for some $y \in \R_{>0}$, $x \in \R$ and $k \in \mathrm{SO}(2)$. We consider the Haar measure $dg = y^{-2}dxdydk$, where $dx$ and $dy$ are the usual Lebesgue measure on $\R$, and $dk$ is the Haar measure on $\mathrm{SO}(2)$ for which the volume of $\mathrm{SO}(2)$ is $\pi$.

Recall from Section \ref{sec:realplace} that $\tau_{\infty}$ is a discrete series representation of $\PGL_2(\R)$ of weight $\ell+1$, and that $\breve{\mathbf g}_{\infty} \in \tau_{\infty}$ is a lowest weight vector. Similarly, recall that $\tilde{\pi}_{\infty}$ is a discrete series representation of $\widetilde{\SL}_2(\R)$ of lowest $\widetilde{\SO}(2)$-type $k+1/2$, and $\breve{\mathbf h}_{\infty} = \tilde V_+^m\mathbf h_{\infty}$ with $\mathbf h_{\infty}$ a lowest weight vector in $\tilde{\pi}_{\infty}$.

Let $J$ be the Jacobi group, which arises as the semidirect product of $\SL_2$ with the so-called Heisenberg group $H$, and it can be realized as a subgroup of $\mathrm{Sp}_2$ (see \cite[Section 1.1]{BerndtSchmidt}). In explicit terms, elements in $J$ can be written as products  
\[
\left(\begin{array}{cc} a & b \\ c & d\end{array}\right)(\lambda, \mu, \xi) = 
\left(\begin{array}{cccc} a & & b \\ & 1 \\ c & & d \\ & & & 1\end{array}\right)
\left(\begin{array}{cccc} 1 & & & \mu \\ \lambda & 1 & \mu & \xi \\ & & 1 & -\lambda \\ & & & 1\end{array}\right), 
\quad \left(\begin{array}{cc} a & b\\ c & d \end{array}\right) \in \SL_2, (\lambda,\mu,\xi) \in H.
\]

By virtue of \cite[Theorem 7.3.3]{BerndtSchmidt}, $\tilde{\pi}_{\infty} \otimes \omega_{\infty}$ is isomorphic to a discrete series representation $\rho_{\infty}$ of $J(\R)$ of lowest $K$-type $k+1$. In particular, the vector $\mathbf h_{\infty} \otimes \pmb{\phi}_{\infty} \in \tilde{\pi}_{\infty} \otimes \omega_{\infty}$ is then identified under the previous isomorphism with a lowest weight vector in $\rho_{\infty}$, which we shall call $\mathbf J_{\infty} \in \rho_{\infty}$. By an abuse of notation, we will simply write $\mathbf J_{\infty} = \mathbf h_{\infty} \otimes \pmb{\phi}_{\infty}$, keeping in mind that this equality is through the isomorphism between $\tilde{\pi}_{\infty} \otimes \omega_{\infty}$ and $\rho_{\infty}$.

Before entering in the computation of the archimedean period $\mathcal I_{\infty}^{\sharp}(\breve{\mathbf h}, \breve{\mathbf g}, \breve{\pmb{\phi}})$, it will be useful to fix once and for all an explicit model $D(k+1,N_f)$ of the discrete series representation $\rho_{\infty}$, which can be found in \cite[Chapter 3]{BerndtSchmidt}, and to describe its main features. As vector spaces, one has
\[
 D(k+1,N_f) = \bigoplus_{\substack{r,s \geq 0},\\ s \text{ even}} \C \cdot v_{r,s},
\]
and $\mathrm{SO}_2(\R)$ acts on $v_{r,s}$ through the character $u \mapsto u^{k+1+r+s}$. The element $v_{0,0}$ is a lowest weight vector, and $\mathrm{SO}_2(\R)$ acts on the line spanned by $v_{0,0}$ through the character $u \mapsto u^{k+1}$. Let $\mathfrak r$ be the Lie algebra of $J(\R)$, and denote by $\mathfrak r_{\C}$ its complexification. There are certain operators $X_+$, $X_-$, $Y_+$, $Y_-$ acting on $\mathfrak r_{\C}$ (see loc. cit. for the precise definition) satisfying $\mathrm d\rho_{\infty} X_- \mathbf J_{\infty} = \mathrm d\rho_{\infty}Y_- \mathbf J_{\infty} = 0$. The action of these operators on the above model is given by the following recipe:
\begin{align*}
 \mathrm d\rho_{\infty}Y_+ v_{r,s} & = v_{r+1,s}, \quad \mathrm d\rho_{\infty}X_+ v_{r,s} = -\frac{1}{2\pi N_f} v_{r+2,s}, \\
 \mathrm d\rho_{\infty}Y_- v_{r,s} & = -2\pi N_f r v_{r-1,s}, \quad \mathrm d\rho_{\infty}X_- v_{r,s} = \pi N_f r(r-1) v_{r-2,s} - \frac{s}{4}(2k+s-1)v_{r,s-2}.
\end{align*}

The space $D(k+1,N_f)$ is further endowed with an inner product $\langle \, , \, \rangle$, and the vectors $v_{r,s}$ form an orthogonal basis with respect to this inner product. Setting $||v||^2 = \langle v, v \rangle$, from \cite[pages 46, 47]{BerndtSchmidt} we know that 
\begin{equation}\label{recursive-norms}
 ||v_{r,s+2}||^2 = \frac{(s+2)(2k+s+1)}{4}||v_{r,s}||^2, \quad 
 ||v_{r+1,s}||^2 = 2\pi N_f(r+1)||v_{r,s}||^2.
\end{equation}

From now on, we normalize the inner product by requiring that $||v_{2m,0}||^2 = \langle v_{2m,0}, v_{2m,0} \rangle = 1$.

\begin{lemma}\label{lemma:normv}
 With the above notation, if $s$ is an even integer with $2 \leq s \leq 2m$, then 
 \[
  ||v_{2m-s,s}||^2 = (4\pi N_f)^{-s} \prod_{\substack{0\leq j\leq s-2,\\ j \text{ even}}}
  \frac{(j+2)(2k+j+1)}{(2m-j-1)(2m-j)}.
 \]
\end{lemma}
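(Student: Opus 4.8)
The plan is to obtain $||v_{2m-s,s}||^2$ by moving from the normalized vector $v_{2m,0}$ (for which $||v_{2m,0}||^2=1$) to $v_{2m-s,s}$ along a two-leg path in the index lattice: first decrease the first index from $2m$ to $2m-s$ with the second index held at $0$, and then increase the second index from $0$ to $s$ with the first index held at $2m-s$. Both legs are governed by the recursions in \eqref{recursive-norms}. For the first leg I would rewrite $||v_{r+1,0}||^2 = 2\pi N_f(r+1)||v_{r,0}||^2$ as $||v_{r,0}||^2 = (2\pi N_f(r+1))^{-1}||v_{r+1,0}||^2$ and iterate for $r$ running from $2m-1$ down to $2m-s$; the product telescopes to
\[
||v_{2m-s,0}||^2 = (2\pi N_f)^{-s}\,\frac{(2m-s)!}{(2m)!}.
\]
For the second leg I would iterate $||v_{2m-s,j+2}||^2 = \tfrac14(j+2)(2k+j+1)\,||v_{2m-s,j}||^2$ over even $j$ from $0$ to $s-2$, picking up the factor $\prod_{\substack{0\le j\le s-2,\ j\text{ even}}}\tfrac14(j+2)(2k+j+1)$ multiplying $||v_{2m-s,0}||^2$.

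Combining the two legs, the only constants to reassemble are: one factor $\tfrac14$ for each even $j$ in $[0,s-2]$, i.e. $4^{s/2}$ in the denominator, and $4^{s/2}(2\pi N_f)^{s} = (4\pi N_f)^{s}$; this produces the prefactor $(4\pi N_f)^{-s}$. It then remains to recognize the elementary identity
\[
\frac{(2m-s)!}{(2m)!} = \prod_{\substack{0\le j\le s-2,\\ j\text{ even}}}\frac{1}{(2m-j-1)(2m-j)},
\]
which holds because the right-hand product is exactly the consecutive product $(2m)(2m-1)(2m-2)\cdots(2m-s+1)$ with its factors grouped into the pairs $(2m-j)(2m-j-1)$. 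Substituting this into the expression above yields precisely the claimed formula.

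This argument is entirely bookkeeping: there is no real obstacle beyond keeping careful track of which index is being moved on each leg, confirming that the telescoping ranges are correct (in particular that $2m-s\ge 0$, which is guaranteed by $s\le 2m$), and checking that the powers of $4$ and $2\pi N_f$ recombine into $(4\pi N_f)^{-s}$. A clean way to present it is by induction on $s$ through even values: the base case $s=2$ is a single application of each recursion, and the inductive step multiplies the formula for $s$ by $\tfrac14(s+2)(2k+s+1)$ on the one hand (second recursion in $s$) while adjusting the first index via the first recursion, matching the change in the stated product when $j=s$ is appended and $2m-s$ is replaced by $2m-s-2$.
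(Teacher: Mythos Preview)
Your proof is correct and follows essentially the same approach as the paper: both compute $||v_{2m-s,s}||^2$ via the two-leg path through $v_{2m-s,0}$ using the recursions in \eqref{recursive-norms}, with the only cosmetic differences being that you write the first-leg product as $(2m-s)!/(2m)!$ and traverse the path in the opposite direction. The paper's proof is exactly your bookkeeping presented in the order $v_{2m-s,s}\to v_{2m-s,0}\to v_{2m,0}$.
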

\begin{proof}
 The claimed identity follows by applying recursively the relations in \eqref{recursive-norms}. Indeed, using the first of them one easily gets
 \begin{equation}\label{normv:recursive1}
  ||v_{2m-s,s}||^2 = 4^{-s/2}||v_{2m-s,0}||^2\prod_{\substack{0\leq j\leq s-2,\\ j \text{ even}}} (s-j)(2k+s-j-1) = 2^{-s} ||v_{2m-s,0}||^2 \prod_{\substack{0\leq j\leq s-2,\\ j \text{ even}}} (j+2)(2k+j+1).
 \end{equation}
 In a similar manner, we can now use recursively the second identity in \eqref{recursive-norms} to deduce that
 \begin{align}
 ||v_{2m-s,0}||^2 & = (2\pi N_f)^{-s} \prod_{0\leq i \leq s-1} \frac{1}{2m-s+i+1} ||v_{2m,0}||^2 = (2\pi N_f)^{-s} \prod_{0\leq i \leq s-1} \frac{1}{2m-s+i+1} = \label{normv:recursive2}\\
 & = \prod_{\substack{0\leq j\leq s-2,\\ j \text{ even}}} \frac{1}{(2m-(s-j-2)-1)(2m-(s-j-2))} = \prod_{\substack{0\leq j\leq s-2,\\ j \text{ even}}} \frac{1}{(2m-j-1)(2m-j)}, \nonumber
 \end{align}
 using that $||v_{2m,0}||^2 = 1$ according to our normalization. The statement follows by combining \eqref{normv:recursive1} and \eqref{normv:recursive2}.
\end{proof}

We shall now focus our attention on the space 
\[
 D(k+1,N_f;2m) = \bigoplus_{\substack{r+s = 2m, \\ s \text{ even}}} \C \cdot v_{r,s},
\]
which is the largest subspace of $D(k+1,N_f)$ on which $\SO_2(\R)$ acts through the character $u \mapsto u^{\ell+1}$.

\begin{proposition}\label{prop:v2mhol}
 Up to a scalar, there is a unique non-zero vector $v_{2m}^{\mathrm{hol}} \in D(k+1,N_f;2m)$ such that $\mathrm d\rho_{\infty} X_- v_{2m}^{\mathrm{hol}} = 0$. Such a vector is given, up to scalar, by 
 \[
  v_{2m}^{\mathrm{hol}} = \sum_{\substack{0\leq s\leq 2m,\\ s \text{ even}}} c_{s} v_{2m-s,s}, \quad 
  c_0 = 1, \quad c_{s} = (4\pi N_f)^{s/2} \prod_{\substack{0\leq j \leq s-2, \\ j \text{ even}}} \frac{(2m-j)(2m-j-1)}{(j+2)(2k+j+1)} \quad (s \geq 2).
 \]
\end{proposition}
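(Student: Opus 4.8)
The plan is to turn the condition $\mathrm{d}\rho_{\infty}X_-v=0$ into a linear recursion among the coordinates of $v$ in the basis $\{v_{r,s}\}$. First I would write a general element of $D(k+1,N_f;2m)$ as $v=\sum_{s}c_{s}\,v_{2m-s,s}$, the index $s$ running over the even integers with $0\le s\le 2m$; thus $D(k+1,N_f;2m)$ is $(m+1)$-dimensional with coordinates $(c_0,c_2,\dots,c_{2m})$. Applying the displayed formula $\mathrm{d}\rho_{\infty}X_-v_{r,s}=\pi N_f\,r(r-1)v_{r-2,s}-\tfrac{s}{4}(2k+s-1)v_{r,s-2}$ to each $v_{2m-s,s}$ and recording that $\mathrm{d}\rho_{\infty}X_-$ carries $D(k+1,N_f;2m)$ into $D(k+1,N_f;2m-2)$, I would reindex the two resulting sums so that both run over the basis vectors $v_{2m-t-2,t}$ with $t$ even and $0\le t\le 2m-2$, obtaining
\[
\mathrm{d}\rho_{\infty}X_-v=\sum_{\substack{t\text{ even}\\ 0\le t\le 2m-2}}\left(\pi N_f(2m-t)(2m-t-1)\,c_{t}-\frac{(t+2)(2k+t+1)}{4}\,c_{t+2}\right)v_{2m-t-2,t}.
\]
By linear independence of the $v_{r,s}$, the vanishing of $\mathrm{d}\rho_{\infty}X_-v$ is then equivalent to the $m$ equations $\pi N_f(2m-t)(2m-t-1)\,c_{t}=\tfrac14(t+2)(2k+t+1)\,c_{t+2}$ for $t=0,2,\dots,2m-2$.

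Next I would note that for $0\le t\le 2m-2$ both $(2m-t)(2m-t-1)$ and $(t+2)(2k+t+1)$ are strictly positive (here $k\ge1$ is used), so each of these $m$ equations expresses $c_{t+2}$ as an explicit nonzero multiple of $c_{t}$. Consequently the solution space is exactly one-dimensional, parametrised by the free coordinate $c_0$, which gives both the existence and the uniqueness up to scalar. Normalising $c_0=1$ and iterating the recursion $c_{t+2}=\dfrac{4\pi N_f(2m-t)(2m-t-1)}{(t+2)(2k+t+1)}\,c_{t}$, one gets for every even $s\ge2$
\[
c_{s}=\prod_{\substack{j\text{ even}\\ 0\le j\le s-2}}\frac{4\pi N_f(2m-j)(2m-j-1)}{(j+2)(2k+j+1)}=(4\pi N_f)^{s/2}\prod_{\substack{j\text{ even}\\ 0\le j\le s-2}}\frac{(2m-j)(2m-j-1)}{(j+2)(2k+j+1)},
\]
since there are exactly $s/2$ even integers $j$ in the range $0\le j\le s-2$; this is the asserted closed form, and $v_{2m}^{\mathrm{hol}}\neq0$ because $c_0=1$.

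There is no genuinely difficult step; the only care needed is in the index bookkeeping for the two sums produced by $\mathrm{d}\rho_{\infty}X_-$. In particular one should check that the ``boundary'' terms drop out compatibly: the contribution $v_{2m-s-2,s}$ at $s=2m$ disappears because its coefficient $(2m-s)(2m-s-1)$ vanishes, and the contribution $v_{2m-s,s-2}$ at $s=0$ disappears because its coefficient $\tfrac{s}{4}(2k+s-1)$ vanishes, so that after reindexing both sums legitimately run over $t=0,2,\dots,2m-2$. The final bookkeeping point --- that the telescoped product accumulates exactly $s/2$ factors of $4\pi N_f$ --- is what produces the exponent in the statement, and serves as a convenient consistency check.
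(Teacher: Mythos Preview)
Your proof is correct and follows essentially the same approach as the paper's own argument: write a general element in the basis $\{v_{2m-s,s}\}$, apply the explicit formula for $\mathrm d\rho_{\infty}X_-$, collect coefficients, and solve the resulting two-term recursion with $c_0=1$. Your version is in fact slightly more careful than the paper's, since you explicitly verify that the boundary contributions at $s=0$ and $s=2m$ vanish and you note that the nonvanishing of the recursion coefficients (using $k\ge1$) is what forces the solution space to be exactly one-dimensional.
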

\begin{proof}
 Let $v_{2m}^{\mathrm{hol}} \in D(k+1,N_f;2m)$ be a putative solution of $\mathrm d\rho_{\infty} X_- v_{2m}^{\mathrm{hol}} = 0$, and let
 \[
  v_{2m}^{\mathrm{hol}} = \sum_{\substack{0\leq s\leq 2m,\\ s \text{ even}}} c_{s} v_{2m-s,s}
 \]
 be its representation in terms of the basis $\{v_{2m-s,s}: 0 \leq s \leq 2m \text{ even}\}$. From the description of $X_-$, 
 \[
  d\rho_{\infty}X_- v_{2m-s,s} = \pi N_f (2m-s)(2m-s-1) v_{2m-s-2,s} - \frac{s}{4}(2k+s-1)v_{2m-s,s-2}.
 \]
 By linearity, we can then write down explicitly $d\rho_{\infty}X_- v_{2m}^{\mathrm{hol}}$ in the form 
 \[
  d\rho_{\infty}X_- v_{2m}^{\mathrm{hol}} = \sum_{\substack{0\leq s\leq 2m,\\ s \text{ even}}} d_s v_{2m-(s+2),s},
 \]
 where we understand that $v_{-2,2m} = 0$. Imposing that $d\rho_{\infty}X_- v_{2m}^{\mathrm{hol}} = 0$ then means that $d_s$ must be zero for all $s$. From the description of $d\rho_{\infty}X_-$, one easily checks that 
 \[
  d_s = \pi N_f(2m-s)(2m-s-1)c_s - \frac{s+2}{4}(2k+s+1)c_{s+2},
 \]
 hence $d_s = 0$ if and only if the recursive formula 
 \[
  c_{s+2} = 4 \pi N_f \frac{(2m-s)(2m-s-1)}{(s+2)(2k+s+1)} c_s
 \]
 holds. In particular, for each non-zero $c_0$ one can solve recursively all the $c_s$ for $2\leq s \leq 2m$ even. Setting $c_0 = 1$, this yields the expression in the statement.
\end{proof}

\begin{corollary}\label{cor:norm-vmhol}
For the vector $v_{2m}^{\mathrm{hol}}$ in the previous proposition, one has 
\[
||v_{2m}^{\mathrm{hol}}||^2 = \sum_{\substack{0\leq s \leq 2m,\\ s \text{ even}}} \prod_{\substack{0\leq j \leq s-2, \\ j \text{ even}}} \frac{(2m-j)(2m-j-1)}{(j+2)(2k+j+1)}.
\]
\end{corollary}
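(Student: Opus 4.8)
The plan is to compute the norm directly from the expansion of $v_{2m}^{\mathrm{hol}}$ in the orthogonal basis $\{v_{2m-s,s}\}$. By the discussion preceding Lemma~\ref{lemma:normv} the vectors $v_{r,s}$ are pairwise orthogonal, and by Proposition~\ref{prop:v2mhol} the coefficients $c_s$ in $v_{2m}^{\mathrm{hol}} = \sum_{0 \le s \le 2m,\ s \text{ even}} c_s v_{2m-s,s}$ are (positive) real numbers. Hence the Pythagorean identity gives $\|v_{2m}^{\mathrm{hol}}\|^2 = \sum_{0 \le s \le 2m,\ s \text{ even}} c_s^2 \, \|v_{2m-s,s}\|^2$, and the whole computation reduces to evaluating each term $c_s^2 \|v_{2m-s,s}\|^2$.

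First I would treat the terms with $s \ge 2$. Proposition~\ref{prop:v2mhol} gives $c_s = (4\pi N_f)^{s/2} \prod_{0 \le j \le s-2,\ j \text{ even}} \frac{(2m-j)(2m-j-1)}{(j+2)(2k+j+1)}$, while Lemma~\ref{lemma:normv} gives $\|v_{2m-s,s}\|^2 = (4\pi N_f)^{-s} \prod_{0 \le j \le s-2,\ j \text{ even}} \frac{(j+2)(2k+j+1)}{(2m-j-1)(2m-j)}$. Multiplying $c_s^2$ by $\|v_{2m-s,s}\|^2$, the powers $(4\pi N_f)^{\pm s}$ cancel, and one copy of $\prod_j \frac{(2m-j)(2m-j-1)}{(j+2)(2k+j+1)}$ cancels against its reciprocal coming from the norm, leaving exactly $\prod_{0 \le j \le s-2,\ j \text{ even}} \frac{(2m-j)(2m-j-1)}{(j+2)(2k+j+1)}$, which is precisely the summand appearing in the statement. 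For $s = 0$, the normalization $\|v_{2m,0}\|^2 = 1$ together with $c_0 = 1$ gives the term $1$, consistent with the empty-product convention. Summing over all even $s$ in $[0,2m]$ then yields the claimed formula; this also explains why the normalization $\|v_{2m,0}\|^2 = 1$ was imposed in the first place.

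There is no genuine obstacle here: the statement is a bookkeeping consequence of orthogonality together with the two explicit formulas already established in Proposition~\ref{prop:v2mhol} and Lemma~\ref{lemma:normv}. The only points requiring a little care are that the two products defining $c_s$ and $\|v_{2m-s,s}\|^2$ are taken over the same index set (even $j$ with $0 \le j \le s-2$), so that the cancellation is term-by-term, and that the case $s=0$ is read off correctly via the empty-product convention.
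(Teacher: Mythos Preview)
Your proof is correct and follows essentially the same approach as the paper: both use the orthogonality of the $v_{r,s}$ to write $\|v_{2m}^{\mathrm{hol}}\|^2 = \sum_s c_s^2 \|v_{2m-s,s}\|^2$ and then plug in the explicit formulas from Proposition~\ref{prop:v2mhol} and Lemma~\ref{lemma:normv} to obtain the stated sum. The only cosmetic difference is that the paper first records the identity $c_s = (4\pi N_f)^{-s/2}\|v_{2m-s,s}\|^{-2}$ before substituting, whereas you carry out the cancellation directly.
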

\begin{proof}
With notation as in Lemma \ref{lemma:normv} and Proposition \ref{prop:v2mhol}, observe that  if $s$ is an even integer with $0\leq s \leq 2m$, then $c_2 = ||v_{2m-s,s}||^{-2}(4\pi N_f)^{-s/2}$. Using that the basis $v_{r,s}$ is orthogonal with respect to the inner product, we find out that 
 \[
  ||v_{2m}^{\mathrm{hol}}||^2 =  
  \sum_{\substack{0\leq s \leq 2m,\\ s \text{ even}}} c_s^2 ||v_{2m-s,s}||^2 = 
  \sum_{\substack{0\leq s \leq 2m,\\ s \text{ even}}} (4\pi N_f)^{-s} ||v_{2m-s,s}||^{-2} = \sum_{\substack{0\leq s \leq 2m,\\ s \text{ even}}} \prod_{\substack{0\leq j \leq s-2, \\ j \text{ even}}} \frac{(2m-j)(2m-j-1)}{(j+2)(2k+j+1)}.
 \]
\end{proof}

Now we come back to the isomorphism $\tilde{\pi}_{\infty} \otimes \omega_{\infty} \simeq \rho_{\infty}$, under which we identify $\mathbf J_{\infty} = \mathbf h_{\infty} \otimes \breve{\pmb{\phi}}_{\infty}$. One can check further that $\breve{\mathbf h}_{\infty} \otimes \breve{\pmb{\phi}}_{\infty} = \tilde V_+^m\mathbf h_{\infty}\otimes \breve{\pmb{\phi}}_{\infty}$ is identified with a multiple of $\breve{\mathbf J}_{\infty} := Y_+^{2m} \mathbf J_{\infty}$. Since the local normalized period we want to compute does not depend on replacing $\breve{\mathbf h}_{\infty} \otimes \breve{\pmb{\phi}}_{\infty}$ by a multiple, and $\tilde{\pi}_{\infty} \otimes \omega_{\infty} \simeq \rho_{\infty}$ is an isometry, we may assume that $\breve{\mathbf h}_{\infty} \otimes \breve{\pmb{\phi}}_{\infty}$ is identified exactly with  $\breve{\mathbf J}_{\infty}$, so that we will simply write $\breve{\mathbf J}_{\infty} = \breve{\mathbf h}_{\infty} \otimes \breve{\pmb{\phi}}_{\infty}$. Therefore, 
\[
\alpha_{\infty}^{\sharp}(\breve{\mathbf h},\breve{\mathbf g},\breve{\pmb{\phi}}) = 
\int_{\SL_2(\R)} \hspace{-0.2cm}
\frac{\langle \tau(g)\breve{\mathbf g}_{\infty},\breve{\mathbf g}_{\infty}\rangle}{||\breve{\mathbf g}_{\infty}||^2} 
\frac{\overline{\langle \tilde{\pi}(g)\breve{\mathbf h}_{\infty},\breve{\mathbf h}_{\infty} \rangle}}{||\breve{\mathbf h}_{\infty}||^2}
\frac{\langle \omega_{\infty}(g)\breve{\pmb{\phi}}_{\infty}, \breve{\pmb{\phi}}_{\infty}\rangle}{||\breve{\pmb{\phi}}_{\infty}||^2} dg = \int_{\SL_2(\R)}  \hspace{-0.2cm}
\frac{\langle \tau(g)\breve{\mathbf g}_{\infty},\breve{\mathbf g}_{\infty}\rangle}{||\breve{\mathbf g}_{\infty}||^2} 
\frac{\overline{\langle \rho(g)\breve{\mathbf J}_{\infty}, \breve{\mathbf J}_{\infty} \rangle}}{||\breve{\mathbf J}_{\infty}||^2} dg.
\]
In view of this, we will compute the local integral $\alpha_{\infty}^{\sharp}(\breve{\mathbf h},\breve{\mathbf g},\breve{\pmb{\phi}})$ by actually computing the integral on the right hand side, which we will denote by $\alpha_{\infty}^{\sharp}(\breve{\mathbf g}_{\infty},\breve{ \mathbf J}_{\infty})$.

\begin{proposition}
 With the above notation, we have
 \[
  \alpha_{\infty}^{\sharp}(\breve{\mathbf h},\breve{\mathbf g},\breve{\pmb{\phi}}) = \frac{2 \pi^2}{\ell ||v_{2m}^{\textrm{hol}}||^2}.
 \]
\end{proposition}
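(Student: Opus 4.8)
The idea is to combine the explicit model $D(k+1,N_f)$ of $\rho_{\infty}$, the characterisation of $v_{2m}^{\mathrm{hol}}$ in Proposition~\ref{prop:v2mhol}, and the Schur orthogonality relations for discrete series of $\SL_2(\R)$. First I would dispose of the bookkeeping. Since the regularized period $\alpha_{\infty}^{\sharp}(\breve{\mathbf g}_{\infty},\breve{\mathbf J}_{\infty})$ (as in \eqref{alphav}) is unchanged under rescaling $\breve{\mathbf J}_{\infty}$ or $\breve{\mathbf g}_{\infty}$, and since $\breve{\mathbf J}_{\infty}=Y_+^{2m}\mathbf J_{\infty}$ with $\mathbf J_{\infty}$ a lowest weight vector $=v_{0,0}$, the relation $\mathrm d\rho_{\infty}Y_+v_{r,s}=v_{r+1,s}$ gives that $\breve{\mathbf J}_{\infty}$ is a multiple of $v_{2m,0}$; we may therefore take $\breve{\mathbf J}_{\infty}=v_{2m,0}$, which has $\|v_{2m,0}\|^2=1$ by our normalization. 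Hence
\[
\alpha_{\infty}^{\sharp}(\breve{\mathbf g}_{\infty},\breve{\mathbf J}_{\infty})=\frac{1}{\|\breve{\mathbf g}_{\infty}\|^2}\int_{\SL_2(\R)}\langle\tau(g)\breve{\mathbf g}_{\infty},\breve{\mathbf g}_{\infty}\rangle\;\overline{\langle\rho_{\infty}(g)v_{2m,0},v_{2m,0}\rangle}\,dg .
\]

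Next I would analyse the restriction of $\rho_{\infty}$ to $\SL_2(\R)\subset J(\R)$ (the subgroup with trivial Heisenberg part). From the formulae for the $\mathfrak r_{\C}$-action, the operators $X_{\pm}$ shift the $\SO(2)$-weight by $\pm 2$ and, together with the weight element, generate the copy of $\mathfrak{sl}_2(\C)$ acting through the $\SL_2(\R)$-factor; in particular the $\SL_2(\R)$-lowest weight vectors in $\rho_{\infty}$ are exactly the solutions of $\mathrm d\rho_{\infty}X_-v=0$. Counting $\SO(2)$-types in $D(k+1,N_f)=\bigoplus_{r,s\ge0,\,s\text{ even}}\C v_{r,s}$ (the weight $k+1+n$ space has dimension $\lfloor n/2\rfloor+1$, and all weights are $\ge k+1\ge 2$), one finds that $\rho_{\infty}|_{\SL_2(\R)}$ is the multiplicity free direct sum of the holomorphic discrete series of weights $k+1,k+3,k+5,\dots$, the isotypic summands being pairwise orthogonal. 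Since $\ell+1=k+1+2m$, exactly one summand is isomorphic to $\tau_{\infty}$, namely a weight $\ell+1$ holomorphic discrete series; by Proposition~\ref{prop:v2mhol} its lowest weight line inside $D(k+1,N_f;2m)$ is spanned by $v_{2m}^{\mathrm{hol}}$. Because the $v_{r,s}$ are mutually orthogonal and $c_0=1$, the orthogonal projection $P$ of $v_{2m,0}$ onto that line equals $\langle v_{2m,0},v_{2m}^{\mathrm{hol}}\rangle\,\|v_{2m}^{\mathrm{hol}}\|^{-2}v_{2m}^{\mathrm{hol}}=\|v_{2m}^{\mathrm{hol}}\|^{-2}v_{2m}^{\mathrm{hol}}$.

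Now I would feed this into the integral. Expanding $\langle\rho_{\infty}(g)v_{2m,0},v_{2m,0}\rangle$ along the isotypic decomposition (the cross terms vanish pointwise, since the summands are orthogonal and $\SL_2(\R)$-invariant) and using that matrix coefficients of inequivalent discrete series of $\SL_2(\R)$ are $L^2$-orthogonal, only the $\tau_{\infty}$-summand contributes; thus the integral equals $\|v_{2m}^{\mathrm{hol}}\|^{-4}\int_{\SL_2(\R)}\langle\tau(g)\breve{\mathbf g}_{\infty},\breve{\mathbf g}_{\infty}\rangle\,\overline{\langle\rho_{\infty}(g)v_{2m}^{\mathrm{hol}},v_{2m}^{\mathrm{hol}}\rangle}\,dg$. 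Since $v_{2m}^{\mathrm{hol}}$ and the image of $\breve{\mathbf g}_{\infty}$ both span the lowest weight line of that summand and any two invariant inner products on an irreducible are proportional, $\langle\rho_{\infty}(g)v_{2m}^{\mathrm{hol}},v_{2m}^{\mathrm{hol}}\rangle$ is a scalar multiple of the lowest weight matrix coefficient $\langle\tau(g)\breve{\mathbf g}_{\infty},\breve{\mathbf g}_{\infty}\rangle$, the scalar being read off at $g=1$. Applying the Schur orthogonality relation $\int_{\SL_2(\R)}|\langle\tau(g)\breve{\mathbf g}_{\infty},\breve{\mathbf g}_{\infty}\rangle|^2\,dg=d_{\tau_{\infty}}^{-1}\|\breve{\mathbf g}_{\infty}\|^4$ and collecting factors yields
\[
\alpha_{\infty}^{\sharp}(\breve{\mathbf g}_{\infty},\breve{\mathbf J}_{\infty})=\frac{1}{d_{\tau_{\infty}}\,\|v_{2m}^{\mathrm{hol}}\|^2},
\]
where $d_{\tau_{\infty}}$ is the formal degree of $\tau_{\infty}$ with respect to the Haar measure $dg=y^{-2}dx\,dy\,dk$ fixed above (with $\mathrm{vol}(\SO(2))=\pi$), and $\|v_{2m}^{\mathrm{hol}}\|^2$ is the explicit sum of Corollary~\ref{cor:norm-vmhol}. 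The assertion of the proposition is then equivalent to the identity $d_{\tau_{\infty}}=\ell/(2\pi^2)$ for a weight $\ell+1$ holomorphic discrete series with this normalization.

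The last point is where I expect the real work to lie, and it is presumably among the details left implicit in \cite{Xue}: the formal degree is classical but acutely normalization sensitive. Concretely, one writes the lowest weight matrix coefficient of $\tau_{\infty}$ explicitly --- in Cartan coordinates $g=k_1\,\mathrm{diag}(a,a^{-1})\,k_2$, $a\ge 1$, it is, up to the obvious $\SO(2)$-characters, a constant times a power of $\cosh$ determined by the weight $\ell+1$ --- passes the Haar measure to these coordinates (where the two factors $\mathrm{vol}(\SO(2))=\pi$ combine with the Jacobian of conjugation), and evaluates the resulting Beta-type integral, keeping careful track of the factor arising from $\SL_2(\R)\to\PGL_2(\R)$. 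This is the one step where a stray $\pi$ or a factor $2$ could creep in, so it must be done with care; everything else --- the branching multiplicities, the vanishing of the cross terms, and the projection formula --- is routine once the model of Section~\ref{sec:realplace} and Proposition~\ref{prop:v2mhol} are available.
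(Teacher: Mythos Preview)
Your proof is correct and follows essentially the same route as the paper: identify $\breve{\mathbf J}_{\infty}$ with $v_{2m,0}$, project onto the line spanned by $v_{2m}^{\mathrm{hol}}$ inside the $\tau_{\infty}$-isotypic summand of $\rho_{\infty}|_{\SL_2(\R)}$, and reduce to the integral $\int_{\SL_2(\R)}|\langle\tau(g)v_{2m}^{\mathrm{hol}},v_{2m}^{\mathrm{hol}}\rangle|^2\,dg$. The only cosmetic difference is that you phrase the remaining step as computing the formal degree $d_{\tau_{\infty}}=\ell/(2\pi^2)$, whereas the paper simply takes $\breve{\mathbf g}_{\infty}=v_{2m}^{\mathrm{hol}}$ and evaluates that integral directly via Cartan decomposition, using $\langle\tau_{\infty}(\mathrm{diag}(e^t,e^{-t}))v_{2m}^{\mathrm{hol}},v_{2m}^{\mathrm{hol}}\rangle=\|v_{2m}^{\mathrm{hol}}\|^2\cosh(t)^{-(\ell+1)}$, $dg=2\sinh(2t)\,dk\,dt\,dk'$, and $\int_0^{\infty}\cosh(t)^{-2(\ell+1)}\sinh(2t)\,dt=1/\ell$ --- exactly the computation you sketch in your last paragraph (your remark about an $\SL_2\to\PGL_2$ factor is unnecessary here, since the integral is already over $\SL_2(\R)$).
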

\begin{proof}
With respect to the above model, $\tau_{\infty}$ might be realized as a subrepresentation of $\rho_{\infty}|_{\SL_2(\R)}$, spanned by $v_{2m}^{\mathrm{hol}}$, and hence we can assume the inner product for $\tau_{\infty}$ to be given by the restriction of the inner product for $\rho_{\infty}$. Besides, $\alpha_{\infty}^{\sharp}(\breve{\mathbf g}_{\infty},\breve{\mathbf J}_{\infty})$ is invariant when replacing $\breve{\mathbf g}_{\infty}$ and $\breve{\mathbf J}_{\infty}$ by multiples of them, so that we may choose $\breve{\mathbf g}_{\infty} = v_{2m}^{\mathrm{hol}}$ and $\breve{\mathbf J}_{\infty} = v_{2m,0}$. Therefore, 
\[
 \alpha_{\infty}^{\sharp}(\breve{\mathbf g}_{\infty}, \breve{\mathbf J}_{\infty}) =  \int_{\SL_2(\R)} \hspace{-0.2cm} \frac{\langle \tau(g)v_{2m}^{\mathrm{hol}},v_{2m}^{\mathrm{hol}}\rangle}{||v_{2m}^{\mathrm{hol}}||^2} \frac{\overline{\langle \rho(g)v_{2m,0},v_{2m,0} \rangle}}{||v_{2m,0}||^2} dg = \frac{1}{||v_{2m}^{\mathrm{hol}}||^2}\int_{\SL_2(\R)} \langle \tau(g)v_{2m}^{\mathrm{hol}},v_{2m}^{\mathrm{hol}}\rangle\overline{\langle \rho(g)v_{2m,0},v_{2m,0} \rangle} dg,
\]
where we have used that $||v_{2m,0}||^2 = 1$ according to our normalization of the inner product. Now, the orthogonal projection of $v_{2m,0}$ to the line generated by $v_{2m}^{\mathrm{hol}}$ is $\mathrm{pr}_{2m}^{\mathrm{hol}}(v_{2m,0}) = ||v_{2m}^{\mathrm{hol}}||^{-2}v_{2m}^{\mathrm{hol}}$. Therefore, 
\[
 \langle \rho(g)v_{2m,0},v_{2m,0} \rangle = \langle \tau(g)\mathrm{pr}_{2m}^{\mathrm{hol}}(v_{2m,0}),\mathrm{pr}_{2m}^{\mathrm{hol}}(v_{2m,0}) \rangle = \frac{1}{||v_{2m}^{\mathrm{hol}}||^4} \langle \tau(g)v_{2m}^{\mathrm{hol}},v_{2m}^{\mathrm{hol}}\rangle,
\]
and we deduce that 
\[
 \alpha_{\infty}^{\sharp}(\breve{\mathbf g}_{\infty}, \breve{\mathbf J}_{\infty}) = 
 \frac{1}{||v_{2m}^{\mathrm{hol}}||^6}\int_{\SL_2(\R)} |\langle \tau(g)v_{2m}^{\mathrm{hol}},v_{2m}^{\mathrm{hol}}\rangle|^2 dg.
\]

At this point, recall that using Iwasawa decomposition for $\SL_2(\R)$, which tells us that any element $g \in \SL_2(\R)$ is written in the form 
\[
 g = \left(\begin{array}{cc} y & 0 \\ 0 & y^{-1}\end{array}\right) \left(\begin{array}{cc} 1 & x \\ 0 & 1 \end{array}\right) k
\]
for some $y \in \R_{>0}$, $x \in \R$ and $k \in \SO_2(\R)$, we have chosen $dg$ to be the Haar measure $y^{-2} dx dy dk$, where $dx$ and $dy$ are the usual Lebesgue measure on $\R$ and $dk$ is the Haar measure on $\SO_2(\R)$ for which the total volume is $\pi$. Define now
\[
 A^+ := \left\lbrace \left(\begin{array}{cc} e^t & \\ & e^{-t}\end{array}\right): t \geq 0\right\rbrace,
\]
and consider the map 
 \begin{align*}
(\SO_2(\R) \times A^+ \times \SO_2(\R))/\{\pm 1\} \, & \longrightarrow \, \SL_2(\R), \\
\left(k, \left(\begin{array}{cc} e^t & \\ & e^{-t}\end{array}\right), k' \right) & \, \longmapsto \, 
k \left(\begin{array}{cc} e^t & \\ & e^{-t}\end{array}\right) k',
\end{align*}
where on the left hand side $-1 = (-1,1,-1)$. This map is a bijection outside the boundary of $A^+$, by virtue of Cartan decomposition. The product measure $dk dt dk'$ on $\SO_2(\R) \times A^+ \times \SO_2(\R)$, where $dt$ is the Lebesgue measure on $\R$ and both $dk$ and $dk'$ are the Haar measure on $\SO_2(\R)$ for which the total volume is $\pi$, induces a measure on the quotient $(\SO_2(\R) \times A^+ \times \SO_2(\R))/\{\pm 1\}$. Under the above bijection, one deduces (by a similar argument as the one in \cite[Section 12]{IchinoIkeda-periods}) that $dg = 2 \cdot \sinh(2t) dk dt dk'$. On the other hand, it is well-known (cf. \cite{Knapp}) that 
\[
 \left\langle \tau_{\infty}\left(\left(\begin{array}{cc} e^t & \\ & e^{-t}\end{array}\right)\right)v_{2m}^{\mathrm{hol}}, v_{2m}^{\mathrm{hol}} \right \rangle = ||v_{2m}^{\mathrm{hol}}||^2 \mathrm{cosh}(t)^{-(\ell+1)},
\]
and therefore 
\[
 \alpha_{\infty}^{\sharp}(\breve{\mathbf g}_{\infty},\breve{\mathbf J}_{\infty}) = 
 \frac{1}{||v_{2m}^{\mathrm{hol}}||^6}\int_{\SL_2(\R)} |\langle \tau(g)v_{2m}^{\mathrm{hol}},v_{2m}^{\mathrm{hol}}\rangle|^2 dg = 
 \frac{2 \pi^2}{||v_{2m}^{\mathrm{hol}}||^2}\int_0^{\infty} \cosh(t)^{-2(\ell+1)}\sinh(2t) dt = \frac{2 \pi^2}{\ell ||v_{2m}^{\mathrm{hol}}||^2}.
\]
\end{proof}

\begin{proposition}\label{prop:Ireal}
We have $\mathcal I_{\infty}^{\sharp}(\breve{\mathbf h},\breve{\mathbf g},\breve{\pmb{\phi}}) = 
\pi^{2m}2^{2m} C_{\infty}(k,\ell)^{-1}$, where the constant $C_{\infty}(k,\ell) \in \Q^{\times}$ is given (setting $\ell-k = 2m$) by
\[
C_{\infty}(k,\ell) := (2m)!\frac{(\ell+k-1)!(k-1)!}{(2k-1)!(\ell-1)!}\sum_{\substack{0\leq s \leq 2m,\\ s \text{ even}}} \prod_{\substack{0\leq j \leq s-2, \\ j \text{ even}}} \frac{(2m-j)(2m-j-1)}{(j+2)(2k+j+1)}.
\]
\end{proposition}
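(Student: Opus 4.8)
The plan is to combine the identity \eqref{Iv-alphav} at the archimedean place with the value of $\alpha_{\infty}^{\sharp}(\breve{\mathbf h},\breve{\mathbf g},\breve{\pmb{\phi}})$ obtained in the previous proposition and an explicit evaluation of the three archimedean $L$-factors. Indeed, \eqref{Iv-alphav} reads
\[
\mathcal I_{\infty}^{\sharp}(\breve{\mathbf h},\breve{\mathbf g},\breve{\pmb{\phi}}) = \frac{L(1,\pi_{\infty},\mathrm{ad})\,L(1,\tau_{\infty},\mathrm{ad})}{L(\pi_{\infty}\otimes\mathrm{ad}(\tau_{\infty}),1/2)}\;\alpha_{\infty}^{\sharp}(\breve{\mathbf h},\breve{\mathbf g},\breve{\pmb{\phi}}),
\]
and by the previous proposition $\alpha_{\infty}^{\sharp}(\breve{\mathbf h},\breve{\mathbf g},\breve{\pmb{\phi}}) = 2\pi^2/(\ell\,||v_{2m}^{\mathrm{hol}}||^2)$, so the whole computation reduces to pinning down the archimedean $L$-factors.

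First I would record the adjoint factors. Since $\pi_{\infty}$ (resp.\ $\tau_{\infty}$) is the holomorphic discrete series of $\mathrm{PGL}_2(\R)$ of weight $2k$ (resp.\ $\ell+1$), its Langlands parameter is $\mathrm{Ind}_{W_{\C}}^{W_{\R}}\bigl(z\mapsto(z/\bar z)^{(2k-1)/2}\bigr)$ (resp.\ with exponent $\ell/2$), so its adjoint parameter is $\mathrm{sgn}\oplus\mathrm{Ind}_{W_{\C}}^{W_{\R}}\bigl(z\mapsto(z/\bar z)^{2k-1}\bigr)$ (resp.\ with exponent $\ell$); hence
\[
L(s,\pi_{\infty},\mathrm{ad}) = \Gamma_{\R}(s+1)\,\Gamma_{\C}(s+2k-1), \qquad L(s,\tau_{\infty},\mathrm{ad}) = \Gamma_{\R}(s+1)\,\Gamma_{\C}(s+\ell),
\]
which is consistent with the finite-place formulas of Proposition \ref{prop:Ip-Mg}. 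Using $\Gamma_{\R}(2)=\pi^{-1}$ and $\Gamma_{\C}(s)=2(2\pi)^{-s}\Gamma(s)$, evaluation at $s=1$ gives $L(1,\pi_{\infty},\mathrm{ad})=2\pi^{-1}(2\pi)^{-2k}(2k-1)!$ and $L(1,\tau_{\infty},\mathrm{ad})=2\pi^{-1}(2\pi)^{-(\ell+1)}\ell!$. For the degree $6$ factor, tensoring the parameter of $\pi_{\infty}$ with that of $\mathrm{ad}(\tau_{\infty})$ and using $\mathrm{Ind}(\chi_1)\otimes\mathrm{Ind}(\chi_2)=\mathrm{Ind}(\chi_1\chi_2)\oplus\mathrm{Ind}(\chi_1\chi_2^{-1})$ together with $\mathrm{Ind}(\chi)\otimes\mathrm{sgn}=\mathrm{Ind}(\chi)$ yields $\mathrm{Ind}\bigl((z/\bar z)^{(2k-1)/2}\bigr)\oplus\mathrm{Ind}\bigl((z/\bar z)^{(2k-1)/2+\ell}\bigr)\oplus\mathrm{Ind}\bigl((z/\bar z)^{\ell-(2k-1)/2}\bigr)$, so that (using $\ell>k-1/2$ to take absolute values, $\ell-k+1=2m+1$ and $2m=\ell-k$)
\[
L(\pi_{\infty}\otimes\mathrm{ad}(\tau_{\infty}),1/2)=\Gamma_{\C}(k)\,\Gamma_{\C}(k+\ell)\,\Gamma_{\C}(2m+1)=8\,(2\pi)^{-(k+2\ell+1)}(k-1)!\,(k+\ell-1)!\,(2m)!;
\]
as a consistency check, this is exactly the archimedean factor $L_{\infty}(f\otimes\mathrm{Ad}(g),s)$ fixed in the Introduction evaluated at the central point $s=k$.

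It remains to assemble the pieces. Substituting into the displayed formula for $\mathcal I_{\infty}^{\sharp}$, the powers of $2\pi$ add up to $(2\pi)^{\ell-k}=(2\pi)^{2m}$, the factor $\pi^{-2}$ cancels against the $\pi^{2}$ coming from $\alpha_{\infty}^{\sharp}$, and $\ell!/\ell=(\ell-1)!$, whence
\[
\mathcal I_{\infty}^{\sharp}(\breve{\mathbf h},\breve{\mathbf g},\breve{\pmb{\phi}})=(2\pi)^{2m}\,\frac{(2k-1)!\,(\ell-1)!}{(k-1)!\,(k+\ell-1)!\,(2m)!}\cdot\frac{1}{||v_{2m}^{\mathrm{hol}}||^2}.
\]
Plugging in the value of $||v_{2m}^{\mathrm{hol}}||^2$ from Corollary \ref{cor:norm-vmhol} and comparing with the definition of $C_{\infty}(k,\ell)$ in the statement (using $(k+\ell-1)!=(\ell+k-1)!$) then identifies the right-hand side with $(2\pi)^{2m}C_{\infty}(k,\ell)^{-1}=\pi^{2m}2^{2m}C_{\infty}(k,\ell)^{-1}$, which is the asserted formula. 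The hard part is not any single step but the careful bookkeeping of archimedean normalizations: one must make sure that the completed adjoint $\Gamma$-factors entering Qiu's decomposition are normalized precisely so that $L(\pi_{\infty}\otimes\mathrm{ad}(\tau_{\infty}),s)$ agrees with $L_{\infty}(f\otimes\mathrm{Ad}(g),\cdot)$ under the shift $s\mapsto s+k-1/2$, since a mismatch there would introduce a spurious power of $2$ or of $\pi$ into the final answer; once the conventions are pinned down, the remainder is a routine manipulation of factorials and powers of $2\pi$.
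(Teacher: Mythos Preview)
Your proof is correct and follows essentially the same approach as the paper: both combine \eqref{Iv-alphav} with the value $\alpha_{\infty}^{\sharp}=2\pi^2/(\ell\,||v_{2m}^{\mathrm{hol}}||^2)$ from the preceding proposition, compute the archimedean $L$-factor ratio (the paper writes the $\Gamma$-factors directly while you derive them from the Langlands parameters, arriving at the same values), and then insert the expression for $||v_{2m}^{\mathrm{hol}}||^2$ from Corollary~\ref{cor:norm-vmhol}. Your additional consistency check against $L_{\infty}(f\otimes\mathrm{Ad}(g),s)$ from the Introduction is a nice touch but not logically required.
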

\begin{proof}
From the previous proposition, we know that $\alpha_{\infty}^{\sharp}(\breve{\mathbf h},\breve{\mathbf g},\breve{\pmb{\phi}}) = 2\pi^2 \ell^{-1}||v_{2m}^{\mathrm{hol}}||^{-2}$. Besides, we have
\begin{align*}
 \frac{L(1,\pi_{\infty},\mathrm{ad})L(1,\tau_{\infty},\mathrm{ad})}{L(\pi_{\infty}\otimes \mathrm{ad}( \tau_{\infty}),1/2)} & = 
 \frac{2(2\pi)^{-\ell-1}\Gamma(\ell+1)\pi^{-1}\Gamma(1)2(2\pi)^{-2k}\Gamma(2k)\pi^{-1}\Gamma(1)}{ 2^2(2\pi)^{-2\ell-1}\Gamma(\ell+k)\Gamma(\ell-k+1)2(2\pi)^{-k}\Gamma(k)} = \\
 & = \frac{2^{1-\ell-2k}\pi^{-\ell-2k-3}\ell!(2k-1)!}{2^{2-2\ell-k}\pi^{-2\ell-k-1}(\ell+k-1)!(\ell-k)!(k-1)!} =  \frac{\pi^{\ell-k-2}2^{\ell-k-1} \ell! (2k-1)! }{(\ell+k-1)!(\ell-k)!(k-1)!}.
\end{align*}
Recalling that $2m = \ell-k$, it follows from the definition of $\mathcal I_{\infty}^{\sharp}(\breve{\mathbf h},\breve{\mathbf g},\breve{\pmb{\phi}})$ that 
\[
\mathcal I_{\infty}^{\sharp}(\breve{\mathbf h},\breve{\mathbf g},\breve{\pmb{\phi}}) = \pi^{2m}2^{2m} \frac{1}{||v_{2m}^{\mathrm{hol}}||^2 \cdot (2m)!} \frac{(2k-1)!(\ell-1)!}{(\ell+k-1)!(k-1)!},
\]
and the claimed expression results from replacing $||v_{2m}^{\mathrm{hol}}||^2$ by its expression computed in Corollary \ref{cor:norm-vmhol}.
\end{proof}

\begin{remark}
 Observe that when $\ell = k$, i.e. $m=0$, the above expression reduces to $\mathcal I_{\infty}^{\sharp}(\breve{\mathbf h},\breve{\mathbf g},\breve{\pmb{\phi}}) = 1$ (when $m=0$, one has $v_{2m}^{\mathrm{hol}} = v_{0,0}$, and the inner product is normalized in this case so that $||v_{0,0}||^2 = 1$), which is coherent with \cite[Proposition 9.4]{PaldVP}.
\end{remark}

\section{Global computations and proof of Theorem \ref{thm:centralvalue}}\label{sec:global}

After the computation of normalized local $\SL_2$-periods, this section is devoted to prove the explicit (global) theta identities that will allow us to conclude the proof of Theorem \ref{thm:centralvalue} following the strategy explained in Section \ref{sec:SL2periods-thm}.

\subsection{An explicit theta identity for the pair \texorpdfstring{$(\GL_2,\mathrm{GO}_{2,2})$}{(GL2,GO(2,2))}}\label{sec:thetaidentityGg}

Let $\tau$ be the automorphic representation of $\GL_2(\A)$ associated with $g$. We can regard $\tau \boxtimes \tau$ as a representation of $\mathrm{GSO}_{2,2}(\A)$, and it extends to a unique automorphic representation $\Upsilon$ of $\mathrm{GO}_{2,2}(\A)$ having a non-zero $\mathrm{O}(V_4')(\A)$-invariant distribution, where $V_4' = \{x \in V_4: \mathrm{tr}(x) = 0\}$. Then, the representations $\tau$ and $\Upsilon$ are in theta correspondence for the pair $(\GL_2,\mathrm{GO}_{2,2})$:
\[
\Theta(\tau) = \Upsilon, \quad \Theta(\Upsilon) = \tau.
\]

As in previous sections, write $\mathbf g \in \tau$ for the adelization of the newform $g$. The cusp form $\mathbf g \otimes \mathbf g \in \tau \boxtimes \tau$ extends to a cusp form $
\mathbf G \in \Upsilon$ on $\mathrm{GO}_{2,2}(\A)$ satisfying $\mathbf G(hh') = \mathbf G(h)$ for all $h \in \mathrm{GO}_{2,2}(\A)$ and $h' \in \mu_2(\A)$, where $\mu_2$ is the subgroup of $\mathrm{O}_{2,2}$ generated by the involution $\ast$ on $V_4$. Observe also that, by construction,
\[
\mathbf G_{|\GL_2\times\GL_2} = \mathbf g \otimes \mathbf g \in \tau \boxtimes \tau.
\]
Associated with $\mathbf g$, we define a Bruhat--Schwartz function $\phi_{\mathbf g} = \otimes_v \phi_{\mathbf g,v} \in \mathcal S(V_4(\A))$ by describing its local components as follows:
\begin{itemize}
    \item[i)] $\phi_{\mathbf g,q} = \mathbf 1_{\mathrm M_2(\Z_q)}$ at all primes $q\nmid N_g$;
    \item[ii)] at primes $p \mid N_g$,
    \[
    \phi_{\mathbf g,p}\left(\left(\begin{smallmatrix} x_1 & x_2 \\ x_3 & x_4\end{smallmatrix}\right)\right) = \mathbf 1_{\Z_p}(x_1)\mathbf 1_{\Z_p}(x_4)\mathbf 1_{p\Z_p}(x_3)\left(\mathbf 1_{\Z_p}(x_2)-p^{-1}\mathbf 1_{p^{-1}\Z_p}(x_2)\right);
    \]
    \item[iii)] at the archimedean place, 
    \[
    \phi_{\mathbf g,\infty} \left(\left(\begin{smallmatrix} x_1 & x_2 \\ x_3 & x_4\end{smallmatrix}\right)\right) = (x_1 + \sqrt{-1}x_2 + \sqrt{-1}x_3 -x_4)^{\ell+1}\exp(-\pi \mathrm{tr}(x^tx)). 
    \]
\end{itemize}

By using the rules of the Weil representation of $\widetilde{\SL}_2 \times \mathrm{GO}_{2,2}(\A)$, one can easily check the following:

\begin{lemma}
Let $p$ be a finite prime. Then the following properties hold. 
\begin{itemize}
    \item If $p \nmid N_g$, then $\phi_{\mathbf g,p}$ is fixed by $\SL_2(\Z_p) \subseteq \SL_2(\Q_p)$ and by $\GL_2(\Z_p) \times \GL_2(\Z_p) \subseteq \mathrm{GO}_{2,2}(\Q_p)$.
    \item If $p \mid N_g$, then $\phi_{\mathbf g,p}$ is fixed by $\Gamma_0(p) \subseteq \SL_2(\Z_p)$ and by $K_0(p) \times K_0(p) \subseteq \GL_2(\Z_p) \times \GL_2(\Z_p)$.
\end{itemize}
\end{lemma}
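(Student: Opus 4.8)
The plan is to verify all four invariance assertions directly from the explicit Weil representation formulas recalled in Section~\ref{sec:Weilreps}, exploiting that $m=\dim V_4=4$ is even. The first step is to record the simplifications available in this case. Since $(V_4,q)$ with $q(x)=\det(x)$ is the $4$-dimensional split quadratic space, $\det(V_4)=1$ in $\Qp^{\times}/(\Qp^{\times})^2$, so $\chi_{\psi,V_4}\equiv 1$, and $\gamma(\psi,V_4)=1$ because $V_4$ is an orthogonal sum of two hyperbolic planes (one for $x_1x_4$, one for $-x_2x_3$). Consequently $\omega_{\psi,V_4}$ descends to $\SL_2(\Qp)\times\mathrm{O}(V_4)(\Qp)$ (so no metaplectic subtlety arises, even at $p=2$), and the generators act by $\omega(u(b))\phi(x)=\psi(b\det x)\phi(x)$, $\omega(t(a))\phi(x)=|a|_p^{2}\phi(ax)$, and $\omega(s)\phi=\widehat\phi$, the Fourier transform for the self-dual measure and the pairing $(x,y)=\mathrm{Tr}(xy^{*})$. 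On the similitude side, $\GL_2\times\GL_2$ acts through $\rho$ (and the extended Weil representation) by $\phi(x)\mapsto|\det(h_1h_2)|_p^{-1}\phi(h_1^{-1}x(h_2^{-1})^{*})$, which, since $A^{*}=\det(A)A^{-1}$ for $2$-by-$2$ matrices, equals $|\det(h_1h_2)|_p^{-1}\phi(h_1^{-1}x\det(h_2)^{-1}h_2)$.

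For a prime $p\nmid N_g$ everything is then immediate. The Gram matrix of $(x,y)=\mathrm{Tr}(xy^{*})$ in the basis $e_{11},e_{22},e_{12},e_{21}$ is unimodular, so $\mathrm{M}_2(\Zp)$ is a self-dual lattice of covolume $1$, whence $\omega(s)\mathbf 1_{\mathrm{M}_2(\Zp)}=\widehat{\mathbf 1_{\mathrm{M}_2(\Zp)}}=\mathbf 1_{\mathrm{M}_2(\Zp)}$. Moreover $\det x\in\Zp$ for $x\in\mathrm{M}_2(\Zp)$, so $\omega(u(b))\mathbf 1_{\mathrm{M}_2(\Zp)}=\mathbf 1_{\mathrm{M}_2(\Zp)}$ for $b\in\Zp$; and $\mathrm{M}_2(\Zp)$ is a two-sided $\GL_2(\Zp)$-module, stable under $\Zp^{\times}$-scaling. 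Since $\SL_2(\Zp)=\langle u(b),\,s\,:\,b\in\Zp\rangle$ (a local ring has $\SL_2$ generated by elementary matrices, and $su(b)s^{-1}=\left(\begin{smallmatrix}1&0\\-b&1\end{smallmatrix}\right)$) and $\det(h_i)\in\Zp^{\times}$, both invariances follow.

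For a prime $p\mid N_g$ I would first rewrite $\phi_{\mathbf g,p}=\mathbf 1_{R_0}-p^{-1}\mathbf 1_{R_1}$, where $R_0=\left(\begin{smallmatrix}\Zp&\Zp\\p\Zp&\Zp\end{smallmatrix}\right)$ is the standard Eichler order of level $p$ and $R_1=\left(\begin{smallmatrix}\Zp&p^{-1}\Zp\\p\Zp&\Zp\end{smallmatrix}\right)\supseteq R_0$; note that $\mathrm{supp}(\phi_{\mathbf g,p})\subseteq R_1$ and $\det x\in\Zp$ for $x\in R_1$. The $K_0(p)\times K_0(p)$-invariance is then structural: $K_0(p)$ is stable under $A\mapsto A^{*}$, and both $R_0$ and $R_1$ are stable under $kRk'$ with $k,k'\in K_0(p)$ and under $\Zp^{\times}$-scaling, so $\phi_{\mathbf g,p}(h_1^{-1}x\det(h_2)^{-1}h_2)=\phi_{\mathbf g,p}(x)$ for $h_i\in K_0(p)$ while the similitude factor is $1$. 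For the $\Gamma_0(p)$-invariance, use that $\Gamma_0(p)$ is generated by $t(a)$ ($a\in\Zp^{\times}$), $u(b)$ ($b\in\Zp$), and the lower unipotents $s^{-1}u(-c)s=\left(\begin{smallmatrix}1&0\\c&1\end{smallmatrix}\right)$ ($c\in p\Zp$); the first two are handled exactly as above. For $s^{-1}u(-c)s$ with $c\in p\Zp$, using $\omega(s)\phi_{\mathbf g,p}=\widehat{\phi_{\mathbf g,p}}$ one reduces to showing that $\widehat{\phi_{\mathbf g,p}}$ is fixed by $\omega(u(-c))$, i.e.\ that $\det y\in p^{-1}\Zp$ on $\mathrm{supp}(\widehat{\phi_{\mathbf g,p}})$. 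A short Fourier computation --- the $(x_1,x_4)$-block is self-dual, while $\mathbf 1_{p\Zp}(x_3)(\mathbf 1_{\Zp}(x_2)-p^{-1}\mathbf 1_{p^{-1}\Zp}(x_2))$ transforms into $p^{-1}\mathbf 1_{p^{-1}\Zp}(y_2)(\mathbf 1_{\Zp}(y_3)-\mathbf 1_{p\Zp}(y_3))$ --- shows $\widehat{\phi_{\mathbf g,p}}$ is supported on $\{\,y_1,y_4\in\Zp,\ y_2\in p^{-1}\Zp,\ y_3\in\Zp^{\times}\,\}$, where indeed $\det y=y_1y_4-y_2y_3\in p^{-1}\Zp$; applying $\omega(s^{-1})$ back then gives $\omega(s^{-1}u(-c)s)\phi_{\mathbf g,p}=\phi_{\mathbf g,p}$.

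The one step that is not a one-line check is this lower-unipotent invariance at $p\mid N_g$: it is the only generator not fixed ``on the nose'' by the support of $\phi_{\mathbf g,p}$, and it forces one to pass to the Fourier transform and to track precisely how the dilated summand $p^{-1}\mathbf 1_{p^{-1}\Zp}$ is produced --- which is exactly where the particular shape of $\phi_{\mathbf g,p}$ enters. Everything else (the normalizations $\gamma(\psi,V_4)=1$ and $\chi_{\psi,V_4}\equiv 1$, self-duality of $\mathrm{M}_2(\Zp)$, the two-sided $K_0(p)$-stability of the Eichler order and of $R_1$, and the generating sets for $\SL_2(\Zp)$ and $\Gamma_0(p)$) is bookkeeping with the formulas above, matching the ``one can easily check'' of the statement.
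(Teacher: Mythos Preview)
Your proof is correct and follows precisely the approach the paper indicates: the paper does not actually give a proof of this lemma, merely stating that ``by using the rules of the Weil representation \ldots\ one can easily check'' it. You have supplied exactly that check, with the correct simplifications for the even-dimensional split space $V_4$ (trivial $\chi_{\psi,V_4}$ and Weil index), the standard generating sets for $\SL_2(\Z_p)$ and $\Gamma_0(p)$, and the correct Fourier computation at $p\mid N_g$ showing $\widehat{\phi_{\mathbf g,p}}$ is supported where $\det y\in p^{-1}\Z_p$. There is nothing to compare against beyond this, and your argument is complete.
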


With this, \cite[Corollary 5.4]{PaldVP} shows that
\begin{equation}\label{thetaidentity-G-basic}
\theta(\mathbf G,\phi_{\mathbf g}) = 2^{\ell+1}\zeta_{\Q}(2)^{-2}\mu_{N_g}^{-1}\langle g,g\rangle \mathbf g,
\end{equation}
where $\mu_{N_g}=[\SL_2(\Z):\Gamma_0(N_g)]$. We want to derive an explicit theta identity analogous to \eqref{thetaidentity-G-basic} involving the old form $\breve{\mathbf g}$ instead of $\mathbf g$. To begin with, define $\mathbf g^{\sharp} = \tau(t(2^{-2})_2)\mathbf g \in \tau$, where the element $t(2^{-1})_2$ is concentrated at the place $2$. In parallel, we also define a Bruhat--Schwartz function by $\phi_{\mathbf g^{\sharp}} = 2^{-2}\omega(t(2^{-1})_2,1)\phi_{\mathbf g}$. That is, if $\phi_{\mathbf g^{\sharp}} = \otimes_v \phi_{\mathbf g^{\sharp},v}$, then we keep $\phi_{\mathbf g^{\sharp},v} = \phi_{\mathbf g,v}$ for all $v \neq 2$ and set 
$\phi_{\mathbf g^{\sharp},2} = 2^{-2}\omega_2(t(2^{-1})_2,1)\phi_{\mathbf g,2}$. One can easily check that $\phi_{\mathbf g^{\sharp},2} = \mathbf 1_{V_4(2\Z_2)}$. With this slight modification at the prime $2$, \cite[Corollary 5.5]{PaldVP} shows that
\begin{equation}\label{thetaidentity-G-sharp}
\theta(\mathbf G,\phi_{\mathbf g^{\sharp}}) = 2^{\ell-1}\zeta_{\Q}(2)^{-2}\mu_{N_g}^{-1}\langle g,g\rangle \mathbf g^{\sharp}.
\end{equation}

Next, with this modification at $p=2$ observe  from Section \ref{sec:testvector} that $\breve{\mathbf g}$ is obtained from $\mathbf g^{\sharp}$ by applying the level raising operator on $\tau$ defined by
\[
\mathbf V_{M_g}: \varphi \mapsto \tau(\varpi_{M_g})\varphi,
\]
where $M_g = N_f/N_g$ and $\varpi_{M_g} \in \GL_2(\A)$ is $1$ away from $M_g$, and equals $\varpi_p = \left(\begin{smallmatrix} p^{-1} & 0 \\ 0 & 1 \end{smallmatrix}\right) \in \GL_2(\Q_p)$ at primes $p \mid M_g$, hence $\breve{\mathbf g} = \mathbf V_{M_g} \mathbf g^{\sharp}$. Besides, consider the element $h_p = (1,\varpi_p) \in \GL_2(\Q_p) \times \GL_2(\Q_p)$ for each prime $p \mid M_g$, and identify it with its image $\rho(h_p) \in \mathrm{GSO}_{2,2}(\Q_p) \subseteq \mathrm{GO}_{2,2}(\Q_p)$. Let $\mathbf Y_p$ be the operator acting on $\Upsilon$ by $\Upsilon(h_p)$, and $\mathbf Y_{M_g}$ be defined as the product $\prod_{p\mid M_g} \mathbf Y_p$ (each acting on the corresponding component). Equivalently, we may write $h_{M_g} \in \GL_2(\A)\times\GL_2(\A)$ for the element which is trivial at all places $v \nmid M_g$ and which equals $h_p$ at each prime $p \mid M_g$, and identify it with its image $\rho(h_{M_g}) \in \mathrm{GSO}_{2,2}(\A)$. Then $\mathbf Y_{M_g}$ is the operator acting on $\Upsilon$ by $\Upsilon(h_{M_g})$. With this, consider the automorphic form
\[
\breve{\mathbf G}:=\mathbf Y_{M_g}\mathbf G \in \Upsilon,
\]
and observe that 
\[
\breve{\mathbf G}_{|\GL_2\times \GL_2} = \mathbf g \otimes \mathbf V_{M_g}\mathbf g \in \tau \boxtimes\tau.
\]
For each prime $p \mid M_g$ the automorphic form $\breve{\mathbf G}$ is fixed by the action of $\GL_2(\Z_p) \times K_0(p) \subseteq \mathrm{GO}_{2,2}(\Q_p)$.

Along similar lines, define a Bruhat--Schwartz function $\phi_{\breve{\mathbf g}} = \otimes_v \phi_{\breve{\mathbf g},v}\in \mathcal S(V_4(\A))$ by keeping $\phi_{\breve{\mathbf g},v} = \phi_{\mathbf g^{\sharp},v}$ at places $v \nmid M_g$, and setting
\[
\phi_{\breve{\mathbf g},p} :=  p^{-1}\omega_p(\varpi_p,h_p)\phi_{\mathbf g^{\sharp},p}
\]
at each prime $p \mid M_g$. Note that in this definition we are using the extended Weil representation. Again, a routinary check easily shows the following:

\begin{lemma}
Let $p$ be a prime dividing $M_g$. Then $\phi_{\breve{\mathbf g},p}$ is fixed by the action of $\Gamma_0(p) \subseteq \SL_2(\Q_p)$, and by the action of $\GL_2(\Z_p) \times K_0(p) \subseteq \mathrm{GO}_{2,2}(\Q_p)$.
\end{lemma}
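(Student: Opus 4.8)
The plan is to reduce both invariance statements, by commutation relations in the extended Weil representation, to the invariance properties of $\phi_{\mathbf g^{\sharp},p}$ already recorded for $p\nmid N_g$. Since $N_f$ is odd and squarefree and $p\mid M_g=N_f/N_g$, the prime $p$ is odd and coprime to $N_g$, so $\phi_{\mathbf g^{\sharp},p}=\phi_{\mathbf g,p}=\mathbf 1_{\mathrm M_2(\Z_p)}$, which is fixed by $\SL_2(\Z_p)$ and by $\GL_2(\Z_p)\times\GL_2(\Z_p)$ (acting through $\rho$). Write $h_p=\rho(1,\varpi_p)$, so $\nu(h_p)=\det(\varpi_p)=p^{-1}$, and note that $\varpi_p\cdot\mathrm{diag}(1,\det(\varpi_p)^{-1})=t(p^{-1})$, whence the defining formula of the extended Weil representation gives $\omega_p(\varpi_p,h_p)=\omega_p(t(p^{-1}),1)\circ L(h_p)$. (Alternatively one can first make $\phi_{\breve{\mathbf g},p}$ explicit: a short computation with these formulas, using that $(\mathrm M_2,\det)$ is split with trivial discriminant so that $\chi_{\psi,V_4}$ is trivial, identifies $\phi_{\breve{\mathbf g},p}$ with the characteristic function of the lattice $S$ of those $x\in\mathrm M_2(\Z_p)$ whose first column lies in $p\Z_p^2$, and the two invariances can then be checked directly on $\mathbf 1_S$. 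I will instead follow the cleaner commutation route.)

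For $\Gamma_0(p)$-invariance, given $\gamma\in\Gamma_0(p)$ the conjugate $\gamma':=\varpi_p^{-1}\gamma\varpi_p$ lies in $\SL_2(\Z_p)$ (immediate from $c\in p\Z_p$). Using $\gamma\varpi_p=\varpi_p\gamma'$ and the multiplicativity of $\omega_p$ on $R(\Q_p)$,
\[
\omega_p(\gamma,1)\phi_{\breve{\mathbf g},p}=p^{-1}\omega_p(\varpi_p,h_p)\,\omega_p(\gamma',1)\mathbf 1_{\mathrm M_2(\Z_p)}=p^{-1}\omega_p(\varpi_p,h_p)\mathbf 1_{\mathrm M_2(\Z_p)}=\phi_{\breve{\mathbf g},p},
\]
the middle equality because $\mathbf 1_{\mathrm M_2(\Z_p)}$ is $\SL_2(\Z_p)$-fixed.

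For $\GL_2(\Z_p)\times K_0(p)$-invariance, recall that the action on $\mathcal S(V_4(\Q_p))$ of $\rho(k_1,k_2)\in\mathrm{GSO}_{2,2}(\Q_p)$ is through the representation $L$ of $\mathrm{GO}(V_4)$, and that $L(h)$ commutes with the torus operator $\omega_p(t(p^{-1}),1)$ (a scalar dilation of the argument commutes with the linear map $h^{-1}$). Thus, for $k_1\in\GL_2(\Z_p)$ and $k_2\in K_0(p)$,
\[
L(\rho(k_1,k_2))\phi_{\breve{\mathbf g},p}=p^{-1}\omega_p(t(p^{-1}),1)\,L\big(\rho(k_1,k_2)h_p\big)\mathbf 1_{\mathrm M_2(\Z_p)},
\]
and, writing $\rho(k_1,k_2)h_p=\rho(k_1,k_2\varpi_p)=h_p\,\rho\big(k_1,\varpi_p^{-1}k_2\varpi_p\big)$ and using $\varpi_p^{-1}K_0(p)\varpi_p\subseteq\GL_2(\Z_p)$, this equals $p^{-1}\omega_p(\varpi_p,h_p)\,L\big(\rho(k_1,\varpi_p^{-1}k_2\varpi_p)\big)\mathbf 1_{\mathrm M_2(\Z_p)}=p^{-1}\omega_p(\varpi_p,h_p)\mathbf 1_{\mathrm M_2(\Z_p)}=\phi_{\breve{\mathbf g},p}$, because $\mathbf 1_{\mathrm M_2(\Z_p)}$ is $\GL_2(\Z_p)\times\GL_2(\Z_p)$-fixed.

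The computations are routine; the places that reward care are the bookkeeping inside the extended Weil representation (the $\det(g)^{-1}$-twist in $\omega_p(g,h)$, the factor $|\nu(h)|^{-m/4}$ in $L(h)$, and the Weil index and character $\chi_{\psi,V_4}$, which happen to be trivial for the split space $V_4$), and the fact that the $\mathrm{GO}_{2,2}$-action whose invariance is in question is $L$ --- a genuine representation of all of $\mathrm{GO}(V_4)(\Q_p)$ --- rather than the partial action $\omega_p(1,\cdot)$, which is only defined on $\ker\nu$. I also expect it worth stressing that $K_0(p)$ being strictly smaller than $\GL_2(\Z_p)$ is exactly what makes $\varpi_p^{-1}K_0(p)\varpi_p$ land inside $\GL_2(\Z_p)$ (equivalently, what makes the first-column condition defining $S$ stable under right multiplication), so the statement is as sharp as it can be.
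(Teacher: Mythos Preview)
Your proof is correct and follows essentially the same approach as the paper's own proof, which is a terse sketch: reduce to the known $\SL_2(\Z_p)$- and $\GL_2(\Z_p)\times\GL_2(\Z_p)$-invariance of $\phi_{\mathbf g^{\sharp},p}=\mathbf 1_{\mathrm M_2(\Z_p)}$ via the conjugation relations $\varpi_p^{-1}\Gamma_0(p)\varpi_p\subseteq\SL_2(\Z_p)$ and $\varpi_p^{-1}K_0(p)\varpi_p\subseteq\GL_2(\Z_p)$. You supply the bookkeeping with the extended Weil representation (the factorization $\omega_p(\varpi_p,h_p)=\omega_p(t(p^{-1}),1)\circ L(h_p)$ and the commutation of $L$ with the torus operator) that the paper leaves implicit.
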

\begin{proof}
The proof just uses the invariance properties of $\phi_{\mathbf g^{\sharp}, p} = \phi_{\mathbf g,p}$, the definition of $\phi_{\breve{\mathbf g},p}$, and the fact that if $\gamma \in K_0(p) \subseteq \GL_2(\Z_p)$ (resp. $\Gamma_0(p) \subseteq \SL_2(\Z_p)$), then $\gamma \varpi_p \in \varpi_p\GL_2(\Z_p)$ (resp. $\varpi_p \SL_2(\Z_p)$).
\end{proof}

With these definitions, the above explicit theta identities recalled from \cite{PaldVP} can be adapted easily to identities relating $\breve{\mathbf g}$ and $\breve{\mathbf G}$ through the theta correspondence with respect to $\phi_{\breve{\mathbf g}}$. Indeed, most importantly for our purposes we have the following:

\begin{proposition}\label{prop:globalG}
With the above notation, 
\begin{equation}\label{thetaidentity-G-breve}
\theta(\breve{\mathbf G},\phi_{\breve{\mathbf g}}) = 2^{\ell-1}M_g^{-1}\mu_{N_g}^{-1}\zeta_{\Q}(2)^{-2}\langle g,g\rangle \breve{\mathbf g}.
\end{equation}
\end{proposition}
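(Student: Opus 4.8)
The plan is to bootstrap \eqref{thetaidentity-G-breve} from the already-established identity \eqref{thetaidentity-G-sharp}, namely $\theta(\mathbf G,\phi_{\mathbf g^{\sharp}}) = 2^{\ell-1}\zeta_{\Q}(2)^{-2}\mu_{N_g}^{-1}\langle g,g\rangle \mathbf g^{\sharp}$, by tracking how both sides transform under the operator $\mathbf Y_{M_g}$ (equivalently, the action of $\rho(h_{M_g}) \in \mathrm{GSO}_{2,2}(\A)$) that defines $\breve{\mathbf G} = \mathbf Y_{M_g}\mathbf G$. The point is that theta lifts intertwine the relevant group actions: for $(g,h) \in R(\A)$ one has the equivariance $\theta(g';\Theta(h)\mathbf G, \phi) $-type relations, and more precisely $\theta(\mathbf G, \omega(g_0,h_0)\phi)$ is related to $\theta(\Upsilon(h_0)^{-1}\mathbf G \circ (\text{translation by } g_0), \phi)$ via the defining integral of the theta lift. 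So I would first recall the precise form of this intertwining on the $\GL_2 \times \mathrm{GO}_{2,2}$ side, using the extended Weil representation introduced in Section~\ref{sec:Weilreps}.

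The key steps, in order: (1) Write out $\theta(\breve{\mathbf G},\phi_{\breve{\mathbf g}}) = \theta(\mathbf Y_{M_g}\mathbf G, \phi_{\breve{\mathbf g}})$ using the integral definition of the theta lift for the pair $(\GL_2, \mathrm{GO}_{2,2})$. (2) Recall that $\phi_{\breve{\mathbf g},p} = p^{-1}\omega_p(\varpi_p, h_p)\phi_{\mathbf g^{\sharp},p}$ at primes $p \mid M_g$ and unchanged elsewhere, so $\phi_{\breve{\mathbf g}} = \left(\prod_{p \mid M_g} p^{-1}\right)\omega(\varpi_{M_g}, h_{M_g})\phi_{\mathbf g^{\sharp}} = M_g^{-1}\omega(\varpi_{M_g}, h_{M_g})\phi_{\mathbf g^{\sharp}}$. (3) Use the equivariance of the theta kernel: since $(\varpi_{M_g}, h_{M_g}) \in R(\A)$ (as $\det(\varpi_{M_g}) = \nu(\rho(h_{M_g})) = \prod_{p \mid M_g} p^{-1}$ on both factors, consistently), one has
\[
\theta\bigl(g, \rho(h_{M_g}); \omega(\varpi_{M_g}, h_{M_g})\phi_{\mathbf g^{\sharp}}\bigr) = \theta\bigl(g\varpi_{M_g}, \rho(h_{M_g})^2; \phi_{\mathbf g^{\sharp}}\bigr)
\]
up to the cocycle bookkeeping, which after substituting into the defining integral and changing variables $g \mapsto g\varpi_{M_g}^{-1}$ in the $\SL_2$-integral transports the $\mathbf Y_{M_g}$-translation on $\mathbf G$ to a $\rho(h_{M_g})$-translation on the output automorphic form. (4) Apply $\mathbf Y_{M_g}$ (i.e. $\Upsilon(h_{M_g})^{-1}$ or its inverse, consistently with the chosen convention) to \eqref{thetaidentity-G-sharp}: since $\theta(f,\phi)$ for varying $\phi, f$ is the theta lift, translating $\mathbf G$ by $h_{M_g}$ and $\phi_{\mathbf g^{\sharp}}$ compatibly produces $\theta(\breve{\mathbf G}, \phi_{\breve{\mathbf g}})$ equal to the same scalar times $\tau(\varpi_{M_g})\mathbf g^{\sharp} = \mathbf V_{M_g}\mathbf g^{\sharp} = \breve{\mathbf g}$, and the extra factor $M_g^{-1}$ from step (2) gives the stated constant $2^{\ell-1}M_g^{-1}\mu_{N_g}^{-1}\zeta_{\Q}(2)^{-2}\langle g,g\rangle$. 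An alternative, perhaps cleaner, route is to verify the identity placewise: both sides of \eqref{thetaidentity-G-breve} lie in $\tau$, which is irreducible, so it suffices to check one local component — and at primes $p \mid M_g$ this reduces to a finite local computation with the explicit Bruhat--Schwartz functions $\phi_{\breve{\mathbf g},p} = p^{-1}\omega_p(\varpi_p, h_p)\phi_{\mathbf g^{\sharp},p}$ and $\phi_{\mathbf g^{\sharp},p} = \mathbf 1_{\mathrm{M}_2(\Z_p)}$, exactly parallel to the proof of \cite[Corollary 5.5]{PaldVP}, with the single new input being the $p^{-1}$ normalization factor.

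The main obstacle I expect is keeping the normalizations consistent: the factors of $|\nu(h)|$ appearing in the extended Weil representation $L(h)\phi = |\nu(h)|_F^{-m/4}\phi(h^{-1}x)$ with $m = 4$ here (so $|\nu(h)|_F^{-1}$), the choice of auxiliary element $g'$ with $\det(g') = \nu(h)$ in the extended theta integral, and the metaplectic cocycle — all must be threaded through correctly so that the scalar comes out as exactly $2^{\ell-1}M_g^{-1}\mu_{N_g}^{-1}\zeta_{\Q}(2)^{-2}\langle g,g\rangle$ rather than off by a power of $M_g$ or a root of unity. The cleanest way to control this is to do the computation purely locally at each $p \mid M_g$: reduce to showing $\theta_p(\breve{\mathbf G}_p, \phi_{\breve{\mathbf g},p}) = p^{-1}\,\theta_p(\mathbf G_p, \phi_{\mathbf g^{\sharp},p})$ as elements of $\tau_p$ (up to the identification $\mathbf g^{\sharp}_p \leftrightarrow \mathbf V_p\mathbf g^{\sharp}_p = \breve{\mathbf g}_p$), which is an honest but routine integral over $\SL_2(\Q_p)$ against the explicit kernel, and then multiply the local identities together with the unchanged archimedean and away-from-$M_g$ contributions supplied by \eqref{thetaidentity-G-sharp}. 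I would present the proof in this placewise form, citing \cite[Corollary 5.5]{PaldVP} for the structure of the argument and only spelling out the new local factor of $p^{-1}$.
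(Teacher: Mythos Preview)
Your approach is correct and essentially the same as the paper's: the paper unwinds the definitions exactly as in your steps (1)--(3), obtaining directly that $\theta(\breve{\mathbf G},\phi_{\breve{\mathbf g}}) = M_g^{-1}\tau(\varpi_{M_g})\theta(\mathbf G,\phi_{\mathbf g^{\sharp}})$, and then applies \eqref{thetaidentity-G-sharp}. Your worries about the metaplectic cocycle are unnecessary here since $\dim V_4 = 4$ is even, and the paper's global change-of-variable argument is short enough that the placewise alternative you propose is not needed.
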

\begin{proof}
If $x \in \GL_2(\A)$ and $y'\in \mathrm{GO}_{2,2}(\A)$ is such that $\det(x)=\nu(y')$, then notice that $\det(x\varpi_{M_g}) = \nu(y'h_{M_g})$, hence we can write by applying the definitions
\begin{align*}
    \theta(\breve{\mathbf G},\phi_{\breve{\mathbf g}})(x) & = M_g^{-1} \int_{[\mathrm{O}_{2,2}]} \left(\sum_{v\in V_4(\Q)} \omega(x\varpi_{M_g},y'yh_{M_g})\phi_{\mathbf g^{\sharp}}(v)\right)\mathbf G(y'yh_{M_g})dy = \\
    & = M_g^{-1} \int_{[\mathrm{O}_{2,2}]} \left(\sum_{v\in V_4(\Q)} \omega(x\varpi_{M_g},y'h_{M_g}y)\phi_{\mathbf g^{\sharp}}(v)\right)\mathbf G(y'h_{M_g}y)dy,
\end{align*}
and from this we deduce that $\theta(\breve{\mathbf G},\phi_{\breve{\mathbf g}}) = M_g^{-1}\tau(\varpi_{M_g})\theta(\mathbf G,\phi_{\mathbf g^{\sharp}})$. The statement follows directly from \eqref{thetaidentity-G-sharp}.
\end{proof}

For later use, we compute in the following lemma the precise description of $\phi_{\breve{\mathbf g}}$ at primes $p$ dividing $N_f$. Recall that $N_f = N_gM_g$, and $\mathrm{gcd}(N_g,M_g)=1$.

\begin{lemma}\label{lem:phibreveg}
With the above notation, if $p$ is a prime dividing $N_f = N_g M_g$ we have 
\[
\phi_{\breve{\mathbf g},p}\left(\left(\begin{smallmatrix}x_1& x_2 \\ x_3 & x_4\end{smallmatrix}\right)\right) = \begin{cases}
 \mathbf 1_{\Z_p}(x_1)\mathbf 1_{\Z_p}(x_4)\mathbf 1_{p\Z_p}(x_3)\left(\mathbf 1_{\Z_p}(x_2)-p^{-1}\mathbf 1_{p^{-1}\Z_p}(x_2)\right) & \text{if } p \mid N_g, \\
 \mathbf 1_{p\Z_p}(x_1)\mathbf 1_{\Z_p}(x_2)\mathbf 1_{p\Z_p}(x_3)\mathbf 1_{\Z_p}(x_4) & \text{if } p \mid M_g.
\end{cases}
\]
\end{lemma}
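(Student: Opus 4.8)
The plan is to unwind the definitions at each prime $p\mid N_f$, the only genuine input being the explicit formulae for the (extended) Weil representation of $\widetilde{\SL}_2\times\mathrm{GO}_{2,2}$ recalled in Section~\ref{sec:Weilreps}. Recall that $N_f=N_gM_g$ with $\gcd(N_g,M_g)=1$, and that by construction $\phi_{\breve{\mathbf g},v}=\phi_{\mathbf g^{\sharp},v}$ for $v\nmid M_g$, while $\phi_{\mathbf g^{\sharp},v}=\phi_{\mathbf g,v}$ for $v\neq 2$.

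\emph{Primes $p\mid N_g$.} Such a $p$ is odd (as $N_g$ is odd) and does not divide $M_g$, so neither the modification at $2$ defining $\phi_{\mathbf g^{\sharp}}$ nor the modifications at primes dividing $M_g$ defining $\phi_{\breve{\mathbf g}}$ affect the $p$-component; hence $\phi_{\breve{\mathbf g},p}=\phi_{\mathbf g^{\sharp},p}=\phi_{\mathbf g,p}$, and the asserted expression is precisely the defining formula of $\phi_{\mathbf g,p}$ in item~(ii) of the definition of $\phi_{\mathbf g}$.

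\emph{Primes $p\mid M_g$.} Here $p$ is odd and $p\nmid N_g$, so $\phi_{\mathbf g^{\sharp},p}=\phi_{\mathbf g,p}=\mathbf 1_{\mathrm M_2(\Z_p)}$, and by definition $\phi_{\breve{\mathbf g},p}=p^{-1}\,\omega_p(\varpi_p,h_p)\,\mathbf 1_{\mathrm M_2(\Z_p)}$ with $\varpi_p=\left(\begin{smallmatrix}p^{-1}&0\\0&1\end{smallmatrix}\right)$ and $h_p=\rho(1,\varpi_p)\in\mathrm{GSO}_{2,2}(\Q_p)$, which satisfies $\nu(h_p)=\det(\varpi_p)=p^{-1}$ so that $(\varpi_p,h_p)\in R(\Q_p)$. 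I would then apply the definition of the extended Weil representation, writing $\omega_p(\varpi_p,h_p)\phi=\omega_p\bigl(t(p^{-1}),1\bigr)\,L(h_p)\phi$ where $t(p^{-1})=\varpi_p\left(\begin{smallmatrix}1&0\\0&\det(\varpi_p)^{-1}\end{smallmatrix}\right)$ and $L(h_p)\phi(x)=|\nu(h_p)|_p^{-1}\phi(h_p^{-1}x)$. Since $V_4=(\mathrm M_2(\Q_p),\det)$ is split of even dimension, the character $\chi_{\psi,V_4}$ is trivial, so the torus formula gives $\omega_p(t(p^{-1}))F(x)=|p^{-1}|_p^{2}F(p^{-1}x)=p^{2}F(p^{-1}x)$; and from $\rho(h_1,h_2)x=h_1xh_2^{*}$ together with $\varpi_p^{*}=\left(\begin{smallmatrix}1&0\\0&p^{-1}\end{smallmatrix}\right)$ one has $h_p^{-1}x=x\left(\begin{smallmatrix}1&0\\0&p\end{smallmatrix}\right)$, i.e.\ the second column of $x$ gets scaled by $p$. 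Composing these two steps and cancelling the constants against the normalizing $p^{-1}$ in the definition of $\phi_{\breve{\mathbf g},p}$, one obtains
\[
\phi_{\breve{\mathbf g},p}(x)=\mathbf 1_{\mathrm M_2(\Z_p)}\!\left(\begin{pmatrix}p^{-1}x_1&x_2\\p^{-1}x_3&x_4\end{pmatrix}\right)=\mathbf 1_{p\Z_p}(x_1)\mathbf 1_{\Z_p}(x_2)\mathbf 1_{p\Z_p}(x_3)\mathbf 1_{\Z_p}(x_4),
\]
which is the claimed formula.

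I do not expect any real obstacle here; the argument is a purely mechanical bookkeeping of the Weil representation conventions. The only point requiring genuine care is the direction in which $h_p$ acts on $V_4$: it is \emph{right} multiplication by $\varpi_p^{*}$, so a stray transpose would interchange the roles of rows and columns and move the $p\Z_p$-conditions onto $x_1,x_2$ instead of $x_1,x_3$. This is pinned down by the explicit $2\times2$ computation using $\rho(h_1,h_2)x=h_1xh_2^{*}$ and the formula for $x^{*}$ from Section~\ref{spaces:lowrank}.
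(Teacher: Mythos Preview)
Your proof is correct and follows the same line as the paper's: for $p\mid N_g$ you invoke the definition directly, and for $p\mid M_g$ you unwind $p^{-1}\omega_p(\varpi_p,h_p)\mathbf 1_{\mathrm M_2(\Z_p)}$ via the extended Weil representation, exactly as the paper does. Your version is just more explicit about the intermediate factors (the triviality of $\chi_{\psi,V_4}$, the effect of $t(p^{-1})$ and of $L(h_p)$, and the cancellation of constants), whereas the paper simply records the formula for $\omega_p(g,h)\phi$ and $L(h)\phi$ and says the result follows.
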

\begin{proof}
The case $p \mid N_g$ was already recalled above. When $p \mid M_g$, one just has to compute 
\[
\phi_{\breve{\mathbf g},p} = p^{-1}\omega_p(\varpi_p,h_p)\phi_{\mathbf g^{\sharp},p} =  p^{-1}\omega_p(\varpi_p,h_p)\phi_{\mathbf g, p} = p^{-1}\omega(\varpi_p,h_p)\mathbf 1_{\mathrm{M}_2(\Z_p)}
\]
using the rules of the (extended) Weil representation. Recall that if $g \in \widetilde{\SL}_2(\Q_p)$ and $h \in \mathrm{GO}_{2,2}(\Q_p)$, then
\[
\omega_p(g,h) \phi = \omega\left(g\left(\begin{smallmatrix} 1 & 0 \\ 0 & \det(g)^{-1}\end{smallmatrix}\right),1\right) L(h)\phi,
\]
where $L(h)\phi(x) =  |\nu(h)|_p^{-1}\phi(h^{-1}\cdot x)$. Applying this for $(g,h)=(\varpi_p,h_p)$, where $h_p = (1,\varpi_p) \in \GL_2(\Q_p)\times \GL_2(\Q_p)$ and $\nu(h_p)=p^{-1}$, one obtains the expression in the statement. 
\end{proof}

\subsection{An explicit theta identity for the pair \texorpdfstring{$(\widetilde{\SL}_2,\mathrm{PGSp}_2)$}{(SL2,PGSp2)}}\label{sec:global-h}

We now focus on an explicit theta identity for the pair $(\widetilde{\SL}_2,\mathrm{PGSp}_2)$. In \cite[Proposition 5.10]{PaldVP} we proved an explicit theta identity relating the adelization $\mathbf h$ of the newform 
\[
h = \sum_{n\geq 1} c(n)q^n \in S_{k+1/2}^+(N_f)
\]
with the adelization $\mathbf F \in \Pi$ of its Saito--Kurokawa lift $F \in S_{k+1}(\Gamma_0^{(2)}(N_f))$. We will now proceed along the same lines to prove an analogous identity between $\breve{\mathbf h}$ and the adelization of $\Delta_{k+1}^m F$. Before doing so, let us first recall some properties concerning the classical forms $h$ and $F$, as well as of their adelizations.

Let $\xi \in \Q_{>0}$, and write $\xi = \mathfrak d_{\xi}\mathfrak f_{\xi}^2$, where $\mathfrak d_{\xi} \in \mathbb N$ is such that $-\mathfrak d_{\xi}$ is the discriminant of $\Q(\sqrt{-\xi})/\Q$ and $\mathfrak f_{\xi} > 0$. If $\xi$ is an integer, then it is well-known that 
\[
c(\xi) = c(\mathfrak d_{\xi}) \sum_{\substack{0<d\mid \mathfrak f_{\xi}, \\ (d,N_f)=1}} \mu(d)\chi_{-\xi}(d)d^{k-1}a_f(\mathfrak f_{\xi}/d),
\]
where we write $a_f(n)$ for the Fourier coefficients of $f$. If $p$ is a prime not dividing $N_f$, we let $\{\alpha_p,\alpha_p^{-1}\}$ be the Satake parameter of $f$ at $p$. If $p$ is a prime dividing $N_f$, we instead define $\alpha_p := p^{1/2-k}a_f(p) = -p^{-1/2}w_p$. 

Besides, writing $\xi = \mathfrak d_{\xi}\mathfrak f_{\xi}^2 \in \Q_{>0}$ as before, let $e_p := \mathrm{val}_p(\xi)$ and define $\Psi_p(\xi;X) \in \C[X, X^{-1}]$ by
\begin{equation}\label{Psip}
\Psi_p(\xi;X) = \begin{cases}
\frac{X^{e_p+1}-X^{-e_p-1}}{X-X^{-1}} - p^{-1/2}\chi_{-\xi}(p)\frac{X^{e_p}-X^{-e_p}}{X-X^{-1}} & \text{if } p \nmid N_f, e_p \geq 0, \\
\chi_{-\xi}(p)(\chi_{-\xi}(p)+w_p)X^{e_p} & \text{if } p \mid N_f, e_p \geq 0, \\
0 & \text{if } e_p < 0.
\end{cases}
\end{equation}
As explained in \cite[Lemma 3.1]{PaldVP}, one has the identity 
\begin{equation}\label{cxi:Psip}
c(\xi) = 2^{-\nu(N)} c(\mathfrak d_{\xi})\mathfrak f_{\xi}^{k-1/2} \prod_p \Psi_p(\xi;\alpha_p),
\end{equation}
where one reads $c(\xi)=0$ if $\xi$ is not an integer. On the other hand, one also has $c(\xi) = e^{2\pi\xi}W_{\mathbf h,\xi}(1)$, where 
\[
W_{\mathbf h,\xi}(g) = \int_{\Q\backslash\A} \mathbf h(u(x)g)\overline{\psi(\xi x)} dx
\]
is the $\xi$-th Fourier coefficient of $\mathbf h$ with respect to the standard additive character $\psi$ of $\A$.

As for the Saito--Kurokawa lift $F = \mathrm{SK}(h) \in S_{k+1}(\Gamma_0^{(2)}(N_f))$ of $h$, its Fourier expansion 
\[
F(Z) = \sum_B A_F(B) e^{2\pi\sqrt{-1}\mathrm{Tr}(BZ)}, \quad Z = X + \sqrt{-1}Y \in \mathcal H_2,
\]
can be explicitly given in terms of the coefficients $c(n)$. Indeed, for each symmetric, half-integral two-by-two matrix $B = \left(\begin{smallmatrix} b_1 & b_2/2\\ b_2/2 & b_3\end{smallmatrix}\right)$ one has
\begin{equation}\label{FCoeff-SK}
A_F(B) = \sum_{\substack{0<d\mid\mathrm{gcd}(b_1,b_2,b_3), \\ (d,N_f)=1}} d^k c(4\xi/d^2),
\end{equation}
where $\xi = \det(B)$. The adelization of $F$ is the automorphic form $\mathbf F: \mathrm{GSp}_2(\A) \to \C$ determined by 
\[
\mathbf F(\gamma g_{\infty} k) = \det(g_{\infty})^{(k+1)/2}\det(C\sqrt{-1}+D)^{-k-1}F(g_{\infty}\sqrt{-1}),
\]
whenever $\gamma \in \mathrm{GSp}_2(\Q)$, $k \in K_0^{(2)}(N_f)$, and $g_{\infty} = \left(\begin{smallmatrix} \ast & \ast \\ C & D \end{smallmatrix}\right) \in \mathrm{GSp}_2^+(\R)$. Here, $K_0^{(2)}(N_f) = \prod_p K_0^{(2)}(N_f;\Z_p)$ with 
\[
K_0^{(2)}(N_f;\Z_p) = \left\lbrace \begin{pmatrix} A & B \\ C & D\end{pmatrix} \in \mathrm{GSp}_2(\Z_p): C \equiv 0 \pmod{ N_f} \right\rbrace
\]

If $B \in \mathrm{Sym}_2(\Q)$ is a two-by-two symmetric matrix, then the $B$-th Fourier coefficient of $\mathbf F$ is defined as the function
\[
\mathcal W_{\mathbf F,B}(h) = \int_{\mathrm{Sym}_2(\Q)\backslash\mathrm{Sym}_2(\A)} \mathbf F(n(X)h)\overline{\psi(\mathrm{Tr}(BX))} dX, \quad h \in \mathrm{GSp}_2(\A).
\]
This Fourier coefficient is determined by its values at elements 
\begin{equation}\label{Whinfty}
h_{\infty} = n(X) m(A,1) = \begin{pmatrix} \mathbf 1_2 & X \\ & \mathbf 1_2 \end{pmatrix} \begin{pmatrix} A \\ & {}^t A^{-1}\end{pmatrix} \in \mathrm{GSp}_2(\R),
\end{equation}
with $X \in \mathrm{Sym}_2(\R)$ and $A \in \GL_2^+(\R)$, and one has
\[
\mathcal W_{\mathbf F,B}(h_{\infty}) = A_F(B)\det(Y)^{(k+1)/2}e^{2\pi\sqrt{-1}\mathrm{Tr}(BZ)},
\]
where $Y = A {}^tA$ and $Z = X+\sqrt{-1}Y \in \mathcal H_2$.

Finally, let $\Delta_{k+1}: S_{k+1}^{nh}(\Gamma_0^{(2)}(N_f)) \to S_{k+3}^{nh}(\Gamma_0^{(2)}(N_f))$ be the Maass differential operator sending nearly holomorphic Siegel forms of weight $k+1$ (and level $\Gamma_0^{(2)}(N_f)$) to nearly holomorphic Siegel forms of weight $k+3$ (and level $\Gamma_0^{(2)}(N_f)$). Writing 
\[
Z = \begin{pmatrix}\tau_1 & z \\ z & \tau_2 \end{pmatrix}, \quad \tau_i = x_i+\sqrt{-1} y_i, \, z = u + \sqrt{-1} v, \quad Z = X + \sqrt{-1} Y,
\]
the Maass differential operator $\Delta_{k+1}$ is defined as (see \cite{MaassBook})
\begin{equation}\label{Maass-Delta}
\Delta_{k+1} = \frac{1}{32\pi^2}\left[\frac{(k+1)(2k+1)}{\det(Y)} - 8 \frac{\partial^2}{\partial\tau_1\partial\tau_2}+2\frac{\partial^2}{\partial^2z}+\frac{2(2k+1)\sqrt{-1}}{\det(Y)}\left(y_1\frac{\partial}{\partial \tau_1}+y_2\frac{\partial}{\partial \tau_2}+v\frac{\partial}{\partial z}\right)\right],
\end{equation}
and $\Delta_{k+1}^mF \in S_{\ell+1}^{nh}(\Gamma_0^{(2)}(N_f))$ has Fourier expansion 
\[
\Delta_{k+1}^mF(Z) = \sum_B A_F(B)C(B,Y)e^{2\pi\sqrt{-1}\mathrm{Tr}(BZ)},
\]
where for each $B$ one has 
\begin{align}\label{const_diff}
C(B,Y) = & \sum_{j=0}^{m} (-4\pi)^{j-m}\frac{\Gamma(\ell-m+\frac{1}{2})}{\Gamma(\ell-2m+j+\frac{1}{2})}\binom{m}{j}\det(B)^{j}\det(Y)^{j-m} \times  \\
& \sum_{i=0}^{m-j}\frac{(2m-2j-i)!}{i!(m-j-i)!}(4\pi)^{i+j-m} \times \sum_{n=0}^{i}\frac{(\ell+1)!(-4\pi)^{-n}}{(\ell+1-n)!} \binom{i}{n} \mathrm{Tr}(BY)^{i-n}. \nonumber
\end{align}

The adelization of $\Delta_{k+1}^m F$ is the automorphic form $\tilde D_+^m\mathbf F$, where $\tilde D_+ = -\frac{1}{64\pi^2} D_+$ for a certain standard weight raising element $D_+ \in \mathcal U(\mathfrak{sp}(2,\R)_{\C})$ (see \cite{PitaleSahaSchmidt}). One defines analogously the $B$-th Fourier coefficients of $\tilde D_+^m\mathbf F$, which are again determined by their values at elements $h_{\infty}$ as before, and one has 
\begin{equation}\label{BFourier-DF}
\mathcal W_{\tilde D_+^m\mathbf F,B}(h_{\infty}) =  A_F(B)C(B,Y)\det(Y)^{(\ell+1)/2}e^{2\pi\sqrt{-1}\mathrm{Tr}(BZ)}.
\end{equation}

Having collected these facts, we now proceed with our main goal of this paragraph. We need to define a Bruhat--Schwartz function $\phi_{\breve{\mathbf h}} \in \mathcal S(V_5(\A))$ with respect to which we will compute the theta lift of $\breve{\mathbf h}$. To do so, we use the same model for $V_5$ as explained above, together with the embedding $V_4 \subset V_5$ obtained by identifying the former with the four-dimensional subspace $\langle v_3\rangle^{\perp}$ of $V_5$. With respect to this embedding, we define the Bruhat--Schwartz function $\phi_{\breve{\mathbf h}}$ as a product of two Bruhat--Schwartz functions, namely 
\[
\phi_{\breve{\mathbf h}} = \phi_{\breve{\mathbf h}}^{(1)}\phi_{\breve{\mathbf h}}^{(4)},
\]
where $\phi_{\breve{\mathbf h}}^{(1)} = \otimes_v \phi_{\breve{\mathbf h},v}^{(1)} \in \mathcal S(\langle v_3\rangle) \simeq \mathcal S(\A)$ is given by 
\[
\phi_{\breve{\mathbf h},v}^{(1)}(x) = \begin{cases}
\mathbf 1_{\Z_q}(x) & \text{if } v = q, \\
e^{-2\pi x^2} & \text{if } v = \infty,
\end{cases}
\]
and $\phi_{\breve{\mathbf h}}^{(4)} = \phi_{\breve{\mathbf g}} \in \mathcal S(V_4)$. In precise terms, with respect to the basis $v_1,\dots,v_5$ of $V_5$, for an arbitrary element $z = x_1v_1 + x_2v_2 + x_3v_3 + x_4v_4 + x_5v_5$, we have \[
\phi_{\breve{\mathbf h}}(z) := \phi_{\breve{\mathbf h}}^{(1)}(x_3)\phi_{\breve{\mathbf h}}^{(4)}\left(\left(\begin{smallmatrix} x_2 & x_1 \\ x_5 & x_4\end{smallmatrix}\right)\right) = \phi_{\breve{\mathbf h}}^{(1)}(x_3)\phi_{\breve{\mathbf g}}\left(\left(\begin{smallmatrix} x_2 & x_1 \\ x_5 & x_4\end{smallmatrix}\right)\right).
\]
Recalling the description of the Bruhat--Schwartz function $\phi_{\breve{\mathbf g}}$ (see Section \ref{sec:thetaidentityGg}, especially Lemma \ref{lem:phibreveg}), the function  $\phi_{\breve{\mathbf h}} = \otimes_v \phi_{\breve{\mathbf h},v}$ is described locally at each place as follows.
\begin{itemize}
    \item[i)] At $v = 2$, 
    \[
    \phi_{\breve{\mathbf h},2}(z) = \mathbf 1_{\Z_2}(x_3)\phi_{\breve{\mathbf g},2}\left(\begin{pmatrix} x_2 & x_1 \\ x_5 & x_4 \end{pmatrix}\right) = \mathbf 1_{2\Z_2}(x_1)\mathbf 1_{2\Z_2}(x_2)\mathbf 1_{\Z_2}(x_3)\mathbf 1_{2\Z_2}(x_4)\mathbf 1_{2\Z_2}(x_5).
    \]
    \item[ii)] If $v = p$ is a prime not dividing $2N_f$, then 
    \[
    \phi_{\breve{\mathbf h},p}(z) = \mathbf 1_{\Z_p}(x_3)\phi_{\breve{\mathbf g},p}\left(\begin{pmatrix} x_2 & x_1 \\ x_5 & x_4 \end{pmatrix}\right) = \mathbf 1_{\Z_p}(x_1)\mathbf 1_{\Z_p}(x_2)\mathbf 1_{\Z_p}(x_3)\mathbf 1_{\Z_p}(x_4)\mathbf 1_{\Z_p}(x_5).
    \]
    \item[iii)] If $v = p$ is a prime dividing $N_g$, then 
    \[
    \phi_{\breve{\mathbf h},p}(z) = \mathbf 1_{\Z_p}(x_3)\phi_{\breve{\mathbf g},p}\left(\begin{pmatrix} x_2 & x_1 \\ x_5 & x_4 \end{pmatrix}\right) = \left(\mathbf 1_{\Z_p}(x_1) - p^{-1}\mathbf 1_{p^{-1}\Z_p}(x_1)\right)\mathbf 1_{\Z_p}(x_2)\mathbf 1_{\Z_p}(x_3)\mathbf 1_{\Z_p}(x_4)\mathbf 1_{p\Z_p}(x_5).
    \]
    \item[iv)] If $v = p$ is a prime dividing $M_g = N_f/N_g$, then
    \[
    \phi_{\breve{\mathbf h},p}(z)  = \mathbf 1_{\Z_p}(x_3)\phi_{\breve{\mathbf g},p}\left(\begin{pmatrix} x_2 & x_1 \\ x_5 & x_4 \end{pmatrix}\right) = \mathbf 1_{\Z_p}(x_1)\mathbf 1_{p\Z_p}(x_2)\mathbf 1_{\Z_p}(x_3)\mathbf 1_{\Z_p}(x_4)\mathbf 1_{p\Z_p}(x_5).
    \]
    \item[v)] If $v = \infty$, then
    \[
    \phi_{\breve{\mathbf h},\infty}(z) = e^{-2\pi x_3^2}\phi_{\breve{\mathbf g},\infty}\left(\begin{pmatrix} x_2 & x_1 \\ x_5 & x_4 \end{pmatrix}\right) = (x_2+\sqrt{-1}x_1+\sqrt{-1}x_5 - x_4)^{\ell+1}\mathrm{exp}(-\pi(x_1^2+x_2^2+2x_3^2+x_4^2+x_5^2)).
    \]
\end{itemize}

At each finite prime $p$, the invariance properties of the Bruhat--Schwartz function $\phi_{\breve{\mathbf h},p}$
with respect to the actions of $\SL_2(\Q_p)$ and $\mathrm{GSp}_2(\Q_p)$ are collected in the following lemma.

\begin{lemma}
Let $p$ be an odd finite prime. Then the following assertions hold.
\begin{itemize}
    \item If $p \nmid N_f$, then $\phi_{\breve{\mathbf h},p}$ is fixed by $\SL_2(\Z_p) \subseteq \SL_2(\Q_p)$ and by $\mathrm{Sp}_2(\Z_p)$.
    \item If $p \mid N_f$, then $\phi_{\breve{\mathbf h},p}$ is fixed by $\Gamma_0(p) \subseteq \SL_2(\Z_p)$ and by $\Gamma_0^{(2)}(p) \subseteq \mathrm{Sp}_2(\Z_p)$.
\end{itemize}
\end{lemma}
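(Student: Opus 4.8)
The plan is to verify the two invariance assertions place by place, using only the explicit formulae for $\phi_{\breve{\mathbf h},p}$ listed above (items (i)--(v)) together with the formulae for the Weil representation $\omega_{\overline{\psi},V_5}$ of $\widetilde{\SL}_2 \times \mathrm{O}(V_5)$ recalled in Section \ref{sec:Weilreps}, transported along the embedding $\mathrm{Sp}_2 \hookrightarrow \GSp_2 \to \SO(V_5)$ from \eqref{exseq:SO5}. First I would treat the unramified case $p \nmid N_f$: here $\phi_{\breve{\mathbf h},p} = \mathbf 1_{\Z_p^5}$ (in the coordinates $x_1,\dots,x_5$ with respect to the basis $v_1,\dots,v_5$), so the $\mathrm{Sp}_2(\Z_p)$-invariance amounts to the statement that the image of $\mathrm{Sp}_2(\Z_p)$ in $\SO(V_5)(\Q_p)$ preserves the lattice spanned by $v_1,\dots,v_5$; this is immediate since $\wedge^2$ of the standard lattice $\Z_p^4$ is preserved by $\GL_4(\Z_p) \supseteq \mathrm{Sp}_2(\Z_p)$ and $v_1,\dots,v_5$ together with $x_0$ form a $\Z_p$-basis of $\wedge^2\Z_p^4$ (here $\tilde\rho(h) = \nu(h)^{-1}\wedge^2 h$ and $\nu(h) \in \Z_p^\times$ for $h \in \mathrm{Sp}_2(\Z_p)$). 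The $\SL_2(\Z_p)$-invariance in this case is exactly the assertion of Lemma \ref{lemma:phi-invariance-norm} for the one-dimensional factor $\phi_{\breve{\mathbf h},p}^{(1)} = \mathbf 1_{\Z_p}$ combined with the $\SL_2(\Z_p)$-invariance of $\phi_{\breve{\mathbf g},p} = \phi_{\mathbf g,p} = \mathbf 1_{\mathrm{M}_2(\Z_p)}$ recorded in the first lemma of Section \ref{sec:thetaidentityGg} (the split quadratic form on $\mathrm M_2(\Z_p)$ being self-dual, one only needs the three generators $u(b), t(a), s$ of $\SL_2(\Z_p)$ modulo $\Gamma(p)$, and for odd $p$ the splitting $g \mapsto [g, s_p(g)]$ is trivial on $\SL_2(\Z_p)$).

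Next I would handle $p \mid N_f$, splitting into $p \mid N_g$ (item (iii)) and $p \mid M_g$ (item (iv)). For the $\SL_2$-side the claim reduces to showing $\phi_{\breve{\mathbf h},p}$ is fixed by $\Gamma_0(p)$; since $\Gamma_0(p)$ is generated (together with the diagonal torus of $\SL_2(\Z_p)$, which acts trivially on $\mathbf 1_{\Z_p}$ and preserves the displayed product by the $|a|_p$-scaling being trivial on units) by upper-triangular unipotents and lower-triangular unipotents in $p\Z_p$, one checks directly from $\omega_{\overline{\psi},V_5}([u(b),1])\phi(x) = \overline{\psi}(Q(x)b)\phi(x)$ that upper unipotents fix $\phi_{\breve{\mathbf h},p}$ because $Q$ takes values in $\Z_p$ on the support, and that lower unipotents $u^-(pc)$ act by the conjugate of the same rule under $s$, which one reduces to the already-known invariance of $\phi_{\breve{\mathbf g},p}$ under $\Gamma_0(p) \subseteq \SL_2(\Z_p)$ (the last lemma of the $p \mid M_g$ case in Section \ref{sec:thetaidentityGg}, resp. the analogous statement for $p \mid N_g$). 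For the $\mathrm{Sp}_2$-side, I would use that $\Gamma_0^{(2)}(p)$ maps into $\SO(V_5)(\Q_p)$ and that the embedding $G(\SL_2\times\SL_2) \hookrightarrow \GSp_2$ from Section \ref{spaces:lowrank} carries $K_0(p)\times K_0(p)$ (with equal determinant-product $1$) into $\Gamma_0^{(2)}(p)$; the invariance of $\phi_{\breve{\mathbf h},p}$ under this subgroup is inherited from the invariance of $\phi_{\breve{\mathbf g},p}$ under $\GL_2(\Z_p)\times K_0(p)$ (resp.\ $K_0(p)\times K_0(p)$) established in Section \ref{sec:thetaidentityGg}, and one extends to all of $\Gamma_0^{(2)}(p)$ by noting that $\Gamma_0^{(2)}(p)$ is generated by its Siegel parabolic part (block upper-triangular matrices, whose Levi sits in $\GL_2(\Z_p)$ and whose unipotent radical acts through $n(X)$ with $X$ integral, hence by $\overline{\psi}(\mathrm{Tr}(\cdot))$ which is $1$ on the support) together with the lower unipotent block $\left(\begin{smallmatrix} \mathbf 1_2 & 0 \\ pC & \mathbf 1_2\end{smallmatrix}\right)$ with $C$ symmetric integral, which one again rewrites using the long Weyl element and the Fourier-transform formula.

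The step I expect to be the main obstacle is the careful bookkeeping on the lower-triangular generators of $\Gamma_0^{(2)}(p)$: applying $\omega_{\overline{\psi},V_5}([s,1])$ introduces a $5$-dimensional Fourier transform (with Weil index $\gamma(\overline{\psi},V_5)$), and one must check that conjugating the rule for $\left(\begin{smallmatrix}\mathbf 1_2 & 0\\ pC & \mathbf 1_2\end{smallmatrix}\right)$ by this transform again lands on the same characteristic-function product — equivalently, that the "dual lattice twisted by $p$" condition is consistent across the coordinates $x_1,\dots,x_5$ with the asymmetric supports ($x_2 \in p\Z_p$ versus $x_5 \in p\Z_p$ in items (iii),(iv)), using that $v_1,-v_5$ are isotropic dual vectors and $V_3$ is self-dual up to a unit. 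Once this consistency is verified on generators, the lemma follows, since $\phi_{\breve{\mathbf h},p}$ is a pure tensor $\phi^{(1)}_{\breve{\mathbf h},p}\cdot\phi^{(4)}_{\breve{\mathbf h},p}$ with $V_5 = \langle v_3\rangle \oplus \langle v_3\rangle^\perp$ an orthogonal decomposition, and the Weil representation of $\widetilde{\SL}_2$ respects this orthogonal splitting, so the $\SL_2$-invariance reduces cleanly to the already-established invariances of the two tensor factors. I would present the odd-prime unramified case in full, then indicate the ramified cases by reduction to the $(\GL_2,\mathrm{GO}_{2,2})$ statements of Section \ref{sec:thetaidentityGg}, treating the $\mathrm{Sp}_2(\Q_p)$ generators explicitly only where the Fourier transform intervenes.
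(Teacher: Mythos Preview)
The paper gives no proof of this lemma; it is treated as an immediate consequence of the explicit local formulas for $\phi_{\breve{\mathbf h},p}$ together with the rules of the Weil representation. Your plan for the $\widetilde{\SL}_2$-side and for the unramified $\mathrm{Sp}_2(\Z_p)$-case is sound.

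There is, however, a genuine confusion in your handling of the $\Gamma_0^{(2)}(p)$-invariance when $p\mid N_f$. You claim that the unipotent radical $n(X)$ of the Siegel parabolic ``acts by $\overline{\psi}(\mathrm{Tr}(\cdot))$'' and that the lower unipotent block is dealt with ``using the long Weyl element and the Fourier-transform formula''. This conflates the two factors of the dual pair $(\widetilde{\SL}_2,\mathrm{O}(V_5))$. The group $\mathrm{Sp}_2(\Q_p)$ enters only via the homomorphism $\rho:\GSp_2\to\SO(V_5)$ of \eqref{exseq:SO5}, and then acts on $\mathcal S(V_5(\Q_p))$ through the \emph{geometric} formula $\omega_{\overline\psi,V_5}(1,h)\phi(x)=\phi(\rho(h)^{-1}x)$ from Section~\ref{sec:Weilreps}. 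No characters and no Fourier transforms appear on the orthogonal side; those formulae belong exclusively to the $\widetilde{\SL}_2$-factor. Consequently the verification of $\Gamma_0^{(2)}(p)$-invariance is pure linear algebra: write down $\rho(h)\in\SO(V_5)(\Q_p)$ in the basis $v_1,\dots,v_5$ for generators $h$ of $\Gamma_0^{(2)}(p)$ (the Levi $m(A,1)$ with $A\in\GL_2(\Z_p)$, the upper block $n(X)$ with $X\in\mathrm{Sym}_2(\Z_p)$, and the lower block with $C\in p\,\mathrm{Sym}_2(\Z_p)$), and check directly that each preserves the sublattice of $\Z_p^5$ defined by the support conditions in items (iii) or (iv). Note also that your proposed reduction via the embedding $G(\SL_2\times\SL_2)\hookrightarrow\GSp_2$ lands only in the stabilizer of $v_3$, i.e.\ in $\SO(V_4)\subset\SO(V_5)$, and its intersection with $\Gamma_0^{(2)}(p)$ does not generate $\Gamma_0^{(2)}(p)$, so that step alone cannot finish the argument even once the action is understood correctly.
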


By construction, it follows that the theta lift $ \theta(\breve{\mathbf h},\phi_{\breve{\mathbf h}})$ belongs to the space of $K_0^{(2)}(N_f)$-fixed vectors in $\Pi$, and hence it is the adelization of a classical (nearly holomorphic) Siegel modular form in $S_{\ell+1}^{nh}(\Gamma_0^{(2)}(N_f))$. In order to prove a relation between $\theta(\breve{\mathbf h},\phi_{\breve{\mathbf h}})$ and the adelization $\mathbf F$ of the Saito--Kurokawa lift $F$, we will compute the $B$-th Fourier coefficients
\[
h \mapsto \mathcal W_{\theta(\breve{\mathbf h},\phi_{\breve{\mathbf h}}),B}(h) = \int_{\mathrm{Sym}_2(\Q)\backslash\mathrm{Sym}_2(\A)} \theta(\breve{\mathbf h},\phi_{\breve{\mathbf h}})(n(X)h)\overline{\psi(\mathrm{tr}(BX))} dX, \quad h \in \mathrm{GSp}_2(\A),
\]
of the theta lift $\theta(\breve{\mathbf h},\phi_{\breve{\mathbf h}})$, for each positive definite rational symmetric two-by-two matrix
\[
B = \begin{pmatrix} b_1 & b_2/2 \\ b_2/2 & b_3\end{pmatrix} \in \mathrm{Sym}_2(\Q).
\]
The Fourier coefficients $\mathcal W_{\theta(\breve{\mathbf h},\phi_{\breve{\mathbf h}}),B}$ are completely determined by their value at elements $h_{\infty} \in \GSp_2(\R)$ as in \eqref{Whinfty}. Setting $\xi = \det(B)$ and $\beta = (b_3,b_2/2,-b_1)$, it follows from \cite[Lemma 4.2]{Ichino-pullbacks} that
\[
\mathcal W_{\theta(\breve{\mathbf h},\phi_{\breve{\mathbf h}}),B}(h) = \int_{U(\A)\backslash\SL_2(\A)} \hat{\omega} (g,h) \hat{\phi}_{\breve{\mathbf h}} (\beta;0,1) W_{\breve{\mathbf h},\xi}(g) dg,
\]
where 
\[
g \mapsto W_{\breve{\mathbf h},\xi}(g) = \int_{\Q\backslash\A} \breve{\mathbf h}(u(x)g) \overline{\psi(\xi x)} dx
\]
is the $\xi$-th Fourier coefficient of $\breve{\mathbf h}$, $\hat{\phi}_{\breve{\mathbf h}} = \otimes_v \hat{\phi}_{\breve{\mathbf h},v} \in \mathcal S(V_3(\A)) \otimes \mathcal S(\A^2)$ is the Bruhat--Schwartz function obtained from $\phi_{\breve{\mathbf h}}$ by applying a change of polarization, and $\hat{\omega}$ denotes the Weil representation acting on $\mathcal S(V_3(\A)) \otimes \mathcal S(\A^2)$ (by the rule $\hat{\omega}(g,h)\hat{\phi}(x) = (\omega(g,h)\phi)\,\hat{ }$ ).

If $\xi = \det(B) > 0$, we write $\xi = \mathfrak d_{\xi} \mathfrak f_{\xi}^2$ with $\mathfrak f_{\xi} \in \Q_{>0}$ and $\mathfrak d_{\xi} \in \mathbb N$ such that $-\mathfrak d_{\xi}$ is the discriminant of the quadratic field $\Q(\sqrt{-\xi})$. Then we have (compare with \cite[Lemma 5.14]{PaldVP})
\begin{equation}\label{WthetaB:product}
\mathcal W_{\theta(\breve{\mathbf h},\phi_{\breve{\mathbf h}}),B} = \begin{cases}
2^{-\nu(N_f)}c(\mathfrak d_{\xi})\mathfrak f_{\xi}^{k-1/2}\zeta_{\Q}(2)^{-1}\prod_v \mathcal W_{B,v} & \text{if } \xi > 0, \\
0 & \text{if } \xi \leq 0,
\end{cases}
\end{equation}
where the local functions $\mathcal W_{B,v}$ are defined as the integrals
\begin{equation}\label{WBvh}
\mathcal W_{B,v}(h) = \int_{U(\Q_v)\backslash \SL_2(\Q_v)} \hat{\omega}_v(g,h)\hat{\phi}_{\breve{\mathbf h},v}(\beta;0,1)W_{v,\xi}(g)dg \times \begin{cases} 
\mathrm{vol}(\SL_2(\Z_p))^{-1} & \text{if } v = p, \\
\mathrm{vol}(\mathrm{SO}_2)^{-1} & \text{if } v = \infty.
\end{cases}
\end{equation}
Here, for each place $v$ the function $W_{v,\xi}$ is a suitably normalized local Whittaker function associated with $\breve{\mathbf h}$. Namely, at the archimedean place $v = \infty$ we consider 
\begin{equation}\label{Winfty-chi}
 W_{\infty,\xi} = \tilde V_+^m W_{\mathbf h_{\infty},\xi},
\end{equation}
where $W_{\mathbf h_{\infty},\xi}$ is the Whittaker function of $\widetilde{\mathrm{SO}}(2)$-type $k+1/2$ defined by 
\[
W_{\mathbf h_{\infty},\xi}(u(x)t(a)\tilde k_{\theta}) = e^{2\pi\sqrt{-1}\xi x}a^{k+1/2}e^{-2\pi\xi a^2}e^{\sqrt{-1}(k+1/2)\theta}, \qquad x \in \R, a \in \R^{\times}_{>0}, \theta \in \R/4\pi\Z,
\]
where for $\theta \in \R/4\pi\Z$ the elements $k_{\theta}\in \mathrm{SO}(2)$, $\tilde k_{\theta} \in \widetilde{\mathrm{SO}}(2)$ are defined by
\[
k_{\theta} = \begin{pmatrix} \cos\theta & \sin\theta \\ -\sin\theta & \cos\theta\end{pmatrix}, \qquad  \tilde k_{\theta} = \begin{cases}
[k_{\theta},1] & \text{if }  -\pi < \theta \leq \pi,\\
[k_{\theta},-1] & \text{if } \pi < \theta \leq 3\pi.
\end{cases}
\]
Observe that $W_{\mathbf h_{\infty},\xi}(1) = e^{-2\pi \xi}$. And if $v = p$ is a finite prime, then $W_{p,\xi}$ is the non-zero multiple of the local Whittaker function $W_{\breve{\mathbf h}_p,\xi}$ determined by requiring that $W_{p,\xi}(1) = \Psi_p(\xi;\alpha_p)$. That is to say, $W_{p,\xi} := W_{\breve{\mathbf h}_p,\xi}(1)^{-1}\Psi_p(\xi;\alpha_p) \cdot W_{\breve{\mathbf h}_p,\xi}$. In Appendix \ref{whitakkerfunctions} below we recall the definition of the local Whittaker functions $W_{\breve{\mathbf h}_p,\xi}$ and collect some special values of them that will be used in this section.
 
We will determine $\mathcal W_{\theta(\breve{\mathbf h},\phi_{\breve{\mathbf h}}),B}$ by computing via \eqref{WBvh} the local values $\mathcal W_{B,p}(1)$ at all finite places, and the values $\mathcal W_{B,\infty}(h_{\infty})$ at special elements $h_{\infty} \in \mathrm{GSp}_2(\R)$ as in \eqref{Whinfty}. We start dealing with the case of finite places. At rational primes $p \nmid M_g$, the computation of $\mathcal W_{B,p}(1)$ was already carried out in \cite[Section 5]{PaldVP}. With the same notation as before, if $\xi \neq 0$ and $\mu_p = [\SL_2(\Z):\Gamma_0(p)]$, then (check Equations (35), (36), and (37) in loc. cit.):
\begin{equation}\label{WBp1:PaldVP}
\mathcal W_{B,p}(1) = \begin{cases}
 \mathbf 1_{\Z_p}(b_1,b_2,b_3) \sum_{n=0}^{\mathrm{min}(\mathrm{val}_p(b_i))} p^{\frac{n}{2}}\Psi_p(p^{-2n}\xi;\alpha_p) & \text{if } p \nmid 2N_f, \\
 \mathbf 1_{\Z_2}(b_1,b_2,b_3) 2^{\frac{-7}{2}}\sum_{n=0}^{\mathrm{min}(\mathrm{val}_p(b_i))} 2^{\frac{n}{2}}\Psi_2(2^{-2n+2}\xi;\alpha_2) & \text{if } p = 2, \\
 \mathbf 1_{\Z_p}(b_1,b_2,b_3)\mu_p^{-1} \Psi_p(\xi;\alpha_p)& \text{if } p \mid N_g.
\end{cases}
\end{equation}
Thus we assume from now on that $p$ is a prime dividing $M_g = N_f/N_g$, and first study the change of polarization.

\begin{lemma}
Let $p$ be a prime dividing $M_g$, and let $x=(x_1,x_2,x_3) \in V_3(\Q_p)$, and $y = (y_1,y_2) \in \Q_p^2$. Then
\[
\hat{\phi}_{\breve{\mathbf h},p}(x;y) = \phi_{p,1}(x) \cdot \phi_{p,2}(y)
\]
where the functions $\phi_{p,1} \in \mathcal S(V_3(\Q_p))$ and $\phi_{p,2} \in \mathcal S(\Q_p^2)$ are given by
\[
\phi_{p,1}(x) = \mathbf 1_{p\Z_p}(x_1)\mathbf 1_{\Z_p}(x_2)\mathbf 1_{\Z_p}(x_3), \qquad \phi_{p,2}(y) = \mathbf 1_{p\Z_p}(y_1)\mathbf 1_{\Z_p}(y_2).
\]
\end{lemma}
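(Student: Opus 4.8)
The plan is to read the identity off directly from the definition of the change of polarisation, the only genuine work being a bookkeeping step matching two sets of coordinates. Recall that $\hat{\phi}_{\breve{\mathbf h}}$ is obtained from $\phi_{\breve{\mathbf h}}\in\mathcal S(V_5(\A))$ by the partial Fourier transform attached to the Witt decomposition $V_5=\langle v_1\rangle\oplus V_3\oplus\langle v_5\rangle$ used in \cite[Lemma 4.2]{Ichino-pullbacks} (and in \cite[Section 5]{PaldVP}): the three coordinates along $V_3=\langle v_2,v_3,v_4\rangle$ and the coordinate along the isotropic line $\langle v_5\rangle$ are retained, while the coordinate along the dual isotropic line $\langle v_1\rangle$ is replaced by its Fourier-dual variable. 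Concretely, writing a vector of $V_5(\Q_p)$ as $a v_1+x_1 v_2+x_2 v_3+x_3 v_4+y_1 v_5$, one has
\[
\hat{\phi}_{\breve{\mathbf h},p}(x_1,x_2,x_3;y_1,y_2)=\int_{\Q_p}\phi_{\breve{\mathbf h},p}\bigl(a v_1+x_1 v_2+x_2 v_3+x_3 v_4+y_1 v_5\bigr)\,\psi_p\bigl((a v_1,\,y_2 v_5)\bigr)\,da,
\]
the pairing between the two dual isotropic lines being a unit, so that no power of $p$ is introduced by the self-dual measure, and $\psi_p$ being unramified (whether one writes $\psi_p$ or $\overline{\psi}_p$, and the sign of the argument, are immaterial below).

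Next I would substitute the explicit shape of $\phi_{\breve{\mathbf h},p}$ at a prime $p\mid M_g$ recorded before the statement, namely $\phi_{\breve{\mathbf h},p}(z)=\mathbf 1_{\Z_p}\cdot\mathbf 1_{p\Z_p}\cdot\mathbf 1_{\Z_p}\cdot\mathbf 1_{\Z_p}\cdot\mathbf 1_{p\Z_p}$ evaluated on the coordinates of $z$ along $v_1,\dots,v_5$ respectively — itself just the product of $\phi_{\breve{\mathbf h},p}^{(1)}=\mathbf 1_{\Z_p}$ with the value of $\phi_{\breve{\mathbf g},p}$ furnished by Lemma~\ref{lem:phibreveg}. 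Since this is a product over the five coordinates and the integral above affects only the $v_1$-coordinate, the integral factors: the $V_3$- and $v_5$-factors come out unchanged as $\mathbf 1_{p\Z_p}(x_1)\mathbf 1_{\Z_p}(x_2)\mathbf 1_{\Z_p}(x_3)$ and $\mathbf 1_{p\Z_p}(y_1)$, while the remaining one-dimensional integral is $\int_{\Z_p}\psi_p(\pm a y_2)\,da=\mathbf 1_{\Z_p}(y_2)$ because $\psi_p$ is unramified and $\mathrm{vol}(\Z_p)=1$ for the self-dual measure. This last point is exactly why no power of $p$ survives: the direction being transformed is the one carrying the \emph{unramified} factor $\mathbf 1_{\Z_p}$. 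Reassembling yields $\hat{\phi}_{\breve{\mathbf h},p}(x;y)=\phi_{p,1}(x)\phi_{p,2}(y)$ with $\phi_{p,1},\phi_{p,2}$ as in the statement.

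The one point demanding care — and where a careless treatment could go astray — is matching the coordinates of the mixed model $\mathcal S(V_3(\Q_p))\otimes\mathcal S(\Q_p^2)$ that appears in \eqref{WBvh} with the coordinates on $V_5$ in which $\phi_{\breve{\mathbf h},p}$ was defined: one must verify that the $v_1$-line (bearing $\mathbf 1_{\Z_p}$) is the line being Fourier-transformed and the $v_5$-line (bearing $\mathbf 1_{p\Z_p}$) the one retained as $y_1$, since transforming the other line instead would produce a spurious $p^{-1}\mathbf 1_{p^{-1}\Z_p}$. This is settled exactly as in \cite[Section 5]{PaldVP}, where the analogous computation is done at primes $p\nmid M_g$; the present case is entirely parallel, the only genuinely new ingredient being the value of $\phi_{\breve{\mathbf g},p}$ at $p\mid M_g$ supplied by Lemma~\ref{lem:phibreveg}. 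Accordingly I expect no real obstacle beyond this routine check of conventions.
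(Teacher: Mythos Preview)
Your proposal is correct and takes essentially the same approach as the paper: both compute $\hat{\phi}_{\breve{\mathbf h},p}$ directly from the definition of the partial Fourier transform along the $v_1$-line, using the explicit product form of $\phi_{\breve{\mathbf h},p}$ at $p\mid M_g$. The paper's own proof is in fact a single sentence (``This follows straightforward from the definition of the partial Fourier transform, $\hat{\phi}_{\breve{\mathbf h},p}(x;y) = \int_{\Q_p} \phi_{\breve{\mathbf h},p}(z;x;y_1)\psi_p(-y_2z)\, dz$''), so your write-up simply supplies the details and the coordinate bookkeeping that the paper leaves implicit.
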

\begin{proof}
This follows straightforward from the definition of the partial Fourier transform,
\[
\hat{\phi}_{\breve{\mathbf h},p}(x;y) = \int_{\Q_p} \phi_{\breve{\mathbf h},p}(z;x;y_1)\psi_p(-y_2z) dz.
\]
\end{proof}

With this change of polarization, one has 
\[
\hat{\omega}_p(g,h)\hat{\phi}_{\breve{\mathbf h},p}(\beta;0,1) = \omega_p(g,h)\phi_{p,1}(\beta) \cdot \phi_{p,2}((0,1)g),
\]
and using equation \eqref{WBvh} we can now compute $\mathcal W_{B,p}(1)$. A first key observation is to use the $\Gamma_0(p)$-invariance of $\breve{\mathbf h}_p$ and of $\hat{\phi}_{\breve{\mathbf h},p}$, together with the fact that $\SL_2(\Q_p)=U(\Q_p)T(\Q_p)\SL_2(\Z_p)$, to rewrite $\mathcal W_{B,p}(1)$ as
\begin{equation}\label{WBp1-sumIBr}
\mathcal W_{B,p}(1) = \mu_p^{-1} \sum_{r\in R_p} \int_{\Q_p^{\times}} \hat{\omega}_p(t(a)r,1)\hat{\phi}_{\breve{\mathbf h},p}(\beta;0,1) W_{p,\xi}(t(a)r)|a|_p^{-2} d^{\times}a = \mu_p^{-1} \sum_{r\in R_p} I(B,r),
\end{equation}
where $R_p$ is a set of representatives for $\SL_2(\Z_p)/\Gamma_0(p)$. We can choose the set $R_p$ to consist of the elements 
\[
r_b = \begin{pmatrix} 1 & 0 \\ b & 1 \end{pmatrix} \text{ with } b \in \{0,1,\dots,p-1\}, \, \text{and } s = \begin{pmatrix} 0 & 1 \\ -1 & 0 \end{pmatrix},
\]
and so we are reduced to compute the values $I(B,1)$, $I(B,s)$, and $I(B,r_b)$ for $b=1,\dots,p-1$. To compute these values, first we point out that the local Whittaker functions $W_{p,\xi}$ satisfy 
\[
W_{p,\xi}(t(a)g) = \chi_{\psi}(a^{-1})\chi_{\delta}(a^{-1})|a|_p^{1/2} W_{p,a^2\xi}(g), \qquad a \in \Q_p^{\times}, \, g \in \SL_2(\Q_p),
\]
and recall also that $\chi_{\psi}(x) = (-1,x)_p\gamma(x,\psi)$. In particular, using this one easily finds that
\[
I(B,r) = \sum_{n\in\Z} p^{n/2}\chi_{\delta}(p^n)\chi_{\psi}(p^n)\int_{\Z_p^{\times}} (a,p^n)_p \omega_p(t(p^na)r,1)\phi_{p,1}(\beta) \phi_{p,2}((0,1)t(p^na)r) W_{p,p^{2n}a^2\xi}(r) d^{\times}a.
\]

We fix $B$ and split the discussion according to whether $r = 1$, $r = s$, or $r = r_b$ for some $b \in \Z_p^{\times}$. To compute the terms $\omega_p(t(p^na)r,1)\phi_{p,1}(\beta)$, we will need to apply the rules for the Weil representation, relative to $V_3$ and $\psi = \psi_p^{-D}$. It is easy to check that $\gamma(\psi,V_3) = 1$ and $\chi_{\psi,V_3}(x) = (-2,x)_p \chi_{\psi}(x)^3$ for $x \in \Q_p^{\times}$ (cf. Section \ref{sec:Weilreps}). In the discussion below, we put $\nu_i = \mathrm{val}_p(b_i)$.

{\bf Case $r=1$.} Start observing that for $n \in \Z$ and $a \in \Z_p^{\times}$ we have $
(0,1)t(p^na) = (0,p^{-n}a^{-1})$, and hence $\phi_{p,2}((0,1)t(p^na)) = 1$ if $n\leq 0$, and vanishes otherwise. Therefore, we obtain 
\begin{align*}
I(B,1) & = \sum_{n\leq 0} p^{n/2}\chi_{\delta}(p^n)\chi_{\psi}(p^n) \int_{\Z_p^{\times}} (a,p^n)_p\omega_p(t(p^na),1)\phi_{p,1}(\beta) W_{p,p^{2n}a^2\xi}(1) d^{\times}a.
\end{align*}
By applying the rules of the Weil representation, one finds
\[
\omega_p(t(p^na),1)\phi_{p,1}(\beta) = (-2,p^n)_p (a,p^n)_p\chi_{\psi}(p^n)^3 p^{-3n/2} \mathbf 1_{\Z_p}(p^nb_1)\mathbf 1_{\Z_p}(p^nb_2)\mathbf 1_{p\Z_p}(p^nb_3).
\]
Therefore, using that $W_{p,p^{2n}a^2\xi}(1) = W_{p,p^{2n}\xi}(1)$ for all $a \in \Z_p^{\times}$ and that $\mathrm{vol}(\Z_p^{\times},d^{\times}a)=1$ (and also that $\chi_{\psi}(p^n)^4=1$),
\begin{align*}
I(B,1) & = \sum_{n\leq 0} p^{-n}\chi_{\delta}(p^n) (-2,p^n)_p \mathbf 1_{\Z_p}(p^nb_1)\mathbf 1_{\Z_p}(p^nb_2)\mathbf 1_{p\Z_p}(p^nb_3) W_{p,p^{2n}\xi}(1) = \\
& = \sum_{n=0}^{\mathrm{min}(\nu_1,\nu_2,\nu_3-1)} p^n \chi_{\delta}(p^n)(-2,p^n)_pW_{p,p^{-2n}\xi}(1).
\end{align*}

{\bf Case $r=s$.} Similarly as before, we now have 
$(0,1)t(p^na)s = (-p^{-n}a^{-1},0)$, thus $\phi_{p,2}((0,1)t(p^na)s) = 1$ if $n\leq -1$, and vanishes otherwise. Therefore, 
\[
I(B,s) = \sum_{n\leq -1} p^{n/2}\chi_{\delta}(p^n)\chi_{\psi}(p^n)\int_{\Z_p^{\times}} (a,p^n)_p \omega_p(t(p^na)s,1)\phi_{p,1}(\beta) W_{p,p^{2n}a^2\xi}(s) d^{\times}a.
\]
Now applying the rules of the Weil representation yields
\[
\omega_p(t(p^na)s,1)\phi_{p,1}(\beta) = (-2,p^n)_p(a,p^n)_p\chi_{\psi}(p^n)^3p^{-3n/2-1}\mathbf 1_{p^{-1}\Z_p}(p^nb_1)\mathbf 1_{\Z_p}(p^nb_2)\mathbf 1_{\Z_p}(p^nb_3).
\]
From this we conclude, using again that $W_{p,p^{2n}a^2\xi}(s)=W_{p,p^{2n}\xi}(s)$ for all $a \in \Z_p^{\times}$, that
\begin{align*}
    I(B,s) & = \sum_{n=1}^{\mathrm{min}(\nu_1+1,\nu_2,\nu_3)}p^{n-1}\chi_{\delta}(p^n)(-2,p^n)_pW_{p,p^{-2n}\xi}(s).
\end{align*}

{\bf Case $r=r_b$, $b \in \Z_p^{\times}$.} We have now $(0,1)t(p^na)r_b = (p^{-n}a^{-1}b,p^{-n}a^{-1})$, and hence $\phi_{p,2}((0,1)t(p^na)r_b) = 1$ if $n\leq -1$, and vanishes otherwise. Thus again we can rewrite 
\[
I(B,r_b) = \sum_{n\leq -1} p^{n/2}\chi_{\delta}(p^n)\chi_{\psi}(p^n)\int_{\Z_p^{\times}} (a,p^n)_p \omega_p(t(p^na)r_b,1)\phi_{p,1}(\beta) W_{p,p^{2n}a^2\xi}(r_b) d^{\times}a.
\]
We can now use that $W_{p,p^{2n}a^2\xi}(r_b) = \psi_p(b^{-1}p^{2n}a^2\xi) W_{p,p^{2n}a^2\xi}(s)$ to rewrite this as
\[
I(B,r_b) = \sum_{n\leq -1} p^{n/2}\chi_{\delta}(p^n)\chi_{\psi}(p^n)\int_{\Z_p^{\times}} (a,p^n)_p\psi_p(b^{-1}p^{2n}a^2\xi) \omega_p(t(p^na)r_b,1)\phi_{p,1}(\beta) W_{p,p^{2n}a^2\xi}(s) d^{\times}a.
\]
To deal with the term $\omega_p(t(p^na)r_b,1)\phi_{p,1}(\beta)$, we first notice that 
\[
r_b = u(b^{-1})s\begin{pmatrix}-b & -1 \\ 0 & -b^{-1}\end{pmatrix}.
\]
The rightmost element belongs to $\Gamma_0(p)$, and hence leaves invariant the function $\phi_{p,1}$. We must therefore compute $\omega_p(t(p^na)u(b^{-1})s)\phi_{p,1}(\beta)$. By applying the rules of the Weil representation, we have
\[
\omega_p(t(p^na)u(b^{-1})s)\phi_{p,1}(\beta) = (-2,p^n)_p(a,p^n)_p\chi_{\psi}(p^n)^3p^{-3n/2-1}\overline{\psi_p(b^{-1}a^2p^{2n}\xi)}\mathbf 1_{p^{-1}\Z_p}(p^nb_1)\mathbf 1_{\Z_p}(p^nb_2)\mathbf 1_{\Z_p}(p^nb_3).
\]
From this, it follows that
\begin{align*}
    I(B,r_b) & = \sum_{n=1}^{\mathrm{min}(\nu_1+1,\nu_2,\nu_3)} p^{n-1}\chi_{\delta}(p^n)(-2,p^n)_pW_{p,p^{-2n}\xi}(s),
\end{align*}
and hence $I(B,r_b) = I(B,s)$, independently on $b$.

Putting all the above discussion together, defining $m(B):=\mathrm{min}(\nu_1+1,\nu_2,\nu_3)$ and $n(B):=\mathrm{min}(\nu_1,\nu_2,\nu_3-1)$ we can rewrite \eqref{WBp1-sumIBr} as
\begin{equation}\label{WBp1=nB+mB}
\mathcal W_{B,p}(1) = \mu_p^{-1} \left(\sum_{n=0}^{n(B)} p^n\chi_{\delta}(p^n)(-2,p^n)_pW_{p,p^{-2n}\xi}(1) + \sum_{n=1}^{m(B)} p^n \chi_{\delta}(p^n)(-2,p^n)_p W_{p,p^{-2n}\xi}(s)\right).
\end{equation}

\begin{proposition}\label{prop:WBp1}
Let $B = \begin{pmatrix} b_1 & b_2/2 \\ b_2/2 & b_3 \end{pmatrix} \in \mathrm{Sym}_2(\Q)$ be a two-by-two symmetric matrix. If $b_1\not\in \Z_p$, or $b_2 \not \in \Z_p$, or $b_3 \not \in p\Z_p$, then $\mathcal W_{B,p}(1) = 0$. Otherwise, let $\xi = \det(B)$ and define integers
\[
m(B) := \mathrm{min}(\nu_1+1,\nu_2,\nu_3), \quad n(B) := \mathrm{min}(\nu_1,\nu_2,\nu_3-1),
\]
where $\nu_i = \mathrm{val}_p(b_i)$. Then $m(B) \geq n(B) \geq 0$, and 
\[
\mathcal W_{B,p}(1) = \mathcal E_p(B) \mu_p^{-1}\Psi_p(\xi;\alpha_p),
\]
where $\mu_p = [\SL_2(\Z):\Gamma_0(p)]$ and
\[
\mathcal E_p(B) = 1+(1-p^{-1})\sum_{n=1}^{n(B)} p^{2n}\chi_{-2\delta}(p)^n +(n(B)-m(B)) p^{2n(B)+1}\chi_{-2\delta}(p)^{n(B)+1}.
\]
\end{proposition}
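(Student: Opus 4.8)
The plan is to start from the reduction already carried out just before the statement, namely formula \eqref{WBp1=nB+mB}, which writes $\mathcal W_{B,p}(1)$ as $\mu_p^{-1}$ times the sum of two finite sums: one over $0\le n\le n(B)$ with summands $p^n\chi_{\delta}(p^n)(-2,p^n)_pW_{p,p^{-2n}\xi}(1)$, and one over $1\le n\le m(B)$ with summands $p^n\chi_{\delta}(p^n)(-2,p^n)_pW_{p,p^{-2n}\xi}(s)$. The vanishing assertion is then immediate: if $b_1\notin\Z_p$ then $\nu_1<0$, so $n(B)\le\nu_1<0$ and $m(B)\le\nu_1+1\le 0$, and both index sets are empty; the cases $b_2\notin\Z_p$ and $b_3\notin p\Z_p$ are handled identically with $\nu_2$, respectively $\nu_3-1$, playing the role of $\nu_1$. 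So assume $\nu_1,\nu_2\ge 0$ and $\nu_3\ge 1$. Then $n(B)\ge 0$, and since $\min(\nu_1+1,\nu_2,\nu_3)\le\min(\nu_1+1,\nu_2+1,(\nu_3-1)+1)=n(B)+1$ one gets $0\le n(B)\le m(B)\le n(B)+1$; this proves the inequalities in the statement and explains why $n(B)-m(B)\in\{0,-1\}$ appears as the coefficient of the boundary term of $\mathcal E_p(B)$.

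Next I would insert the explicit local Whittaker values recorded in Appendix \ref{whitakkerfunctions}. Since $p\mid M_g\mid N_f$, the metaplectic component $\tilde{\pi}_p$ is the special representation $\tilde{\sigma}^{\delta}(\overline{\psi}_p^D)$ and $\breve{\mathbf h}_p=\mathbf h_p$ is its $\tilde{\Gamma}_0(p)$-newvector, characterized by $\mathbf h_p|_{\widetilde{\SL}_2(\Z_p)}=\mathbf 1_{\widetilde{\SL}_2(\Z_p)}-(p+1)\mathbf 1_{\tilde{\Gamma}}$; the appendix computes $W_{\breve{\mathbf h}_p,\eta}(1)$ and $W_{\breve{\mathbf h}_p,\eta}(s)$ for such a vector, and after the normalization $W_{p,\eta}=W_{\breve{\mathbf h}_p,\eta}(1)^{-1}\Psi_p(\eta;\alpha_p)W_{\breve{\mathbf h}_p,\eta}$ these read $W_{p,\eta}(1)=\Psi_p(\eta;\alpha_p)$ and $W_{p,\eta}(s)=-p^{-1}\Psi_p(\eta;\alpha_p)$ for every $\eta$ with $\mathrm{val}_p(\eta)\ge 0$. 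Two further simplifications are available at a prime $p\mid N_f$: first $\alpha_p=-p^{-1/2}w_p$, so $\alpha_p^{-2n}=p^n$; and second, the $p\mid N_f$ branch of \eqref{Psip} gives $\Psi_p(\eta;\alpha_p)=\chi_{-\eta}(p)(\chi_{-\eta}(p)+w_p)\alpha_p^{\mathrm{val}_p(\eta)}$ when $\mathrm{val}_p(\eta)\ge 0$. Since $p^{-2n}$ is a square, $\chi_{-p^{-2n}\xi}(p)=\chi_{-\xi}(p)$, and therefore $\Psi_p(p^{-2n}\xi;\alpha_p)=\alpha_p^{-2n}\Psi_p(\xi;\alpha_p)=p^n\Psi_p(\xi;\alpha_p)$, provided $\mathrm{val}_p(p^{-2n}\xi)\ge 0$.

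I would then check that $\mathrm{val}_p(p^{-2n}\xi)\ge 0$ throughout the relevant ranges. Writing $e_p=\mathrm{val}_p(\xi)$, from $b_1,b_2\in\Z_p$, $b_3\in p\Z_p$ and $p$ odd one has $e_p\ge\min(\nu_1+\nu_3,2\nu_2)\ge 2n(B)$, so the $W(1)$-sum causes no trouble. In the $W(s)$-sum the only possibly problematic term is $n=m(B)$ in the case $m(B)=n(B)+1$; a short case analysis of which of $\nu_1,\nu_2,\nu_3-1$ realises the minimum defining $n(B)$ shows that $e_p<2m(B)$ forces $e_p$ to be odd, hence $\chi_{-\xi}(p)=0$ and $\Psi_p(\xi;\alpha_p)=0$, and in that degenerate situation both sides of the claimed identity vanish. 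In every remaining case each $W_{p,p^{-2n}\xi}$ occurring is evaluated at an argument of non-negative valuation, and substituting the values above into \eqref{WBp1=nB+mB} lets me factor $\mu_p^{-1}\Psi_p(\xi;\alpha_p)$ out of all summands; each remaining summand carries a factor $\chi_{\delta}(p^n)(-2,p^n)_p=\chi_{\delta}(p)^n(-2,p)_p^n=\chi_{-2\delta}(p)^n$ together with an explicit power of $p$.

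The last step is the bookkeeping that reassembles the two sums into $\mathcal E_p(B)$. The cleanest organisation treats separately the case $m(B)=n(B)$, in which the two sums have equal length and the $W(s)$-sum contributes exactly $-p^{-1}$ times the $n\ge 1$ part of the $W(1)$-sum, producing the coefficient $1-p^{-1}$; and the case $m(B)=n(B)+1$, in which the $W(s)$-sum has one extra term contributing precisely $-p^{2n(B)+1}\chi_{-2\delta}(p)^{n(B)+1}$. In both cases the $n=0$ term of the $W(1)$-sum supplies the leading $1$, and one lands on the stated expression for $\mathcal E_p(B)$. I expect this elementary but fiddly matching of the $W(1)$- and $W(s)$-contributions term by term — and pinning down both the sign $n(B)-m(B)$ and the normalising factor $1-p^{-1}$ of the correction term — to be the main obstacle; everything else is a routine unwinding of the Weil representation already performed before the statement, together with the cited Whittaker values.
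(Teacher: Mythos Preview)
Your approach is essentially the paper's: start from \eqref{WBp1=nB+mB}, insert the Whittaker values from the appendix, factor out $\mu_p^{-1}\Psi_p(\xi;\alpha_p)$, and split into the cases $m(B)=n(B)$ and $m(B)=n(B)+1$. The only tactical difference is in how the $W(s)$-sum is handled at the boundary. The paper first uses the recursion $W_{p,p^{-2n}\xi}(s)=p\,W_{p,p^{-2(n-1)}\xi}(s)$ (valid for $1\le n\le m(B)$ since then $\mathrm{val}_p(p^{-2n}\xi)\ge -1$) to shift the index range to $0\le n\le m(B)-1$, and only \emph{then} applies $W_{p,\eta}(s)=-p^{-1}W_{p,\eta}(1)$; this way every argument has nonnegative valuation and no separate degenerate case is needed. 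You instead apply $W_{p,\eta}(s)=-p^{-1}\Psi_p(\eta;\alpha_p)$ directly and isolate the possibly bad term $n=m(B)$ by a parity argument on $e_p$.

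Your degeneracy argument is correct but one step is left implicit. You show that $e_p<2m(B)$ forces $e_p$ odd, hence $\Psi_p(\xi;\alpha_p)=0$ and the right-hand side vanishes; you then assert that the left-hand side vanishes too. For the term $n=m(B)$ in the $s$-sum one has $\mathrm{val}_p(p^{-2m(B)}\xi)=-1$, where your formula $W_{p,\eta}(s)=-p^{-1}\Psi_p(\eta;\alpha_p)$ was not established (indeed $W_{\breve{\mathbf h}_p,\eta}(s)\neq 0$ there while $W_{\breve{\mathbf h}_p,\eta}(1)=0$). The missing sentence is that the normalizing scalar is the \emph{single} constant $c=\Psi_p(\xi;\alpha_p)/W_{\breve{\mathbf h}_p,\xi}(1)$ attached to the original $\xi$, not one that varies with $\eta$; since $e_p\ge 1$ gives $W_{\breve{\mathbf h}_p,\xi}(1)\neq 0$, one gets $c=0$, so $W_{p,\eta}\equiv 0$ for every $\eta$ via the functional equation, and \eqref{WBp1=nB+mB} is indeed zero. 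With that remark your argument is complete; the paper's reindexing simply sidesteps the issue.
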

\begin{proof}
It is clear from \eqref{WBp1=nB+mB} that $\mathcal W_{B,p}(1)=0$ if either $b_1 \not\in \Z_p$, $b_2 \not\in \Z_p$, or $b_3 \not \in p\Z_p$. Let us thus assume that $b_1 \in \Z_p$, $b_2 \in \Z_p$, and $b_3 \in p\Z_p$, and notice that then we have $m(B), n(B) \geq 0$. It is also clear from their definition that $m(B) \geq n(B)$, and one can easily check that $2n(B)\leq \mathrm{val}_p(\xi)$. In addition, when $m(B) > n(B)$ one necessarily has $m(B) = n(B)+1$. In particular, observe that the factor $n(B)-m(B)$ in the definition of $\mathcal E_p(B)$ is either $0$ or $-1$, according to whether $m(B)=n(B)$ or $m(B)>n(B)$, respectively.

From our computations in Appendix \ref{whitakkerfunctions}, for $n = 1, \dots, m(B)$ we have $W_{p,p^{-2n}\xi}(s) = p W_{p,p^{-2(n-1)}\xi}(s)$. Using this and reindexing the second sum in \eqref{WBp1=nB+mB}, we get
\[
\mathcal W_{B,p}(1) = \left(\sum_{n=0}^{n(B)} p^n\chi_{-2\delta}(p)^n W_{p,p^{-2n}\xi}(1) + p^2\chi_{-2\delta}(p)\sum_{n=0}^{m(B)-1} p^{n} \chi_{-2\delta}(p)^nW_{p,p^{-2n}\xi}(s)\right)\mu_p^{-1}.
\]
Now, for $n = 0, \dots, m(B)-1$ we also have $W_{p,p^{-2n}\xi}(s) = -p^{-1}W_{p,p^{-2n}\xi}(1)$, and since $W_{p,p^{-2n}\xi}(1) = \Psi_p(p^{-2n}\xi;\alpha_p) = p^n \Psi_p(\xi;\alpha_p)$, we deduce that
\[
 \mathcal W_{B,p}(1) = \left(\sum_{n=0}^{n(B)} p^{2n}\chi_{-2\delta}(p)^n - p\chi_{-2\delta}(p)\sum_{n=0}^{m(B)-1} p^{2n} \chi_{-2\delta}(p)^n\right)\mu_p^{-1} \Psi_p(\xi;\alpha).
\]
If $m(B) = n(B)$, this reduces to 
\[
 \mathcal W_{B,p}(1) = \left(1+(1-p^{-1})\sum_{n=1}^{n(B)} p^{2n}\chi_{-2\delta}(p)^n\right)\mu_p^{-1}\Psi_p(\xi;\alpha_p),
\]
while if $m(B)>n(B)$, then $m(B) -1 = n(B)$ and the above reads
\[
 \mathcal W_{B,p}(1) = \left(1+(1-p^{-1})\sum_{n=1}^{n(B)} p^{2n}\chi_{-2\delta}(p)^n - p^{2n(B)+1}\chi_{-2\delta}(p)^{n(B)+1}\right)\mu_p^{-1}\Psi_p(\xi;\alpha_p).
\]
\end{proof}

We must emphasize that the quantities $\mathcal E_p(B)$ in the proposition are non-zero rational numbers, and that they depend only on $p$ and $B$, as the notation suggests.

Now we deal with the computation of $\mathcal W_{B,\infty}(h_{\infty})$, for an arbitrary $h_{\infty} = n(X)m(A,1)$ as in \eqref{Whinfty}, with $X \in \mathrm{Sym}_2(\R)$ and $A \in \GL_2^+(\R)$. We recall that by definition
\[
\mathcal W_{B,\infty}(h_{\infty}) = \mathrm{vol}(\mathrm{SO}_2(\R))^{-1}\int_{U(\R)\backslash\SL_2(\R)} \hat{\omega}(g,h)\hat{\phi}_{\breve{\mathbf h},\infty}(\beta;0,1)W_{\infty,\xi}(g) dg,
\]
where $W_{\infty,\xi}$ is the Whittaker function of $\widetilde{\mathrm{SO}}(2)$-type $\ell+1/2$ defined in \eqref{Winfty-chi}. An inductive argument shows that this satisfies
\[
W_{\infty,\xi}(t(a)) = a^{k+1/2}e^{-2\pi\xi a^2}\sum_{j=0}^m (-4\pi)^{j-m}a^{2j}\frac{\Gamma(k+1/2+m)}{\Gamma(k+1/2+j)} \binom{m}{j} \qquad \text{for } a \in \R_{>0}.
\]

\begin{proposition}\label{prop:WBinfty}
With the above notation, one has
\[
\mathcal W_{B,\infty}(n(X)m(A,1)) = \begin{cases}
2^{\ell+1}\det(Y)^{(\ell+1)/2}C(B,Y)e^{2\pi\sqrt{-1}\mathrm{Tr}(BZ)} & \text{if } B > 0,\\
0 & \text{otherwise,}
\end{cases}
\]
where $Y = A {}^tA^{-1}$, $Z = X+\sqrt{-1}Y$, and $C(B,Y)$ is defined as in \eqref{const_diff}.
\end{proposition}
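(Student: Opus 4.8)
\smallskip
\noindent\textbf{Proof strategy.} The plan is to reduce the statement to the already-known case $m=0$, which is precisely the archimedean computation carried out in \cite[Proposition 5.10]{PaldVP} (itself following \cite[Lemma 4.2]{Ichino-pullbacks}): there one finds $\mathcal W_{B,\infty}(n(X)m(A,1)) = 2^{k+1}\det(Y)^{(k+1)/2}e^{2\pi\sqrt{-1}\mathrm{Tr}(BZ)}$ for $B>0$ and $0$ otherwise, which agrees with the asserted formula since $C(B,Y)=1$ when $m=0$. For general $m$, both archimedean ingredients of $\mathcal W_{B,\infty}$ are obtained from their $m=0$ counterparts by weight raising. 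On the metaplectic side $W_{\infty,\xi} = \tilde V_+^m W_{\mathbf h_\infty,\xi}$ with $\tilde V_+ = -\frac{1}{8\pi}V_+$, which accounts for the displayed expression of $W_{\infty,\xi}(t(a))$. On the dual side the Schwartz function $\phi_{\breve{\mathbf h},\infty}$ differs from its $m=0$ analogue only in that the holomorphic polynomial factor has degree $\ell+1=(k+1)+2m$ instead of $k+1$, and in the Fock model this extra factor $(x_2+\sqrt{-1}x_1+\sqrt{-1}x_5-x_4)^{2m}$ is produced by $m$ applications of the weight-raising element $D_+\in\mathcal U(\mathfrak{sp}(2,\R)_{\C})$ underlying the Maass operator $\Delta_{k+1}$ (equivalently of the normalized element $\tilde D_+ = -\frac{1}{64\pi^2}D_+$ that intertwines $\mathbf F$ with $\tilde D_+^m\mathbf F$, cf.\ \eqref{BFourier-DF}).

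The key step is the dual-pair commutation inside the Weil representation $\omega_\infty$ of $\widetilde{\SL}_2(\R)\times\mathrm{Sp}_2(\R)$ on $\mathcal S(V_5(\R))$: the raising element $V_+\in\mathfrak{sl}_2$ and the relevant element of $\mathfrak{sp}_2$ act through compatible formulae on $\mathcal S(V_5(\R))$, so that, after the change of polarization and integration over $U(\R)\backslash\SL_2(\R)$, replacing the $\widetilde{\SL}_2$-input $W_{\mathbf h_\infty,\xi}$ by $\tilde V_+W_{\mathbf h_\infty,\xi}$ has the same effect on $\mathcal W_{B,\infty}$ as acting on the output by $\tilde D_+$ --- equivalently, by the classical operator $\Delta_{k+1}$ --- up to the explicit ratio of the normalizing scalars $-\frac{1}{8\pi}$ and $-\frac{1}{64\pi^2}$ and a compensating power of $2$ (which turns $2^{k+1}$ into $2^{\ell+1}=2^{2m}\cdot 2^{k+1}$). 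Iterating $m$ times and using the $m=0$ case yields, for $B>0$,
\[
\mathcal W_{B,\infty}(n(X)m(A,1)) = 2^{\ell+1}\,C(B,Y)\det(Y)^{(\ell+1)/2}e^{2\pi\sqrt{-1}\mathrm{Tr}(BZ)},
\]
by the very definition of $C(B,Y)$ in \eqref{const_diff}: namely that $\Delta_{k+1}^m$ multiplies the $B$-th Fourier mode of $\det(Y)^{(k+1)/2}e^{2\pi\sqrt{-1}\mathrm{Tr}(BZ)}$ by $C(B,Y)$ while shifting the weight from $k+1$ to $\ell+1$ (cf.\ \eqref{BFourier-DF}). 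The vanishing for $B\not>0$ is inherited from the $m=0$ case, since $\Delta_{k+1}^m$ is a differential operator that does not alter the Fourier support.

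A concrete alternative --- essentially the route of \cite{Xue}, and the one we shall actually carry out in detail --- is to evaluate $\mathcal W_{B,\infty}(h_\infty)$ head-on: parametrize $U(\R)\backslash\SL_2(\R)$ by $t(a)k_\theta$ with $a\in\R_{>0}$ and $k_\theta\in\SO_2(\R)$, use that $W_{\infty,\xi}$ has pure $\widetilde{\SO}(2)$-type $\ell+1/2$ and that the $k_\theta$-action on the Fock-model function $\hat\phi_{\breve{\mathbf h},\infty}$ is explicit, so that the $\theta$-integral extracts a single Fourier mode, leaving a one-variable integral in $a$ that evaluates to a product of Gamma factors. I expect the main obstacle to lie in the bookkeeping rather than in any individual integral: in this second approach, matching the resulting finite sum --- coming from the $j$-sum in $W_{\infty,\xi}(t(a))$, the binomial expansion of the polynomial factor, and the expansion of the Gaussian under $\omega_\infty$ --- with the triple sum \eqref{const_diff}; and in the first approach, establishing the dual-pair commutation of the raising operators and pinning down the precise constant, which is delicate because $\tilde V_+$, $\tilde D_+$ and the classical $\Delta_{k+1}$ carry different normalizing scalars.
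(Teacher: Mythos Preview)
The paper's own ``proof'' is a one-line citation: the proposition is stated to be Lemma~5.6 of \cite{Chen}, with no argument given. Your proposal therefore goes well beyond what the paper does, and both strategies you outline are sound in principle. The direct-evaluation route (your second approach) is essentially what Chen carries out: parametrize $U(\R)\backslash\SL_2(\R)$ by $t(a)k_\theta$, exploit the $\widetilde{\SO}(2)$-type of $W_{\infty,\xi}$ and of $\hat\phi_{\breve{\mathbf h},\infty}$ to collapse the $\theta$-integral, and then reduce to Gamma integrals in $a$, with the triple sum in \eqref{const_diff} emerging from the various binomial expansions. Your first approach via dual-pair commutation of $\tilde V_+$ and $\tilde D_+$ is more conceptual and explains \emph{why} the answer must match the Fourier coefficient of $\Delta_{k+1}^m F$; it trades the combinatorial matching of sums for the (also delicate) verification of how the raising elements intertwine under the Weil representation and the tracking of normalization constants. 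Either route is acceptable; the paper simply offloads the computation to \cite{Chen}.
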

\begin{proof}
This is \cite[Lemma 5.6]{Chen}. 
\end{proof}

We can now finish the computation of $\mathcal W_{\theta(\breve{\mathbf h},\phi_{\breve{\mathbf h}}),B}(h_{\infty})$, where $h_{\infty}=n(X)m(A,1) \in \mathrm{GSp}_2(\R)$ is as in \eqref{Whinfty}. So fix $B \in \mathrm{Sym}_2(\Q)$ be as usual, with entries given by $b_1$, $b_2/2$ and $b_3$, and set $\xi = \det(B)$. If $\xi \leq 0$, the above proposition implies that $\mathcal W_{\theta(\breve{\mathbf h},\phi_{\breve{\mathbf h}}),B}(h_{\infty}) = 0$, so let us assume that $\xi > 0$ and write $\xi = \mathfrak d_{\xi}\mathfrak f_{\xi}^2$ with conventions as above. To simplify the notation in the computation, for each prime $p$ dividing $M_g = N_f/N_g$ we let $\mathcal E_p(B)$ be as in Proposition \ref{prop:WBp1}, and define
\[
\mathcal E(B) := \prod_{p\mid M_g} \mathcal E_p(B).
\]
With this, it follows from \eqref{WthetaB:product}, \eqref{WBp1:PaldVP} and Proposition \ref{prop:WBp1} that $\mathcal W_{\theta(\breve{\mathbf h},\phi_{\breve{\mathbf h}}),B}(h_{\infty}) = 0$ if either $b_1\not\in \Z$, $b_2\not\in \Z$, or $b_3\not\in M_g\Z$. And assuming that $b_1, b_2 \in \Z$ and $b_3 \in M_g\Z$, combining \eqref{WthetaB:product}, \eqref{WBp1:PaldVP}, Proposition \ref{prop:WBp1}, and Proposition \ref{prop:WBinfty} we find
\[
    \mathcal W_{\theta(\breve{\mathbf h},\phi_{\breve{\mathbf h}}),B}(h_{\infty}) = 2^{-\nu(N_f)-7/2}\mu_{N_f}^{-1}\zeta_{\Q}(2)^{-1} c(\mathfrak d_{\xi})\mathfrak f_{\xi}^{k-1/2} \mathcal E(B)\mathcal W_{B,\infty}(h_{\infty})\hspace{-0.1cm} \prod_{p\nmid N_f} \hspace{-0.1cm} \sum_{n=0}^{\mathrm{min}(\nu_i)} p^{\frac{n}{2}}\Psi_p\left(\frac{4\xi}{p^{2n}};\alpha_p\right)\hspace{-0.1cm}\prod_{p\mid N_f} \Psi_p(\xi;\alpha_p),
\]
where we have used that for an odd prime $p \nmid N_f$ one has $\Psi_p(p^{-2n}\xi;\alpha_p) = \Psi_p(4p^{-2n}\xi;\alpha_p)$. We also have $\mathfrak d_{4\xi}=\mathfrak d_{\xi}$ and $\mathfrak f_{\xi}^{k-1/2} = 2^{-k+1/2}\mathfrak f_{4\xi}^{k-1/2}$, hence we can rewrite the above expression as
\[
    2^{-k-3}\mu_{N_f}^{-1}\zeta_{\Q}(2)^{-1}  \mathcal E(B)\mathcal W_{B,\infty}(h_{\infty})
    \sum_{\substack{d\mid(b_1,b_2,b_3),\\(d,N_f)=1}} 2^{-\nu(N_f)}c(\mathfrak d_{4\xi})\mathfrak f_{4\xi}^{k-1/2} d^{1/2}
    \prod_{p\nmid N_f} \Psi_p\left(\frac{4\xi}{d^2};\alpha_p\right)
    \prod_{p\mid N_f} \Psi_p(\xi;\alpha_p).
\]
Now, for each integer $d$ with $(d,N_f)=1$ and each $p \mid N_f$ we have $\Psi_p(\xi;\alpha_p) = \Psi_p(4d^{-2}\xi;\alpha_p)$. We also have $\mathfrak d_{4\xi} = \mathfrak d_{4\xi/d^2}$ and $\mathfrak f_{4\xi}^{k-1/2} = d^{k-1/2}\mathfrak f_{4\xi/d^2}$, and hence using Proposition \ref{prop:WBinfty} and equation \eqref{cxi:Psip} one deduces that
\[
 \mathcal W_{\theta(\breve{\mathbf h},\phi_{\breve{\mathbf h}}),B}(h_{\infty}) =  2^{\ell-k-2} \mu_{N_f}^{-1} \zeta_{\Q}(2)^{-1} \mathcal E(B) \det(Y)^{(\ell+1)/2}C(B,Y)e^{2\pi\sqrt{-1}\mathrm{Tr}(BZ)} \sum_{\substack{d\mid(b_1,b_2,b_3),\\(d,N_f)=1}} d^k c\left(\frac{4\xi}{d^2}\right).
\] 
But now the last sum is exactly the $B$-th Fourier coefficient of $F = \mathrm{SK}(h)$. Comparing with \eqref{BFourier-DF} we obtain
\[
\mathcal W_{\theta(\breve{\mathbf h},\phi_{\breve{\mathbf h}}),B}(h_{\infty}) = 
\begin{cases}
2^{\ell-k-2} \mu_{N_f}^{-1} \zeta_{\Q}(2)^{-1} \mathcal E(B) \mathcal W_{\tilde D_+^m\mathbf F,B}(h_{\infty}) & \text{if } b_3 \in M_g\Z,\\
0 & \text{otherwise,}
\end{cases}
\]
and hence we have proved the following statement:

\begin{proposition}\label{prop:Ftheta}
With the above notation, 
\[
\theta(\breve{\mathbf h},\phi_{\breve{\mathbf h}}) = 2^{2m-2}\mu_{N_f}^{-1}\zeta_{\Q}(2)^{-1} \breve{\mathbf F},
\]
where $\breve{\mathbf F}$ is the adelization of the nearly holomorphic Siegel form $\breve F \in S_{\ell+1}^{nh}(\Gamma_0^{(2)}(N_f))$ whose Fourier coefficients are given by
\[
A_{\breve F}(B) = \begin{cases}
\mathcal E(B) A_{\Delta_{k+1}^m F}(B) & \text{if } b_3 \in M_g\Z,\\
0 & \text{otherwise.}
\end{cases}
\]
\end{proposition}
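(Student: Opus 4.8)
The plan is to identify $\theta(\breve{\mathbf h},\phi_{\breve{\mathbf h}})$ by computing its Fourier coefficients. By the invariance properties of $\phi_{\breve{\mathbf h}}$ recorded in the lemma above, this theta lift lies in the space of $K_0^{(2)}(N_f)$-fixed vectors of $\Pi$, hence is the adelization of a classical nearly holomorphic Siegel form of weight $\ell+1$ and level $\Gamma_0^{(2)}(N_f)$; it is therefore determined by the functions $\mathcal W_{\theta(\breve{\mathbf h},\phi_{\breve{\mathbf h}}),B}$, and these are in turn determined by their values at the archimedean elements $h_{\infty} = n(X)m(A,1)$ of \eqref{Whinfty}. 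The first step, then, is to apply the formula of \cite[Lemma 4.2]{Ichino-pullbacks}, which is exactly formula \eqref{WthetaB:product}: for $\xi = \det(B) > 0$ it expresses $\mathcal W_{\theta(\breve{\mathbf h},\phi_{\breve{\mathbf h}}),B}$ as $2^{-\nu(N_f)}c(\mathfrak d_{\xi})\mathfrak f_{\xi}^{k-1/2}\zeta_{\Q}(2)^{-1}$ times the product of the local integrals $\mathcal W_{B,v}$, and it vanishes for $\xi \leq 0$.

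Next I would feed in the local factors. At the finite places $v = p$ with $p \nmid M_g$ (this includes $p = 2$ and the primes dividing $N_g$) I would quote verbatim the values of $\mathcal W_{B,p}(1)$ computed in \cite{PaldVP}, recalled in \eqref{WBp1:PaldVP}; at the finite places $p \mid M_g$ I would use Proposition \ref{prop:WBp1}, according to which $\mathcal W_{B,p}(1) = \mathcal E_p(B)\mu_p^{-1}\Psi_p(\xi;\alpha_p)$ provided $b_1, b_2 \in \Z_p$ and $b_3 \in p\Z_p$ (and $\mathcal W_{B,p}(1) = 0$ otherwise); and at $v = \infty$ I would use Proposition \ref{prop:WBinfty}, which gives the archimedean factor $2^{\ell+1}\det(Y)^{(\ell+1)/2}C(B,Y)e^{2\pi\sqrt{-1}\mathrm{Tr}(BZ)}$ when $B > 0$, with $C(B,Y)$ as in \eqref{const_diff}.

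The third step is the arithmetic bookkeeping that recombines these pieces. Multiplying the prefactor, the $p \nmid N_f$ factors $\sum_{n} p^{n/2}\Psi_p(4\xi/p^{2n};\alpha_p)$, the $p \mid N_f$ factors $\Psi_p(\xi;\alpha_p)$, the archimedean factor, and the $\mathcal E_p(B)$'s, I would unfold the product of finite sums into a single sum over divisors $d \mid \mathrm{gcd}(b_1,b_2,b_3)$ with $(d,N_f) = 1$, track how $\mathfrak d_{\xi}$ and $\mathfrak f_{\xi}$ transform under $\xi \mapsto 4\xi/d^2$, and apply the identity \eqref{cxi:Psip} to collapse the Euler product back into $c(4\xi/d^2)$. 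Summing over $d$ then reproduces precisely the Fourier coefficient $A_F(B)$ of the Saito--Kurokawa lift $F = \mathrm{SK}(h)$ given by \eqref{FCoeff-SK}, multiplied by the scalar $2^{\ell-k-2}\mu_{N_f}^{-1}\zeta_{\Q}(2)^{-1}\mathcal E(B)$ with $\mathcal E(B) = \prod_{p \mid M_g}\mathcal E_p(B)$, together with the side condition $b_1, b_2 \in \Z$ and $b_3 \in M_g\Z$ assembled place by place. Comparing this with \eqref{BFourier-DF} and using $2m = \ell - k$ then identifies $\theta(\breve{\mathbf h},\phi_{\breve{\mathbf h}})$ with $2^{2m-2}\mu_{N_f}^{-1}\zeta_{\Q}(2)^{-1}$ times the adelization of the nearly holomorphic Siegel form $\breve F$ whose $B$-th coefficient is $\mathcal E(B)A_{\Delta_{k+1}^m F}(B)$ when $b_3 \in M_g\Z$ and $0$ otherwise.

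The hard part will be this middle bookkeeping: keeping careful control of the several competing powers of $2$ --- from the prefactor $2^{-\nu(N_f)}$, from the archimedean factor $2^{\ell+1}$, from the substitution $\xi \mapsto 4\xi$ which introduces $2^{-k+1/2}\mathfrak f_{4\xi}^{k-1/2}$, and from the further $d$-scaling $\mathfrak f_{4\xi}^{k-1/2} = d^{k-1/2}\mathfrak f_{4\xi/d^2}^{k-1/2}$ --- and making sure that the divisibility constraints produced at the various places glue to exactly the claimed support $b_3 \in M_g\Z$ of $A_{\breve F}$. A secondary point, essentially automatic once the above is in place, is that $\breve F$ really is a nearly holomorphic Siegel form of weight $\ell+1$ and level $\Gamma_0^{(2)}(N_f)$: this follows since it is realized as a genuine theta lift into $\Pi$, so no independent verification of the modularity of the $\mathcal E_p(B)$-twisted expansion is needed.
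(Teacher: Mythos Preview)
Your proposal is correct and follows essentially the same route as the paper: the paper's proof is precisely the Fourier-coefficient computation you outline, invoking \eqref{WthetaB:product}, the local values \eqref{WBp1:PaldVP}, Proposition \ref{prop:WBp1}, and Proposition \ref{prop:WBinfty}, then performing the divisor-sum bookkeeping via \eqref{cxi:Psip} and \eqref{FCoeff-SK} to match against \eqref{BFourier-DF}. The only point you might make more explicit is the extra factor $2^{-7/2}$ coming from the $p=2$ entry of \eqref{WBp1:PaldVP}, which is one of the ``competing powers of $2$'' you allude to and which the paper tracks separately.
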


By recalling that $A_{\Delta_{k+1}^mF}(B) = C(B,Y)A_F(B)$, with $C(B,Y)$ as in \eqref{const_diff}, we see that
\[
A_{\breve F}(B) = \begin{cases}
\mathcal E(B) C(B,Y)A_{F}(B) & \text{if } b_3 \in M_g\Z,\\
0 & \text{otherwise.}
\end{cases}
\]

\subsection{Conclusion of the proof of Theorem \ref{thm:centralvalue}}\label{sec:proof}

We can now finish the proof of Theorem \ref{thm:centralvalue}. Suppose first that $\Lambda(f\otimes\mathrm{Ad}(g),k)\neq 0$, so that $\mathcal Q$ is non-vanishing by Corollary \ref{cor:nonvanishing}. Then we know from  \eqref{centralvalueformula-Q} that 
\begin{equation}\label{cv-mainidentity}
\Lambda(f \otimes \mathrm{Ad}(g), k) = \frac{4 \Lambda(1,\pi,\mathrm{ad})\Lambda(1,\tau,\mathrm{ad})}{\langle \breve{\mathbf h}, \breve{\mathbf h} \rangle\langle \breve{\mathbf g}, \breve{\mathbf g}\rangle\langle \breve{\pmb{\phi}}, \breve{\pmb{\phi}} \rangle} \left( \prod_v \mathcal I_v^{\sharp}(\breve{\mathbf h}, \breve{\mathbf g}, \breve{\pmb{\phi}})^{-1}\right) \mathcal Q(\breve{\mathbf h}, \breve{\mathbf g}, \breve{\pmb{\phi}}),
\end{equation}
where $\breve{\mathbf h} \otimes\breve{\mathbf g}\otimes\breve{\pmb{\phi}} \in \tilde{\pi}\otimes\tau\otimes\omega$ is our test-vector as chosen in Section \ref{sec:testvector}. Now we can compute all the terms on the right hand side. First of all, by Propositions \ref{prop:Ip-PaldVP}, \ref{prop:Ip-Mg}, \ref{prop:Ireal}, we have
\[
\prod_v \mathcal I_v^{\sharp}(\breve{\mathbf h}, \breve{\mathbf g}, \breve{\pmb{\phi}})^{-1} = \pi^{-2m}2^{-2m}C_{\infty}(k,\ell)2^{-\nu(M_g)}N_g\prod_{p\mid M_g}(p+1) = \pi^{-2m}2^{-2m-\nu(M_g)}C_{\infty}(k,\ell)N_g \mu_{M_g}.
\]
Secondly, we have $\langle \breve{\pmb{\phi}},\breve{\pmb{\phi}}\rangle = 2^{-1}$, $\langle \breve{\mathbf g},\breve{\mathbf g}\rangle = \zeta_{\Q}(2)^{-1}\langle g,g\rangle$ and (cf. \cite[page 22]{Waldspurger80})
\begin{align*}
\langle \breve{\mathbf h},\breve{\mathbf h}\rangle & = (4\pi)^{-2m} \frac{\Gamma(k+m+1/2)\Gamma(m+1)}{\Gamma(k+1/2)}\langle \mathbf h, \mathbf h\rangle =  2^{-1}\zeta_{\Q}(2)^{-1} (4\pi)^{-2m} \frac{\Gamma(k+m+1/2)\Gamma(m+1)}{\Gamma(k+1/2)} \langle h, h \rangle \\
& = 2^{-1-2m}\zeta_{\Q}(2)^{-1} (4\pi)^{-2m}\frac{(\ell+k-1)!(k-1)!m!}{(k+m-1)!(2k-1)!} \langle h,h\rangle.
\end{align*}
Besides, by applying \cite[Theorem 5.15]{Hida-MFGalCoh}, \cite[\S 3.2.1]{Watson}, we find
\[
  \Lambda(1,\pi,\mathrm{ad}) = 2^{2k}N_f^{-1}\mu_{N_f}\langle f,f\rangle, \quad \Lambda(1,\tau,\mathrm{ad}) = 2^{\ell+1}N_g^{-1}\mu_{N_g}\langle g,g\rangle,
\] 
and altogether the first term on the right hand side of \eqref{cv-mainidentity} reads
\begin{align*}
 \frac{4\Lambda(1,\pi,\mathrm{ad})\Lambda(1,\tau,\mathrm{ad})}{\langle \breve{\mathbf h}, \breve{\mathbf h}\rangle\langle \breve{\mathbf g},\breve{\mathbf g}\rangle \langle \breve{\pmb{\phi}}, \breve{\pmb{\phi}}\rangle} & = 
 \pi^{2m}\frac{2^{8m+3k+5}\zeta_{\Q}(2)^2\mu_{N_f}\mu_{N_g}}{N_fN_g} \frac{(k+m-1)!(2k-1)!}{(\ell+k-1)!(k-1)!m!} \frac{\langle f,f\rangle}{\langle h,h\rangle}.
\end{align*}

Finally, it remains to compute the value of the global $\SL_2$-period evaluated on our test vector, $\mathcal Q(\breve{\mathbf h},\breve{\mathbf g},\breve{\pmb{\phi}})$. By Proposition \ref{prop:globalG} we have 
\[
\theta(\breve{\mathbf G},\phi_{\breve{\mathbf g}}) = C_1^{-1} \breve{\mathbf g}, \quad C_1 = 2^{1-\ell}M_g\mu_{N_g}\zeta_{\Q}(2)^2 \langle g,g\rangle^{-1},
\]
and hence \cite[Theorem 5.3]{Qiu} implies that $\mathcal Q(\breve{\mathbf h},\breve{\mathbf g},\breve{\pmb{\phi}}) = C_1^2\mathcal P(\theta(\breve{\mathbf h},\phi_{\breve{\mathbf h}}), \breve{\mathbf G})$, where 
\[
\mathcal P: \Pi \otimes \Pi \otimes \Upsilon \otimes \Upsilon \, \longrightarrow \, \C 
\]
is the $\mathrm{SO}(V_4)$-period defined by associating any choice of decomposable vectors $\mathbf F_1, \mathbf F_2 \in \Pi$, $\mathbf G_1, \mathbf G_2 \in \Upsilon$ the product of integrals
\[
\mathcal P(\mathbf F_1, \mathbf F_2, \mathbf G_1, \mathbf G_2) := \left(\int_{[\mathrm{SO}(V_4)]} \mathbf F_1(h)\overline{\mathbf G_1(h)}dh\right)\left(\int_{[\mathrm{SO}(V_4)]} \overline{\mathbf F_2(h)}\mathbf G_2(h)dh\right),
\]
and we abbreviate $\mathcal P(\theta(\breve{\mathbf h},\phi_{\breve{\mathbf h}}), \breve{\mathbf G})= \mathcal P(\theta(\breve{\mathbf h},\phi_{\breve{\mathbf h}}),\theta(\breve{\mathbf h},\phi_{\breve{\mathbf h}}), \breve{\mathbf G},\breve{\mathbf G})$. 
But from Proposition \ref{prop:Ftheta} we know that $\theta(\breve{\mathbf h},\phi_{\breve{\mathbf h}}) = C_2 \breve{\mathbf F}$ with $C_2 = 2^{2m-2}\mu_{N_f}^ {-1}\zeta_{\Q}(2)^{-1}$, hence $\mathcal Q(\breve{\mathbf h},\breve{\mathbf g},\breve{\pmb{\phi}}) = C_1^2C_2^2\mathcal P(\breve{\mathbf F}, \breve{\mathbf G})$. Now, $\breve{\mathbf F}$ is the adelization of $\breve{F}$ and $\breve{\mathbf G}_{|\GL_2\times \GL_2} = \mathbf g \otimes \mathbf V_{M_g}\mathbf g$, which is the adelization of $g \times V_{M_g}g$, hence $\mathcal P(\breve{\mathbf F}, \breve{\mathbf G}) = C_3^2|\langle \breve F_{|\mathcal H \times \mathcal H}, g \times V_{M_g} g\rangle|^2$ with $C_3 = 2^{-1}\zeta_{\Q}(2)^{-2}$ (cf. \cite[Section 9]{IchinoIkeda-periods}). Altogether,
\[
\mathcal Q(\breve{\mathbf h},\breve{\mathbf g},\breve{\pmb{\phi}}) = (C_1C_2C_3)^2 |\langle \breve F_{|\mathcal H \times \mathcal H}, g \times V_M g\rangle|^2 =
2^{4m-2\ell-4} \zeta_{\Q}(2)^{-2} M_g^2 \mu_{N_f}^{-2}\mu_{N_g}^2
\frac{|\langle \breve F_{|\mathcal H \times \mathcal H}, g \times V_{M_g} g\rangle|^2}{\langle g,g\rangle^2}. 
\]
Combining all the terms, we obtain that 
\[
\Lambda(f\otimes \mathrm{Ad}(g),k) = 2^{6m+k+1-\nu(M_g)} \frac{M_g^2\mu_{N_g}^3\mu_{M_g}}{N_f\mu_{N_f}} \frac{(k+m-1)!(2k-1)!}{(\ell+k-1)!(k-1)!m!} C_{\infty}(k,\ell) \cdot \frac{\langle f,f\rangle}{\langle h,h\rangle}\frac{|\langle \breve F_{|\mathcal H \times \mathcal H}, g \times V_{M_g} g\rangle|^2}{\langle g,g\rangle^2}. 
\]
By using the definition of $C_{\infty}(k,\ell)$, we see \[
\frac{(k+m-1)!(2k-1)!}{(\ell+k-1)!(k-1)!m!} C_{\infty}(k,\ell) = \frac{(2m)!}{m!}\frac{(k+m-1)!}{(\ell-1)!} \sum_{\substack{0\leq s \leq 2m,\\ s \text{ even}}} \prod_{\substack{0\leq j \leq s-2, \\ j \text{ even}}} \frac{(2m-j)(2m-j-1)}{(j+2)(2k+j+1)} = C_{\infty}(f,g),
\]
and so the claimed formula in Theorem \ref{thm:centralvalue} follows by noticing that $\mu_{N_f}=\mu_{N_g}\mu_{M_g}$.

If $\Lambda(f\otimes\mathrm{Ad}(g),k) = 0$, then Corollary \ref{cor:nonvanishing} tells us that the functional $\mathcal Q$ is identically zero, and hence the central formula stated in Theorem \ref{thm:centralvalue} holds trivially because all the local periods continue to be non-zero whereas $\mathcal Q(\breve{\mathbf h},\breve{\mathbf g},\breve{\pmb{\phi}}) = 0$, and hence $\langle \breve F_{|\mathcal H\times\mathcal H}, g\times V_{M_g}g\rangle = 0$. \hfill{\ensuremath{\square}}

\vspace{0.2cm}

One can use the explicit formula in Theorem \ref{thm:sc-intro} to prove Deligne's algebraicity conjecture for $\Lambda(f\otimes\mathrm{Ad}(g),k)$:

\begin{corollary}
Let $f \in S_{2k}(N_f)$ and $g \in S_{\ell+1}(N_g)$ be as in Theorem \ref{thm:centralvalue}. If $\sigma \in \mathrm{Aut}(\C)$, then 
\[
\left(\frac{\Lambda(f\otimes\mathrm{Ad}(g),k)}{\langle g,g\rangle^2 c^+(f)}\right)^{\sigma} = \frac{\Lambda(f^{\sigma}\otimes\mathrm{Ad}(g^{\sigma}),k)}{\langle g^{\sigma},g^{\sigma}\rangle^2 c^+(f^{\sigma})},
\]
where $c^+(f)$ is the `plus' period associated with $f$ as in \cite{Shimura77}. In particular, if $\Q(f,g)$ denotes the number field generated by the Fourier coefficients of $f$ and $g$, then 
\[
\Lambda(f\otimes\mathrm{Ad}(g),k)^{\mathrm{alg}} := \frac{\Lambda(f\otimes\mathrm{Ad}(g),k)}{\langle g,g\rangle^2 c^+(f)} \in \Q(f,g).
\]
\end{corollary}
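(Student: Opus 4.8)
The plan is to read off both assertions directly from the explicit central value formula of Theorem~\ref{thm:centralvalue}. First note that the hypotheses of that theorem are stable under $\sigma$: the weights and levels of $f^\sigma$, $g^\sigma$ agree with those of $f$, $g$, and the Atkin--Lehner eigenvalues satisfy $w_p(f^\sigma)=\sigma(w_p(f))=1$ for $p\mid M_g$. Applying Theorem~\ref{thm:centralvalue} to $(f,g)$ gives
\[
\Lambda(f\otimes\mathrm{Ad}(g),k)^{\mathrm{alg}}=2^{6m+k+1-\nu(M_g)}C_0(f,g)C_\infty(f,g)\cdot\frac{\langle f,f\rangle}{\langle h,h\rangle\,c^+(f)}\cdot\frac{|\langle\breve{F}_{|\mathcal H\times\mathcal H},\,g\times V_{M_g}g\rangle|^2}{\langle g,g\rangle^4},
\]
and since $2^{6m+k+1-\nu(M_g)}C_0(f,g)C_\infty(f,g)\in\Q^\times$ depends only on the weights and levels, it suffices to show that the two remaining factors lie in $\barQ$ and that $\sigma$ sends each of them to the corresponding factor for $(f^\sigma,g^\sigma)$. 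Because the right-hand side of Theorem~\ref{thm:centralvalue} is unchanged when $h$ is rescaled ($\breve F$ is linear in $h$ and $\langle h,h\rangle$ quadratic), I would fix once and for all a normalization of $h$ with Fourier coefficients in the totally real Hecke field $\Q(f)$; then, by the Galois equivariance of the Shimura--Shintani correspondence on Kohnen's plus new space (\cite{Kohnen-newforms}), $h^\sigma\in S_{k+1/2}^{+,new}(N_f)$ is in correspondence with $f^\sigma$ and may serve as the auxiliary form for the pair $(f^\sigma,g^\sigma)$; similarly I take $g$, $g^\sigma$ with coefficients in $\Q(g)\subset\R$.

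For the factor attached to $f$, the plan is to combine the Kohnen--Zagier formula (in the odd squarefree level, plus-space version; cf.~\cite{Kohnen-newforms}) with Shimura's theorem on twisted $L$-values \cite{Shimura77}. Choose a fundamental discriminant $-\mathfrak{d}<0$ with $c(\mathfrak{d})\neq 0$. Kohnen--Zagier gives $c(\mathfrak{d})^2\langle f,f\rangle=(\text{explicit rational})\cdot\langle h,h\rangle\,L(f\otimes\chi_{-\mathfrak{d}},k)$, while Shimura's theorem gives $L(f\otimes\chi_{-\mathfrak{d}},k)=\pi^k\mathfrak{d}^{k-1/2}c^+(f)\cdot\lambda$ with $\lambda\in\barQ$, the sign being $+$ precisely because $k$ is odd and $-\mathfrak{d}<0$. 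Dividing, the powers of $\pi$ and of $\mathfrak{d}$ cancel and one gets $\langle f,f\rangle/(\langle h,h\rangle\,c^+(f))=\lambda^{-1}c(\mathfrak{d})^{-2}\cdot(\text{explicit rational})\in\barQ$; both $c(\mathfrak{d})$ (for the $\Q(f)$-normalized $h$) and $\lambda$ being $\sigma$-equivariant in the standard sense, so is the quotient.

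For the factor attached to $g$ — which I expect to be the hard part — the plan is as follows. By Proposition~\ref{prop:Ftheta} and the explicit shapes of $C(B,Y)$ and $\mathcal{E}(B)$ (the latter a product of nonzero rationals), the nearly holomorphic Siegel form $\breve F$ is obtained from $\mathrm{SK}(h)$ by arithmetic Maass--Shimura operators together with a rational modification of Fourier coefficients, hence is defined over $\Q(f)$ in Shimura's sense (with the standard $(4\pi Y)$-normalization of nearly holomorphic Fourier expansions), and its pullback $\breve{F}_{|\mathcal H\times\mathcal H}$ is a nearly holomorphic cusp form of weight $(\ell+1,\ell+1)$ for $\Gamma_0(N_f)\times\Gamma_0(N_f)$ over $\Q(f)$. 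Applying holomorphic projection in each variable — which respects the field of definition and does not alter pairings with holomorphic cusp forms — and then the $\barQ$-rational, Hecke-defined projector onto the $g$-old space in each variable, I would write $\langle\breve{F}_{|\mathcal H\times\mathcal H},\,g\times V_{M_g}g\rangle=\beta\,\langle g,g\rangle^2$ with $\beta\in\barQ$, using that every Petersson product of two forms in the $g$-old space at level $N_f$ is a $\barQ$-multiple of $\langle g,g\rangle$ (\cite{Shimura77}; cf.~\cite{Ichino-pullbacks,IchinoIkeda-periods}). A change of variables $z_i\mapsto-\overline{z_i}$ on the fundamental domains shows this pairing is real (the Fourier coefficients of $h$ and $g$ and the Maass constants are real), so $\beta\in\barQ\cap\R$ and the factor equals $\beta^2$; Galois equivariance of $\beta^2$ then follows from that of Shimura's Petersson-period relations and of the eigenprojectors.

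Putting the three pieces together yields $\Lambda(f\otimes\mathrm{Ad}(g),k)^{\mathrm{alg}}\in\barQ$ and $(\Lambda(f\otimes\mathrm{Ad}(g),k)^{\mathrm{alg}})^\sigma=\Lambda(f^\sigma\otimes\mathrm{Ad}(g^\sigma),k)^{\mathrm{alg}}$; letting $\sigma$ range over $\Aut(\C/\Q(f,g))$, which fixes $f$ and $g$ and hence $\Lambda^{\mathrm{alg}}$, gives $\Lambda^{\mathrm{alg}}\in\Q(f,g)$. The principal difficulty is the $g$-factor: one must fuse the arithmeticity of nearly holomorphic Siegel forms and of their restrictions to $\mathcal H\times\mathcal H$, the behavior of holomorphic projection, and the rationality of Rankin--Selberg/Petersson pairings into a single $\sigma$-equivariant statement for $\langle\breve{F}_{|\mathcal H\times\mathcal H},\,g\times V_{M_g}g\rangle/\langle g,g\rangle^2$, and — since the formula contains $|\cdot|^2$ rather than a holomorphic quantity — one must establish the reality of that pairing, which is where the totally real nature of $\Q(f)$ and $\Q(g)$ (a consequence of $f$ and $g$ having trivial nebentype) is used.
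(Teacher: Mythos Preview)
Your proposal is correct and follows essentially the same approach the paper indicates. The paper itself does not give a detailed proof of this corollary: it simply refers to \cite[Corollary~6.5]{PaldVP} and \cite[Corollary~8.2]{Chen}, mentions Kohnen's formula \cite{Kohnen-FC} relating Fourier coefficients of $h$ to central values of twisted $L$-series for $f$, and leaves the details to the reader. Your plan---splitting the right-hand side of Theorem~\ref{thm:centralvalue} into the rational constant, the $f$-factor handled via Kohnen--Zagier and Shimura's period theorem, and the $g$-factor handled via arithmeticity of nearly holomorphic Siegel forms, holomorphic projection, and rational Hecke projectors---is precisely the argument carried out in those references, so you have reconstructed what the paper intends. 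One small bibliographic point: the Kohnen--Zagier formula in the odd squarefree level plus-space setting is in \cite{Kohnen-FC} rather than \cite{Kohnen-newforms}.
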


The corollary can be proved along the same lines as \cite[Corollary 6.5]{PaldVP} or \cite[Corollary 8.2]{Chen}, using Kohnen's formula \cite{Kohnen-FC} relating Fourier coefficients of $h$ and central values of twisted $L$-series for $f$ (see also \cite[Theorem A]{ChenCheng} for a different approach). We leave the details for the reader.

\section{Application to subconvexity}\label{sec:subconvexity}

This section is devoted to derive a partial result towards the subconvexity problem stated in \eqref{scproblem} in the Introduction, as a direct consequence of the computation of local $\SL_2$-periods in Section \ref{sec:localperiods} (some of those computations already carried out in \cite{PaldVP}).

As a piece of motivation, let us recall that the subconvexity problems for the families of automorphic $L$-functions 
\begin{equation}\label{sc:pitau}
L(\pi\otimes\tau, s), \qquad \pi \text{ on } \GL_2 \text{ fixed, } \tau \text{ on } \GL_2 \text{ varying,}
\end{equation}
\begin{equation}\label{sc:piadtau}
L(\pi\otimes\mathrm{ad}(\tau), s), \qquad \pi \text{ on } \PGL_2 \text{ fixed, } \tau \text{ on } \GL_2 \text{ varying,}
\end{equation}
are closely related to fundamental arithmetic equidistribution questions. Indeed, the subconvexity problem in \eqref{sc:pitau} is related for instance to the distribution of integral points on spheres, representations of integers by ternary quadratic forms, Heegner points and closed geodesics on modular surfaces, etc. (see, e.g., \cite{IwaniecSarnak-Perspectives}, \cite{MichelVenkatesh-ICM}, \cite{Michel-IAS}). Besides, the subconvexity problem in \eqref{sc:piadtau} has applications towards the limiting mass distribution of automorphic forms, also referred to as the `arithmetic quantum unique ergodicity' (see \cite{SarnakAQC}, \cite{IwaniecSarnak-Perspectives}, \cite{HolowinskySoundarajan}, \cite{NelsonPitaleSaha}, \cite{Sarnak-RP}). This motivated and pushed the efforts to tackle these problems by many authors. The major achievement of these efforts was the solution for the subconvexity problem in \eqref{sc:pitau} given by Michel--Venkatesh in \cite{MichelVenkatesh}, relying crucially on Michel's observation that \eqref{sc:pitau} can be reduced to the subconvexity problem for $L(\pi \otimes \chi, s)$ with $\pi$ on $\GL_2$ fixed and $\chi$ on $\GL_1$ varying (cf. \cite{Michel}).

Concerning the subconvexity problem in \eqref{sc:piadtau}, much less progress has been done until very recently (except for the case where $\pi$ is dihedral, see \cite{Sarnak}). We focus our attention here in the work of Nelson \cite{Nelson}, who reduces the subconvexity problem in \eqref{sc:piadtau}, under important local assumptions, to the subconvexity problem for
\begin{equation}\label{sc:tautauchi}
L(\tau \otimes \tau^{\vee} \otimes \chi, s), \qquad \chi \text{ on } \GL_1 \text{ fixed, } \tau \text{ on } \GL_2 \text{ varying.}
\end{equation}
Thanks to the factorization 
\[
L(\tau \otimes \tau^{\vee} \otimes \chi, s) = L(\chi,s)L(\mathrm{ad}(\tau)\otimes\chi,s),
\]
one can further reduce the subconvexity problem in \eqref{sc:tautauchi} to that for 
\[
L(\mathrm{ad}(\tau)\otimes\chi,s) \qquad \chi \text{ on } \GL_1 \text{ fixed, } \tau \text{ on } \GL_2 \text{ varying.}
\]
Recent work of Munshi \cite{Munshi} on this latter problem, which can be seen as a specialization of \eqref{sc:piadtau} upon restricting $\pi$ to an Eisenstein series, motivates Hypothesis (H) below in Nelson's study of \eqref{sc:piadtau}.

As commented in the Introduction, our contribution to the subconvexity problem in \eqref{sc:piadtau}  relies on providing the bounds for local $\SL_2$-periods required in the main result of Nelson \cite{Nelson}. Let us fix an odd integer $\ell \geq 1$, and let $q$ traverse an infinite sequence $\mathfrak Q$ of (odd) primes. For each prime $q \in \mathfrak Q$, choose an automorphic representation $\tau$ of $\GL_2(\A)$ such that
\begin{center}
    the local component $\tau_q$ is a twist of the special representation,
\end{center}
and let $\mathcal G$ be the infinite family of all such representations $\tau$, when varying $q$ in $\mathfrak Q$. We may also refer to elements in $\mathcal G$ as pairs $(q,\tau)$, in order to keep track of the distinguished prime $q$ of each automorphic representation $\tau$ in the family.

We consider the following hypothesis on the family $\mathcal G$, namely the existence of a subconvex bound for $L(\tau\otimes\tau^{\vee}\otimes\chi,1/2)$ with polynomial dependence upon the character $\chi$:

\vspace{0.2cm}

\noindent {\bf Hypothesis (H):} there exist an absolute constant $\delta_0 = \delta_0(\mathcal G) > 0$ such that for all $\tau \in \mathcal G$ and all unitary characters $\chi$
 of $\A^{\times}/\Q^{\times}$ one has 
 \[
 L(\tau\otimes\tau^{\vee}\otimes\chi,1/2) \ll C(\tau\otimes\tau^{\vee}\otimes\chi)^{1/4-\delta_0}C(\chi)^{O(1)}.
 \]
 
\vspace{0.2cm}
 
 Here, we use the usual `big O' and Vinogradov notation, so that the above hypothesis is equivalent to the existence of absolute constants $c_0, A_0 \geq 0$ and $\delta_0 > 0$ (depending only on the family $\mathcal G$) such that
 \[
  |L(\tau\otimes\tau^{\vee}\otimes\chi,1/2)| \leq c_0 C(\tau\otimes\tau^{\vee}\otimes\chi)^{1/4-\delta_0}C(\chi)^{A_0}
 \]
 for all $\tau \in \mathcal G$ and all unitary characters $\chi$ of $\A^{\times}/\Q^{\times}$.
 
 \begin{theorem}\label{thm:sc}
 Fix an odd integer $\ell \geq 1$. With the above notation, suppose that every $\tau \in \mathcal G$ is the automorphic representation associated with some newform $g \in S_{\ell+1}(N_g)$ of odd squarefree level $N_g$ (and trivial nebentypus), and that $\mathcal G$ satisfies Hypothesis (H). Then, there exists an absolute constant $\delta = \delta(\mathcal G)$ with the following property: if $\pi = \pi(f)$ is an automorphic representation of $\PGL_2(\A)$ associated with a newform $f \in S_{2k}(N_f)$ of weight $2k$, with $1 \leq k \leq \ell$ odd, and odd squarefree level, then 
 \[
 L(\pi\otimes\mathrm{ad}(\tau),1/2) \ll C(\pi\otimes\mathrm{ad}(\tau))^{1/4-\delta}P^{O(1)},
 \]
 where $P = C(\pi)\cdot \prod_{p\neq q} C(\tau_p)$.
 \end{theorem}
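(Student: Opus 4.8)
The plan is to deduce Theorem~\ref{thm:sc} from Nelson's main result in \cite{Nelson} by verifying that the computations of local $\SL_2$-periods carried out in Section~\ref{sec:localperiods} (together with those in \cite{PaldVP}) supply exactly the local input that Nelson requires. Recall that Nelson's approach reduces the subconvexity problem for $L(\pi\otimes\mathrm{ad}(\tau),1/2)$ to Hypothesis~(H) for $L(\tau\otimes\tau^\vee\otimes\chi,1/2)$, but this reduction rests on having, at each place, a test vector for which the relevant local period functional is nonzero and, moreover, bounded below by a quantity that is polynomially controlled in the conductors. The bridge between these local periods and the global central value $\Lambda(\pi\otimes\mathrm{ad}(\tau),1/2)=\Lambda(f\otimes\mathrm{Ad}(g),k)$ is precisely Qiu's decomposition formula \eqref{SL2period-decomposition}, with the local factors $\mathcal I_v^\sharp(\breve{\mathbf h},\breve{\mathbf g},\breve{\pmb\phi})$ computed in Propositions~\ref{prop:Ip-PaldVP}, \ref{prop:Ip-Mg} and~\ref{prop:Ireal}.

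First I would recall the exact bookkeeping of \cite{PaldVP} and this paper: for every place $v$ the regularized local period $\mathcal I_v^\sharp(\breve{\mathbf h},\breve{\mathbf g},\breve{\pmb\phi})$ has been evaluated explicitly, equalling $1$ at primes $p\nmid N_f$, equalling $p^{-1}$ at primes $p\mid N_g$ (Proposition~\ref{prop:Ip-PaldVP}), equalling $2/(p+1)$ at primes $p\mid M_g$ when $w_p=+1$ (Proposition~\ref{prop:Ip-Mg}), and given by the explicit archimedean constant $\pi^{2m}2^{2m}C_\infty(k,\ell)^{-1}$ at $v=\infty$ (Proposition~\ref{prop:Ireal}). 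The crucial observation is that all of these are nonzero and polynomially bounded, both above and below, in terms of the local conductor $C(\tau_v)$ and of $C(\pi)$; in particular none of them vanishes under the hypothesis $w_p=+1$ at primes dividing $M_g$ (which, after choosing the fundamental discriminant $D$ appropriately, can be arranged as in Section~\ref{sec:SL2periods-thm}). Next I would combine this with the central value formula of Theorem~\ref{thm:centralvalue}, which rewrites $\Lambda(f\otimes\mathrm{Ad}(g),k)$ as a product of the global $\SL_2$-period with these local factors and the normalizing $L$-values $\Lambda(1,\pi,\mathrm{ad})$, $\Lambda(1,\tau,\mathrm{ad})$. Feeding the lower bounds on $|\mathcal I_v^\sharp|$ into Nelson's machinery, and invoking Hypothesis~(H) to bound the arithmetically interesting factor $L(\tau\otimes\tau^\vee\otimes\chi,1/2)$ subconvexly, one obtains the subconvex bound for $L(\pi\otimes\mathrm{ad}(\tau),1/2)$ with an explicit polynomial dependence on $P=C(\pi)\cdot\prod_{p\neq q}C(\tau_p)$. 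The constant $\delta=\delta(\mathcal G)$ comes out of Nelson's argument and depends only on $\delta_0$ from Hypothesis~(H) and on $\ell$.

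The key technical point, and the one I expect to be the main obstacle, is the uniformity of the lower bounds for the local periods as the level $N_f$ (and hence the number of primes dividing $M_g$) grows: Nelson's original argument handles $\pi$ of level $1$, so the only local periods that occur there are the spherical ones and the archimedean one, all equal to simple constants. To push his result to arbitrary odd squarefree $N_f$ one must show that the \emph{product} $\prod_{p\mid M_g}\frac{2}{p+1}$ (and the factor $\prod_{p\mid N_g}p^{-1}$) is absorbed into an acceptable power of $P$ rather than degrading the saving $\delta$; this is where the precise shape $2/(p+1)$ from Proposition~\ref{prop:Ip-Mg}, as opposed to something smaller, is essential, and where the computations of \cite{PaldVP} alone would not suffice. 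Concretely, I would bound $\prod_{p\mid M_g}(p+1)\ll M_g^{1+\varepsilon}\ll C(\pi)^{1+\varepsilon}$ and check that the conductor-dropping inequality $C(\pi\otimes\mathrm{ad}(\tau))\ll C(\pi)^{O(1)}\prod_{p\neq q}C(\tau_p)^{O(1)}$ (valid since at $q$ both $\pi_q$ is unramified and $\tau_q$ is Steinberg, so the local conductor of $\pi\otimes\mathrm{ad}(\tau)$ at $q$ is controlled by that of $\tau_q$ alone) keeps the exponent in the convexity bound from being spoiled. Once these uniformity estimates are in place, the remaining steps are a direct bookkeeping of Nelson's proof with the local data replaced by ours, and the statement of Theorem~\ref{thm:sc} follows; Theorem~\ref{thm:sc-intro} is then the special case in which each $N_g=q$ is prime and $f$ has level $1$ is replaced by $f$ of odd squarefree level, as stated.
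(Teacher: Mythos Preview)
Your high-level plan is right: the theorem follows by feeding the local $\SL_2$-period computations into Nelson's main theorem. But the execution takes a detour that the paper avoids, and it also misses one of the two local conditions Nelson actually requires.

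The paper's proof is much shorter because Nelson's \cite[Theorem~2]{Nelson} is invoked as a black box whose only input is a \emph{local} condition at each prime $p$ with $\pi_p$ ramified (the ``conjecture in \cite[\S2.15.1]{Nelson}''). That condition asks for unit vectors $\varphi_1\in\tilde\pi_p$, $\varphi_2\in\tau_p$, $\varphi_3\in\omega_p$ such that
\[
\alpha_p^{\sharp}(\varphi_1,\varphi_2,\varphi_3)\,(C(\pi_p)C(\tau_p))^{O(1)}\gg 1
\quad\text{and}\quad
\mathcal S(\varphi_i)\ll (C(\pi_p)C(\tau_p))^{O(1)}\ (i=1,2,3),
\]
where $\alpha_p^{\sharp}$ is the \emph{raw} local integral \eqref{alphav}, not the $L$-normalized $\mathcal I_p^{\sharp}$, and $\mathcal S$ denotes the Sobolev norm. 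The paper simply takes $\varphi_i$ to be the local components $\breve{\mathbf h}_p,\breve{\mathbf g}_p,\breve{\pmb\phi}_p$ of the test vector, observes that each is $\Gamma_0(p)$-fixed so $\mathcal S(\varphi_i)=p^{O(1)}$, and then reads off $\alpha_p^{\sharp}$ from \cite[Proposition~7.14]{PaldVP} (for $p\mid N_g$) and Proposition~\ref{prop:case2-localintegral} (for $p\mid M_g$). Each of these explicit values is visibly $\asymp p^{-O(1)}$, so the condition holds at every ramified prime and Nelson's theorem applies directly.

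There is therefore no need for the global formula of Theorem~\ref{thm:centralvalue}, no need to control products like $\prod_{p\mid M_g}\tfrac{2}{p+1}$, and no conductor-dropping argument; the absorption into $P^{O(1)}$ is already part of Nelson's conclusion once the local hypotheses are met prime by prime. Your worry about ``uniformity as the number of primes dividing $M_g$ grows'' is a non-issue in this framework. Conversely, you have omitted the Sobolev-norm half of Nelson's condition, which is not automatic and is what forces the test vectors to be chosen with bounded level (here, $\Gamma_0(p)$-invariance does the job). Finally, your remark that the sign condition $w_p=+1$ ``can be arranged by choosing $D$'' is not correct: $w_p$ is the Atkin--Lehner eigenvalue of $f$, intrinsic to $\pi$; the choice of $D$ only aligns $\chi_D(p)$ with $w_p$.
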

\begin{proof}
The theorem follows by checking that the hypotheses of \cite[Theorem 2]{Nelson} hold. Indeed, it is enough to check that for every $\tau = \tau(g) \in \mathcal G$ and every $\pi = \pi(f)$ as in the statement, and every rational prime $p$, either $\pi_p$ is unramified or the conclusion of the conjecture in \cite[\S 2.15.1]{Nelson} is satisfied. If $p$ does not divide $N_f$, then $\pi_p$ is unramified and there is nothing to say. If $p \mid N_f$, then we must prove that (with notations as in Section \ref{sec:localperiods}) there are unit vectors $\varphi_1 \in \tilde{\pi}_p$, $\varphi_2 \in \tau$, $\varphi_3 \in \omega_p$ such that 
\begin{equation}\label{conditions-NelsonConj}
\alpha_p^{\sharp}(\varphi_1,\varphi_2,\varphi_3) (C(\pi_p)C(\tau_p))^{O(1)} \gg 1 \quad \mathrm{ and } \quad \mathcal S(\varphi_i) \ll  (C(\pi_p)C(\tau_p))^{O(1)} \,\, (i=1,2,3),
\end{equation}
where $\alpha_p^{\sharp}(\varphi_1,\varphi_2,\varphi_3)$ is the normalized local integral as defined in Section \ref{sec:SL2periods-thm}, $C(\pi_p)$ and $C(\tau_p)$ denote the analytic conductor of $\pi_p$ and $\tau_p$, respectively, and $\mathcal S(\varphi_i)$ are the Sobolev norms of $\varphi_i$ (on the corresponding representation, in each case), as defined\footnote{One actually defines a family of Sobolev norms $\mathcal S_d$, for each integer $d$, and then $\mathcal S$ denotes a Sobolev norm of the form $\mathcal S_d$ for some fixed large enough $d$ (the ``implied index'').} in \cite[Section 2]{MichelVenkatesh} (see also \cite[Sections 4.6, 5.3]{NelsonSpectral}). We let $\varphi_1$, $\varphi_2$, $\varphi_3$ be the $p$-th components $\breve{\mathbf h}_p$, $\breve{\mathbf g}_p$, $\breve{\pmb{\phi}}_p$ of our test vector chosen in Section \ref{sec:testvector}, normalizing them so that each of the $\varphi_i$ has norm $1$. Since each of the $\varphi_i$ is fixed by $\Gamma_0(p) \subseteq \SL_2(\Z_p)$, it is well--known that the Sobolev norm of $\varphi_i$ satisfies $\mathcal S(\varphi_i) = ||\varphi_i||p^{O(1)} = p^{O(1)}$. Having this into account, we divide the discussion in two cases.

\begin{itemize}
    \item[a)] If $p \mid N_g$, then $C(\pi_p) = C(\tau_p) = p$. Besides, from \cite[Proposition 7.14]{PaldVP} we have 
    \[
    \alpha_p^{\sharp}(\varphi_1,\varphi_2,\varphi_3) = \frac{p-w_p}{p+w_p}\zeta_p(2)^{-1} = \frac{(p-w_p)^2}{p^2},
    \]
    and therefore we clearly see that both conditions in \eqref{conditions-NelsonConj} hold.
    
    \item[b)] If $p \nmid N_g$, we have $C(\pi_p)=p$ and $C(\tau_p) = 1$. In this case, we may invoke instead Proposition \ref{prop:case2-localintegral}, which tells us that
    \[
    \alpha_p^{\sharp}(\varphi_1,\varphi_2,\varphi_3) = \frac{2(p-1)^2(p-\xi)(p\xi-1)}{p^2(p+1)(p+\xi)(p\xi+1)},
    \]
    where $\xi = \chi(p)^2$ with $\chi: \Q_p^{\times} \to \C^{\times}$ the unramified character such that $\tau_p = \pi(\chi,\chi^{-1})$. Again, it follows that both conditions in \eqref{conditions-NelsonConj} are satisfied. 
\end{itemize}
\end{proof}

Some final comments are in order:
 \begin{itemize}
     \item[i)] For a family $\mathcal G$ as in the Introduction, we immediately see that $P = C(\pi)$, and hence Theorem \ref{thm:sc-intro} is just a particular instance of the above statement.
     \item[ii)] If the quantity $P$ in the statement satisfies $\log P \gg \log q$, then the conclusion is worse than the convex bound. So the theorem becomes interesting only under the assumption that 
     \[
     C(\pi) \cdot \prod_{p\neq q} C(\tau_p) = q^{o(1)},
     \]
     where $o(1)$ is a quantity tending to $0$ as $q$ tends to $\infty$. One may hence assume this, which implies in particular that $\pi_q$ is unramified, and that $\tau$ is `essentially unramified away from $q$'.
     \item[iii)] As hinted in the introduction of \cite{Nelson}, modulo the Hypothesis (H) the above theorem should lead to strong quantitative forms of the arithmetic quantum unique ergodicity conjecture in the prime level aspect.
 \end{itemize}

\appendix

\section{Computation of local Whittaker functions at special elements}\label{whitakkerfunctions}

We collect here some special values of Whittaker functions attached to the local components of $\breve{\mathbf h}$, needed in Section \ref{sec:global-h}. If $p$ is a prime, and $\xi \in \Q$, we define
\[
W_{\breve{\mathbf h}_p,\xi}(g) = \int_{\Q_p} \breve{\mathbf h}_p(s^{-1}u(x)g)\overline{\psi_p(\xi x)}dx = \int_{\Q_p}\breve{\mathbf h}_p(s^{-1}u(x)g)\psi_p(-\xi x)dx, \quad g \in \SL_2(\Q_p),
\]
where $s = \left(\begin{smallmatrix}0 & 1 \\ -1 & 0 \end{smallmatrix}\right)$ and $\psi_p$ denotes the standard additive character of $\Q_p$. Assume that $p$ divides $N_f$. By using the definition of $\breve{\mathbf h}_p \in \tilde{\pi}_p$ given in Section  \ref{sec:testvector}, together with the transformation property spelled out in \eqref{transfproperty-hp}, one can prove the following statements. The details are left to the reader.

\begin{proposition}
With the above notation, 
\[
W_{\breve{\mathbf h}_p,\xi}(1) = \begin{cases} p^{-r}(1+(-p\xi,p)_p) & \text{if } \mathrm{val}_p(\xi)=2r, r \geq 0,\\
p^{-r-1}(p+1) & \text{if } \mathrm{val}_p(\xi) = 2r+1, r \geq 0, \\
0 & \text{if } \mathrm{val}_p(\xi) < 0.
\end{cases}
\]
\end{proposition}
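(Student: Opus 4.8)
The statement is a purely local unramified/ramified computation of a Whittaker integral at the identity, so the plan is to unfold the definition of $\breve{\mathbf h}_p$ and reduce the $p$-adic integral to a finite sum. First I would recall from Section \ref{sec:testvector} that, for $p\mid N_f$, the vector $\breve{\mathbf h}_p=\mathbf h_p$ is the newvector in the special representation $\tilde{\sigma}^{\delta}(\bar\psi_p^D)$ determined by $\mathbf h_{p|\widetilde{\SL}_2(\Z_p)}=\mathbf 1_{\widetilde{\SL}_2(\Z_p)}-(p+1)\mathbf 1_{\tilde\Gamma}$, together with the left transformation rule \eqref{transfproperty-hp}. The integrand $\breve{\mathbf h}_p(s^{-1}u(x)g)$ with $g=1$ depends only on the coset of $u(x)$ modulo (a conjugate of) $\Gamma_0(p)$, so I would split the $x$-integration over $\Q_p$ according to $\mathrm{val}_p(x)$: for $\mathrm{val}_p(x)\ge 0$ one uses the Iwasawa form of $s^{-1}u(x)$ lying over $\SL_2(\Z_p)$, and for $\mathrm{val}_p(x)=-n<0$ one factors out a torus element $t(p^{-n})$ (picking up $\chi_{\bar\psi_p^D}(p^{-n})\chi_\delta(p^{-n})|p^{-n}|^{3/2}$ from \eqref{transfproperty-hp}) and is left with evaluating $\mathbf h_p$ on an element of $\SL_2(\Z_p)$ that lies in $\tilde\Gamma$ or not.

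\textbf{Key steps, in order.} (1) Decompose $\Q_p=\Z_p\sqcup\bigsqcup_{n\ge 1}p^{-n}\Z_p^\times$ and write $s^{-1}u(x)$ in Iwasawa coordinates on each piece, tracking the metaplectic cocycle via $s_p$ and the formulas in Section \ref{sec:notation}; this is where one must be careful because $p=2$ is excluded (the statement is for $p\mid N_f$ odd), so the splitting is over $\Gamma_1(4;\Z_2)$ is not needed and the cocycle simplifies. (2) For $x\in\Z_p$: $s^{-1}u(x)\in\SL_2(\Z_p)$, and it lies in $\Gamma_0(p)$ iff $x\in p\Z_p$; so the contribution is $\int_{p\Z_p}(1-(p+1))\psi_p(-\xi x)dx+\int_{\Z_p\setminus p\Z_p}1\cdot\psi_p(-\xi x)dx$ (up to the cocycle sign), which collapses using $\int_{p^j\Z_p}\psi_p(-\xi x)dx=p^{-j}\mathbf 1_{\mathrm{val}_p(\xi)\ge -j}$ and vanishes unless $\mathrm{val}_p(\xi)\ge -1$, contributing only when $\xi\in\Z_p$. (3) For $\mathrm{val}_p(x)=-n\le -1$: factor $u(x)=t(p^{-n})\cdot(\text{bounded})$, apply \eqref{transfproperty-hp} to pull out $\chi_{\bar\psi_p^D}(p^n)^{-1}\chi_\delta(p^n)^{-1}p^{-3n/2}$ — wait, more precisely the exponent works out to $p^{-3n/2}$ times a quadratic-character value — and evaluate the residual $\SL_2(\Z_p)$-element against $\mathbf 1_{\widetilde{\SL}_2(\Z_p)}-(p+1)\mathbf 1_{\tilde\Gamma}$, then integrate $\psi_p(-\xi x)$ over $x\in p^{-n}\Z_p^\times$ (a Gauss-sum-type integral that is $0$ unless $\mathrm{val}_p(\xi)\ge n-1$, and produces a sign $(\cdot,p)_p$ when $\mathrm{val}_p(\xi)=n-1$ exactly). (4) Assemble the geometric-type sum over $n$; the character-value bookkeeping collapses the quadratic characters $\chi_\delta$ and $\chi_{\bar\psi_p^D}$ since $D\in\Z_p^\times$, leaving the clean closed forms $p^{-r}(1+(-p\xi,p)_p)$ when $\mathrm{val}_p(\xi)=2r$, $p^{-r-1}(p+1)$ when $\mathrm{val}_p(\xi)=2r+1$, and $0$ when $\mathrm{val}_p(\xi)<0$; the Hilbert symbol $(-p\xi,p)_p$ records whether the leading unit of $\xi$ (times $-1$) is a square mod $p$, i.e. whether the extra term survives in the even-valuation case.

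\textbf{Main obstacle.} The genuine difficulty — and the reason the authors "leave the details to the reader" — is the careful bookkeeping of the metaplectic cocycle and of the two quadratic characters $\chi_\delta$, $\chi_{\bar\psi_p^D}=\chi_{\psi_p^{-D}}$ throughout step (3): one must repeatedly decompose $s^{-1}u(p^{-n}u_0)$ (with $u_0\in\Z_p^\times$) as $[\text{torus}]\cdot[\text{unipotent}]\cdot[\SL_2(\Z_p)]$ in the metaplectic group, correctly inserting the factors $\epsilon_p(g_1,g_2)$ and $s_p$, and verify that all the genuine signs either cancel against the normalization $\mathbf 1_{\widetilde{\SL}_2(\Z_p)}([g,\epsilon])=\epsilon s_p(g)$ or combine into the single Hilbert symbol $(-p\xi,p)_p$. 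Once that is done the $x$-integrals are elementary (characteristic-function integrals of $\psi_p$), and the reindexing of the resulting sum over $n$ against $\mathrm{val}_p(\xi)$ to get the stated three-case formula is routine. I would therefore organize the write-up as: (a) a lemma recording the Iwasawa decomposition of $s^{-1}u(x)$ in $\widetilde{\SL}_2(\Q_p)$ with explicit cocycle; (b) the case analysis $\mathrm{val}_p(\xi)\in\{2r,2r+1,<0\}$; (c) the final summation.
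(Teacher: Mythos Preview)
Your approach is exactly what the paper indicates (and then leaves to the reader): unfold the newvector $\mathbf h_p$ via the transformation rule \eqref{transfproperty-hp} together with the description $\mathbf h_{p|\widetilde{\SL}_2(\Z_p)}=\mathbf 1_{\widetilde{\SL}_2(\Z_p)}-(p+1)\mathbf 1_{\tilde\Gamma}$, split the $x$-integral over $\Q_p$ according to $\mathrm{val}_p(x)$, and sum the resulting Gauss-type integrals. The identification of the metaplectic-cocycle bookkeeping in step (3) as the only genuinely delicate point is accurate.

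There is, however, a concrete slip in step (2) that would propagate if you don't catch it. For $x\in\Z_p$ one has
\[
s^{-1}u(x)=\begin{pmatrix}0&-1\\1&0\end{pmatrix}\begin{pmatrix}1&x\\0&1\end{pmatrix}=\begin{pmatrix}0&-1\\1&x\end{pmatrix},
\]
whose lower-left entry is $1$, so this element is \emph{never} in $\Gamma_0(p)$, regardless of $x$. Hence $\mathbf h_p(s^{-1}u(x))=1$ for every $x\in\Z_p$ (the cocycle $s_p$ being trivial on $\SL_2(\Z_p)$ for odd $p$), and the $\Z_p$-contribution is simply $\int_{\Z_p}\psi_p(-\xi x)\,dx=\mathbf 1_{\Z_p}(\xi)$, not the two-piece expression you wrote. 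With this correction the remainder of your plan (steps (3)--(4)) goes through as described and yields the stated three-case formula.
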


\begin{proposition}
With the above notation, 
\[
W_{\breve{\mathbf h}_p,\xi}(s) = \begin{cases}
-p^{-r-1}(1+(-p\xi,p)_p) & \text{if } \mathrm{val}_p(\xi) = 2r, r \geq 0, \\
-p^{-r-2}(p+1) & \text{if } \mathrm{val}_p(\xi) = 2r+1, r \geq -1, \\
0 & \text{if } \mathrm{val}_p(\xi) < -1.
\end{cases}
\]
\end{proposition}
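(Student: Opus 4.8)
\textbf{Proof proposal for the last displayed Proposition (computation of $W_{\breve{\mathbf h}_p,\xi}(s)$).}

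The plan is to compute the integral $W_{\breve{\mathbf h}_p,\xi}(s) = \int_{\Q_p} \breve{\mathbf h}_p(s^{-1}u(x)s)\,\psi_p(-\xi x)\,dx$ directly from the explicit description of the newvector $\breve{\mathbf h}_p = \mathbf h_p$ given in Section~\ref{sec:testvector}, exactly as in the proof of the preceding proposition for $W_{\breve{\mathbf h}_p,\xi}(1)$. First I would note that $s^{-1}u(x)s = \left(\begin{smallmatrix} 1 & 0 \\ -x & 1\end{smallmatrix}\right)$, so that the integrand only requires evaluating $\breve{\mathbf h}_p$ on lower-triangular unipotent elements. The strategy is to break $\Q_p$ into the region $x \in \Z_p$, where $s^{-1}u(x)s \in \SL_2(\Z_p)$ and we can read off $\mathbf h_p$ from the formula $\mathbf h_{p|\widetilde{\SL}_2(\Z_p)} = \mathbf 1_{\widetilde{\SL}_2(\Z_p)} - (p+1)\mathbf 1_{\tilde\Gamma}$, and the annular regions $\mathrm{val}_p(x) = -j$ for $j \geq 1$, where one must first extract a diagonal factor via the Iwasawa/Bruhat decomposition and then apply the transformation property \eqref{transfproperty-hp}. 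Concretely, for $\mathrm{val}_p(x) = -j < 0$ one writes $\left(\begin{smallmatrix} 1 & 0 \\ -x & 1\end{smallmatrix}\right) = \left(\begin{smallmatrix} -x^{-1} & 1 \\ 0 & -x\end{smallmatrix}\right)\left(\begin{smallmatrix} 0 & -1 \\ 1 & x^{-1}\end{smallmatrix}\right)$ with the rightmost factor in $\SL_2(\Z_p)$, lift to the metaplectic cover keeping careful track of the cocycle $\epsilon_p$ and the section $s_p$, and use \eqref{transfproperty-hp} to pull out the factor $\chi_{\bar\psi_p^D}(-x^{-1})\chi_\delta(-x^{-1})|x|_p^{-3/2}$.

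The key steps, in order, are: (1) reduce to evaluating $\mathbf h_p$ on $\left(\begin{smallmatrix} 1 & 0 \\ -x & 1\end{smallmatrix}\right)$ and split the integral over $\Z_p$ versus $p^{-j}\Z_p^\times$ for $j \geq 1$; (2) on $\Z_p$, determine whether $\left(\begin{smallmatrix} 1 & 0 \\ -x & 1\end{smallmatrix}\right)$ lies in $\Gamma = \Gamma_0(p)$ (it does iff $x \in p\Z_p$), so the contribution over $\Z_p$ is $\mathrm{vol}(p\Z_p)\bigl(1 - (p+1)\bigr) + \mathrm{vol}(\Z_p\setminus p\Z_p)\cdot 1 = p^{-1}(-p) + (1-p^{-1}) = -p^{-1} + 1 - p^{-1}$, and then weight by the local Schwartz-function section value and the character $\psi_p(-\xi x)$ — here the Gauss-sum type cancellation in $\int_{\Z_p}\psi_p(-\xi x)dx$ and $\int_{p^{-j}\Z_p^\times}\psi_p(-\xi x)dx$ governs which $j$ survive; (3) assemble the annular contributions as a geometric-type sum in $j$, using $\gamma(\psi_p) = 1$ and the explicit formulas for $\gamma(\cdot,\psi_p)$, $\chi_{\psi_p}$ from Section~\ref{sec:Weilreps}, and the fact that $D \in \Z_p^\times$ with $\chi_D(p) = w_p$; (4) package the Hilbert-symbol factors into the clean form $1 + (-p\xi,p)_p$ (for $\mathrm{val}_p(\xi)$ even) or the factor $p+1$ (for $\mathrm{val}_p(\xi)$ odd), and read off the powers of $p$. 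A useful sanity check along the way: the relation $W_{\breve{\mathbf h}_p,\xi}(s) = -p^{-1}W_{\breve{\mathbf h}_p,\xi}(1)$ should hold in the even-valuation range and in the low odd-valuation range, since that identity is invoked in the proof of Proposition~\ref{prop:WBp1}; matching against the already-proved formula for $W_{\breve{\mathbf h}_p,\xi}(1)$ then pins down the normalization and the shift in the range of validity (the odd case now extends down to $r \geq -1$ because the extra factor $|x|_p^{-3/2}$ shifts the convergence threshold).

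I expect the main obstacle to be the bookkeeping of the metaplectic cocycle and the section $s_p$ when moving the matrix $\left(\begin{smallmatrix} 1 & 0 \\ -x & 1\end{smallmatrix}\right)$ into Iwasawa form in the metaplectic group: one must verify that the genuine function $\mathbf h_p$ is evaluated with the correct sign $\epsilon$, since $\mathbf 1_{\widetilde{\SL}_2(\Z_p)}$ sends $[g,\epsilon]$ to $\epsilon s_p(g)$ and the diagonal factor $\left(\begin{smallmatrix} -x^{-1} & \ast \\ 0 & -x\end{smallmatrix}\right)$ contributes a further $\chi_{\delta}$-twist that interacts with the Hilbert symbol $(c,d)_p$ appearing in $s_p$. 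The valuation of $-x$ being odd (namely $j$) in roughly half the annuli is exactly where the $s_p$-factor is nontrivial, so care is needed there. Once this is handled consistently with the computation of $W_{\breve{\mathbf h}_p,\xi}(1)$ in the preceding proposition, the remaining manipulations are routine geometric sums, and the stated case division on $\mathrm{val}_p(\xi)$ emerges from collecting the surviving terms. As in the excerpt, the routine details will be left to the reader.
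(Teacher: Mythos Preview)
Your proposal follows exactly the approach the paper indicates: the appendix does not give a detailed proof but simply says that the three propositions follow ``by using the definition of $\breve{\mathbf h}_p \in \tilde{\pi}_p$ given in Section~\ref{sec:testvector}, together with the transformation property spelled out in \eqref{transfproperty-hp},'' leaving the details to the reader. Your plan---splitting the integral over $\Z_p$ and the annuli $p^{-j}\Z_p^\times$, reading off $\mathbf h_p$ on $\SL_2(\Z_p)$ from $\mathbf 1_{\widetilde{\SL}_2(\Z_p)} - (p+1)\mathbf 1_{\tilde\Gamma}$, and pulling out a diagonal via Iwasawa and \eqref{transfproperty-hp} on the annuli---is precisely this, and your identification of the metaplectic cocycle/section bookkeeping as the only delicate point is accurate. (One small arithmetic slip in your illustrative step (2): $\mathrm{vol}(p\Z_p)(1-(p+1)) = p^{-1}(-p) = -1$, not $-p^{-1}$; but this is before inserting $\psi_p(-\xi x)$ anyway and does not affect the method.)
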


\begin{proposition}
With the above notation, if $b \in \Z_p^{\times}$ and $r_b = \left(\begin{smallmatrix}1 & 0 \\ b & 1\end{smallmatrix}\right)$, then
\[
W_{\breve{\mathbf h}_p,\xi}(r_b) = \begin{cases}
-\psi_p(b^{-1}\xi)p^{-r-1}(1+(-p\xi,p)_p) & \text{if } \mathrm{val}_p(\xi) = 2r, r \geq 0, \\
-\psi_p(b^{-1}\xi)p^{-r-2}(p+1) & \text{if } \mathrm{val}_p(\xi) = 2r+1, r \geq -1, \\
0 & \text{if } \mathrm{val}_p(\xi) < -1.
\end{cases}
\]
\end{proposition}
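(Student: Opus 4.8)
The plan is to deduce this third formula directly from the preceding proposition (the value of $W_{\breve{\mathbf h}_p,\xi}(s)$) by a single matrix identity together with a change of variables, so that no new computation using the transformation property \eqref{transfproperty-hp} is actually needed. The identity I would use is the factorization
\[
r_b = \begin{pmatrix} 1 & 0 \\ b & 1 \end{pmatrix} = u(b^{-1})\, s\, \gamma_b, \qquad \gamma_b := \begin{pmatrix} -b & -1 \\ 0 & -b^{-1} \end{pmatrix},
\]
valid for every $b \in \Q_p^{\times}$ and checked by multiplying out; this is the same factorization already exploited in Section~\ref{sec:global-h}. When $b \in \Z_p^{\times}$ the matrix $\gamma_b$ lies in $\Gamma_0(p)$, so its canonical lift $\tilde{\gamma}_b$ lies in the subgroup $\tilde{\Gamma}$ of $\widetilde{\SL}_2(\Z_p)$ over which the newvector $\breve{\mathbf h}_p = \mathbf h_p$ is, by construction, right-invariant.

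With this in hand, I would first observe that inside the defining integral $W_{\breve{\mathbf h}_p,\xi}(r_b) = \int_{\Q_p}\breve{\mathbf h}_p\bigl(s^{-1}u(x)r_b\bigr)\psi_p(-\xi x)\,dx$ one may write the metaplectic element $s^{-1}u(x)r_b$ as $\widetilde{s^{-1}}\,\widetilde{u(x+b^{-1})}\,\tilde s\cdot\tilde{\gamma}_b$, using that $\widetilde{u(x)}\widetilde{u(b^{-1})} = \widetilde{u(x+b^{-1})}$ already on the metaplectic level (the cocycle $\epsilon_p$ of a product of two upper unipotents is trivial). Right-invariance of $\breve{\mathbf h}_p$ under $\tilde{\gamma}_b$ then gives $\breve{\mathbf h}_p\bigl(s^{-1}u(x)r_b\bigr) = \breve{\mathbf h}_p\bigl(s^{-1}u(x+b^{-1})s\bigr)$, and the substitution $y = x+b^{-1}$ produces the clean relation $W_{\breve{\mathbf h}_p,\xi}(r_b) = \psi_p(b^{-1}\xi)\,W_{\breve{\mathbf h}_p,\xi}(s)$ — which is exactly the identity used, in its normalized form, in Section~\ref{sec:global-h}. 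Substituting the three-case formula for $W_{\breve{\mathbf h}_p,\xi}(s)$ from the previous proposition then yields the claimed expression, with the prefactor $\psi_p(b^{-1}\xi)$ in front of each branch and the vanishing range $\mathrm{val}_p(\xi) < -1$ carried over unchanged.

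The only point that is not purely formal — and the reason the cited propositions leave the details to the reader — is the metaplectic bookkeeping in the step $\breve{\mathbf h}_p(s^{-1}u(x)r_b) = \breve{\mathbf h}_p(s^{-1}u(x+b^{-1})s)$: one must make sure that appending $\tilde{\gamma}_b$ on the right genuinely recovers the metaplectic element representing $s^{-1}u(x)r_b$, with no stray sign. This is where $p$ being odd and $b \in \Z_p^{\times}$ enter: since $s$, $u(b^{-1})$ and $\gamma_b$ all lie in $\SL_2(\Z_p)$, the section $g \mapsto [g, s_p(g)]$ is multiplicative on them, and the only remaining Hilbert symbols $(\cdot,\cdot)_p$ are units paired against units, hence trivial. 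Once this (mild) cocycle verification is done the rest is immediate, and I expect it to be the crux, exactly parallel to what is needed for the first two propositions.
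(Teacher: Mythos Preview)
Your approach is correct and is precisely the relation the paper itself invokes in Section~\ref{sec:global-h} (the identity $W_{p,\xi}(r_b)=\psi_p(b^{-1}\xi)W_{p,\xi}(s)$), so reducing to the previous proposition via $r_b = u(b^{-1})s\gamma_b$ with $\gamma_b\in\Gamma_0(p)$ is exactly in the spirit of the paper, which leaves these appendix computations to the reader. Your caution about the metaplectic cocycle is appropriate and your justification (all factors lie in $\SL_2(\Z_p)$ for odd $p$, where the section is a splitting, and the unipotent cocycle is trivial) is the correct one.
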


\bibliographystyle{alpha}

\begin{thebibliography}{99}

\bibitem[BS98]{BerndtSchmidt}
Rolf Berndt and Ralf Schmidt.
\newblock {\em Elements of the representation theory of the {J}acobi group},
  volume 163 of {\em Progress in Mathematics}.
\newblock Birkh\"auser Verlag, Basel, 1998.

\bibitem[Bum98]{Bump}
Daniel Bump.
\newblock {\em Automorphic Forms and Representations}. 
\newblock Cambridge Studies in Advanced Mathematics 55. Cambridge University Press, First Edition, 1988.

\bibitem[Che19]{Chen}
Shih-Yu Chen.
\newblock Pullback formulae for nearly holomorphic Saito--Kurokawa lifts.
\newblock {\em Manuscripta Math.} (2019), doi:10.1007/s00229-019-01111-2.

\bibitem[CC19]{ChenCheng}
Shih-Yu Chen and Yao Cheng.
\newblock On Deligne's conjecture for certain automorphic $L$-functions for $\GL(3)\times\GL(2)$ and $\GL(4)$.
\newblock {\em Documenta Math.} 24:2241--2297, 2019.

\bibitem[GG09]{GanGurevich}
Wee Teck Gan and Nadya Gurevich.
\newblock Restrictions of {S}aito--{K}urokawa representations.
\newblock In {\em Automorphic forms and {$L$}-functions {I}. {G}lobal aspects},
  volume 488 of {\em Contemp. Math.}, pages 95--124. Amer. Math. Soc.,
  Providence, RI, 2009.
\newblock With an appendix by Gordan Savin.

\bibitem[GT11]{GanTakeda}
Wee Teck Gan and Shuichiro Takeda.
\newblock Theta correspondences for {${\rm GSp}(4)$}.
\newblock {\em Represent. Theory}, 15:670--718, 2011.

\bibitem[GJ78]{GelbartJacquet}
Stephen Gelbart and Herv\'e Jacquet.
\newblock A relation between automorphic representations of {${\rm GL}(2)$}\
  and {${\rm GL}(3)$}.
\newblock {\em Ann. Sci. \'Ecole Norm. Sup. (4)}, 11(4):471--542, 1978.

\bibitem[Hid86]{Hida-Galreps}
Haruzo Hida.
\newblock Galois representations into {${\rm GL}_2({\bf Z}_p[[X]])$} attached
  to ordinary cusp forms.
\newblock {\em Invent. Math.}, 85(3):545--613, 1986.

\bibitem[Hid00]{Hida-MFGalCoh}
Haruzo Hida.
\newblock {\em Modular forms and {G}alois cohomology}, volume~69 of {\em
  Cambridge Studies in Advanced Mathematics}.
\newblock Cambridge University Press, Cambridge, 2000.

\bibitem[HS10]{HolowinskySoundarajan}
Roman Holowinsky and Kannan Soundarajan. 
\newblock Mass equidistribution for Hecke eigenforms.
\newblock {\em Ann. of Math. (2)}, 172(2):1517--1528, 2010.

\bibitem[Ich05]{Ichino-pullbacks}
Atsushi Ichino.
\newblock Pullbacks of {S}aito--{K}urokawa lifts.
\newblock {\em Invent. Math.}, 162(3):551--647, 2005.

\bibitem[II08]{IchinoIkeda}
Atsushi Ichino and Tamotsu Ikeda.
\newblock On {M}aass lifts and the central critical values of triple product
  {$L$}-functions.
\newblock {\em Amer. J. Math.}, 130(1):75--114, 2008.

\bibitem[II10]{IchinoIkeda-periods}
Atsushi Ichino and Tamotsu Ikeda.
\newblock On the periods of automorphic forms on special orthogonal groups and the {G}ross--{P}rasad conjecture.
\newblock {\em Geom. Funct. Anal.}, 19(5):1378--1425, 2010.

\bibitem[IS00]{IwaniecSarnak-Perspectives}
Henryk Iwaniec and Peter Sarnak.
\newblock Perspectives on the analytic theory of $L$-functions. 
\newblock {\em Geom. Funct. Anal.}, (Special Volume, Part II), pp. 705--741, 2000. GAFA 2000 (Tel Aviv, 1999).

\bibitem[Kna]{Knapp}
Anthony W. Knapp.
\newblock Representations of $\mathrm{GL}_2(\mathbb{R})$ and
  $\mathrm{GL}_2(\mathbb{C})$.
\newblock In A.~Borel and W.~Casselman, editors, {\em Automorphic {F}orms, {R}epresentations and $L$-functions, Part 1}, volume~33, pages 87--91. Amer. {M}ath. {S}oc.

\bibitem[Koh82]{Kohnen-newforms}
Winfried Kohnen. 
\newblock Newforms of half-integral weight. 
\newblock {\em J. Reine Angew. Math.}, 333:32--72, 1982.

\bibitem[Koh85]{Kohnen-FC}
Winfried Kohnen.
\newblock Fourier coefficients of modular forms of half-integral weight.
\newblock {\em Math. Ann.}, 271(2):237--268, 1985.

\bibitem[Kud94]{Kudla}
Stephen S. Kudla.
\newblock The local Langlands correspondence: the non-Archimedean case. In {\em Motives (Seattle, WA, 1991)}, volume 55, Part 2, of {\em Proc. Sympos. Pure Math.}, pages 365--391. Amer. Math. Soc., Providence, RI, 1994.

\bibitem[Maa71]{MaassBook}
Hans Maass.
\newblock {\em Siegel's Modular Forms and Dirichlet Series}. 
\newblock Lecture Notes Series in Mathematics, volume 216. Springer, Berlin, 1971.

\bibitem[Mic04]{Michel}
Philippe Michel. 
\newblock The subconvexity problem for Rankin--Selberg $L$-functions and equidistribution of Heegner points.
\newblock {\em Ann. of Math. (2)}, 160(1):185--236, 2004.

\bibitem[Mic07]{Michel-IAS}
Philippe Michel. 
\newblock Analytic number theory and families of automorphic $L$-functions. In {\em Automorphic forms and applications}, volume 12 of {\em IAS/Park City Math. Ser.}, pages 181--295. Amer. Math. Soc., Providence, RI, 2007. 

\bibitem[MV06]{MichelVenkatesh-ICM}
Philippe Michel and Akshay Venkatesh.
\newblock Equidistribution, $L$-functions and ergodic theory: on some problems of Yu. Linnik. In {\em International Congress of Mathematicians. Vol. II}, pages 421--457. Eur. Math. Soc., Z\"urich, 2006.

\bibitem[MV10]{MichelVenkatesh}
Philippe Michel and Akshay Venkatesh.
\newblock The subconvexity problem for $\GL_2$.
\newblock {\em Publ. Math. Inst. Hautes \'Etudes Sci.}, 111:171--271, 2010.

\bibitem[Mun]{Munshi}
Ritabrata Munshi.
\newblock Subconvexity for symmetric square $L$-functions. 
\newblock Preprint 2017, \href{https://arxiv.org/abs/1709.05615}{\texttt{https://arxiv.org/abs/1709.05615}}.

\bibitem[Nel]{NelsonSpectral}
Paul D. Nelson.
\newblock The spectral decomposition of $|\theta|^2$.
\newblock Preprint 2016, \href{https://arxiv.org/abs/1601.02529}{\texttt{https://arxiv.org/abs/1601.02529}}.

\bibitem[Nel19]{Nelson}
Paul D. Nelson. 
\newblock Subconvex equidistribution of cusp forms: reduction to Eisenstein observables.
\newblock {\em Duke Math. J.}, 168(9):1665--1722, 2019.

\bibitem[NPS14]{NelsonPitaleSaha}
Paul D. Nelson, Ameya Pitale, and Abhishek Saha.
\newblock Bounds for Rankin--Selberg integrals and quantum unique ergodicity for powerful levels. 
\newblock {\em J. Amer. Math. Soc.}, 27(1):147--191, 2014.
\bibitem[PdVP19]{PaldVP}
Aprameyo Pal and Carlos de Vera-Piquero.
\newblock Pullbacks of {S}aito--{K}urokawa lifts and a central value formula for degree $6$ $L$-series.
\newblock {\em Documenta Math.}, 24:1935--2036, 2019.

\bibitem[PSS]{PitaleSahaSchmidt}
Ameya Pitale, Abhisheck Saha, and Ralf Schmidt.
\newblock Lowest weight modules of $\mathrm{Sp}_4(\R)$ and nearly holomorphic Siegel modular forms.
\newblock To appear in {\em Kyoto J. Math.}

\bibitem[Pra90]{Prasad}
Dipendra Prasad.
\newblock Trilinear forms for representations of {${\rm GL}(2)$} and local
  {$\epsilon$}-factors.
\newblock {\em Compositio Math.}, 75(1):1--46, 1990.

\bibitem[Qiu14]{Qiu}
Yannan Qiu.
\newblock Periods of {S}aito--{K}urokawa representations.
\newblock {\em Int. Math. Res. Not. IMRN}, (24):6698--6755, 2014.

\bibitem[Sar95]{SarnakAQC}
Peter Sarnak.
\newblock Arithmetic quantum chaos. 
\newblock In {\em The Schur lectures (1992) (Tel Aviv)}, volume 8 of {\em Israel Math. Conf. Proc.}, pp. 183--236. Bar-Ilan Univ., Ramat Gan, 1995.

\bibitem[Sar01]{Sarnak}
Peter Sarnak.
\newblock Estimates for Rankin--Selberg $L$-functions and quantum unique ergodicity.
\newblock {\em J. Funct. Anal.}, 184(2):419--453, 2001.

\bibitem[Sar11]{Sarnak-RP}
Peter Sarnak.
\newblock Recent progress on the quantum unique ergodicity conjecture. {\em Bull. Amer. Math. Soc. (N.S.)}, 48(2):211--228, 2011.

\bibitem[Shi73]{Shimura73}
Goro Shimura.
\newblock On modular forms of half integral weight.
\newblock {\em Annals of Math.} 97(3):440--481, 1973.

\bibitem[Shi77]{Shimura77}
Goro Shimura.
\newblock On the periods of modular forms.
\newblock {\em Math. Ann.} 229(3):211--221, 1977.

\bibitem[Wal80]{Waldspurger80}
Jean-Loup Waldspurger.
\newblock Correspondance de {S}himura.
\newblock {\em J. Math. Pures Appl. (9)}, 59(1):1--132, 1980.

\bibitem[Wal91]{Waldspurger91}
Jean-Loup Waldspurger.
\newblock Correspondances de Shimura et quaternions. 
\newblock {\em Forum Math.} 3:219--307, 1991.

\bibitem[Wat02]{Watson}
Thomas C. Watson.
\newblock {\em Rankin triple products and quantum chaos}.
\newblock ProQuest LLC, Ann Arbor, MI, 2002.
\newblock Thesis (Ph.D.)--Princeton University.


\bibitem[Xue19]{Xue}
Hang Xue.
\newblock Central values of degree $6$ $L$-functions.
\newblock {\em Journal of Number Theory} 203:350--359, 2019.


\bibitem[YZZ13]{YZZ}
Xinyi Yuan, Shou-Wu Zhang, and Wei Zhang.
\newblock {\em The {G}ross--{Z}agier formula on {S}himura curves}, volume 184 of
  {\em Annals of Mathematics Studies}.
\newblock Princeton University Press, Princeton, NJ, 2013.


\end{thebibliography}

\end{document}